\newcommand{\mc}{\mathcal}
\newcommand{\mb}{\mathbb}
\newcommand{\ul}{\underline}
\newcommand{\mr}{\mathring}
\newcommand{\wh}{\widehat}
\newcommand{\wc}{\widecheck}
\newcommand{\wt}{\widetilde}
\newcommand{\ol}{\overline}
\newcommand{\vtr}{\vartriangleright}
\title{Constant Mean Curvature surfaces with prescribed finite topologies}
\long\def\symbolfootnote[#1]#2{\begingroup%
\def\thefootnote{\fnsymbol{footnote}}\footnote[#1]{#2}\endgroup}
\theoremstyle{plain}
\numberwithin{equation}{section}
\newtheorem{theorem}{Theorem}[section]  
\newtheorem{lemma}[theorem]{Lemma}
\newtheorem{corollary}[theorem]{Corollary}
\newtheorem{proposition}[theorem]{Proposition}
\newtheorem{definition}[theorem]{Definition}
\newtheorem{remark}[theorem]{Remark}
\title{Complete embedded constant mean curvature surfaces in euclidean three space with prescribed topologies}
\author{Stephen J.  Kleene}
\address{Department of Mathematics, University of Rochester, Rochester, NY}
\email{skleene@ur.rochester.edu}
\begin{document}
\maketitle

\begin{abstract} In this  article, we construct complete embedded constant mean curvature surfaces in $\mb{R}^3$ with freely prescribed  genus and any  number of ends greater than or equal to four. Heuristically, the surfaces are obtained by resolving finitely many points of tangency between collections of spheres.  The construction relies a family of constant mean curvature surfaces constructed in \cite{Kleene}, constructed as graphs over catenoidal necks of small scale.
\end{abstract}

\section{Introduction}
\begin{figure} \label{CMCFigure}
\includegraphics[ width=.8\textwidth]{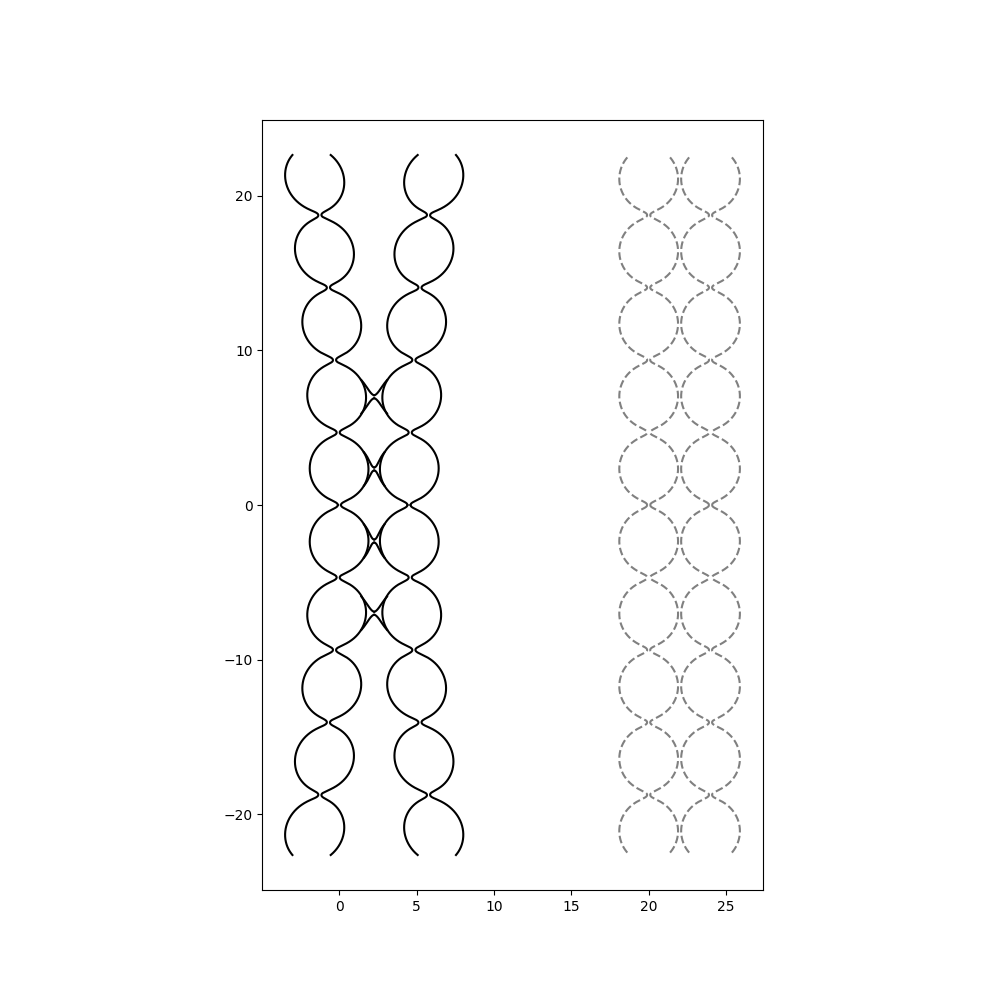}
\caption{A schematic depiction of one of surfaces constructed in this article, along with the limiting configuration comprising two touching Delaunay ends. This surface has  genus three and four ends, and is symmetric through the the $x$, $y$ and $z$ coordinate planes. Note the asymptotics of the 4 ends relative to the limiting configuration}
\end{figure}

Constant mean curvature (CMC) surfaces arise as critical points for the area functional under volume preserving deformations. They arise in physical world as soap films. In this article,  we construct families of CMC surfaces in euclidean three space with genus $g$ and $k$ ends, where $g$ and $k$ are arbitrary integers with $g \geq 0$ and $k \geq 4$.

\begin{theorem}\label{BigTheorem}
Given integers $k \geq 4$ and $g \geq 0$, there is a family  of complete embedded constant mean curvature surfaces in $\mb{R}^3$ with $k$ ends and genus $g$.
\end{theorem}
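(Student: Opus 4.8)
The plan is to construct these surfaces by a singular perturbation (gluing) argument of the type pioneered by Kapouleas and by Mazzeo--Pacard, whose degenerate model is a finite configuration of mutually tangent unit spheres terminating in half-Delaunay ends, and whose desingularizing pieces are the small-scale CMC catenoidal necks furnished by \cite{Kleene}. Concretely, fix a graph $\Gamma\subset\mb{R}^3$ with first Betti number $b_1(\Gamma)=g$ having exactly $k$ half-infinite edges. Over the bounded part of $\Gamma$ place one unit sphere (a CMC surface) at each vertex, with consecutive centers at distance $2$ along each bounded edge so that adjacent spheres are tangent; along each half-infinite edge graft a half-Delaunay unduloid of small neck size asymptotic to that ray. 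For a suitable choice of $\Gamma$ this yields an embedded piecewise-smooth surface $M_0$. The hypothesis $k\ge 4$ enters precisely here: the sphere-neck forces are governed by the Delaunay balancing formula, and a configuration of unit spheres joined by small necks along $\Gamma$ admits a non-rigid balanced family only when there are at least four ends --- with two ends one is forced onto a single Delaunay surface and with three onto a rigid trinoid-type configuration, leaving none of the free parameters the construction will need.

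To build an approximate solution $M(\mathbf p)$, observe that near the $j$-th tangency two unit spheres osculate like opposed paraboloids; excise from each a geodesic disc of radius comparable to $\sqrt{\tau_j}$ and insert the CMC surface of \cite{Kleene} that is a graph over a catenoidal neck of scale $\tau_j$, matching across a fixed transition annulus by cutoff interpolation, and graft the Delaunay ends onto the outermost spheres in the same way. The resulting surface depends on a parameter $\mathbf p$ collecting the neck scales $\tau_j>0$, the vertex positions, the Delaunay neck sizes and phases, and the asymptotic directions of the ends; a direct estimate shows that the error $H-1$ is supported in the transition regions and near the ends and satisfies $\|H-1\|\lesssim(\max_j\tau_j)^{\alpha}$ in a suitable weighted H\"older norm, for some $\alpha>0$.

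The analytic core is the Jacobi operator $\mc L=\Delta_{M(\mathbf p)}+|A|^2$, the linearization at $0$ of $u\mapsto H(\mathrm{graph}\ u)$. On each spherical region $\mc L$ restricts to $\Delta_{\S^2}+2$, whose kernel is the span of the linear coordinate functions (infinitesimal translations); on each Delaunay end $\mc L$ carries the temperate Jacobi fields; and near each neck $|A|^2$ concentrates. I would work in weighted H\"older spaces --- with weight decaying at the ends at an indicial rate just below the translation rate, and scaling with $\tau_j$ near the $j$-th neck --- and show that $\mc L$, composed with the projection off a finite-dimensional approximate cokernel $\mc K$ spanned by cutoff translation fields on the spheres together with the lowest Delaunay Jacobi fields, admits a right inverse $\mc G$ of norm bounded uniformly as $\max_j\tau_j\to 0$. \emph{Establishing this uniform semi-Fredholm estimate is the main obstacle}: one must patch the model inverses on spheres, necks and ends without losing control as the necks degenerate, and rule out small eigenvalues by a blow-up/contradiction argument in which any putative approximate null solution converges, after rescaling, either to a bounded Jacobi field on a round sphere (absorbed by $\mc K$) or to one on a catenoid or a Delaunay surface (excluded by the choice of weights).

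Granting $\mc G$, for each admissible $\mathbf p$ the fixed-point equation $u=-\mc G\bigl(1-H(M(\mathbf p))+Q_{\mathbf p}(u)\bigr)$, with $Q_{\mathbf p}$ the quadratic remainder, is a contraction on a small weighted ball --- since there $\|Q_{\mathbf p}(u)-Q_{\mathbf p}(v)\|$ is controlled by $(\|u\|+\|v\|)\,\|u-v\|$ --- and yields a surface $\wt M(\mathbf p)$ that is a small graph over $M(\mathbf p)$ with $H\equiv 1$ modulo an element of $\mc K$. It remains to choose $\mathbf p$ so that this $\mc K$-component vanishes. To leading order the $\mc K$-projection, viewed as a function of $\mathbf p$, is the classical balancing map: its sphere-translation entries pair against the net neck forces and its Delaunay entries against the period and asymptotic conditions of the ends. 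By the choice of $\Gamma$ this map is non-degenerate at the balanced configuration, so the implicit function theorem (or a mapping-degree argument) produces a parameter $\mathbf p_\star$ with vanishing $\mc K$-component. The surface $\wt M(\mathbf p_\star)$ is then a complete embedded CMC surface: it is embedded, being a small graph over the embedded $M_0$; it has $k$ ends, each asymptotic to a Delaunay surface; and it has genus $g$, because resolving the tangencies along $\Gamma$ attaches exactly $b_1(\Gamma)=g$ handles. Letting $\max_j\tau_j$ vary over a small interval produces the asserted family.
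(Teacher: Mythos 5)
Your proposal follows the standard Kapouleas--style template: build a global approximate solution from a balanced graph, linearize, correct by a fixed-point argument modulo an approximate cokernel of translational and Delaunay Jacobi fields, and kill the obstruction by perturbing the configuration. The paper proceeds by a quite different route, matching Cauchy data across a disjoint union of spherical domains and CMC necks via a ``defect operator'' and applying the implicit function theorem directly, exploiting the crucial fact that the family of \cite{Kleene} necks is \emph{differentiable} in $\tau$ at $\tau=0$. That methodological difference is not in itself a defect; the problem lies elsewhere.

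The genuine gap is in the very first step, the sentence ``For a suitable choice of $\Gamma$ this yields an embedded piecewise-smooth surface $M_0$.'' You need a graph $\Gamma\subset\mb{R}^3$ with $b_1(\Gamma)=g$ and exactly $k$ unbounded edges, realizable by unit-distance vertices (so that the spheres are tangent along edges and disjoint elsewhere), whose balanced configurations form a nondegenerate family. This existence question is exactly the obstruction that constrained Breiner--Kapouleas: at a $2$-valent vertex the two incident flux vectors must be antiparallel and equal, forcing collinearity along chains, so any ``turn'' requires a vertex of valence $\ge 3$, and each such vertex contributes a vectorial balancing equation. The free parameters to absorb these equations are essentially the fluxes of the unbounded edges, so the number of turning vertices---and hence the attainable genus---is bounded by the number of ends. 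This is precisely why the surfaces of \cite{breiner-kapouleas1} have genus $2k-5$ with $k$ ends in $\mb{R}^3$ rather than arbitrary genus. Your proposal neither exhibits such a $\Gamma$ nor explains why the known constraint does not bind here.

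The paper's way around this is essential and not cosmetic: the limiting configuration is taken to be \emph{not} embedded --- a collection of spheres on a full planar integer lattice, equivalently a family of tangent Delaunay columns --- and the genus is put into a finite set $\mb{F}$ of resolved \emph{horizontal} tangencies between columns, which can be made as large as desired without increasing the number of ends. In exchange, the limiting ends touch, so your closing assertion ``it is embedded, being a small graph over the embedded $M_0$'' fails for every configuration that would realize the theorem: the surface is \emph{not} a small graph over an embedded limit. Establishing embeddedness then becomes the hardest part of the argument; the paper devotes Section~\ref{EmbeddSol} to showing that an explicit choice of relative branch translations $d$ tilts the asymptotic axes $-\sin(\ol{\phi}_i)e_y+\cos(\ol{\phi}_i)e_z$ so that $\ol{\phi}^{\vtr}<0$ and the ends separate at infinity. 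Without a replacement for that analysis, the construction you outline stops short of the claimed conclusion. (A smaller point: the paper's basic construction yields $2(n+1)$ ends and needs the separate grafting of Section~\ref{AddEnd} to reach odd $k\ge 5$; your graph formulation handles parity automatically, but only if the existence and embeddedness issues above were resolved.)
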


 The classical examples of constant mean curvarture surfaces in $\mb{R}^3$ with finite topology are round spheres, cylinders, and the family of rotationally invariant, singly periodic surfaces discovered by Delaunay in 1841 \cite{delaunay}. Hopf \cite{Hopf} showed that the only closed genus zero CMC surface in $\mb{R}^3$ is the round sphere, and Alexandrov \cite{Alexandrov} showed that the only closed embedded CMC surface is the round sphere. In contrast, Wente \cite{Wente} constructed genus zero closed immersed examples, at the time the only examples of complete CMC surfaces of finite topological type beyond the classical examples. The gluing techniques of Schoen \cite{Schoen} and Kapouleas \cite{Kapouleas},\cite{Kapouleas2}, \cite{Kapouleas3}, as well as those of Mazzeo  and Pacard \cite{mazzeo-pacard}, and Mazzeo-Pacard-Pollack \cite{mazzeo-pacard-pollack} led to proliferation of examples of complete CMC surfaces of finite topological type in the embedded, Alexandrov embedded, and immersed settings. In particular, Kapouleas in \cite{Kapouleas2} and \cite{Kapouleas3} constructed closed immersed CMC surfaces of arbitrary genus greater than two. Mazzeo and Pacard, \cite{mazzeo-pacard} and Mazzeo, Pacard and Pollack \cite{mazzeo-pacard-pollack} developed a general  connected sum construction for non-degenerate surfaces. In the embedded setting, the strongest existence results for finite topologies  are those of Breiner and Kapouleas in \cite{breiner-kapouleas1} and \cite{breiner-kapouleas2}, in which  complete embedded CMC surfaces with infinitely many topological types in euclidean spaces of dimension three and higher are constructed. More precisely, they construct embedded surfaces with prescribed numbers of ends, and for each fixed number of ends embedded surfaces of  finitely many distinct topological types are constructed.  In contrast, Meeks \cite{Meeks} showed that embedded CMC surfaces with finite topology have at least two ends, and Korevaar, Kusner,  and Solomon \cite{KKS} showed that each end of a finite topology CMC surface converges exponentially to a Delaunay end and, in the case two ends, it must be Delaunay. Theorem \ref{BigTheorem} is then optimal in the sense that we construct all possible finite topologies that can arise among embedded CMC surfaces with at least four ends. The topologies of the  Breiner-Kapouleas examples in  \cite{breiner-kapouleas1} and \cite{breiner-kapouleas2} are constrained by the fact that they are modeled on families of weighted  graphs, in which the nodes of the graph are replaced by spheres and edges are modeled on Delaunay ends with length approximately equal to an integer weight. The topology of the surface is then inherited from that of the underlying graph. The graphs must be carefully constructed in order to ensure embeddness of the resulting surfaces. In  $\mb{R}^3$, the surfaces constructed in \cite{breiner-kapouleas1} have genus $2k - 5$ where $k$ is the number of ends. In \cite{breiner-kapouleas2}, the authors construct hypersurfaces in higher dimensional spaces and remark that, due higher extra  space afforded by more dimensions, the graphs in \cite{breiner-kapouleas1} can be modified to obtain additional topological types for each $k$, although the specifics are not mentioned.  In contrast, the geometry of the surfaces we construct is comparatively simple and is easy to describe in terms of their limiting configuration.  In the case of four ends and maximal symmetry, the surfaces depend on a single parameter, and as the parameter tends to zero they converge to a singular limit comprising two tangent Delaunay ends differing by a translation.  The surfaces can thought of has having been obtained by removing finitely many of the points  of tangency and replacing them with catenoidal bridges. A basic modification of this idea introduces more ends into the construction. Since the ends of the limiting configuration are not separated, it is not immediately obvious that the surfaces we construct are in fact embedded, and  to establish this a careful treatment of the linearized problem is needed. Although we mention Delaunay ends above, they do not appear directly in our construction.  Rather, we construct our surfaces by resolving the points of tangency of configurations of tangentially touching spheres with catenoidal necks of small scale. Also, we remark that, although the surfaces we construct have a planar symmetry, no symmetry is required by our method. 
 
 \subsection{General comments on the proof}
The general  approach of our construction has its origins in the work  of Mazzeo-Pacard in \cite{mazzeo-pacard}. A novel feature of their approach, at least in the context of singular perturbations, is the use of the Dirichlet to Neumann operator in matching the Cauchy data of various component surfaces.  This is in contrast with the approach developed by  Kapouleas,  in which families of  globally defined  approximate solutions are constructed and the mapping properties of the jacobi operator are understood globally.  Both types of constructions are carried out subject to one or more parameters, which we refer to here as the \emph{scale} of the construction and denote by $\tau$, and which is assumed to small subject to various constraints. For small values of $\tau$, the relevant error term is  shown to be sufficiently small, and the linearized problem sufficiently regular, so that a version of Newton's Method can be employed to find exact solutions.  A general feature of these constructions is that the problem does not extend  $\tau = 0$,  which has to do with the fact that surfaces tend to singular limits as the scale $\tau$ tends to zero, and uniform $C^1$ estimates for the problem do not hold at small scales. Thus, perturbing away from $\tau = 0$ using methods requiring differentiability in $\tau$ are not available,  and instead arguments relying on continuity, and their attendant technical estimates, must be substituted.

The principal advance offered by our construction  is a formulation of the problem  that extends differentiably to $\tau = 0$. This allows for a direct use of the implict function theorem and abstract appeals to differentiabilty, and obviates the need for many of the technical estimates  that are required  when only continuity is available. This allows us to perturb the problem differentiably away from scale zero and obtain improved estimates for solutions  in terms of the scale. These improved estimates are fundamental to our construction, which relies on the ability to linearize in $\tau$ at $\tau = 0$, and would not be possible otherwise.  Our formulation relies on a  family of constant mean curvature necks constructed in \cite{Kleene}, whose relavant properties are discussed in detail in Section  \ref{CMCNecks}.

\subsection{Notation, terminology and conventions}
In this article, we mostly  follow generally established notation that is widely in use, and thus we will avoid commenting extensively on it. So, for example,  $\mb{R}^n$ denotes $n$-dimensional euclidean space and $e_1, \ldots e_n$ the associated standard basis. We will regard, unless otherwise indicated, $\mb{R}^{n}$ as canonically included in $\mb{R}^{n +1}$ under the identification with the plane $\{x_n = 0 \}$.  We use the non-standard notation $f \leq^C g$ to mean $f \leq C g$, which is convenient in the case that the right hand side contains many terms and would otherwise need to be enclosed in parentheses. We also use $f \cong g$ to mean that the quantities $f$ and $g$ are equal up to a fixed positive constant. We use $f < < g$ to mean that the ratio $f/g$ tends  $0$ for asymptotic values of $f$ and $g$.

We take the mean curvature of an oriented surface $S$ in $\mb{R}^3$ to be the sum of the principal curvatures. Thus, the sphere of radius $\frac{1}{2}$ in $\mb{R}^3$ has mean curvature $1$ when  oriented by the outward pointing unit normal.  The \emph{stability operator} or \emph{jacobi operator} of a surface is  given by $L_{S} = \Delta_S + |A_S|^2$, where here $|A_S|^2$ denotes the square length of the second fundamental form of $S$ and $\Delta_S$ the Laplace-Beltrami operator. If $X$ is a vector field on $S$, the linear change $\dot{H}$ at $t = 0$ in  the mean curvature $H_{t}$  of the one parameter family of surfaces $S_t : = S + t  X$  is given by
\begin{align}\label{MCVariation}
\dot{H} = L_S X^\perp + \nabla_{X^\top} H.
\end{align}
Above, $X^\perp : = X \cdot N$ denotes the normal part of $X$ along $S$, $X^\top  : = X - X^\perp N$ denotes the tangential part and $\nabla_{X^\top} H : = \nabla H \cdot X^\top$ denotes direction derivative of $H$ in the direction $X$ . A \emph{jacobi field} on $S$ is a function $u$ on $S$ satisfying $L_{S} u = 0$. For a constant mean curvature surface, the gradient of the mean curvature vanishes. Thus, since  the mean curvature is invariant under rigid motions, the normal part of the translational and rotational vector fields along $S$ are jacobi fields. We refer to jacobi fields of this form as \emph{geometric}. A geometric jacobi field generated by a translation is said to be \emph{translational} and one generated by a rotation is said to be \emph{rotational}. When $S$ is non-degenerate--meaning that the Dirichlet kernel of the stability operator of $S$ is trivial--we let $\dot{\mc{h}}_{S, f}$ denote the unique jacobi field on $S$ with trace $f$.

Throughout, we rely on the language of disjoint unions, which in places we use somewhat informally. If $I$ is an index set  $F = (F_i)_{i \in I}$ is a family of sets, then the \emph{disjoint union} of the family is given by $\coprod_{i \in I} F_i  = \bigcup_{i} \{i\} \times F_i$. Although this is standard, we record this precisely as \emph{disjoint union} is frequently used informally to mean the union of disjoint sets. In this article, we will frequently consider disjoint unions in which the index set is only implicitly mentioned, if at all, since the existence of an indexing is trivial and distinct indexings give rise to isomorphic objects.  We define the \emph{canonical projection} $\pi:  \coprod_{i \in I} F_i \rightarrow \bigcup_{i} F_i$ the mapping given by $(i, p) \in \{ i\} \times F_i \mapsto p$.  When the sets $F_i$ are subsets of $\mb{R}^3$--or more generally, belong to a common space--we  will sometimes  say that the canonical projection \emph{projects into $\mb{R}^3$}. When the sets $F_i$ are mutually disjoint, the disjoint union and the union are isomorphic under the canonical projection. When this is the case, we will not carefully distinguish between the two unless it is important to do so. 

A one parameter family of surfaces $S_t$ is smooth  if there is smooth  family $V_t$ of vector fields such that $S_t   = S_0 + V_t$. Let  $S_t$ be  a smooth one parameter family of oriented surfaces and assume that $S_0$ is smooth.  The \emph{ normal variation field} on $S_0$ generated by the family $S_t$ is given by $\dot{S}^\perp : = \left(\left.\frac{d}{dt} \right|_{t = 0} V_t \right) \cdot N_{S_0}$ where $N_{S_0}$ denotes the unit normal to the surface $S_0$. Suppose that $V_t$ and $W_t$ are smooth families of vector fields on $S_0$ satisfying $S_t = S_0 + V_t = S_0 + W_t$. Then there is a smooth family of a diffeomorphisms $\phi_t$ of $S_0$ such that $p + V_t (p) = \phi_t(p) + W_t(\phi_t(p))$ for all $p \in S_0$. Differentiating in $t$ at $t = 0$ gives
\[
\dot{V} = \dot{W} + \dot{\phi}
\]
and thus the normal parts $\dot{V}^\perp$ and $\dot{W}^\perp$ agree, and the normal variation field is well-defined. When $S_t$ is a smooth one parameter family of constant mean curvature surfaces, we have from (\ref{MCVariation}) that
\[
L_{S_0} \dot{S}^\perp = 0,
\]
and thus the normal variation field of a smooth one parameter family of CMC surfaces is a jacobi field on $S_0$. 
\subsection{Outline of the proof}
In Section \ref{CMCNecks}, we record the properties of the family of constant mean curvature necks  constructed in \cite{Kleene} that are relevant to our construction.  In Section \ref{PreEmSpheres}, we define the family of  surfaces  $\mc{C}$ that we will use to construct complete embedded CMC surfaces with prescribed topologies and describe the parameters of the family. The surfaces are derived from collections $\mb{S}$ of spheres that touch tangentially, by taking the disjoint union of the components of the collection that are within a fixed small distance from the points of tangency with the components of the closed complement. The collection $\mc{S}$ of components that are far from the points of tangency are regular subsets of spheres, and nearby CMC surfaces are freely prescribed by perturbations of the boundary. Each element in the collection  $\mc{N}$  of components  near the points of tangency  lies at the extremal limit of  the family of constant mean curvature necks described in Section \ref{PreEmSpheres}. The  family of surfaces $\mc{C}$ is defined to be the disjoint union of the components  in $\mc{S}$ and $\mc{N}$ and is  controlled by  parameters  $\xi \in \ul{\xi} $ which parametrize  the family of constant mean curvature  necks, as well as boundary perturbations of the components of $\mc{S}$. We group the parameters into two types, according to the jacobi fields they generate in $\mc{C}$. The type II parameters can be  roughly thought of as dirichlet parameters with $C^{2, \alpha}$ boundary data, and generate $C^{2, \alpha}$, non-trivial jacobi fields on $\mc{C}$.  The full complement of jacobi fields is not generated by type II variations, as those that are generated by $\mc{N}$ through variations of the family of CMC necks are orthogonal to the space of lower modes. The type I parameters behave differently, either generating trivial jacobi fields or else singular ones with logarithmic poles at the points of tangency. These parameters modify the scale and rotate the axes of the necks in $\mc{N}$. The type I and type II parameters will play distinct roles in the construction.   We also define a pairing $*$--an order two diffeomorphism--of the boundary $\partial \mc{C}$ by the condition that the paired components project to the same curve under the canonical projection into $\mb{R}^3$. This gives rise to a notion of even and odd functions on the boundary of $\mc{C}$, which we use to formulate the Cauchy data matching operator.    In Section \ref{TheDefectOperator}, we define an operator $\Lambda$, which we call the \emph{defect operator}, associated to the family $\mc{C}$ which we use to match the Cauchy data of components of $\mc{C}$ \emph{up to translations}. That is, the zeroes of $\Lambda$ correspond to surfaces in the family whose paired boundary components differ by a translation and whose co-normals are opposing. Roughly speaking, the operator $\Lambda$ measures the angle between conormals across paired boundary components. It maps from the parameter space $\ul{\xi}$ into the error space $\ul{e}_*$ which is equal to the space of even $C^{1, \alpha}$ functions on $\partial \mc{C}$. We also prove differentiability and defined-ness of $\Lambda$ on a ball about the origin in the parameter space $\ul{\xi}$. Since the components of the  nonsingular part $\mc{S}$ of $\mc{C}$ are regular, we can restrict the family $\mc{C}$ apriori to variations in which paired boundary components differ by translations. In other words, we define the family of surfaces $\mc{C}$ so that at each parameter value  $\xi$,  the dirichlet data of $\mc{C}_{\xi}$ on paired boundary components is matched up to a translation. This reduces the Cauchy data matching problem to finding surfaces with opposing conormals and to resolving the translational errors separately.    In Section \ref{TypeIIVariations}, we study the linearization of $\Lambda$ in the type II parameters of the family. The operator is shown to be self-adjoint, and thus index zero, and the kernel--which we denote by $\ul{e}_{* ,I}$-- corresponds to the space of bounded jacobi fields on $\mb{S}$, with each sphere in $\mb{S}$ contributing a three dimensional space of translational jacobi fields in the absence of imposed symmetries. Thus, the orthogonal complement $\ul{e}_{*, II}$  of kernel the  $\ul{e}_{*, I}$ is generated by the type II parameters.  In Section \ref{SpheresLattice}, we specialize to the case that the collection $\mb{S}$ of spheres, from which the family $\mc{C}$ is derived, is arranged on a regular integer lattice in the plane. The collection is then invariant under a group $\mc{G}$ of symmetries, which is large enough so that the subspace of $\mc{G}$-invariant parameters of the family $\mc{C}$ avoids the co-kernel of the Type II variations. Thus, $\mc{G}$-invariant perturbations of $\mc{C}$ through zeroes of $\Lambda$ are freely prescribable through variations of $\mc{G}$-invariant Type $I$ parameters. Moreover,  the symmetries of the $\mc{G}$-invariant perturbations imply that the remaining translational error is trivially resolved. In Section \ref{DefectOnDelaunay}, as a prelude to addressing symmetry breaking variations, we record the $L^2$ projections of the variations of  $\Lambda$  generated by  type I and type II parameters at small $\mc{G}$-invariant perturbations of $\mc{C}$. The symmetries of $\mc{C}$, and an imposed orthogonality condition on the necks, together imply that the type II variations preserve the orthogonal complement of $\ul{e}_{*, I}$, and the type I and type II together generate the   error space $\ul{e}_{*}$.  In fact, to generate the co-kernel, only type I variations of the \emph{vertical necks} in $\mc{N}$--necks that connect spheres belonging to the same column of $\mb{S}$--are needed. We denote by $\ul{e}$ the space  all type II parameters, together with the type I parameters supported on the vertical necks. The entire parameter space $\ul{\xi}$ is then the direct sum of $\ul{e}$ with the space $\ul{v}$ of type I parameters supported on the horizontal necks. In Section \ref{SymBreaPert}, we develop the linear theory needed for symmetry breaking perturbations. Of note here is our use of \emph{graded norms}, in which terms are measured not by their absolute size, but by their differences across fundamental domains. We then show that  $\Lambda$ is defined and differentiable in a ball about zero in the parameter space equipped with an exponentially decaying graded norm, and that the linearization in $\ul{e}$ is a bounded isomorphism onto $\ul{e}_*$. By appealing directly to the implicit function theorem, we are then able to construct families $\mc{C}_{v}$ of zeroes of $\Lambda$ parametrized by the remaining free parameters $v \in \ul{v}$ of the construction. In Section \ref{CanTranErr}, we show that the translational errors  between components of the  surfaces $\mc{C}_{v}$ are resolvable with the remaining free parameters. This is done by arranging $\mc{C}$ into sub-collections we call \emph{branches}, in which translational errors are trivially resolvable. Basically,  a branch comprises the components of $\mc{S}$ belonging to a single column, along with all incident half necks. The surfaces in each branch can then be translated independently from each other so that paired boundary components are matched exactly, not just up to translations. Since the half necks are translated independently from each other, this process introduces new translational errors--namely, the differences between the waists of the half necks belonging to the same CMC neck--which need to be resolved. This error is measured by an operator which we denote by $\mc{E}$ which at each horizontal neck takes values on $\mb{R}^2$--due to the imposed symmetries. We then show that the remaining free parameters generate the error space of $\mc{E}$ at each neck. To match the surfaces, we restrict our attention to the finite dimensional subspace $\ul{v}_0$ of $\ul{v}$ comprising parameters $v \in \ul{v}$ that  vanish away from a fixed compact subset $\mb{F}$ of horizontal necks. The operator $\mc{E}$ is then a bounded isomorphism near zero. By translating the branches independently from each other, and correcting the error with $v$, we then obtain families of smoothly immersed CMC surfaces prescribed by the relative translations $d$ of the branches. Finally, in Section \ref{EmbeddSol}, we exhibit explicit choices of $\mb{F}$ and $d$ that give rise to embedded surfaces with prescribed genus. Since the construction just described only gives rise to surfaces with prescribed even numbers of ends greater than three, we explain  a modification that yields any number of ends greater than four.

\section{CMC necks} \label{CMCNecks}

The central tool used in our construction is a family of CMC annuli constructed in \cite{Kleene} whose relevant properties we now recall. 

\subsection{Catenoidal necks}
The  standard catenoid is given implicitly by the  equation $r = \cosh(z)$ in cylindrical coordinates on $\mb{R}^3$.  Given $\tau > 0$, let $\mc{N}_{\tau}$ denote the intersection of the standard catenoid the tube of radius $r =1$ about the $z$-axis. We refer to any surface of the form $\mc{N}_{\tau}$ as a catenoidal  neck, and we refer  to $\tau$ as the  scale. Each catenoidal neck  is invariant under rotations about the $z$-axis and reflections through the plane $\{z = 0 \}$.  The boundary $\partial \mc{N}_{\tau}$  of $\mc{N}_{\tau}$  comprises the circles $\partial^{\pm} \mc{N}_{\tau} : = \mb{S}^2 \pm l_{\tau}e_z$, where here $\mb{S}^2$ denotes the unit circle in the plane $\mb{R}^2 = \{ z = 0\}$ and where we have set $l_{\tau} : = \tau \text{Arcosh} \left(\frac{1}{\tau}\right) $.  The waist $w_{\tau}$ of the catenoidal neck $\mc{N}_{\tau}$ is the intersection of $\mc{N}_{\tau}$ with the plane $\{ z = 0\}$. It coincides with the circle of radius $\tau$ centered at the origin in $\mb{R}^2$.  At the boundary component $\partial^+ \mc{N}_{\tau}$ the outward conormal is given by $ \cos(\tau) e_r + \sin(\tau) e_z$. By symmetry, the outward conormal at $\partial^{-}\mc{N}_{\tau}$ is given by $ \cos(\tau) e_r - \sin(\tau) e_z$. Thus, the parameter $\tau$ has the dual interpretation as the \emph{flux}, or more precisely  the \emph{mean flux} of the neck $\mc{N}_{\tau}$ in the sense that  
 \[
 \int_{\partial^+ \mc{N}_{\tau}} \eta = 2\pi\sin(\tau) e_z,
 \]
 where above $\eta$ denotes the outward conormal to the surface $\mc{N}_{\tau}$. Thus, we will alternatively refer to $\tau$ as the flux  of the neck $\mc{N}_{\tau}$.
 
 \subsection{The separation parameter $\sigma$}
 It will be useful in places to parametrize the family of necks $\mc{N}_{\tau}$ by their separation, rather than their flux. We define the \emph{separation} of the neck $\mc{N}_{\tau}$ to be  half the distance between the top and bottom boundary components. Precisely, the separation $\sigma = \sigma_{\tau}$ as a function of $\tau$ is given by
 \begin{align}\label{SigmaFromTau}
 \sigma = \tau \text{Arcosh}\left( \frac{1}{\tau}\right).
 \end{align}
Clearly we  have that $\sigma \rightarrow 0$ as $\tau \rightarrow 0$ and $\frac{\sigma}{\tau} = \text{Arcosh}\left( \frac{1}{\tau}\right) \rightarrow \infty$.  Moreover,  we have $\sigma' = \text{Arcosh}\left( \frac{1}{\tau} \right) - \frac{1}{\sqrt{\tau^2 - 1}} \rightarrow \infty$ as $\tau \rightarrow 0$. 

\subsection{Smooth convergence at  $\tau = 0$}
The surfaces $\mc{N}_{\tau}$ are embedded minimal annuli depending smoothly on $\tau$ for $\tau > 0$.  
 Let $\mc{N}^{\pm}_{\tau}$ denote the intersection of $\mc{N}_{\tau}$ with the half space $\{ \pm z \geq 0\}$.  We refer to any surface of the form $\mc{N}^\pm_{\tau}$ as \emph{catenoidal half neck}. Clearly, the surfaces $\mc{N}_{\tau}^+$  and $\mc{N}^-_{\tau}$ converge in $C^k$ away from the origin to the disk $\mc{D}$ of radius $1$ in the plane as $\tau \rightarrow 0$. Moreover, the convergence is smooth modulo translations in the following sense: Let  $\tilde{\mc{N}}^\pm_{\tau}$ denote the translation of  $\mc{N}_{\tau}$  with boundary contained in the plane $\{ z = 0\}$, so $ \tilde{\mc{N}}^+_{\tau}= \mc{N}^+_{\tau} - l_{\tau} e_z$. Then the family of surfaces $\tilde{\mc{N}}^+_{\tau}$ extends smoothly to $\tau = 0$ away from the origin in $C^k$  with  $\tilde{\mc{N}}^+_{0} = \mc{D}$ and the variation field generated at $\tau = 0 $ on $\mc{D}$ is $ \log (r) e_z$. Again, by symmetry the family of surfaces $\tilde{\mc{N}}^-_{\tau}$ extends smoothly to $\tau = 0$ away from the origin in $C^k$ with $ \tilde{\mc{N}}^-_{0} = \mc{D}$ and the variation field generated at $\tau = 0 $ is $- \log (r) e_z$.
\subsection{CMC necks}
The family of surfaces constructed in \cite{Kleene} extend the properties of catenoidal necks discussed above. They are constructed as normal perturbations of catenoidal necks by functions solving a constant mean curvature dirichlet problem on catenoidal necks of small scale. We restate the main theorem of \cite{Kleene} below
\begin{theorem}\label{Prop:MainTheoremSimple}
There is $\epsilon> 0$ and $C > 0$ such that: Given $\tau > 0$ with $\tau < \epsilon$, a function $f \in C^{2, \alpha} (\partial \mc{N}_\tau)$ with $\| f\| < \epsilon$  and $\delta \in (-\epsilon, \epsilon )$, there is a unique normalized  function $\mc{h}_{\tau, \delta, f}$ in $C^{2, \alpha} (\mc{N}_\tau)$ satisfying the estimate
\[
|\mc{h}_{\tau, \delta, f}| \leq^C r^2\| f\|
\] 
and such that the normal graph $\mc{N}_{\tau, \delta, f}$ over $\mc{N}_\tau$ has constant mean curvature $\delta$ and such that the trace of $\mc{h}_{\tau, \delta, f}$ agrees with $f$ up to lower modes. 
 \end{theorem}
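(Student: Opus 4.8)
The plan is to express the constant mean curvature condition for a normal graph over $\mc{N}_\tau$ as a perturbation of the Jacobi operator $L_\tau$, to invert the linearization on a Dirichlet problem posed \emph{modulo the lowest Fourier modes} with estimates uniform as $\tau\to0$, and to close the problem by a contraction mapping argument; almost all the work lies in the linear theory. For a function $u$ on $\mc{N}_\tau$ set $\mc{N}_\tau^u:=\{p+u(p)N(p):p\in\mc{N}_\tau\}$. Since $\mc{N}_\tau$ is minimal, the mean curvature of $\mc{N}_\tau^u$ is $H(u)=L_\tau u+Q_\tau(u)$ with $L_\tau=\Delta_{\mc{N}_\tau}+|A_{\mc{N}_\tau}|^2$, where $Q_\tau$ collects the terms of order at least two in $(u,\nabla u,\nabla^2u)$ with coefficients smooth in the second fundamental form of $\mc{N}_\tau$. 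Working in the conformal coordinate $(\theta,t)\in\S^1\times[-L_\tau,L_\tau]$, $L_\tau:=\text{Arcosh}(1/\tau)$, in which $\mc{N}_\tau$ is a flat cylinder, the cylindrical radius is $r=\tau\cosh t$ and one has
\[
L_\tau=\frac{1}{\tau^2\cosh^2t}\Bigl(\pd_\theta^2+\pd_t^2+\frac{2}{\cosh^2t}\Bigr),\qquad |A_{\mc{N}_\tau}|=\frac{\sqrt2\,\tau}{r^2}.
\]
Producing $\mc{N}_{\tau,\delta,f}$ is then equivalent to solving
\begin{align}\label{eq:reform}
L_\tau u=\delta+Q_\tau(u),\qquad u|_{\pd\mc{N}_\tau}\equiv f\ \ (\text{mod modes }0,\pm1),
\end{align}
together with a normalization that removes the resulting finite-dimensional ambiguity.

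The crux is the construction, for $\tau<\epsilon$, of a bounded solution operator $\mc{G}_\tau$ for the linear problem $L_\tau w=g$, $w|_{\pd\mc{N}_\tau}\equiv\phi$ mod modes $0,\pm1$, normalized, obtained by separating variables in $\theta$. In Fourier mode $n$ the equation reduces to $w_n''+\bigl(2(\cosh t)^{-2}-n^2\bigr)w_n=(\text{source})_n$ on $[-L_\tau,L_\tau]$, whose homogeneous solutions are explicit: for $n=0$ they are $\tanh t$ and the linearly growing $t\tanh t-1$; for $n=\pm1$ they are $(\cosh t)^{-1}$ and an exponentially growing partner; for $|n|\ge2$ they grow and decay like $e^{\pm nt}$. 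The bounded ones ($\tanh t$ in mode $0$, and $\cos\theta/\cosh t$, $\sin\theta/\cosh t$ in modes $\pm1$) are exactly the traces of the bounded Jacobi fields on $\mc{N}_\tau$ coming from the vertical and the two horizontal translations, and they obstruct the genuine two-point problem in these modes; this is precisely why $\phi$ is prescribed only modulo modes $0,\pm1$, the residual freedom being pinned down by the normalization. For $|n|\ge2$ the two-point problem is uniquely and stably solvable, and the exponential rates transmit boundary data inward with decay no slower than the mode-$2$ rate $e^{-2(L_\tau-|t|)}\sim r^2$; summing the Fourier series yields a $\mc{G}_\tau$ bounded between $r^2$-weighted Hölder spaces, with norm independent of $\tau$. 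The uniformity holds because in the conformal coordinate $\mc{N}_\tau$ is a geometrically fixed catenoidal cylinder merely truncated at height $L_\tau$, so interior and boundary Schauder estimates for the uniformly elliptic operator $\pd_\theta^2+\pd_t^2+2(\cosh t)^{-2}$ apply with fixed constants; the mode-$2$ transmission rate is the source of the weight $r^2$ in the statement.

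To close the nonlinear problem, take as approximate solution $u_0:=\tfrac{\delta}{4}r^2+\mc{H}_\tau f$, where $\tfrac{\delta}{4}r^2$ solves $L_\tau(\cdot)=\delta$ identically (a direct computation gives $L_\tau(\tau^2\cosh^2t)=4$, and the boundary trace $\tfrac\delta4$ lies in mode $0$, hence is admissible) and $\mc{H}_\tau f$ is the $L_\tau$-harmonic extension of the high-mode part of $f$ built mode by mode. Writing $u=u_0+v$, \eqref{eq:reform} becomes the fixed point equation $v=\mc{G}_\tau\bigl(Q_\tau(u_0+v),0\bigr)$. In the weighted norms adapted above (in which the decay of $u$ offsets the size $|A_{\mc{N}_\tau}|\sim\tau r^{-2}$ near the waist) the nonlinearity $Q_\tau$ obeys a quadratic Lipschitz estimate on a small ball, so for $\epsilon$ small enough the map $v\mapsto\mc{G}_\tau(Q_\tau(u_0+v),0)$ contracts a small ball and has a unique fixed point $\mc{h}_{\tau,\delta,f}$ there, satisfying the asserted bound $|\mc{h}_{\tau,\delta,f}|\leq^C r^2\|f\|$, the $\delta$-contribution being carried by the explicit term $\tfrac\delta4 r^2$ that the normalization absorbs. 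Uniqueness among normalized solutions follows from the contraction together with the triviality of the normalized kernel of $L_\tau$.

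The delicate point throughout is the uniformity of the linear estimates as $\tau\to0$: the neck degenerates (its length $2L_\tau$ tends to infinity while its waist radius $\tau$ tends to zero), so a priori the Schauder constants and the stability constants of the mode-by-mode problems could deteriorate. This is handled by carrying out all the analysis in the conformal coordinate, where $\mc{N}_\tau$ is a fixed cylinder, and by tracking the slowest transmission rate (the mode-$2$ rate), which at once furnishes the weight $r^2$ and keeps the low modes $0,\pm1$ (which carry the approximate kernel of $L_\tau$) quarantined via the ``modulo lower modes'' formulation.
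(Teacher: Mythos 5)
This theorem is not proved in the paper under review: it is explicitly introduced as a restatement of the main theorem of \cite{Kleene} (``We restate the main theorem of \cite{Kleene} below''), so there is no in-paper proof for you to match. What I can say is that your sketch is a plausible reconstruction of the expected argument and is consistent with the framework the paper sets up around the statement --- in particular the role of lower modes, the normalization at the waist, and the weight $r^2$ in the estimate. Your computations check out: in the conformal coordinate $(\theta,t)$ with $r=\tau\cosh t$ one does get $L_\tau=(\tau\cosh t)^{-2}(\partial_\theta^2+\partial_t^2+2\,\mathrm{sech}^2 t)$, $|A|=\sqrt2\,\tau/r^2$, the mode-$0$ homogeneous solutions $\tanh t$ and $t\tanh t-1$, the mode-$\pm1$ solution $\mathrm{sech}\,t$, and $L_\tau(r^2/4)=1$; the bounded solutions are precisely the traces of the translational Jacobi fields, and the slowest transmission rate among modes $|n|\geq 2$ is $e^{-2(L_\tau-|t|)}\cong r^2$, which is exactly the weight appearing in the estimate.

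Two points you should be careful about. First, as literally quoted the estimate $|\mc{h}_{\tau,\delta,f}|\leq^C r^2\|f\|$ forces $\mc{h}_{\tau,\delta,0}=0$, which cannot produce mean curvature $\delta\neq 0$; your explicit $\delta$-term $\tfrac{\delta}{4}r^2$ does not vanish after the normalization is imposed (its normalized remnant is roughly $\tfrac{\delta}{4}\tau^2(\sinh^2 t+t\tanh t)$, whose boundary sup is $\cong\delta$). The bound your argument actually delivers is $|\mc{h}_{\tau,\delta,f}|\leq^C r^2(\|f\|+|\delta|)$, which is the natural version; the quoted statement is best read as suppressing the $\delta$-dependence since $\delta$ is held fixed throughout the paper. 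Second, the claim that ``almost all the work lies in the linear theory'' understates the issue with $Q_\tau$: in the conformal coordinate each $t$- or $\theta$-derivative of the graph function carries a factor $\rho^{-1}=(\tau\cosh t)^{-1}$, and the quadratic terms of the mean curvature mix $w$, $\nabla w$, $\nabla^2 w$ with curvature coefficients that blow up like $\tau/r^2$ near the waist, so verifying that the quadratic Lipschitz estimate closes in the $r^2$-weighted $C^{2,\alpha}$ norms (and not just in the pointwise $C^0$ bound you display) requires a careful bookkeeping of weights that the sketch asserts but does not carry out. That bookkeeping is presumably the technical heart of the cited paper; flagging it as the genuine remaining work would make the sketch more honest.
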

Here, the term \emph{lower mode} refers to  a function on a circle in the span of $1$, $\cos(\theta)$ and $\sin(\theta)$, where $\theta$ parametrizes the circle at constant speed with period $2\pi$.  A lower mode on $\partial \mc{N}_{\tau}$ is then a function whose restriction to each circular boundary component of $\mc{N}$ is a lower mode. Since the boundary of $\mc{N}_{\tau}$ is a pair of circles, the space of lower modes on $\partial \mc{N}_{\tau}$ is six dimensional, with three dimensions arising from each boundary component.  If $f$ is a function on a circle, we let $\ol{f}$ denote its projection onto the space of lower modes and $\mr{f}$ its projection onto the orthogonal complement, and we refer to $\ol{f}$ and $\mr{f}$ as the lower and higher parts of $f$, respectively. Clearly, these definitions extend to functions defined on collections of circles in the obvious way. The term \emph{normalized} appearing in the statement of Theorem \ref{Prop:MainTheoremSimple} means the following: Introduce coordinates $(\theta, s)$ on the neck $\mc{N}_{\tau}$, where $\theta \in \mb{R}$ parametrizes the foliating circles of $\mc{N}_{\tau}$ by constant speed with period $2\pi$ and $s$ parametrizes the profile curve of $\mc{N}_{\tau}$ at unit speed, and such that the slice $s = 0$ corresponds to the waist $w_\tau$ of $\mc{N}_{\tau}$. A function $u$ on $\mc{N}_{\tau}$ is normalized if both the restriction of $u$ to the waist $w_{\tau}$ as well as the conormal derivative $u_{s}$ across $w_{\tau}$, are orthogonal to the lower modes on $w_{\tau}$. Since the space of lower modes on $w_{\tau}$ is three dimensional, this represents a six dimensional family of constraints.  We will refer to any surface of the form $\mc{N}_{\tau, \delta, f}$ as a \emph{constant mean curvature neck}. Each CMC neck  $\mc{N}_{\tau, \delta, f}$ is an immersed  $C^{2, \alpha}$ surface with constant mean curvature $\delta$.

\subsection{Criteria for embeddedness of CMC necks}
  Since the functions $\mc{h}_{\tau, \delta, f}$ defining $\mc{N}_{\tau, \delta, f}$ are normalized, it follows  from the uniqueness assertion in Theorem \ref{Prop:MainTheoremSimple}  that the surface $\mc{N}_{\tau, \delta, f}$ is  invariant under reflections  through the plane $\{z = 0 \}$ when $f$ is, and is invariant under rotations about the $z$-axis when $f = 0$.  In this last case, the surfaces $\mc{N}_{\tau, \delta, 0}$ are rotationally symmetric about the $z$-axis and have constant mean curvature $\delta$. Thus, their images lie in Delaunay surfaces. Since we are orienting the catenoidal necks $\mc{N}_{\tau}$, and consequently the surfaces $\mc{N}_{\tau, \delta, f}$ by the `downward pointing' unit normal, for negative values of $\delta$ the surfaces lie in immersed family of \emph{nodoids} and are thus non-embedded for small values of $\tau$ relative to $\delta$,  and for positive  values of $\delta$ the surface $\mc{N}_{\tau, \delta, 0}$ lies the embedded family of \emph{unduloids} and is thus embedded for all positive values of $\tau$. Since the values of $f$ are unrestricted relative to the size of $\delta$ and $\tau$, the surfaces $\mc{N}_{\tau, \delta, f}$ are not in general embedded, even for negative values of $\delta$. However,  for positive $\delta$, $\mc{N}_{\tau, \delta, f}$ is embedded provided $f$ is sufficiently small relative to $\delta$. 
\subsection{Continuity of half necks at $\tau$ = 0}
Let $\mc{D}$ denote the unit disk in the plane $\{ z = 0\}$ centered at the origin. For small values of $\delta$ and $f \in \mr{C}^{2, \alpha} (\mb{S})$, there is  (Corollary 7.2 in \cite{Kleene}) a unique function $\mc{h}_{\delta, f}$ with
\[
H_{\mc{D}} (\mc{h}_{\delta, f}) = \delta, \quad \mr{\mc{tr}} (\mc{h}_{\delta, f}) = 0,
\]
and satisfying the normalization $\mc{h}_{\delta, f} (0, 0) = \nabla \left. \mc{h}_{\delta, f} \right|_{(0, 0)} = 0$. We let $\mc{D}_{\delta, f}$ denote the normal graph over $\mc{D}$ by the function $\mc{h}_{\delta, f}$, where here we are orienting $\mc{D}$ with the downward pointing unit normal $- e_z$. The $\mc{D}_{\delta, f}$ is an embedded disk with constant mean curvature $\delta$ depending smoothly on $\delta$ and $f$. 

 Let $\mc{h}^{\pm}_{\tau, \delta, f}$ denote the restriction of $\mc{h}_{\tau, \delta, f}$ to $\mc{N}^{\pm}_{\tau}$ and $\mc{N}^{\pm}_{\tau, \delta, f}$ the normal graph over $\mc{N}^{\pm}_{\tau}$ by $\mc{h}^{\pm}_{\tau, \delta, f}$. We will refer to any surface of the form $\mc{N}^{\pm}_{\tau, \delta, f}$ as a \emph{CMC half neck}. By the uniqueness assertion in Corollary 7.2 and the estimate for $\mc{h}_{\tau, \delta, f}$ in Proposition \ref{Prop:MainTheoremSimple}, the function $\mc{h}^+_{\tau, \delta, f}$ converges to $\mc{h}_{\delta, f_+}$ in $C^{2, \alpha/2}$ away from the origin, where  $f_+$ denotes the restriction of $f$ to $\partial^+ \mc{N}_{\tau}$. Similarly, the function   $\mc{h}^-_{\tau, \delta, f}$ converges to $\mc{h}_{-\delta, -f_-}$, where again  $f_-$ denotes the restriction of $f$ to $\partial^- \mc{N}_{\tau}$. The family of half necks $\mc{N}^\pm_{\tau, \delta, f}$ then extends continuously to $\tau= 0$ in $C^{2, \alpha/2}$ away from the origin.

\subsection{Differentiability at $\tau = 0$}
The family of half necks $\mc{N}^\pm_{\tau, \delta, f}$ extends continuosly  but not differentiably to $\tau = 0$ with $\left. \mc{N}^+_{\tau, \delta, f} \right|_{\tau = 0} = \mc{D}_{\delta, f_+}$ and $\left. \mc{N}^-_{\tau, \delta, f} \right|_{\tau = 0} = \mc{D}_{-\delta, -f_-}$.  However, similarly to the family of catenoidal necks,  the  extended family CMC half necks is differentiable in $\tau$ at $\tau = 0$ \emph{modulo translations}. That is, the family $\tilde{\mc{N}}_{\tau, \delta, f}^{\pm} : = \mc{N}^{\pm}_{\tau, \delta, f}  -\pm l_{\tau} e_z $ is differentiable in $\tau$ at $\tau = 0$, and smooth in $\delta$ and $f$. The  variation field generated by the family in $\tau$ at $\tau = f  = \delta = 0$ is again $ \log (r) e_z$. 

\subsection{Conventions in this article}
In this section we describe some minor modifications to the parameters that describe the family of CMC necks  that will be more convenient for our use in this article. The basic problem is that the surfaces $\mc{N}_{\tau, \delta, f}$ as introduced are described as graphs over the family of minimal catenoidal necks $\mc{N}_{\tau}$, which is slightly inconvenient in the context of constant mean curvature surfaces.  We thus take some time to modify  the definitions of these parameters so that certain quantities, and their geometric interpretations, are defined relative to the constant mean curvature neck $\mc{N}_{0, \delta, 0}$ instead. 

\subsubsection{Rescaling of the surfaces}
We will, throughout this article regard $\delta > 0$ as a  small but fixed constant and suppress it from our notation. Thus, for example, we will write $\mc{N}_{\tau, f}$ to mean $\mc{N}_{\tau, \delta,f}$ and $\mc{D}_f : = \mc{D}_{\delta, f}$.  We will also set $\mc{N}_{\tau} : = \mc{N}_{\tau, \delta}$ and $\mc{D} = \mc{D}_{\delta}$. This can potentially cause confusion, and to avoid this we observe that for example $\mc{N}_{\tau}$ now longer refers to the family of minimal surfaces defined above, but rather the CMC annulus of rotation $\mc{N}_{\tau, \delta}$. Similarly, $\mc{D}$ no longer refers to the unit disk in the plane, but rather the rotational CMC graph over $\mc{D}$ with constant curvature $\delta$ and tangent to the origin.  We will also assume that the surface $\mc{N}_{\tau, \delta, f}$ has been scaled  so that  the mean curvature is $1$. Precisely, we will identify $\mc{N}_{\tau, f}$ with the scaled surface $\delta \cdot \mc{N}_{\tau, \delta,  f}$. Observe that, when $f$  and $\tau$ vanish, the surface  $\mc{N}_0 : = \left. \mc{N}_{\tau, f} \right|_{\tau = f = 0}$ is the intersection of two spheres of radius $1/2$ centered at the points $(0, 0, \pm 1/2)$ with  a ball of small radius about the origin.  Thus, it is the union of the two constant mean curvature disks $\mc{D}_{\pm} : = \mc{D}_{\mp \delta}$.

\subsubsection{Modifying the Dirichlet parameter $f$}
 If we let $\mc{tr} (u)$ denote the trace of the function $u$, we then have that $\mr{\mc{tr}}(\mc{h}_{\tau, \delta, f}) = \mr{f}$, or equivalently $\mc{tr}(\mc{h}_{\tau, \delta, f}) = f + \ell$, where $\ell$ is a lower mode on $\partial \mc{N}_{\tau}$ depending smoothly on $\delta$ and $f$ for all values of $\tau$, smoothly in $\tau$ for $\tau \neq 0$ and differentiably in $\tau$ at $\tau = 0$. Moreover, it holds that $\frac{\partial \ell}{\partial f} = 0$ at $f = 0$.  The normal variation field on $\mc{N}_{\tau, f}$ generated by $\dot{f}$at $f= 0$ is  then
\[
\left(\frac{\partial \mc{N}_{\tau, f}}{\partial f} (\dot{f})\right)^\perp =  \delta \beta_{\tau} \dot{\mc{h}}_{\mc{N}_{\tau}, \dot{f}}.
\]
where above $\beta_{\tau} : = - N_{\mc{N}_\tau} \cdot e_z$ is the cosine of the angle between the unit normal $\mc{N}_\tau$ and $-e_z$.   Throughout this article, we will re-paramatrize the family of CMC necks slightly be setting $\mc{N}_{\tau, \delta, f} = : \mc{N}_{\tau, \delta, f/\beta_0}$. With this convention, the variation field $\dot{\mc{h}}_{\mc{N}_{0},\dot{f}}$ is a jacobi field on $\mc{N}_0$ with trace $\delta f$, rather than $\delta \beta f$.

\subsubsection{Modification of the flux parameter $\tau$ and the separation parameter $\sigma$}
Let $\lambda = \lambda_{\tau, f}$ denote the angle between the outward conormal of $\mc{N}_{\tau, f}$ and $\mc{N}_{0}$, which we regard as a function on $\partial \mc{N}_0$. Precisely, if $\eta = \eta_{\tau, f}$ and $\eta_0$ denote the outward conormals of  $\mc{N}_{\tau, f}$ and $\mc{N}_{0}$, respectively, then $\lambda$ denotes the angle between $\eta'$ and $\eta_0$, where here  $\eta'$ denotes the projection of $\eta$ into the plane spanned by $\eta_0$ and $N_0$, the unit normal to $\mc{N}_0$. For clarity, we remark that we are taking $\langle \eta_0, N_0 \rangle$ as a positive basis and thus 
\[
\frac{\eta'}{|\eta'|} =  \cos (\lambda)\eta_0 + \sin(\lambda)N_0
\]
By continuity, for small values of $f$, $\tau$ and $\delta$, the projection $|\eta'|$ is non-zero and thus $\lambda$ is a  well defined  differentiable function of $\tau$, $f$ and $\delta$ near zero. When $\delta$ and $f$ vanish,  the functions $\mc{h}_{\tau, 0, 0} $ vanish as well, and thus $\mc{N}_{\tau,  0, 0} = \mc{N}_{\tau}$. Since $\lambda$ is invariant under translations of the surface,    the variation $\dot{\lambda} : = \left. \frac{\partial \lambda}{ \partial \tau} \right|_{\tau = f = \delta = 0}$ of $\lambda$ in $\tau$ at $\tau  = f= \delta = 0$  is given along the boundary component $\partial^+\mc{N}_0$ by
\[
\dot{\lambda}   = N_0 \cdot V^\perp + \nabla_{V^\top} \eta_0.
\]
Here $V : = \left. \frac{d}{d\tau} \tilde{\mc{N}}_{\tau}^+ \right|_{\tau = 0}$ denotes the normal variation field generated by $\tau$ of the family $\tilde{\mc{N}}^{+}_{\tau}$. When $\delta  = 0$, we have that $\tilde{\mc{N}}^+ = \mc{D}$--here $\mc{D}$ again denotes the unit disk on $\mb{R}^2$--and thus the variation field $V = \log (r) e_z  $ is purely normal. Thus, at $\delta = 0$ it holds that $\dot{\lambda} = -1$. Since $\lambda$ is differentiable in $\delta$ for small $\delta$ we have that $\dot{\lambda} = - \beta$, where $\beta > 0$ is a constant depending continuously on $\delta$ and approximately equal to $1$ (note that this is not the same $\beta$ from above). We thus re-parametrize the family $\mc{N}_{\tau, f}$ by setting $\mc{N}_{\tau, f} : = \mc{N}_{\tau/\beta, f}$. With this re-parametrization we have that $\dot{\lambda} = -1$ for our fixed choice of $\delta$. With this in mind, we also choose to redefine the relative separation parameter so that $\sigma = \sigma_{\tau}$ is given instead by
\begin{align}\label{NewSigmaFromTau}
\sigma = l_{\tau} : = \beta\tau \text{Arcosh}\left(\frac{1}{\beta \tau} \right).
\end{align}

\section{Pre-embedded collections of spheres} \label{PreEmSpheres}
Let $\mb{S}$  denote a collection of spheres with radius $\frac{1}{2}$. The \emph{singular set} $\mc{s}$ of $\mb{S}$ is the collection of points belonging to more than one sphere.  We assume that the singular set is a discrete set of points, and we say that  a  collection $\mb{S}$ with this property is   \emph{pre-embedded}.

Fix   a constant $\delta > 0$ sufficiently small and set  $\mc{N} : = \mb{S} \cap \ol{B}_{\delta} (\mc{s})$, where $\ol{B}_{\delta} (\mc{s})$ denotes  closed ball of radius $\delta$ about $\mc{S}$. Then for $\delta$ sufficiently small, $\mc{N}$ is a collection of rotationally invariant constant mean curvature necks with vanishing flux, and we refer to $\mc{N}$ as the \emph{singular part} of $\mb{S}$.

\begin{remark}
Observe that here $\delta$ differs slightly from the $\delta$ used to construct the family of constant mean curvature necks in Theorem \ref{Prop:MainTheoremSimple}, and that we have fixed as a background constant throughout the article. Denoting it here by $\delta'$, the relationship between the two is given by $\delta'= \delta \sqrt{1 - \delta^2}$.
\end{remark}

 We refer to the  closed complement of $\mc{N}$, $\mc{S} : =  \mb{S} \setminus B_{\delta} $ as the \emph{spherical part} of $\mb{S}$. Each component of $\mc{S}$  up to translations is a sphere of radius $1/2$ with finitely many extrinsic disks of small radius $r_0$ removed. We will refer to any such surface as a \emph{spherical domain}. Since the collection $\mb{S}$ is pre-embedded, the number such disks, as well the pairwise distance from their centers, is bounded by a universal constant, and the components of $\mc{S}$ lie in a fixed compact subset of  the moduli space of compact CMC surfaces with smooth boundary, independent of the collection $\mb{S}$. As a consequence, taking $r_0 > 0$ smaller if necessary,  and since the stability operator on the sphere has kernel--the three dimensional space spanned by the translational killing fields--we can ensure by strict monotonicity of eigenvalues that each spherical domain  is non-degenerate, meaning that the Dirichlet kernel of the stability operator is trivial. We will assume throughout that $\delta > 0$ has been chosen sufficiently small so that this is the case

\subsection{Pairings}
Let $\mc{C}$ denote the disjoint union of the components of the collections $\mc{N}$ and $\mc{S}$, and let $\pi: \mc{C} \rightarrow \mb{R}^3$ denote the canonical  projection into $\mb{R}^3$. Then $\pi \mc{C} = \mb{S}$ and the projection restricts to a diffeomorphism on the interior of $\mc{C}$. On the boundary $\pi: \partial \mc{C} \rightarrow \mb{R}^3$ is two to one and thus induces an order two diffeomorphism of the boundary $*: \partial \mc{C} \rightarrow  \partial\mc{C}$--here order two means that the square $**$ of $*$ is the identity--determined by the condition.
\[
\pi * =  \pi.
\]
Alternatively, since the canonical projection restricts to a $1-1$ mapping on $\mc{S}$ and $\mc{N}$--or more precisely, their images in the disjoint union, we can interpret $*$ as a diffeomorphism of $\partial \mc{S}$ with $\partial \mc{N}$.
We use $*$ to define a notion of even and odd functions on $\partial \mc{C}$. A function $f$ on $\partial \mc{C}$ is said to be \emph{even} (resp. \emph{odd}) if it holds that $f^* = f$ (resp. $f^* = - f$). The even part and odd part of a function $f$ are respectively  given by
\[
f^e = \frac{f + f^*}{2}, \quad f^o = \frac{f^* - f}{2}.
\]
Clearly, the even and odd parts of a function are respectively even and odd, and it holds that $f = f^e + f^o$. 
\subsection{The augmented family $\mc{C}$}
The surface $\mc{C}$ lies in a larger family of surfaces  which we now describe. Define a \emph{piecewise motion} of a surface $S$ in $\mb{R}^3$ to be  mapping $\rho: S \rightarrow \mb{R}^3$ such that the restriction of $\rho$ to each component of $S$ is a rigid motion. We can add the terminology \emph{piecewise translation} and \emph{piecewise rotation} in the obvious way, and generalize these notions to disjoint unions of surfaces in $\mb{R}^3$.

\subsubsection{Phase parameters}
We will need the following basic observation recorded  in the following Lemma, whose proof we omit.  In the following, we let $\phi$ be a piecewise rotation of $\mc{N}$ and $\mc{N}_{\phi}$ its image.   We will refer to any such choice of $\phi$ as a \emph{phase assignment} in $\mc{N}$ or in $\mc{C}$, and we let $\ul{\phi}$ denote the space of all phase assignments. \begin{lemma} \label{ParallelPhase}
For each $\phi \in \ul{\phi}$ near $0$ there is a unique collection $\mc{S}_{\phi}$ of spherical domains, with $\mc{S}_{\phi}$ depending smoothly on $\phi$ and such that: for each component $\partial$ of $\partial \mc{N}_{\phi}$, the component $\partial^* \subset \partial \mc{S}_{\phi}$ differs from $\phi$ by a translation. 
\end{lemma}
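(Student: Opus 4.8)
The statement is a straightforward application of the implicit function theorem, exploiting the non-degeneracy of spherical domains established earlier in the section. The plan is to fix a component $\partial$ of $\partial \mc{N}_\phi$, let $\partial^*$ denote the paired component lying on the appropriate spherical domain, and set up the requirement ``$\partial^*$ differs from $\partial$ by a translation'' as the vanishing of a suitable finite-dimensional obstruction map, solved over the translation parameters and the boundary perturbations of the spherical domains.

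\emph{Step 1: Coordinatize the spherical domains near $\mc{S}$.} Since each component of $\mc{S}$ lies in a fixed compact subset of the moduli space of compact CMC surfaces with smooth boundary and is non-degenerate, nearby CMC surfaces with prescribed boundary are freely parametrized: for each component $S_i$ of $\mc{S}$ and each rigid motion $g_i$ near the identity applied to the boundary circles of $g_i S_i$, together with small $C^{2,\alpha}$ boundary perturbations, there is a unique nearby CMC surface, depending smoothly on all data. I would record this as the local chart for the family of spherical domains $\mc{S}_\phi$ about $\mc{S}$. The key point is that the spherical components are \emph{regular}, so the map from (boundary data) to (CMC surface realizing it) is a smooth diffeomorphism onto a neighborhood.

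\emph{Step 2: Set up the matching equation and differentiate at $\phi = 0$.} For $\phi$ near $0$, each boundary component $\partial \subset \partial \mc{N}_\phi$ is a small perturbation (through a piecewise rotation) of the corresponding component of $\partial \mc{N}$, which at $\phi = 0$ coincides under $\pi$ with a boundary circle of $\mc{S}$. I would define a map $\Phi$ sending $\phi$ and the translation parameters $\{d_i\}$ (one translation per spherical component, or per boundary circle as appropriate) to the discrepancy between $\pi(\partial^*)$ — where $\partial^*$ is the boundary circle of the translated spherical domain — and $\pi(\partial)$, measured in a fixed finite-dimensional way (e.g. the position of the circle's center and its axis, since all circles involved are round of fixed radius $1/2$ to leading order; more carefully, project the discrepancy onto the six-dimensional space of rigid displacements of a circle). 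At $\phi = 0$ and $d_i = 0$, the discrepancy vanishes by construction, since $\mc{C}$ is built from $\mb{S}$. The derivative in the $d_i$ directions is, to leading order, the identity on the rigid-displacement space of each circle — a translation of a spherical component moves its boundary circles rigidly by that translation. Hence the linearization in the translation parameters is a bounded isomorphism onto the obstruction space.

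\emph{Step 3: Apply the implicit function theorem and check uniqueness/smoothness.} By Step 2, the implicit function theorem yields, for each $\phi$ near $0$, a unique choice of translations $d_i(\phi)$ — depending smoothly on $\phi$ — making each $\partial^*$ differ from the corresponding $\partial$ by exactly that translation; feeding these translations (and the trivial boundary perturbations, since only translations are needed here) into the chart of Step 1 produces the collection $\mc{S}_\phi$, which then inherits smooth dependence on $\phi$. Uniqueness of $\mc{S}_\phi$ follows from uniqueness in the implicit function theorem together with the regularity (uniqueness of CMC fillings) from Step 1. The main obstacle, such as it is, is bookkeeping: one must be careful that a ``translation'' of a spherical component is consistent across all of its boundary circles simultaneously — i.e. that the finitely many pairing conditions on the circles bounding a single spherical domain are compatible with a \emph{single} rigid translation of that domain. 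This is automatic, however, because the conditions only ask that each $\partial^*$ differ from $\partial$ by \emph{some} translation, and the piecewise-rotation structure of $\mc{N}_\phi$ — each neck component rotated independently about its axis — moves each boundary circle of $\mc{N}_\phi$ within the plane it already spans; so the required translation for a given circle is determined, and the spherical domain is then translated to meet it, one domain at a time. Since distinct spherical domains share no boundary circles, no global compatibility across domains is needed, and the construction decouples component-by-component. I expect the only genuinely delicate point to be verifying that for $\phi$ small the relevant projections (onto rigid displacements of circles, and the filling map of Step 1) remain nondegenerate, which follows from continuity together with the uniform bounds on $\mc{S}$ recorded earlier in the section.
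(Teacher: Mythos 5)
There is a genuine gap, and it lies at the heart of Step 2 (and its continuation in Step 3): you treat the matching condition as one that can be solved over translation parameters $\{d_i\}$, but the discrepancy created by a piecewise rotation $\phi$ is not translational, and translations cannot kill it. A nontrivial element of $\ul{\phi}$ tilts each neck component about an axis through its waist (rotating a neck about its own axis of rotational symmetry acts trivially), so the axis of each boundary circle of $\mc{N}_\phi$ is tilted away from the axis it had at $\phi = 0$. Translating a spherical domain moves all of its boundary circles by a common rigid translation and in particular leaves every axis direction unchanged; more to the point, since the condition you are imposing is ``$\partial^*$ differs from $\partial$ by \emph{some} translation,'' post-composing $\partial^*$ with a further translation cannot change whether this condition holds at all. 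The linearization of your obstruction map in the $d_i$ directions therefore vanishes on precisely the (axis-direction) part of the discrepancy that matters, so the map is not an isomorphism and the implicit function theorem argument does not go through. Your closing assertion that the piecewise rotation ``moves each boundary circle of $\mc{N}_\phi$ within the plane it already spans'' is the point at which the error enters; it is false, and if it were true then indeed $\mc{S}_\phi = \mc{S}$ would already work, making the lemma vacuous.

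What the lemma requires is to move the centers of the removed disks along each fixed sphere, i.e.\ to rotate each disk removal independently about the center of the sphere that carries it, so that the axis of each resulting boundary circle of $\mc{S}_\phi$ agrees with the tilted axis of the paired boundary circle of $\mc{N}_\phi$. This is an explicit, sphere-by-sphere construction requiring no implicit function theorem: the axis condition determines each disk center on the sphere uniquely for $\phi$ near $0$, the radii agree automatically because rotations preserve radii, the sphere center is left unchanged (which fixes the residual translational ambiguity and gives uniqueness), and the resulting surface is by construction a spherical domain. If one prefers to route through the non-degenerate CMC filling machinery of your Step 1, one can instead prescribe the boundary circles of $\mc{S}_\phi$ directly as suitable translates of the rotated boundary circles of $\mc{N}_\phi$; the unique CMC filling then coincides with this sphere-with-moved-disk-removals by the uniqueness assertion. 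But in that version the prescribed boundary data must already carry the tilt, which the translation-only setup you propose never produces, and which your ``trivial boundary perturbations'' clause explicitly forgoes.
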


For a fixed phase assignment $\phi \in \ul{\phi}$ small relative to the constraints of Lemma \ref{ParallelPhase}, we let $\mc{C}_{\phi}$ denote the disjoint union of $\mc{S}_{\phi}$ and $\mc{N}_{\phi}$. 
\subsubsection{Spherical parameters}
Since  each component the spherical part $\mc{S}$ lies in a fixed compact subset of the space of  non-degenerate, compact CMC surfaces,  CMC surfaces near $\mc{S}$ are uniquely determined by their boundaries, which can be freely prescribed. That is:

\begin{lemma} There is $\epsilon > 0$   such that: Given a $C^{2, \alpha}$ vector field $X$ on $\partial \mc{S}$ with $\| X\| < \epsilon$, there is a unique CMC surface $\mc{S}_X$ near $\mc{S}$ such that 
\[
\partial \mc{S}_X = \partial \mc{S} + X
\]
Moreover, the mapping $X \mapsto \mc{S}_X$ is smooth in $C^{2, \alpha}$.
\end{lemma}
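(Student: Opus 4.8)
The plan is to recognize this statement as a standard consequence of the implicit function theorem applied to the constant mean curvature operator, using the assumed non-degeneracy of the spherical domains. First I would set up the relevant Banach spaces: fix a component $S_0$ of $\mc{S}$ and, for a $C^{2,\alpha}$ vector field $X$ on $\partial S_0$ with small norm, extend $X$ to a $C^{2,\alpha}$ diffeomorphism $\Psi_X$ of a neighborhood of $S_0$ in $\mb{R}^3$ onto a neighborhood of $S_0 + X$, depending smoothly on $X$ and reducing to the identity when $X = 0$ — e.g.\ by cutting off a harmonic (or merely smooth) extension of $X$ near $S_0$. Then any CMC surface near $S_0$ with boundary $\partial S_0 + X$ can be written as a normal graph $\mathrm{graph}_{u}$ over $\Psi_X(S_0)$ for a function $u \in C^{2,\alpha}(\Psi_X(S_0))$ with $u|_{\partial} = 0$ and $\|u\|$ small; pulling back by $\Psi_X$, the problem becomes: find $u \in C^{2,\alpha}_0(S_0)$ (Dirichlet-zero) solving $\mc{H}(X, u) := H\bigl(\Psi_X(\mathrm{graph}_u)\bigr) - 1 = 0$, where $\mc{H}: U \times C^{2,\alpha}_0(S_0) \to C^{0,\alpha}(S_0)$ is a smooth map of Banach spaces defined near $(0,0)$.

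Next I would verify the hypotheses of the implicit function theorem at the base point $(X,u) = (0,0)$. Since $S_0$ is a CMC surface, $\mc{H}(0,0) = 0$. The partial derivative $D_u \mc{H}(0,0): C^{2,\alpha}_0(S_0) \to C^{0,\alpha}(S_0)$ is, by the first variation formula (\ref{MCVariation}) and the fact that $S_0$ has constant mean curvature (so $\nabla H = 0$), exactly the Jacobi operator $L_{S_0} = \Delta_{S_0} + |A_{S_0}|^2$ acting on functions vanishing on $\partial S_0$. By the non-degeneracy assumption built into the definition of spherical domain — the Dirichlet kernel of $L_{S_0}$ is trivial — and elliptic Schauder theory, $L_{S_0}: C^{2,\alpha}_0(S_0) \to C^{0,\alpha}(S_0)$ is a bounded isomorphism. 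Hence the implicit function theorem supplies $\epsilon > 0$ and a smooth map $X \mapsto u_X \in C^{2,\alpha}_0(S_0)$, defined for $\|X\| < \epsilon$, with $u_0 = 0$ and $\mc{H}(X, u_X) = 0$; setting $\mc{S}_X := \Psi_X(\mathrm{graph}_{u_X})$ gives a CMC surface with $\partial \mc{S}_X = \partial S_0 + X$ depending smoothly on $X$. Uniqueness near $S_0$ follows from the local uniqueness clause of the implicit function theorem, after observing that any CMC surface $C^{2,\alpha}$-close to $S_0$ with the prescribed boundary is of the form $\Psi_X(\mathrm{graph}_u)$ for a unique small $u \in C^{2,\alpha}_0(S_0)$. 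Finally, since $\mc{S}$ is a finite disjoint union of spherical domains, each lying in a fixed compact subset of the moduli space, one can take a single $\epsilon$ uniform over the components — using that non-degeneracy and the operator norm bounds on $L_{S_0}^{-1}$ vary continuously over that compact set, hence are uniformly controlled — and assemble the component-wise maps into the desired global map $X \mapsto \mc{S}_X$ on $\partial \mc{S}$.

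The only genuine subtlety, and the step I would be most careful about, is the uniformity over the family of spherical domains: one must know that the constant $\epsilon$ and the smoothness estimates in the implicit function theorem can be chosen independently of which configuration $\mb{S}$ (hence which spherical domain) one is looking at. This is precisely what the compactness remark in the excerpt is for — the spherical domains range over a fixed compact subset of the moduli space of CMC surfaces with boundary, and on that compact set non-degeneracy holds with a uniform spectral gap, so $\|L_{S_0}^{-1}\|$ is uniformly bounded and the quadratic-remainder estimates for $\mc{H}$ are uniform; a standard compactness/continuity argument then yields a uniform $\epsilon$. Everything else — the construction of $\Psi_X$, the smoothness of $\mc{H}$, the identification of $D_u\mc{H}(0,0)$ with $L_{S_0}$, and the Schauder isomorphism — is routine.
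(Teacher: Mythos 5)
Your proof is correct and uses exactly the argument the paper has in mind: the paper omits the proof but sets up the non-degeneracy of the spherical domains in the preceding paragraph precisely so that the Jacobi operator with Dirichlet conditions is an isomorphism, after which the implicit function theorem gives existence, uniqueness, and smooth dependence. Your attention to uniformity of $\epsilon$ over the compact family of spherical domains is the right thing to flag, and your compactness argument handles it.
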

We let $\ul{\rho}$ denote the space of piecewise translations of $\partial \mc{S}$. 
For reasons which will become later in the article, we prefer to  parametrize these surfaces as follows:

\begin{lemma}
There is $\epsilon > 0$ such that: Given a  piecewise translation $\rho \in \ul{\rho}$ of $\partial \mc{S}$ with $\| \rho\| < \epsilon$, there is a unique CMC surface $\mc{S}_{\rho}$ with boundary given by
\[
\partial \mc{S}_{\rho} =\left( \mb{I} + \rho \right) \cdot \partial \mc{S}.
\]
where above we have used $\mb{I}$ to denote the identity translation.  Moreover, by taking $\rho$ smaller, if necessary, we can assume that the surface $\mc{S}_{\rho}$ is again non-degenerate
\end{lemma}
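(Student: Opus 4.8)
The plan is to reduce the statement to the immediately preceding lemma, which already produces a unique non-degenerate CMC surface $\mc{S}_X$ near $\mc{S}$ for every small $C^{2,\alpha}$ boundary vector field $X$, with smooth dependence on $X$. The only thing to add is that we may re-index the family by a piecewise translation $\rho$ rather than by an arbitrary boundary vector field, \emph{and} that the re-indexing is achieved by prescribing the boundary as $(\mb{I}+\rho)\cdot \partial\mc{S}$ rather than as $\partial\mc{S}+X$. So first I would observe that, componentwise, prescribing $\partial\mc{S}_\rho = (\mb{I}+\rho)\cdot\partial\mc{S}$ is the same as prescribing a boundary vector field: on each component $\Sigma$ of $\mc{S}$, writing $\rho|_\Sigma = R_\Sigma + t_\Sigma$ for a rotation $R_\Sigma$ near the identity and a translation $t_\Sigma$ (or, in the stated form, $\rho$ a piecewise translation so $R_\Sigma = \mb{I}$), the map $p \mapsto (\mb{I}+\rho)(p) - p$ restricted to $\partial\Sigma$ is a $C^{2,\alpha}$ vector field $X_\rho$ on $\partial\mc{S}$, depending smoothly and linearly on $\rho$, with $\|X_\rho\| \leq^C \|\rho\|$. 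Applying the previous lemma to $X_\rho$ gives $\mc{S}_\rho := \mc{S}_{X_\rho}$, a CMC surface with the prescribed boundary, and uniqueness among CMC surfaces near $\mc{S}$ is inherited verbatim from the previous lemma since the class of competitors is the same. Smoothness of $\rho \mapsto \mc{S}_\rho$ follows by composing the smooth map $\rho \mapsto X_\rho$ with the smooth map $X \mapsto \mc{S}_X$.

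Next I would address the last clause, non-degeneracy. When $\rho = 0$ we have $\mc{S}_0 = \mc{S}$, whose components lie in a fixed compact subset of the moduli space of non-degenerate compact CMC surfaces with smooth boundary (this is exactly the standing assumption made earlier, secured by choosing $\delta$, hence $r_0$, small and invoking strict monotonicity of Dirichlet eigenvalues). Non-degeneracy is the statement that $0$ is not a Dirichlet eigenvalue of the stability operator $L_{\mc{S}_\rho}$. The smallest such eigenvalue, call it $\mu_1(\rho)$, depends continuously on $\rho$ because the surfaces $\mc{S}_\rho$ vary smoothly in $C^{2,\alpha}$, hence their metrics and second fundamental forms vary continuously, hence (by standard perturbation theory for the Dirichlet problem on a smoothly varying domain, pulling everything back to a fixed model domain by the smooth diffeomorphisms) so do the eigenvalues of $L_{\mc{S}_\rho}$. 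Since $\mu_1(0) \neq 0$ uniformly over the compact family of components of $\mc{S}$, there is $\epsilon > 0$ so that $\mu_1(\rho) \neq 0$ whenever $\|\rho\| < \epsilon$; shrinking the $\epsilon$ from the existence part to this value gives a non-degenerate $\mc{S}_\rho$ for all admissible $\rho$.

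The only mild subtlety — and the step I would flag as the main thing to get right rather than a genuine obstacle — is the bookkeeping of what "unique CMC surface $\mc{S}_\rho$" means: uniqueness is \emph{not} claimed among all CMC surfaces with that boundary, only among those "near $\mc{S}$," and this is precisely the local graphical neighborhood over $\mc{S}$ furnished by the non-degeneracy of $\mc{S}$ together with the implicit function theorem applied to the mean-curvature operator with Dirichlet data. So I would be careful to state that $\mc{S}_\rho$ is obtained as the normal graph over (the components of) $\mc{S}$ of the unique small solution of $H = 1$ with the prescribed Dirichlet data, which exists and is unique by the implicit function theorem exactly because $L_{\mc{S}}$ has trivial Dirichlet kernel. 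With that understood, the argument above is just a change of parametrization plus a continuity-of-eigenvalues remark, and no new estimates are needed beyond those already in place.
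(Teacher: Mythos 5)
Your proof is correct, and it is the natural argument: since a piecewise translation $\rho$ restricts to a constant vector field $X_\rho = \rho - \mathrm{id}$ on each component of $\partial\mc{S}$, the existence and uniqueness reduce verbatim to the immediately preceding lemma, and non-degeneracy persists by continuity of the Dirichlet spectrum of $L_{\mc{S}_\rho}$ together with the uniform bound coming from the components of $\mc{S}$ lying in a fixed compact set in moduli. The paper in fact states this lemma without proof, treating it as a reparametrization of the previous one; your write-up supplies exactly the argument the author evidently had in mind, and no gap is present.
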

We can then define the surface
\[
\mc{S}_{\rho, X} = \mc{S}_{X + \rho}
\]
for $\rho$ sufficiently close to the identity and for $X$ sufficiently small in $C^{2, \alpha}$. The mapping $(\rho, X) \mapsto \mc{S}_{\rho, X}$ is smooth. 

\begin{lemma} \label{lael}
The normal variation field generated on $\mc{S}$ generated by $(\dot{\rho}, \dot{X})$ at $\rho = X = 0$ is $\dot{\mc{h}}_{S, \dot{X}^\perp + \dot{\rho}^\perp}$
\end{lemma}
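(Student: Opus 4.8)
\textbf{Proof proposal for Lemma \ref{lael}.}

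The plan is to compute the normal variation field directly from the definition of $\mc{S}_{\rho, X} = \mc{S}_{X+\rho}$ and then match it against the characterization of jacobi fields by their trace. First I would recall that $\mc{S}$ is non-degenerate, so by the preceding lemmas the map $Y \mapsto \mc{S}_Y$ from small $C^{2,\alpha}$ vector fields $Y$ on $\partial \mc{S}$ to CMC surfaces near $\mc{S}$ is smooth, and by construction $\mc{S}_0 = \mc{S}$. Since each $\mc{S}_Y$ is CMC, the discussion following \eqref{MCVariation} shows that the normal variation field of any smooth one-parameter subfamily is a jacobi field on $\mc{S}$; applying this to $t \mapsto \mc{S}_{tY}$ for $Y = \dot{X} + \dot{\rho}$ (equivalently, differentiating the two-parameter family at the origin in the direction $(\dot{\rho}, \dot{X})$) produces a jacobi field $u$ on $\mc{S}$. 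By non-degeneracy, $u$ is determined by its trace, i.e. $u = \dot{\mc{h}}_{S, \mc{tr}(u)}$ in the notation introduced earlier.

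Next I would identify the trace. The boundary of $\mc{S}_Y$ is $\partial\mc{S} + Y$ (for the $X$-part) composed with the piecewise translation $\mb{I} + \rho$ (for the $\rho$-part), and differentiating the boundary identity at the origin gives that the variation vector field along $\partial\mc{S}$ equals $\dot{X} + \dot{\rho}$. The trace of the \emph{normal} variation field is the normal component of this, namely $\dot{X}^\perp + \dot{\rho}^\perp$ where $(\cdot)^\perp = (\cdot)\cdot N$ along $\partial\mc{S}$. One subtlety: the variation of the boundary data as a parametrized curve differs from the geometric variation by a tangential reparametrization, but as noted in the discussion of well-definedness of the normal variation field (the $\dot V = \dot W + \dot\phi$ computation), the normal part is insensitive to this, so $\mc{tr}(u) = \dot{X}^\perp + \dot{\rho}^\perp$. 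Combining, $u = \dot{\mc{h}}_{S, \dot{X}^\perp + \dot{\rho}^\perp}$, which is the claim.

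I expect the main obstacle to be purely bookkeeping rather than conceptual: one must be careful that the composite parametrization $\mc{S}_{\rho,X} := \mc{S}_{X+\rho}$ is set up so that the boundary variation genuinely adds, i.e. that $(\mb{I}+\rho)\cdot\partial\mc{S} + X$ linearizes to $\partial\mc{S} + (\dot\rho + \dot X)$ at the origin, using that $\rho$ is close to the identity translation and $X$ is small; and that the phrase ``normal variation field'' is being used consistently with its earlier definition (the normal component of $\frac{d}{dt}|_{t=0}$ of the displacement field, well-defined independent of the chosen extension). Once the linearization of the boundary map is in hand, the identification with $\dot{\mc{h}}_{S,\cdot}$ is immediate from non-degeneracy and the definition of $\dot{\mc{h}}_{S,f}$ as the unique jacobi field with trace $f$.
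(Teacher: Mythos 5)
Your proposal is correct and follows essentially the same route as the paper: consider the one-parameter family $\mc{S}_t := \mc{S}_{t\dot\rho, t\dot X}$, observe that its boundary varies by $\dot X + \dot\rho$, take the normal component to get the trace of the variation field, and invoke non-degeneracy to identify the jacobi field as $\dot{\mc{h}}_{S, \dot X^\perp + \dot\rho^\perp}$. The only cosmetic difference is that the paper first projects the boundary variation orthogonally away from $\partial\mc{S}$ (denoted $V'$) before applying $(\cdot)^\perp$, whereas you go directly to the normal component and justify this via the earlier remark that tangential reparametrizations do not affect the normal variation field; both handle the same subtlety.
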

\begin{proof}
Fix $\dot{\rho}$ and $\dot{X}$ and consider the one parameter family $\mc{S}_t : = \mc{S}_{t \dot{\rho}, t \dot{X}}$. Then $\mc{S}_t$ is a composition of smooth mappings and is thus a smooth one parameter family of surfaces. Thus, there is a smooth one parameter family of vector fields $V_t$ on $S_0$ such that $S_0 + V_t = S_{t}$. Observe that $S_t$ restricts as a smooth variation $\partial S_t$ of the boundary of $S_t$ and thus $\partial S_0 + \left. V_t\right|_{\partial S_0} = \partial S_t  = \partial + X + \rho$. Thus, the normal variation field of the family $\partial S_t$ along $\partial S_0$  is given by $\dot{X}' + \dot{\rho}'$, where here we have used $V'$ to denote the orthogonal projection of a vector field $V$ along $\partial S_0$ away from $\partial S_0$. Thus, the normal variation field $u$ of the family $S_t$ along $S_0$ has trace $\left(\dot{X}' + \dot{\rho}'\right)^\perp = \left(\dot{X} + \dot{\rho}\right)^\perp $. Since $u$ is a jacobi field along $S_0$ we have $u  = \dot{\mc{h}}_{S, \dot{X}^\perp + \dot{\rho}^\perp}$. This completes the proof. 
\end{proof}

\subsubsection{Dirichlet parameters}
The boundary of $\mc{C}$ is a disjoint union of circles, and thus the notions of  lower and higher mode extend easily to $\partial \mc{C}$. We declare a function on $\partial \mc{C}$ to be a \emph{lower mode} if its restriction to each component is a lower mode. Similarly  a function is \emph{higher mode} if its restriction to each boundary component is orthogonal to the lower modes. We let $\mr{C}^{k, \alpha}\left( \partial \mc{C}\right)$ denote the space of class $C^{k, \alpha}$ higher modes on $\partial \mc{C}$. We can then define the surface $\mc{N}_{\tau, f}$  component-wise as follows. Namely, given a function $f \in \mr{C}^{k, \alpha}\left( \partial \mc{N}\right)$ and constants $\tau$ indexed by the components of $\mc{N}$, we let $\mc{N}_{\tau, f}$ denote the collection of catenoidal necks given by
\[
\left(\mc{N}_{\tau, f}\right)_i = \mc{N}_{\tau_i, f_i}
\]
where $i$ indexes the components of   $\mc{N}$ and $f_i$ denotes the restriction of $f$ to $\partial \mc{N}_i$. Given a higher mode $f \in \mr{C}^{2, \alpha} (\partial \mc{N})$, define the vector field $X = X_{\tau, f} \in \partial \mc{N}$ by
\begin{align}\label{VecXDef}
X_{\tau, f} : = \left( \mc{tr}(\mc{h}_{\tau, f}) - \mc{tr}(\mc{h}_{\tau, 0}) \right)\left.  N_{\mc{N}_{\tau}} \right|_{\partial},
\end{align}
where above $\left.  N_{\mc{N}_{\tau}} \right|_{\partial}$ denotes the restriction of the unit normal $N_{\mc{N}_{\tau, 0}}$ to $\partial \mc{N}_{\tau}$. We identify $X_{\tau, f}$ with its even extension to $\partial \mc{C}_0$

 \subsubsection{The augmented family}
We can now define the family $\mc{C}: \xi \mapsto \mc{C}_{\xi}$. We first define the parameter space $\ul{\xi}$. We let $\ul{\phi}$ denote the space of piecewise rotations of $\mc{N}$, $\ul{\rho}$ the space of piecewise translations of $\partial \mc{S}$, $\ul{\tau}$ the space of flux parameters of $\mc{N}$, and $\ul{f}$ the space of even functions in $\mr{C}^{2, \alpha}(\partial \mc{C})$. Observe that, since even functions are determined by their restrictions to $\partial\mc{N}$ and $\partial \mc{S}$,  we have the natural identification  of $\ul{f}$ with $\mr{C}^{2, \alpha} (\partial \mc{S})$ and $\mr{C}^{2, \alpha} (\partial \mc{N})$.  

\begin{definition} \label{ParameterSpaces}
We set $\ul{\xi} : = \ul{\tau} \times \ul{ \phi} \times \ul{ \rho} \times \ul{f}$, and refer to $\ul{\xi}$ as the parameter space of the family $\mc{C}$. We also put $\ul{\xi}_{I}  = \ul{\tau} \times \ul{ \phi}$  and $\ul{\xi}_{II} : =  \ul{ \rho} \times \ul{ f}$, so that $\ul{\xi} = \ul{\xi}_{I} \times \ul{\xi}_{II}$. We will refer to parameters $(\tau, \phi) \in \ul{\xi}_I$ as type $I$ parameters and to parameters $(\rho, f)$ as type II.
\end{definition}

\begin{definition}\label{InitialSurfaces}
Given $\xi = (\tau, \phi, \rho, f) \in \ul{\xi}$,  and provided the following quantities are defined:
\begin{enumerate}
\item \label{InitialSurfaces1}We set $ \mc{N}_{\xi} = \mc{N}_{\tau, \phi, f}  : = \phi \cdot \mc{N}_{\tau, f}$, where  $\phi \cdot \mc{N}_{\tau, f}$ denotes the application of the piecewise rotation $\phi$ to $\mc{N}_{\tau, f}$.
\item  \label{InitialSurfaces2} We  set $\mc{S}_{\xi} = \mc{S}_{\tau, \phi, \mc{M }, f} : = \left(\mc{S}_{\phi}\right)_{\rho, X_{\tau, f}}$.

\item \label{InitialSurfaces3} We set $\mc{C}_{\xi} : = \mc{N}_{\xi} \coprod \mc{S}_{\xi}$.
\end{enumerate}
\end{definition}

\begin{proposition}
There is $\epsilon > 0$ such that: Given $\xi \in \ul{\xi}$  with $\| \xi\| < \epsilon$,  the surface $\mc{C}_{\xi}$ is defined and depends diffferentiably modulo translations on $\xi$. Moreover, it holds that $\left. \mc{C}_{\xi} \right|_{\xi = 0} = \mc{C}$. Finally, the diffeomorphism $*$ acts by translation on $\partial \mc{C}$; that is for any component $\partial \subset \partial\mc{C}$, it holds that $\partial$ and $\partial^*$ differ by a translation. 
\end{proposition}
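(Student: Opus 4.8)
The plan is to verify the three assertions of the proposition separately, in each case by tracking how the definitions in Definition~\ref{InitialSurfaces} are assembled from the component constructions. First I would establish defined-ness and differentiability modulo translations of the singular part $\mc{N}_{\xi}$. By the conventions of Section~\ref{CMCNecks}, each component neck $\mc{N}_{\tau_i, f_i}$ is defined whenever $|\tau_i|$ and $\|f_i\|$ are small (Theorem~\ref{Prop:MainTheoremSimple}), and the family $\tilde{\mc{N}}_{\tau, \delta, f}^{\pm}$ is differentiable in $\tau$ at $\tau = 0$ and smooth in $f$; since the piecewise rotation $\phi$ acts componentwise by rigid motions depending smoothly on $\phi$, the composition $\mc{N}_{\xi} = \phi \cdot \mc{N}_{\tau, f}$ inherits differentiability modulo translations on a small ball $\|\xi\| < \epsilon_1$, and $\mc{N}_{\xi}|_{\xi = 0} = \mc{N}$.

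Next I would handle the spherical part $\mc{S}_{\xi} = (\mc{S}_{\phi})_{\rho, X_{\tau, f}}$. Here one chains three facts: $\phi \mapsto \mc{S}_{\phi}$ is smooth near $0$ by Lemma~\ref{ParallelPhase}; the map $(\rho, X) \mapsto \mc{S}_{\rho, X}$ is smooth for $\rho$ near the identity and $X$ small in $C^{2,\alpha}$; and the vector field $X_{\tau, f}$ defined by \eqref{VecXDef} depends differentiably on $\tau$ (at $\tau = 0$) and smoothly on $f$, since $\mc{tr}(\mc{h}_{\tau, f})$ does by the conventions subsection and by the estimate $|\mc{h}_{\tau,\delta,f}| \leq^C r^2 \|f\|$ together with its trace behaviour. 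Composing, $\mc{S}_{\xi}$ is defined and differentiable modulo translations on a possibly smaller ball $\|\xi\| < \epsilon_2$; and when $\xi = 0$ we have $\phi = 0$, $\rho = 0$, $X_{0,0} = 0$, so $\mc{S}_{\xi}|_{\xi=0} = \mc{S}$. Taking $\epsilon = \min(\epsilon_1, \epsilon_2)$, assertion~(1) follows from $\mc{C}_{\xi} = \mc{N}_{\xi} \coprod \mc{S}_{\xi}$ and assertion~(2) from the two componentwise identities at $\xi = 0$.

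For the final assertion — that $*$ acts by translation on $\partial \mc{C}$ — the point is that $\partial \mc{C}$ consists of paired circles, a component $\partial \subset \partial\mc{N}_{\xi}$ and its partner $\partial^* \subset \partial\mc{S}_{\xi}$. By construction $\mc{N}_{\xi} = \phi \cdot \mc{N}_{\tau,f}$ carries each original boundary component $\partial^{\pm}\mc{N}_i$ to a rigidly-rotated-and-normally-perturbed circle; and Lemma~\ref{ParallelPhase} was invoked precisely so that the corresponding component of $\partial\mc{S}_{\phi}$ differs from the $\phi$-image by a translation, while the subsequent perturbation by $\rho$ is itself a piecewise translation and the perturbation by $X_{\tau, f}$ is the \emph{even} extension of the normal displacement already applied on the $\mc{N}$-side (this is the force of identifying $X_{\tau,f}$ with its even extension to $\partial\mc{C}_0$). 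Hence on each paired pair of circles the two boundary curves are congruent under a translation, which is exactly $\partial \sim_{\mathrm{transl}} \partial^*$.

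\textbf{Main obstacle.} The bookkeeping in the last assertion is where care is needed: one must check that the Dirichlet parameter $f$ enters the two paired sides \emph{consistently}, i.e. that the normal graph data imposed on $\mc{N}_{\xi}$ over a neck boundary and the boundary perturbation $X_{\tau,f}$ imposed on the adjacent spherical domain produce curves that genuinely differ by a translation and not merely by a congruence with a nontrivial rotational part. This is exactly what the definition \eqref{VecXDef} of $X_{\tau,f}$ — built from $\mc{tr}(\mc{h}_{\tau,f}) - \mc{tr}(\mc{h}_{\tau,0})$ pushed along the neck normal and then extended evenly — is engineered to arrange, together with the reflection symmetry of $\mc{N}_{\tau,\delta,f}$ through $\{z=0\}$ when $f$ is even. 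The differentiability-modulo-translations claim, by contrast, is a routine consequence of composing the smooth/differentiable dependences already recorded in Section~\ref{CMCNecks} and the lemmas of this section, and I would treat it briefly.
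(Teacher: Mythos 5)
Your proof takes essentially the same route as the paper's: differentiability and the identity at $\xi=0$ are read off componentwise, and the translation property of $*$ is verified by tracking each of the four parameters $\phi,\tau,\rho,f$ in turn, with Lemma~\ref{ParallelPhase} handling $\phi$, $\rho$ contributing a literal translation, and the even extension of $X_{\tau,f}$ matching the $f$-perturbation across the pairing. One place where you are slightly less explicit than the paper: the paper records separately that, with $f=0$, varying $\tau$ moves each neck boundary component by a translation along its common axis (so $\partial\mc{N}_{\tau,\phi,0}$ and $\partial\mc{S}_\phi+\rho$ remain translates of each other), whereas you fold this into the phrase ``rigidly-rotated-and-normally-perturbed circle'' and the built-in subtraction $\mc{tr}(\mc{h}_{\tau,f})-\mc{tr}(\mc{h}_{\tau,0})$ in the definition of $X_{\tau,f}$. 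It would be worth stating this $\tau$-step explicitly, since $X_{\tau,f}$ only records the $f$-dependent part of the boundary displacement and the $\tau$-dependent part (the translation by $\pm l_\tau$ along the rotated axis) is what makes the base case $f=0$ work; but this is a presentational omission rather than a substantive gap, and the argument is otherwise aligned with the paper's.
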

\begin{proof}
The fact that $\mc{C}_{\xi}$ depends differentiably modulo vertical translations is a direct consequence of the fact that the each component is either a spherical domain, or else CMC neck.  Clearly, when $\xi = 0$ the surface $\mc{C}_{\xi}$ agrees with the initial surface $\mc{C}$. 
To see that paired boundary components $\partial$ and $\partial^*$ differ by a translation, observe first that the claim holds when $\tau$, $f$ and $\rho$ vanish, since then $\mc{C} = \mc{C}_{\phi} = \mc{N}_{\phi}\coprod \mc{S}_{\phi}$.  Since varying $\tau$ translates the boundary components of the catenoidal necks along a common axis, the claim holds for $\tau \neq 0$. Again, since $\partial \mc{S}$ and $\partial \mc{S}_{\rho}$ differ by the piecewise translation $\rho$, the claim holds for $\rho \neq 0$. Finally, since $X_{\tau, f}$  is even the claim holds for non-vanishing $f$ as well. This completes the proof.
\end{proof}

\begin{proposition}\label{FamilyLinearization}
The normal variation field generated by $(\dot{\rho}, \dot{f})$ on $\mc{S}$ at $\xi = 0$ is $\dot{\mc{h}}_{\mc{C}, \rho^\perp + \dot{f}}$.
\end{proposition}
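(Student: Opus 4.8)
The plan is to reduce Proposition \ref{FamilyLinearization} to Lemma \ref{lael} together with the definition of the Dirichlet parameters and the re-parametrization of $f$ adopted in the ``Conventions in this article'' subsection. First I would observe that the surface $\mc{S}_\xi$ depends on $\xi = (\tau, \phi, \rho, f)$ only through the data $(\phi, \rho, X_{\tau,f})$, since by Definition \ref{InitialSurfaces}\eqref{InitialSurfaces2} we have $\mc{S}_\xi = (\mc{S}_\phi)_{\rho, X_{\tau,f}}$. Restricting to the variation $(\dot\rho, \dot f)$ at $\xi = 0$ — so in particular $\dot\tau = 0$ and $\dot\phi = 0$ — the phase $\phi$ is held fixed at the identity, so the only active parameters are $\rho$ and the vector field $X = X_{\tau, f}$. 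Hence the normal variation field of $\mc{S}_\xi$ at $\xi = 0$ in the directions $(\dot\rho, \dot f)$ is exactly the normal variation field produced by Lemma \ref{lael} with $\dot X$ replaced by the variation $\dot X := \left.\frac{d}{dt}\right|_{t=0} X_{0, t\dot f}$ of the boundary vector field. By Lemma \ref{lael} this field is $\dot{\mc{h}}_{S, \dot X^\perp + \dot\rho^\perp}$, so the whole proposition comes down to identifying $\dot X^\perp$ with $\dot f$ along $\partial \mc{S}$ (after the even-extension identification of functions on $\partial\mc{S}$ with functions on $\partial\mc{N}$).

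Second I would compute $\dot X^\perp$. From \eqref{VecXDef}, $X_{\tau, f} = \big(\mc{tr}(\mc{h}_{\tau, f}) - \mc{tr}(\mc{h}_{\tau, 0})\big)\, N_{\mc{N}_\tau}\big|_\partial$, which is already a purely normal vector field along $\partial \mc{N}_\tau$ (it is a scalar multiple of the unit normal), so $\dot X^\perp = \left.\frac{d}{dt}\right|_{t=0}\big(\mc{tr}(\mc{h}_{0, t\dot f}) - \mc{tr}(\mc{h}_{0,0})\big)$. Now I invoke the facts recorded in the subsection ``Modifying the Dirichlet parameter $f$'': we have $\mc{tr}(\mc{h}_{\tau,\delta,f}) = f + \ell$ with $\ell$ a lower mode satisfying $\frac{\partial \ell}{\partial f} = 0$ at $f = 0$, and moreover the author has re-parametrized so that the normal variation field generated by $\dot f$ has trace $\delta \dot f$ (the $\beta_0$-rescaling was precisely designed to cancel the factor $\beta_\tau$). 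Differentiating $\mc{tr}(\mc{h}_{0, t\dot f})$ in $t$ at $t=0$ thus yields $\delta \dot f$ up to the scaling convention; after the global rescaling $\mc{N}_{\tau,f} \leftrightarrow \delta\cdot\mc{N}_{\tau,\delta,f}$ that normalizes the mean curvature to $1$, this becomes simply $\dot f$. Feeding $\dot X^\perp = \dot f$ into Lemma \ref{lael} gives $\dot{\mc{h}}_{S, \dot f + \dot\rho^\perp} = \dot{\mc{h}}_{\mc{C},\, \rho^\perp + \dot f}$ on the spherical part, which is the claim.

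The main obstacle I anticipate is bookkeeping the cascade of re-parametrizations rather than any substantive analysis: one must carefully track how the $\beta_0$-rescaling of the Dirichlet parameter, the $\beta$-rescaling of the flux parameter, the $\delta$-rescaling of the whole surface, and the even-extension identification of $\ul f$ with $\mr{C}^{2,\alpha}(\partial\mc{S})$ interact, so that the trace of the variation field comes out to be exactly $\dot f$ with no residual constants or lower-mode terms. The vanishing $\frac{\partial\ell}{\partial f}\big|_{f=0} = 0$ is what kills the lower-mode contamination to first order, so the higher-mode field $\dot f$ is transported cleanly; and since $\dot X$ is normal by construction, no tangential correction (of the sort appearing in \eqref{MCVariation}) enters. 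Once these identifications are pinned down, the proposition is immediate from Lemma \ref{lael}, exactly as in the proof of that lemma: the family $\mc{S}_t := \mc{S}_{t\dot\rho, t\dot f}$ is a composition of smooth maps, hence a smooth one-parameter family whose normal variation field is a jacobi field on $\mc{S}$ with the prescribed trace, and uniqueness of the jacobi extension (non-degeneracy of the spherical domains) finishes it.
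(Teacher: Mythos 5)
Your proposal follows the same route as the paper's proof: reduce via Definition \ref{InitialSurfaces}\eqref{InitialSurfaces2} to Lemma \ref{lael}, then compute $\dot{X}^\perp$ by differentiating \eqref{VecXDef} using the fact that $X_{\tau,f}$ is purely normal and that $\frac{\partial \ell}{\partial f}\big|_{f=0} = 0$. The only difference is that you spend more effort tracking the $\delta$- and $\beta_0$-rescaling conventions; the paper's own proof stops at $\bigl(\dot{X}_{0,f}\bigr)^{\perp} = \delta\dot{f}$ and leaves the absorption of the $\delta$ implicit, so your extra bookkeeping is a welcome clarification rather than a departure.
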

\begin{proof}
By Lemma \ref{lael}  and Definition \ref{InitialSurfaces2} we have that that the normal variation field is given by $\dot{\mc{h}}_{\mc{S},\dot{\rho}^\perp + \dot{X}_{0, f}^\perp}$. From Definition \ref{VecXDef}, we have $\left(\dot{X}_{0, f}\right)^{\perp} = \delta \dot{f}$. This completes the proof. 
\end{proof}

\section{The defect operator}\label{TheDefectOperator}

\begin{definition}\label{NeumannOperator}
We let $\lambda = \lambda_{\xi}$ denote the angle between the outward conormals of $\mc{C}_{\xi}$ and $\mc{C}_{0}$. Precisely, if $\eta = \eta_{\xi}$ denotes the outward co-normal of  $\mc{C}_{\xi}$  then $\lambda_{\xi}$ denotes the angle between $\eta'_{\xi}$ and $\eta_0$, where here  $\eta'_{\xi}$ denotes the projection of $\eta_{\xi}$ into the plane spanned by $\eta_0$ and $N_0$, the unit normal to $\mc{C}_0$.Thus we have: 
\begin{align}\label{AngleFormula}
\frac{\eta'_{\xi}}{|\eta'_{\xi}|} =  \cos (\lambda_{\xi})\eta_0 + \sin(\lambda_{\xi})N_0.
\end{align}

\end{definition}
\begin{lemma} \label{NuemannOperatorProperties}
For each parameter value $\xi$, $\lambda_{\xi}$ lies in $C^{1, \alpha}(\partial \mc{C})$, and the mapping $\lambda: \xi \mapsto \lambda_{\xi}$ is once differentiable from $\ul{\xi}$ into $C^{1, \alpha}(\partial \mc{C})$. Moreover, the linearization $\dot{\lambda}$ of $\lambda$ at $\xi = 0$ is given by:
\[
\dot{\lambda}_{\dot{\xi}} =\left(\nabla V^\perp\right) \cdot \eta + A\left(V^\top, \eta \right)
\]
Above  $V$ denotes the variation field generated by $\dot{\xi}$, $V^\perp : = V \cdot N$ its normal part and $V^\top : = V - V^\perp N$ its tangential part. Also, $A$ denotes the second fundamental form of $\mc{C}_0$ and $N$ the unit normal. 
\end{lemma}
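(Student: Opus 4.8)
The plan is to compute $\lambda_\xi$ pointwise on a single boundary component and then differentiate in $\xi$, using the formula (\ref{AngleFormula}) as the definition. Fix a component $\partial$ of $\partial\mc{C}$. For a one-parameter family $\xi(t)$ with $\xi(0) = 0$ and $\dot\xi(0) = \dot\xi$, let $V$ be the associated variation field on $\mc{C}_0$ (well-defined modulo translations by the earlier discussion, but $\lambda$ is translation-invariant so this ambiguity does no harm). Write $S_t := \mc{C}_{\xi(t)}$, $\eta_t := \eta_{\xi(t)}$, $N_t := N_{S_t}$, and let $\eta_t'$ be the projection of $\eta_t$ onto $\mathrm{span}(\eta_0, N_0)$. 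First I would record the regularity: since each component of $\mc{C}_\xi$ is either a spherical domain or a CMC neck, and since the spherical domains depend smoothly on the $C^{2,\alpha}$ boundary data while the CMC necks depend (at worst) once-differentiably in $\tau$ and smoothly in the other parameters — all of this being exactly the content of Theorem \ref{Prop:MainTheoremSimple} and the conventions of Section \ref{CMCNecks} — the outward conormal $\eta_\xi$, which is a first-order quantity in the surface, lies in $C^{1,\alpha}(\partial\mc{C})$ and depends once-differentiably on $\xi$. Because $|\eta_\xi'| \to 1$ as $\xi \to 0$, the projection is nonvanishing on a neighborhood of $0$, so $\lambda_\xi = \mathrm{arcsin}\big(\langle \eta_\xi'/|\eta_\xi'|,\, N_0\rangle\big)$ is a well-defined $C^{1,\alpha}$ function depending once-differentiably on $\xi$.

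For the linearization, differentiate (\ref{AngleFormula}) at $t = 0$. Since $\lambda_0 = 0$, we have $\cos\lambda_0 = 1$, $\sin\lambda_0 = 0$, so differentiating the right side gives $\dot\lambda\, N_0 + (\text{a multiple of }\eta_0)$; pairing with $N_0$ (and using $\langle \eta_0, N_0\rangle = 0$) isolates
\[
\dot\lambda_{\dot\xi} = \Big\langle \frac{d}{dt}\Big|_{t=0}\frac{\eta_t'}{|\eta_t'|},\ N_0 \Big\rangle = \Big\langle \dot\eta,\ N_0\Big\rangle,
\]
where the last equality uses that $\frac{d}{dt}|\eta_t'| = 0$ at $t=0$ (the length of a unit vector is stationary) and that projecting onto $\mathrm{span}(\eta_0,N_0)$ commutes with pairing against $N_0$. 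So the whole problem reduces to computing $\dot\eta \cdot N_0$, the normal component of the variation of the outward conormal.

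The computation of $\dot\eta\cdot N_0$ is the standard first variation of the conormal, and it is where the geometric formula comes from. Parametrize $\partial$ by arclength $c(u)$ with respect to the metric on $\mc{C}_0$; the perturbed surface near $\partial$ is $\mc{C}_0 + V$, and a point of $\partial S_t$ is $c_t(u) = c(u) + tV(c(u)) + o(t)$. The conormal $\eta_t$ at $c_t(u)$ is the unit vector tangent to $S_t$, normal to $c_t'(u)$, pointing outward; writing $\eta_t = \eta_0 + t\dot\eta + o(t)$, the three defining conditions $\langle\eta_t,\eta_t\rangle = 1$, $\langle \eta_t, c_t'\rangle = 0$, $\langle \eta_t, N_t\rangle = 0$ linearize to $\langle\dot\eta,\eta_0\rangle = 0$, $\langle\dot\eta, c'\rangle = -\langle\eta_0, (V\circ c)'\rangle$, and $\langle\dot\eta, N_0\rangle = -\langle\eta_0,\dot N\rangle$. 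Taking the $N_0$-component and decomposing $V = V^\perp N + V^\top$: the contribution of $V^\perp N$ to $\dot N$ produces the gradient term $-( \nabla V^\perp)\cdot\eta_0$ with a sign that flips once we pass from $\dot N$ to $\dot\eta$, giving $(\nabla V^\perp)\cdot\eta$; the contribution of the tangential part $V^\top$ — sliding along $\mc{C}_0$ — rotates the normal by the shape operator, contributing $A(V^\top,\eta)$. Assembling,
\[
\dot\lambda_{\dot\xi} = \big(\nabla V^\perp\big)\cdot\eta + A\big(V^\top, \eta\big),
\]
which is the asserted formula.

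The main obstacle is bookkeeping the signs and the interaction of the $\eta_0$- versus $N_0$-components carefully — in particular verifying that the tangential part of $V$ genuinely enters through $A(V^\top,\eta)$ and not through some additional derivative-of-$V^\top$ term, which follows because a tangential slide is (to first order) a reparametrization composed with the flow of $V^\top$ on $\mc{C}_0$, under which the conormal transports by the surface's own geometry. Everything else — the regularity claim, the reduction to $\dot\eta\cdot N_0$, and the fact that $|\eta_\xi'|$ is stationary — is routine given the results of Section \ref{CMCNecks}.
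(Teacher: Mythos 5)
Your proposal follows essentially the same route as the paper's proof: regularity and once-differentiability come from the $C^1$ dependence of the CMC-neck family on $(\tau, f)$ together with translation-invariance of $\lambda$, and the linearization reduces to computing $\dot\eta\cdot N_0$ via the standard conormal-variation identity $\dot\eta\cdot N = -\dot N\cdot\eta = (\nabla V^\perp)\cdot\eta + A(V^\top,\eta)$, which the paper records as (\ref{UnitNormalVariation}). The only difference is that you sketch a first-principles derivation of that identity by linearizing the defining conditions of $\eta_t$, whereas the paper cites it as known; your parenthetical justification that $\frac{d}{dt}|\eta_t'|=0$ because ``the length of a unit vector is stationary'' is not quite right as stated ($\eta_t'$ is a projection, not a unit vector -- the paper instead observes $|\eta_\xi'|\le 1$ attains its maximum at $\xi=0$), but the step is rendered harmless by your second observation that projecting onto $\mathrm{span}(\eta_0,N_0)$ commutes with pairing against $N_0$.
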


\begin{proof}
 Recall that the mapping $\mc{h}: (\tau, f) \mapsto \mc{h}_{\tau, f}$ is $C^1$  near $\tau = f = 0$, and thus the mapping$ (\tau, f) \mapsto \tilde{\mc{N}}^\pm_{\tau, f}$ is $C^1$ near $\tau = f = 0$. Since $\lambda_{\xi}$ is invariant under translations of the components of $\mc{C}$, it follows that $\xi \mapsto \lambda_{\xi}$ is  $C^1$ near $\xi = 0$.  That the codomain of $\lambda$ is the space $C^{1, \alpha} (\partial \mc{C})$, follows from standard regularity theory for elliptic operators. Namely, since the boundary  $\mc{C}$ is  class $C^{2, \alpha}$, then so is $\mc{C}$, and thus the outward conormal field is $C^{1, \alpha}$.
 
 Now, observe that if $S$ is a surface and $S_t$ is a variation of $S$ , then the  normal variation of the outward normal is given by
\begin{align}\label{UnitNormalVariation}
\dot{\eta} \cdot N = - \dot{N} \cdot \eta =\left(\nabla V^\perp\right) \cdot \eta + A\left(V^\top, \eta \right)
\end{align}
Since at $\xi = 0$ we have $|\eta_{0}'| = 1$ and otherwise $|\eta_{\xi}'| \leq 1$, the function $|\eta_{\xi}'|$ has a  maximum at $\xi = 0$  and thus the variation  of $\frac{\eta'_{\xi}}{|\eta'_{\xi}|}$ induced by $\dot{\xi}$ at $\xi = 0$ is given by
\[
\left(\frac{\eta'_{\xi}}{|\eta'_{\xi}|} \right)^{\cdot} = \dot{\eta},
\]
where above $\dot{\eta}$ denotes the variation of $\eta_{\xi}$ induced by $\dot{\xi}$ at $\xi = 0$. Differentiating and taking the dot product of both sides of (\ref{AngleFormula}) with $N_0$ and using (\ref{UnitNormalVariation}) gives
\[
\dot{\lambda} = \dot{\eta} \cdot N_0 = \left(\nabla V^\perp\right) \cdot \eta + A\left(V^\top, \eta \right)
\]
which is the claim. 
 \end{proof}
\subsection{The defect operator}

\begin{definition}
We let $\Lambda: \xi \mapsto \Lambda_\xi$ denote the even part of $\lambda$, so $\Lambda : = \lambda^{(e)}$. We refer to $\Lambda$ as the defect operator of $\mc{C}$.
\end{definition}

 In the following, we let $\ul{e}_*$ denote the space of even functions in  $C^{1, \alpha}(\partial \mc{C})$.
\begin{lemma} \label{ConormalMatching}
The mapping $\xi \mapsto \Lambda_{\xi}$ is a differentiable mapping of the ball $\{|\xi|  < \epsilon \}$ into $\ul{e}_{*}$. The linearization  $\dot{\Lambda}$ of $\Lambda$ at $\xi = 0$ is given by
\begin{align}\label{DotLambdaExpression}
\dot{\Lambda}_{\dot{\xi}} = \left( u_{\eta}\right)^{e}
\end{align}
where $u$ is the normal variation field generated by $\dot{\xi}$ on $\mc{C}$. Moreover, suppose that $\Lambda_{\xi} = 0$ for some $\xi$. Then  $\eta_{\xi} = \left(- \eta_{\xi} \right)^*$, where $\eta_{\xi}$ denotes the outward conormal to $\mc{C}_{\xi}$.
\end{lemma}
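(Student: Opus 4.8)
The plan is to establish the three assertions of Lemma \ref{ConormalMatching} in order, each building on the analysis of $\lambda$ already carried out in Lemma \ref{NuemannOperatorProperties}. First, for the differentiability statement: since $\Lambda = \lambda^{(e)}$ is obtained from $\lambda$ by composing with the bounded linear projection $f \mapsto f^{(e)}$ on $C^{1,\alpha}(\partial\mc{C})$, and since Lemma \ref{NuemannOperatorProperties} already gives that $\xi \mapsto \lambda_{\xi}$ is once differentiable from $\{|\xi| < \epsilon\}$ into $C^{1,\alpha}(\partial\mc{C})$, the composition is differentiable with range in $\ul{e}_*$ by definition of the latter. This part is essentially formal.

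Second, for the linearization formula \eqref{DotLambdaExpression}: I would simply differentiate $\Lambda_{\xi} = \lambda_{\xi}^{(e)}$ at $\xi = 0$, using that the even-part projection is linear and bounded, so $\dot{\Lambda}_{\dot\xi} = (\dot{\lambda}_{\dot\xi})^{(e)}$. Then I would invoke the formula for $\dot\lambda$ from Lemma \ref{NuemannOperatorProperties}, namely $\dot{\lambda}_{\dot\xi} = (\nabla V^\perp)\cdot\eta + A(V^\top,\eta)$. The point is to identify this with $u_\eta$, where $u = V^\perp$ is the normal variation field on $\mc{C}$ generated by $\dot\xi$. Using the standard identity for the derivative of a function along a varying surface — that the normal derivative of the normal variation field picks up precisely the term $A(V^\top, \eta)$ from the tangential motion, i.e. $u_\eta = (\nabla u)\cdot\eta + A(V^\top,\eta)$ when $u$ is viewed as the trace data carried by the deformation — one sees $\dot\lambda_{\dot\xi} = u_\eta$. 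Taking even parts then yields $\dot{\Lambda}_{\dot\xi} = (u_\eta)^{(e)}$. I should be a little careful here that $u_\eta$ is interpreted consistently as the conormal derivative of the Jacobi field; this is where one must match conventions with \eqref{MCVariation} and the earlier discussion of normal variation fields, but it is bookkeeping rather than a genuine difficulty.

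Third, for the rigidity consequence when $\Lambda_\xi = 0$: the hypothesis $\Lambda_\xi = \lambda_\xi^{(e)} = 0$ says that $\lambda_\xi$ is an odd function on $\partial\mc{C}$, i.e. $\lambda_\xi^* = -\lambda_\xi$. Now $\lambda_\xi$ is the angle between the conormal $\eta_\xi$ and the reference conormal $\eta_0$ of $\mc{C}_0$, measured in the plane spanned by $\eta_0$ and $N_0$ via \eqref{AngleFormula}. On paired boundary components $\partial$ and $\partial^*$, the reference surface $\mc{C}_0$ has opposing conormals (the two components of $\mc{C}$ share the same image curve under $\pi$ but the conormals point oppositely), so $\eta_0^* = -\eta_0$, while $N_0^* = N_0$ (since $\mc{C}_0$ is, near the singular set, two sheets of the same sphere configuration with matching normals, up to the sign conventions fixed in Section \ref{PreEmSpheres} — this needs to be stated precisely). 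Given $\eta_0^* = -\eta_0$, $N_0^* = N_0$, and $\lambda^* = -\lambda$, substituting into \eqref{AngleFormula} on $\partial^*$ and comparing with the expression on $\partial$ shows $\eta'_\xi{}^* / |\eta'_\xi{}^*| = -\eta'_\xi/|\eta'_\xi|$, and since the lengths agree $|\eta'_\xi{}^*| = |\eta'_\xi|$, we get $\eta'_\xi{}^* = -\eta'_\xi$. Finally, one upgrades from $\eta'_\xi$ (the projection into the $\mathrm{span}(\eta_0,N_0)$ plane) to the full conormal $\eta_\xi$: because the construction of $\mc{C}_\xi$ matches the Dirichlet data of paired components up to translation (the final assertion of the preceding Proposition, that $*$ acts by translation on $\partial\mc{C}$), the tangent planes of paired components agree, so $\eta_\xi$ already lies in the appropriate plane and $\eta_\xi = \eta'_\xi$ up to normalization, giving $\eta_\xi^* = -\eta_\xi$ as claimed.

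The main obstacle is the third step — specifically, getting the sign conventions for $\eta_0$ and $N_0$ under the pairing $*$ exactly right, and then legitimately passing from the planar projection $\eta'_\xi$ back to the full conormal $\eta_\xi$. The latter passage relies essentially on the fact, established in the preceding Proposition, that $\mc{C}_\xi$ has been constructed so that paired boundary components differ by a translation; without that, $\eta_\xi$ need not lie in the $(\eta_0, N_0)$-plane and the argument collapses. I would make sure to cite that Proposition explicitly at this point and to spell out that a translation preserves the tangent plane of the boundary curve, hence the conormal direction lies in $\mathrm{span}(\eta_0, N_0)$, so that $|\eta'_\xi| \neq 0$ forces $\eta_\xi/|\eta_\xi| = \eta'_\xi/|\eta'_\xi|$. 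The first two steps are routine once Lemma \ref{NuemannOperatorProperties} is in hand.
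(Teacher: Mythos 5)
Your handling of the first part (differentiability is inherited through the bounded linear projection onto even functions) and the third part (substitution into the angle formula using $\eta_0^*=-\eta_0$, $N_0^*=N_0$, and $\lambda_\xi^*=-\lambda_\xi$, then upgrading from $\eta'_\xi$ to $\eta_\xi$ via the translation property of $*$) both match the paper's proof, and in fact you are a bit more explicit than the paper about why one may pass from the projection $\eta'_\xi$ back to the full conormal. That part is fine.

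However, your proof of the linearization formula \eqref{DotLambdaExpression} contains a genuine error. You write that ``$u_\eta = (\nabla u)\cdot\eta + A(V^\top,\eta)$ when $u$ is viewed as the trace data carried by the deformation,'' and use this to conclude $\dot\lambda_{\dot\xi}=u_\eta$. No such identity exists: the notation $u_\eta$ means precisely the conormal derivative $(\nabla u)\cdot\eta$ of the fixed function $u=V^\perp$ on $\mc{C}$; there is nothing more to it. You appear to have conflated $u_\eta$ with $\dot\eta\cdot N$ from equation \eqref{UnitNormalVariation}, which does involve the second--fundamental--form correction, but that is a formula for the variation of the conormal, not for the conormal derivative of $u$. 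In fact $\dot\lambda_{\dot\xi}\neq u_\eta$ in general. The actual reason the formula $\dot\Lambda_{\dot\xi}=(u_\eta)^{e}$ holds is a parity cancellation: taking even parts of $\dot\lambda_{\dot\xi} = (\nabla V^\perp)\cdot\eta + A(V^\top,\eta)$, the second term drops out because $A(V^\top,\eta)$ is \emph{odd} under the pairing $*$ --- the restriction of $V$ to $\partial\mc{C}$ is even, $A$ projects to the same tensor on paired boundary components, and $\eta$ is odd, giving $\bigl(A(V^\top,\eta)\bigr)^* = A(V^\top,\eta^*) = -A(V^\top,\eta)$ and hence $(A(V^\top,\eta))^{e}=0$. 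What remains is $((\nabla V^\perp)\cdot\eta)^{e} = (u_\eta)^{e}$. Your proposal replaces this parity argument, which is the real content of the step, with an identity that is not true, so this part needs to be rewritten.
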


\begin{proof}
The differentiability of $\Lambda$ follows directly from the differentiability of $\lambda$. By Lemma \ref{NuemannOperatorProperties} we have
\begin{align*}
\dot{\Lambda}_{\dot{\xi}} & = \left( \left(\nabla V^\perp\right) \cdot \eta + A\left(V^\top, \eta \right) \right)^e  \\
& =  \left( \left(\nabla V^\perp\right) \cdot \eta \right)^{e} + \left(A\left(V^\top, \eta \right) \right)^e \\
& = \left( \left(\nabla V^\perp\right) \cdot \eta \right) + \left( \left(\nabla V^\perp\right) \cdot \eta \right)^* ,
\end{align*}
where above we have used that  the restriction of $V$ to the boundary is even, and the outward conormal $\eta$ to $\mc{C}$ is odd and thus
\[
  \left(A\left(V^\top, \eta \right) \right)^* =   \left(A\left(V^\top, \eta^* \right) \right) = -   A\left(V^\top, \eta \right).
\]
Setting $u = V^\top$ gives (\ref{DotLambdaExpression}).
If we let $\lambda_{\xi}$ denote the angle between $\eta_\xi$ and $\eta_{0}$ then $\Lambda_{\xi} = 0$ if and only if $\lambda_{\xi} = - \lambda_{\xi}^*$. Then we have 
\[
\frac{\eta'_{\xi}}{|\eta'_{\xi}|} =  \cos (\lambda_{\xi})\eta_0 + \sin(\lambda_{\xi})N_0.
\] 
and thus
\begin{align*}
\frac{\eta^{*'}_{\xi}}{|\eta'_{\xi}|}  & =  \cos (\lambda^*_{\xi})\eta^*_0 + \sin(\lambda^*_{\xi})N_0 \\
& = - \cos (\lambda^*_{\xi})\eta_0 - \sin(\lambda_{\xi}) N_0 \\
& = -\frac{\eta'_{\xi}}{|\eta'_{\xi}|} .
\end{align*}
Moreover, since $\partial$ and $\partial^*$ differ by translations, it then follows that $\eta_{\xi}  = - \eta^*_{\xi}$ as claimed. 
\end{proof}

\subsection{The linearized operator $\dot{\Lambda}$}
Let $\dot{\Lambda}: \ul{\xi} \rightarrow \ul{e}_*$ denote the linearization of $\Lambda$ at $\xi = 0$.  We will study the linearization in the type I and II parameters separately. The type II  parameters generate the error space $\ul{e}_*$ up to a finite dimensional co-kernel. Depending upon the geometry of the configuration $\mb{S}$, we will show that  the type I parameters generate the remaining co-kernel.

\section{Type II variations}\label{TypeIIVariations}
In this section we study the linearization  $\dot{\Lambda}$ of $\Lambda$ in the type II parameters by relating it to a  Dirichlet to Neumann operator.
\subsection{The Dirichlet to Neumann operator}

\begin{definition}\label{DtoNOpoerator}  We define the operator $\slashed{\partial}: C^{2,  \alpha} (\partial \mc{C}) \rightarrow  C^{1,   \alpha} (\partial \mc{C}) $ to be the operator given by:
\[
\slashed{\partial} g = \frac{\dot{\mc{h}}_{\mc{C},g}}{\partial \eta} : =  \nabla \dot{\mc{h}}_{\mc{C}',g} \cdot \eta.  
\]
We refer to the operator $\slashed{\partial}$ as the \emph{Dirichlet to Neumann operator} of the collection $\mc{C}$.
\end{definition}
It is well-known that  $\slashed{\partial}$ is an elliptic  first order self adjoint pseudo-differential operator with principle symbol $-|\xi|$.  Moreover it is straightforward to verify that $\slashed{\partial}$ is self adjoint  with respect to $L^2$ inner product on $\partial \mc{C}$: Let $u$ and $v$ be jacobi fields on $\mc{C}$, then
\[
\int_{\mc{C}'} u Lv - v L u = \int_{\partial \mc{C}'} u v_{\eta} - v u _{\eta}= 0.
\]
When $\mc{tr}(u) = f$ and $\mc{tr} (v)= g$, the second equality above gives $\int_{\partial \mc{C}'} f \slashed{\partial }g - g \slashed{\partial} f = 0$. It then follows directly that: 
\begin{proposition}
The even part $\slashed{\partial}^{(e)}$ of $\slashed{\partial}$ is self adjoint on the subspace of even functions on $\partial \mc{C}$
\end{proposition}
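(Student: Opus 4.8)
The plan is to deduce self-adjointness of $\slashed{\partial}^{(e)}$ on even functions from the self-adjointness of $\slashed{\partial}$ already established via Green's identity, together with the algebraic fact that the pairing $*$ intertwines $\slashed{\partial}$ with itself. First I would observe that since $\mc{C}_0$ is fixed and the defining surface $\mc{C}$ is symmetric under the canonical projection structure — more precisely, since paired components $\partial$ and $\partial^*$ differ by a translation and the corresponding components of $\mc{C}$ are congruent under that translation with matching (opposing) conormals — the Dirichlet-to-Neumann operator commutes with the involution in the sense that $(\slashed{\partial} g)^* = \slashed{\partial}(g^*)$ for all $g \in C^{2,\alpha}(\partial\mc{C})$. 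This is the key structural input; it holds because solving the Jacobi equation on a component and then reflecting by $*$ is the same as reflecting the boundary data first and then solving on the paired component, the two components being isometric.

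Granting that, the argument is purely formal. The even projection is $P g = g^{(e)} = \tfrac12(g + g^*)$, which is the orthogonal projection onto $\ul{e}_*$ in $L^2(\partial\mc{C})$ because $*$ is an $L^2$-isometric involution (it is an order-two diffeomorphism acting by translation on each component, hence preserves arclength). From $(\slashed{\partial}g)^* = \slashed{\partial}(g^*)$ we get that $\slashed{\partial}$ commutes with $P$, so $\slashed{\partial}^{(e)} := P \slashed{\partial} P = P\slashed{\partial} = \slashed{\partial} P$ on the range of $P$. For even $f, g$ we then compute, using self-adjointness of $\slashed{\partial}$ on all of $C^{2,\alpha}(\partial\mc{C})$ and $Pf = f$, $Pg = g$:
\[
\int_{\partial\mc{C}} f\,\slashed{\partial}^{(e)} g = \int_{\partial\mc{C}} f\,\slashed{\partial} g = \int_{\partial\mc{C}} g\,\slashed{\partial} f = \int_{\partial\mc{C}} g\,\slashed{\partial}^{(e)} f,
\]
which is exactly the assertion.

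The main obstacle — really the only nontrivial point — is verifying the intertwining relation $(\slashed{\partial}g)^* = \slashed{\partial}(g^*)$. I would argue this component by component: the involution $*$ identifies each boundary component of $\partial\mc{S}$ with a boundary component of $\partial\mc{N}$, and the relevant Jacobi operators are carried into one another because the underlying surfaces agree up to a translation (translations are isometries preserving $|A|^2$ and hence $L_S$). Since the Jacobi field with prescribed higher-mode trace is unique on a non-degenerate component (by the hypotheses in Section \ref{PreEmSpheres} and Theorem \ref{Prop:MainTheoremSimple}), pushing forward $\dot{\mc h}_{\mc{C},g}$ by the translation solves the Jacobi equation with boundary data $g^*$, and its conormal derivative — with the conormal orientation flipped by $*$, which is consistent with $\eta$ being odd — gives $\slashed{\partial}(g^*)$. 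One must be slightly careful that the "normalized" Jacobi fields on the necks (orthogonal to lower modes at the waist) are genuinely translation-equivariant, but this follows since the normalization conditions are themselves translation-invariant and $*$ respects the waist. With that in hand the rest is the formal computation above.
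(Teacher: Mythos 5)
Your overall strategy is close to the paper's, and the displayed chain of equalities is in fact correct. However, the justification you offer for the first and last steps — the claimed intertwining relation $(\slashed{\partial}g)^* = \slashed{\partial}(g^*)$ — is false, and you correctly identify it as the crux. The involution $*$ pairs a boundary circle of $\partial\mc{S}$ with a boundary circle of $\partial\mc{N}$, and while those \emph{circles} differ by a translation, the component \emph{surfaces} attached to them do not: one is a spherical domain (a sphere of radius $1/2$ with several disks removed), the other is a half-neck, which at the base configuration is a small spherical cap. These are not congruent, so pushing the Jacobi solution forward by the translation of the boundary circle does not produce the Jacobi solution on the paired component. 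Consequently $\slashed{\partial}$ does not commute with $*$, and your identity $\slashed{\partial}^{(e)} = \slashed{\partial}P = P\slashed{\partial}$ on the range of $P$ is unjustified.

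The fix is that the intertwining is not needed. What you do need — and what the paper uses — is only that $*$ is an $L^2$-isometric involution, i.e. $\langle u^*, v^*\rangle = \langle u, v\rangle$, which holds because $*$ acts by translation on each boundary circle. That makes $P = \tfrac12(\mathbb{I} + *)$ a self-adjoint (orthogonal) projection. Then for even $f$ and arbitrary $w$ one has $\langle f, Pw\rangle = \langle Pf, w\rangle = \langle f, w\rangle$, so $\int f\,\slashed{\partial}^{(e)}g = \int f\, P\slashed{\partial}g = \int f\,\slashed{\partial}g$ even though $P\slashed{\partial}g \neq \slashed{\partial}g$. Your displayed computation then goes through with this corrected justification, and the paper's proof is exactly this: start from the Green's-identity self-adjointness of $\slashed{\partial}$, apply $*$ (using the isometry) and evenness of $f,g$, and average the two identities. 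You should delete the intertwining claim and the entire discussion of translation-equivariance of the normalized Jacobi fields, and replace the justification of the first and last displayed equalities with the self-adjointness of $P$.
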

\begin{proof}
Since $\slashed{\partial}$ is self adjoint, we have
\[
\langle f, \slashed{\partial }g \rangle = \langle \slashed{\partial }f,g \rangle. 
\]
Applying $*$ to both sides above gives
\[
\langle f^*, \left(\slashed{\partial }g\right)^* \rangle = \langle \left(\slashed{\partial }f\right)^*,g^* \rangle. 
\]
Since we are assuming $f$ and $g$ are even, we have $f= f^*$ and $g = g^*$ and thus summing the two equations above gives
\[
\langle f, \slashed{\partial }g + \left(\slashed{\partial }g\right)^* \rangle = \langle \slashed{\partial }f + \left(\slashed{\partial }f\right)^*,g \rangle. 
\]
which gives the claim. 
\end{proof}
 
Thus, $\slashed{\partial}$ is index zero and the obstruction to inverting the operator arises from its kernel. The following lemma relates $\dot{\Lambda}$ to the operator $\slashed{\partial}$.
\begin{lemma}
It holds that $\dot{\Lambda} (f, \rho) = \slashed{\partial}^{(e)}(f + \rho^\perp)$, where here we are identifying the piecewise translation $\rho$ of $\partial \mc{S}$ with its even extension to $\partial \mc{C}$.
\end{lemma}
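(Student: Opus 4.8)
The plan is to compute the normal variation field generated by the type II parameters $(\dot f,\dot\rho)$ and then feed it into the formula for $\dot\Lambda$ established in Lemma \ref{ConormalMatching}. Recall that \eqref{DotLambdaExpression} gives $\dot\Lambda_{\dot\xi}=(u_\eta)^e$ where $u$ is the normal variation field on $\mc C$ generated by $\dot\xi$. So the whole statement reduces to identifying $u$ for a type II variation, reading off its conormal derivative, and matching this with the definition of $\slashed\partial^{(e)}$.

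First I would recall from Proposition \ref{FamilyLinearization} that the normal variation field generated by $(\dot\rho,\dot f)$ on the spherical part $\mc S$ at $\xi=0$ is $\dot{\mc h}_{\mc C,\,\dot\rho^\perp+\dot f}$, where $\dot\rho$ is identified with its even extension to $\partial\mc C$. Next I need the analogous statement on the singular part $\mc N$: since the type II parameters act on $\mc N$ only through the vector field $X_{\tau,f}$ of \eqref{VecXDef}, and $X_{0,f}$ has normal part $\delta\dot f$ (as used in the proof of Proposition \ref{FamilyLinearization}), the variation field on $\mc N$ also has trace $\dot f$ up to the normalization conventions of Section \ref{CMCNecks}; combined with the fact that $X_{\tau,f}$ is extended evenly, the variation field $u$ on all of $\mc C$ is the jacobi field $\dot{\mc h}_{\mc C,\,\dot f+\dot\rho^\perp}$ with the even boundary data $f+\rho^\perp$. (Here one uses that $\dot\rho$ is supported on $\partial\mc S$ and extended evenly, so that $\dot f+\dot\rho^\perp$ is a well-defined even function on $\partial\mc C$, and that $\mc C$ is non-degenerate so this jacobi field is unique.)

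Then by \eqref{DotLambdaExpression} we have
\[
\dot\Lambda(f,\rho) \;=\; \bigl(u_\eta\bigr)^e \;=\; \bigl(\nabla\dot{\mc h}_{\mc C,\,f+\rho^\perp}\cdot\eta\bigr)^e \;=\; \bigl(\slashed\partial(f+\rho^\perp)\bigr)^e \;=\; \slashed\partial^{(e)}(f+\rho^\perp),
\]
where the third equality is just Definition \ref{DtoNOpoerator} and the last is the definition of the even part of $\slashed\partial$. This gives the claim.

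The main obstacle I anticipate is the bookkeeping at the singular part $\mc N$: one must check carefully that, with the re-parametrizations of Section \ref{CMCNecks} (the $\beta_0$-rescaling of $f$, the identification $\mc N_{\tau,\delta,f}=\mc N_{\tau,\delta,f/\beta_0}$, and the scaling to unit mean curvature), the normal variation field generated on $\mc N$ by $\dot f$ really does have trace exactly $\dot f$ rather than $\delta\beta\dot f$ or some other constant multiple, so that it glues with the $\mc S$-side field into the single jacobi field $\dot{\mc h}_{\mc C,\,f+\rho^\perp}$. Once the normalizations line up, the even-extension hypothesis on $X_{\tau,f}$ and on $\rho$ ensures the traces agree across paired boundary components, and the remaining steps are immediate from the already-established formula for $\dot\Lambda$ and the definition of $\slashed\partial$.
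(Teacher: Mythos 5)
Your plan — feed the normal variation field into the formula $\dot\Lambda_{\dot\xi}=(u_\eta)^e$ from Lemma \ref{ConormalMatching}, identify $u$ via Proposition \ref{FamilyLinearization}, and then read off $\slashed\partial^{(e)}$ from Definition \ref{DtoNOpoerator} — is exactly the structure of the paper's proof. However, there is a genuine gap in the step where you identify $u$ on all of $\mc C$ as $\dot{\mc h}_{\mc C, f+\rho^\perp}$.

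The issue is on $\partial\mc N$. By Definition \ref{InitialSurfaces}, $\mc N_\xi = \phi\cdot\mc N_{\tau,f}$ does not depend on $\rho$ at all: varying $\rho$ translates $\partial\mc S$ but leaves $\mc N$ fixed. So, directly from the definition of the family, the normal variation field on $\mc N$ has trace only $\dot f$, not $\dot f + \dot\rho^\perp$. The boundary function you want, $f+\rho^\perp$, is even, but the trace of the actual variation field as defined is not. Identifying $\rho^\perp$ with its even extension is a labelling convention for the target of $\slashed\partial^{(e)}$; it does not by itself make the variation field on $\mc N$ pick up the $\rho^\perp$ term. Your appeal to ``$X_{\tau,f}$ is extended evenly'' does not repair this, and in fact the premise that ``the type II parameters act on $\mc N$ only through $X_{\tau,f}$'' is backwards: $X_{\tau,f}$ prescribes the boundary of $\mc S$ (Definition \ref{InitialSurfaces}), while $f$ acts on $\mc N$ directly through the CMC neck family $\mc N_{\tau,f}$.

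What the paper does — and what is genuinely needed — is to first apply a $\xi$-dependent piecewise translation to $\mc N_\alpha$ so that $\partial=\partial^*$ for every boundary component of $\mc S_\alpha$. Because the angle function $\lambda$ (and hence $\Lambda$) is invariant under translations of components, this modification is harmless. After the translation, the variation field on $\mc N$ acquires the $\rho^\perp$ contribution, the restriction of the variation field $V$ to $\partial\mc C$ becomes even (which is exactly the hypothesis used in the proof of Lemma \ref{ConormalMatching} to kill the $A(V^\top,\eta)$ term), and the identification $u=\dot{\mc h}_{\mc C,f+\rho^\perp}$ holds. Without this step, the jacobi field you name has trace $f+\rho^\perp$ on $\partial\mc N$ while the true variation field has trace $f$, and the discrepancy is exactly $\tfrac{1}{2}\slashed\partial_{\mc N}(\rho^\perp)$, which does not vanish in general; so the argument as written does not close.
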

\begin{proof}
Fix a component $\mc{S}_\alpha$ in $\mc{S}$ and let $\mc{N}_{\alpha} = \mc{N}_{\alpha, 1} \coprod \ldots \mc{N}_{\alpha, k}$ denote the components of $\mc{N}$ that are incident to $\mc{S}_{\alpha}$. That is, for each component $\partial$ of the boundary $\partial \mc{S}_{\alpha}$, there is a unique $i \in \{1, \ldots k \}$ such that $\partial^*$ is in $\mc{N}_{\alpha, i}$. After applying a piecewise translation to $\mc{N}_{\alpha}$, we can assume that the boundaries are matched, so that $\partial = \partial^*$ for all components $\partial$ of $\mc{S}_{\alpha}$.  Since the boundary components of $\mc{S}_{\alpha}$ are matched the normal variation field generated by $(\dot{\rho}, \dot{f})$ at $\xi = 0$  has even trace and on $\mc{S}_{\alpha}$ is given by $\dot{\mc{h}}_{\mc{S}_{\alpha}, \dot{\rho}^\perp + f}$. By expression (\ref{DotLambdaExpression}) in  Lemma \ref{ConormalMatching}, we then have $\dot{\Lambda}_{\dot{\rho}, \dot{f}} = \left(\left( \dot{\mc{h}}_{C, f  + \rho^\perp}\right)_{\eta}\right)^{(e)} = \slashed{\partial}^{(e)} (f + \rho^\perp)$.

\end{proof}

\subsection{The kernel of $\slashed{\partial}^{(e)}$}
The kernel of $\slashed{\partial}^{(e)}$ in the space of even functions can be characterized  as the space of jacobi fields on $\mb{S}$. To do this, it will be helpful to have a few definitions first:

\begin{definition}\label{DefMatched}
A function $u: \mc{C} \rightarrow \mb{R}$ is said to be Dirichlet matched if its trace $\mc{tr}(u)$ is even. It is Neumann matched if the deriviative  $\frac{\partial u}{\partial \eta}$ of $u$ with respect to the boundary conormal is odd. It is Cauchy matched if it is both Dirichlet and Neumann matched. 
\end{definition}
An obvious consequence of Definition \ref{DefMatched} is that Cauchy matched functions descend as $C^1$ functions under the canonical projection into $\mb{R}^3$. Moreover, by standard elliptic regularity, if $u$ is a Cauchy matched jacobi field, then it descends  to a smooth function on  $\mb{S}$ under the canonical  projection.  We record this observation in Lemma \ref{Prop:MatchedFunctions} below, whose proof we omit. 
\begin{lemma} \label{Prop:MatchedFunctions}
Let $u: \mc{C} \rightarrow \mb{R}$ be a Dirichlet matched function. Then the projection $\wh{u}: \mb{S} \rightarrow \mb{R}$, given by $\wh{u}( \pi'(p))$ = u (p) is a well defined $C^{0}$ function on $\mb{S}$. When $u$ is Cauchy matched $\wh{u}$, is $C^1$. 
\end{lemma}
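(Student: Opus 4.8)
The plan is to treat $\wh{u}$ separately on the regular locus $\mb{S}\setminus\pi(\partial\mc{C})$, where the canonical projection $\pi$ is a diffeomorphism, and along the curves $\pi(\partial\mc{C})$, where exactly two sheets of $\mb{S}$ come together. First I would record well-definedness: by construction $\pi$ restricts to a diffeomorphism of the interior of $\mc{C}$ onto $\mb{S}\setminus\pi(\partial\mc{C})$, so there $\wh{u}=u\circ(\pi|_{\mathrm{int}\,\mc{C}})^{-1}$ is unambiguous and inherits the regularity of $u$. Over a point $q\in\pi(\partial\mc{C})$ the fiber $\pi^{-1}(q)$ consists of exactly one boundary point $p\in\partial\mc{S}$ and its partner $p^\ast=\partial^\ast\ni\,?$—more precisely the paired point in $\partial\mc{N}$—and the hypothesis that $u$ is Dirichlet matched says exactly that $\mc{tr}(u)(p)=\mc{tr}(u)(p^\ast)$, so $\wh{u}(q)$ is well defined on all of $\mb{S}$ (here one uses implicitly that $u$ extends continuously to $\partial\mc{C}$, as is required for the trace to make sense).

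Next I would establish continuity. Fixing $q\in\pi(\partial\mc{C})$, note that $q$ lies on a single sphere $\Sigma\in\mb{S}$: the curves $\pi(\partial\mc{C})$ sit at distance $\delta$ from the discrete singular set $\mc{s}$ and hence avoid it, so a small disk in $\Sigma$ about $q$ is the union of $\pi$ applied to a collar of $\partial\mc{S}$ inside the relevant spherical component and $\pi$ applied to a collar of $\partial\mc{N}$ inside the relevant neck, these two half-disks meeting along the arc of $\pi(\partial\mc{C})$ through $q$. On each collar $\wh{u}$ is the push-forward of the corresponding restriction of $u$, which is continuous up to the boundary, and the two one-sided limits at $q$ agree because both equal $\mc{tr}(u)(p)=\mc{tr}(u)(p^\ast)$. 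Hence $\wh{u}\in C^0(\mb{S})$.

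For the $C^1$ assertion, with $u$ now also Neumann matched, it suffices to show that $\nabla\wh{u}$ extends continuously across each curve $\pi(\partial\mc{C})$. I would split the gradient at $q$ into the component tangent to $\pi(\partial\mc{C})$ and the component along $\pi_\ast\eta$, $\eta$ the outward boundary conormal. The tangential component read off either collar is the tangential derivative of $\mc{tr}(u)$, so it matches by Dirichlet matching. For the conormal direction, one uses that paired boundary components of $\mc{C}$ differ by a translation and project to the same curve, so their outward conormals are opposite as vectors in $\mb{R}^3$, i.e. $\pi_\ast\eta(p^\ast)=-\pi_\ast\eta(p)$ (the oddness of $\eta$ used already in Lemma \ref{ConormalMatching}), while Neumann matching gives $\tfrac{\partial u}{\partial\eta}(p^\ast)=-\tfrac{\partial u}{\partial\eta}(p)$. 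Consequently the directional derivative of $\wh{u}$ in the fixed spatial direction $\pi_\ast\eta(p)$ equals $\tfrac{\partial u}{\partial\eta}(p)$ computed from the $\partial\mc{S}$-collar and $-\tfrac{\partial u}{\partial\eta}(p^\ast)=\tfrac{\partial u}{\partial\eta}(p)$ computed from the $\partial\mc{N}$-collar; the two coincide. Together with the tangential match and the already-proved continuity of $\wh{u}$, this shows $\nabla\wh{u}$ is continuous on $\mb{S}$, hence $\wh{u}\in C^1(\mb{S})$.

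The only genuine subtlety is this last sign bookkeeping: the two minus signs—oddness of the conormal $\eta$ and oddness of $\partial u/\partial\eta$ built into ``Neumann matched''—must be combined so that the spatial directional derivatives from the two sheets reinforce rather than cancel. Everything else is a routine consequence of the two-sheeted structure of $\mb{S}$ along $\pi(\partial\mc{C})$ together with elliptic regularity up to the boundary for $u$ (which, although not needed for the bare $C^1$ statement, is what upgrades $\wh{u}$ to a smooth function when $u$ is in addition a Cauchy matched jacobi field, as remarked before the lemma).
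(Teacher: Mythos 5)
The paper explicitly omits the proof of this lemma, so there is no official argument to compare against; your proof supplies exactly the kind of elementary two-sheeted decomposition argument the authors clearly have in mind, and it is correct. The well-definedness and $C^0$ steps follow directly from evenness of the trace, and your sign bookkeeping in the $C^1$ step is right: the outward conormal $\eta$ is odd under $*$ (as the paper itself records in the proof of Lemma \ref{ConormalMatching}), so after pushing forward to the sphere the two boundary pieces see opposite ambient conormal directions, and the second sign from the oddness of $\partial u/\partial\eta$ cancels it. One small remark worth making explicit: for the ``two half-disks meeting along an arc'' picture to be literal, you are using that at $\xi=0$ the translation between $\partial$ and $\partial^*$ is trivial, so the paired circles actually coincide in $\mb{R}^3$; this is the setting of the lemma and is consistent with how you use it, but phrasing it as ``differ by a translation'' slightly obscures that the translation is the identity here. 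You are also implicitly assuming $u$ is $C^1$ up to $\partial\mc{C}$ so that the one-sided traces and conormal derivatives you compare are classical; this is the ambient regularity under which the lemma is applied and is fine to take for granted, as you note.
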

We can define lifts  of functions on $\mb{S}$ as an inverse operator to projections of  Dirichlet matched functions  on $\mc{C}$.
\begin{definition}\label{DefLifts}
Given a function $u$ on $\mb{S}$, its \emph{lift} $\wc{u}: \mc{C}' \rightarrow \mb{R}$ to $\mc{C}'$ is given by $\wc{u}(p) = u (\pi'(p))$
\end{definition}

Clearly, we have:
\begin{lemma}
The lift $\wc{u}$ of a smooth function $u$ on $\mb{S}$ is a smooth function on $\mc{C}'$. Moreover, $\wc{u}$ is Cauchy matched and it holds that $\wh{\wc{u}} = u = \wc{\wh{u}} $. 
\end{lemma}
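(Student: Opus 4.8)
The statement to prove asserts three things about the lift of a smooth function $u$ on $\mathbb{S}$: (i) the lift $\wc{u}$ is smooth on $\mc{C}'$; (ii) $\wc{u}$ is Cauchy matched; and (iii) the projection and lift operations are mutually inverse, i.e.\ $\wh{\wc{u}} = u$ and $\wc{\wh{u}} = u$. The plan is to dispatch each of these in turn using the already-established properties of the canonical projection $\pi$ (and its restriction $\pi'$ to interiors and to the collections $\mc{S}$, $\mc{N}$ separately), together with Definitions \ref{DefMatched} and \ref{DefLifts}.

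For (i), I would observe that $\wc{u} = u \circ \pi'$ by Definition \ref{DefLifts}, and that $\pi'$ restricts to a diffeomorphism on the interior of each component of $\mc{C}'$ (this was recorded when $\pi$ was introduced: ``the projection restricts to a diffeomorphism on the interior of $\mc{C}$''). Hence on interiors $\wc{u}$ is a composition of a smooth map with a diffeomorphism, so it is smooth; smoothness up to the boundary follows since $\mb{S}$ is smooth near every non-singular point and each boundary circle of $\mc{C}'$ projects into the smooth locus of $\mb{S}$ (the singular set $\mc{s}$ sits at the waists, interior to the necks, not on $\partial\mc{C}$). For (ii), the point is that $\pi * = \pi$, so for any boundary point $p$ with paired point $p^*$ we have $\pi'(p) = \pi'(p^*)$, whence $\wc{u}(p) = u(\pi'(p)) = u(\pi'(p^*)) = \wc{u}(p^*)$; that is, $\mc{tr}(\wc{u})$ is even, so $\wc{u}$ is Dirichlet matched. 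For the Neumann condition I would note that the two boundary components $\partial$ and $\partial^*$ project to the same curve in $\mb{R}^3$ but with \emph{opposite} outward conormals (they are the conormals of the two surfaces meeting along that curve from opposite sides), so the normal derivative $\partial \wc{u}/\partial\eta$ picks up a sign under $*$ — this is exactly oddness — because $\wc{u}$ descends to a genuine function on $\mb{S}$ whose (one-sided) conormal derivatives along the shared curve agree in $\mb{R}^3$ but are measured with opposite-pointing conormals on the two sheets. Thus $\wc{u}$ is Neumann matched, hence Cauchy matched.

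For (iii), both identities are formal once the definitions are unwound: $\wh{\wc{u}}(\pi'(p)) = \wc{u}(p) = u(\pi'(p))$ for all $p$, and since $\pi'$ is onto $\mb{S}$ this gives $\wh{\wc{u}} = u$; conversely, for Cauchy matched $u$, $\wc{\wh{u}}(p) = \wh{u}(\pi'(p)) = u(p)$ by the defining relation of $\wh{u}$ in Lemma \ref{Prop:MatchedFunctions}, so $\wc{\wh{u}} = u$. I would be slightly careful to state which identity requires $u$ to live on $\mb{S}$ versus on $\mc{C}'$, but no real argument is needed.

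The only step with any genuine content is the Neumann-matching claim in (ii): one must pin down the sign convention for the two outward conormals along a shared boundary curve and confirm that ``same function on $\mb{S}$'' forces the conormal derivatives to be odd rather than even under $*$. This is precisely the geometric reason the pairing $*$ was built to make odd functions (not even ones) the natural Neumann data, and it is the part I would write out with care; everything else is bookkeeping with $\pi * = \pi$ and the diffeomorphism property of $\pi'$.
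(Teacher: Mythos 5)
The paper states this lemma without proof (it is prefaced by ``Clearly, we have:''), so there is no argument in the paper to compare against; your task was essentially to supply one, and your argument is correct. The one step with genuine content is, as you say, the Neumann-matching claim, and you handle it correctly: paired boundary components $\partial$ and $\partial^*$ project to the same circle $\gamma \subset \mb{R}^3$, and because the corresponding neck and spherical domain meet along $\gamma$ from opposite sides within a common sphere $\mb{S}_{i,j}$, the two outward conormals at $\gamma$ are antipodal; since $\wc{u}$ on both sheets is the pullback of the single ambient function $u$, its gradient along $\gamma$ is the same on both, so the conormal derivative flips sign under $*$, which is precisely oddness. The Dirichlet-matching and the two inverse-operation identities are, as you note, immediate from $\pi * = \pi$ and from unwinding Definitions \ref{DefMatched}, \ref{Prop:MatchedFunctions}, and \ref{DefLifts}, and you also correctly flag the small abuse in the statement $\wc{\wh{u}} = u$, which should be read with $u$ a Dirichlet-matched function on $\mc{C}$ rather than a function on $\mb{S}$. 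No gaps.
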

\subsection{The kernel of $\slashed{\partial}^e$}We are ready to characterize the kernel of $\slashed{\partial}^e$.

\begin{definition}
We let $\ul{\mc{k}}$ denote the space of functions on $\mb{S}$ such that: Given $\mc{k} \in \ul{\mc{k}}$, the restriction of $\mc{k}$ to any sphere in the collection $\mb{S}$ is translational jacobi field 
\end{definition}

\begin{lemma} \label{Prop:SKernel}
The kernel of the stability operator $L$ of $\mb{S}$ coincides with $\ul{\mc{k}}$.
\end{lemma}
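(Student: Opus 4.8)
The plan is to show the two inclusions $\ul{\mc{k}} \subseteq \ker L$ and $\ker L \subseteq \ul{\mc{k}}$, working on $\mb{S}$ sphere by sphere. First I would unwind the definitions: the stability operator $L$ of $\mb{S}$ means $L = \Delta_{\mb{S}} + |A|^2$ on the (smooth, since the spheres meet only at isolated points which we can treat carefully or which do not obstruct the pointwise operator) collection $\mb{S}$, where $|A|^2 \equiv 4$ on each sphere of radius $\tfrac12$. On a single round sphere of radius $\tfrac12$, the operator is $\Delta_{S^2_{1/2}} + 4$; rescaling to the unit sphere this is $\Delta_{S^2} + 2$, whose kernel is exactly the span of the first spherical harmonics, i.e.\ the restrictions of the three coordinate linear functions — which are precisely the normal parts of the translational Killing fields. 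So on each individual sphere the Dirichlet-free kernel of $L$ is the three-dimensional space of translational jacobi fields.

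For the inclusion $\ul{\mc{k}} \subseteq \ker L$: given $\mc{k} \in \ul{\mc{k}}$, its restriction to each sphere is a translational jacobi field, hence lies in the kernel of $L$ restricted to that sphere; since $L$ acts componentwise on $\mb{S}$ (each sphere is an open piece away from the singular set $\mc{s}$, and the operator is local), $L\mc{k} = 0$ on all of $\mb{S}$. For the reverse inclusion $\ker L \subseteq \ul{\mc{k}}$: suppose $L u = 0$ on $\mb{S}$. Restricting $u$ to any individual sphere $S$ in the collection gives a function on (a sphere minus the finitely many points of $\mc{s}$ lying on it, but an $L^2$/bounded jacobi field extends smoothly across isolated points since the only singular jacobi fields on a surface have logarithmic poles and those are not in the relevant class — or, more simply, a bounded solution of $(\Delta + 2)u = 0$ on $S^2$ minus finitely many points extends smoothly by removable-singularity theory for this subcritical operator). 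Hence $u|_S$ is a genuine element of $\ker(L|_S)$, which by the spherical-harmonic computation above is a translational jacobi field. Since this holds for every sphere in $\mb{S}$, we conclude $u \in \ul{\mc{k}}$.

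The main obstacle — and the step I would treat most carefully — is the behavior at the singular set $\mc{s}$: one must argue that ``jacobi field on $\mb{S}$'' is meaningful across the finitely many tangency points, and that a solution of $Lu = 0$ that is, say, continuous on $\mb{S}$ (or in whatever regularity class is implicit in the phrase ``stability operator of $\mb{S}$'') restricts to a \emph{smooth} jacobi field on each closed sphere rather than merely a smooth function on the punctured sphere. Since the collection $\mb{S}$ is pre-embedded, the singular set consists of isolated points, and each sphere contains only finitely many of them; the relevant fact is that for the operator $\Delta_{S^2} + 2$ a bounded (or even just $L^\infty_{loc}$) weak solution near a puncture is smooth across it, because the punctures have zero capacity relative to this operator. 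Once that removable-singularity point is dispatched, the rest is the explicit eigenvalue computation on the round sphere together with the identification of the second-eigenvalue eigenspace with the translational jacobi fields, both of which are standard. I would likely cite the round-sphere spectral computation and the removable-singularity property rather than reprove them.
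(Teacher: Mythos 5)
Your proof is correct and follows essentially the same approach as the paper: the kernel of the stability operator on a single closed round sphere is the three-dimensional space of translational jacobi fields (the first spherical harmonics), and the operator acts sphere-by-sphere on the collection $\mb{S}$. Two small remarks. First, the removable-singularity digression is unnecessary: the tangency points in $\mc{s}$ are interior points of each closed sphere in the collection, so a smooth function on $\mb{S}$ already restricts to a smooth function on the full closed sphere (not a punctured sphere) -- the paper's proof assumes smoothness of $u$ at the outset and needs no extension argument. Your argument is valid, just redundant given the hypothesis. Second, a minor arithmetic slip: on a sphere of radius $\tfrac12$ the principal curvatures are each $2$, so $|A|^2 = 8$ (not $4$), and the stability operator rescales to $4(\Delta_{S^2} + 2)$ rather than $\Delta_{S^2} + 2$; this does not affect the conclusion, since the kernel is the same.
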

\begin{proof}
It is a standard fact that the kernel of the stability operator on a sphere is three dimensional and is spanned by the translational jacobi fields.  Thus, if $u: \mb{S} \rightarrow \mb{R}$ is smooth jacobi field on $\mb{S}$, then its restriction to any sphere in the collection is  a jacobi field, and thus $u$ belongs to $\ul{\mc{k}}$. Clearly, any element of $\ul{\mc{k}}$ is jacobi field on $\mb{S}$.
\end{proof}

We let $\ul{g}$ denote the space of even functions in $C^{2,\alpha} (\mc{C})$, $\ul{g}_{II}$ denote the orthogonal complement of $\ul{e}_{*, I}$ in $\ul{g}$ and $\ul{e}_{*, II}$ the orthogonal complement of $\ul{e}_{*, I}$ in $\ul{e}_{*}$. 

\begin{proposition}\label{LPrimeKernel}
 The kernel of the operator $\slashed{\partial}^{(e)}$ coincides with the space $e_{*,I } : = \mc{tr}\left(\wc{\ul{\mc{k}}}\right)$.
\end{proposition}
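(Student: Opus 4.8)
The plan is to show both inclusions between $\ker \slashed{\partial}^{(e)}$ and $e_{*,I} = \mc{tr}(\wc{\ul{\mc{k}}})$ by passing between jacobi fields on $\mc{C}$ and jacobi fields on $\mb{S}$ via the lift and projection operators. First I would verify the easy inclusion $e_{*,I} \subseteq \ker \slashed{\partial}^{(e)}$. Given $\mc{k} \in \ul{\mc{k}}$, its lift $\wc{\mc{k}}$ is a Cauchy matched function on $\mc{C}$; since $\mc{k}$ is a jacobi field on $\mb{S}$ and the canonical projection is an isometry on the interior of $\mc{C}$, the lift $\wc{\mc{k}}$ is itself a jacobi field on $\mc{C}$, i.e.\ $L\wc{\mc{k}} = 0$. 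By non-degeneracy of $\mc{C}$ (each spherical domain and each CMC neck is non-degenerate) and uniqueness, $\wc{\mc{k}} = \dot{\mc{h}}_{\mc{C}, \mc{tr}(\wc{\mc{k}})}$. Being Cauchy matched means the conormal derivative $\partial_\eta \wc{\mc{k}}$ is odd, so its even part vanishes: $\slashed{\partial}^{(e)}(\mc{tr}(\wc{\mc{k}})) = (\partial_\eta \dot{\mc{h}}_{\mc{C},\mc{tr}(\wc{\mc{k}})})^{(e)} = 0$. Hence $\mc{tr}(\wc{\mc{k}}) \in \ker\slashed{\partial}^{(e)}$.

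For the reverse inclusion, suppose $f \in \ker\slashed{\partial}^{(e)}$, so $f$ is even and $(\partial_\eta \dot{\mc{h}}_{\mc{C},f})^{(e)} = 0$, i.e.\ $\partial_\eta \dot{\mc{h}}_{\mc{C},f}$ is odd. Set $u := \dot{\mc{h}}_{\mc{C},f}$. Then $\mc{tr}(u) = f$ is even so $u$ is Dirichlet matched, and $\partial_\eta u$ odd means $u$ is Neumann matched; thus $u$ is Cauchy matched. By Lemma \ref{Prop:MatchedFunctions} together with elliptic regularity, $u$ descends to a smooth function $\wh{u}$ on $\mb{S}$, and since $Lu = 0$ and the projection is a local isometry, $\wh{u}$ is a jacobi field on $\mb{S}$. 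By Lemma \ref{Prop:SKernel}, $\wh{u} \in \ul{\mc{k}}$. Finally $u = \wc{\wh{u}}$ (a Cauchy matched jacobi field equals the lift of its projection, since both solve the same well-posed boundary value problem on $\mc{C}$ by non-degeneracy), so $f = \mc{tr}(u) = \mc{tr}(\wc{\wh{u}}) \in \mc{tr}(\wc{\ul{\mc{k}}}) = e_{*,I}$. This establishes $\ker\slashed{\partial}^{(e)} \subseteq e_{*,I}$.

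The one point requiring genuine care is the implication ``$\partial_\eta \dot{\mc{h}}_{\mc{C},f}$ is odd $\implies$ $\dot{\mc{h}}_{\mc{C},f}$ descends to a jacobi field on $\mb{S}$.'' The subtlety is that a priori $\dot{\mc{h}}_{\mc{C},f}$ is only defined componentwise on $\mc{C}$ and need only match in Dirichlet data across paired boundary components via $*$, which identifies $\partial$ with $\partial^*$ after a translation; one must check that the translation identification is exactly the one under which the canonical projection $\pi$ glues the components into $\mb{S}$, so that Cauchy matching really does produce a $C^1$ — hence by elliptic regularity smooth — function on the honest union $\mb{S}$. This is precisely the content of Lemmas \ref{Prop:MatchedFunctions} and the lift lemma, and once those are invoked the argument closes. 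I would also note in passing that $\slashed{\partial}^{(e)}$ is self-adjoint (already established), which makes $\ker\slashed{\partial}^{(e)}$ the exact obstruction space, so the proposition identifies the co-kernel of the type II linearization with the finite-dimensional space $e_{*,I}$ of translational jacobi fields on the spheres of $\mb{S}$.
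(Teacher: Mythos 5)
Your proof is correct and follows essentially the same route as the paper: use Cauchy matching to descend the jacobi field $\dot{\mc{h}}_{\mc{C},g}$ to $\mb{S}$ via Lemma \ref{Prop:MatchedFunctions}, identify it as translational via Lemma \ref{Prop:SKernel}, and conclude. The only difference is that you spell out both inclusions explicitly, whereas the paper's written proof carries out only the inclusion $\ker\slashed{\partial}^{(e)}\subseteq\ul{e}_{*,I}$ and leaves the converse (lift of a translational jacobi field has odd conormal derivative) implicit; your additional paragraph supplies that easy half cleanly.
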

              
\begin{proof}
Pick $g \in \ul{g}$ and suppose first that $\slashed{\partial}^e (g)  =0$. Then  $\dot{\mc{h}}_{\mc{C}, g}$ is Cauchy matched and thus by Lemma  \ref{Prop:MatchedFunctions} the projection  $\wh{\dot{\mc{h}}}_{\mc{C}, g}$ of $\dot{\mc{h}}_{\mc{C}', g}$ is a well-defined smooth jacobi field on $\mb{S}$. Thus, by Lemma \ref{Prop:SKernel},  it follows that $\wh{\dot{\mc{h}}}_{\mc{C}, g}$ is in $\ul{\mc{k}}$. This is equivalent to the statement that  $\dot{\mc{h}}_{\mc{C}, g}$ belongs to $\wc{\ul{\mc{k}}}$ or, equivalently, that $\mc{tr}(\dot{\mc{h}}_{\mc{C}, g})$ belongs to $ \ul{e}_{*,I}: = \mc{tr}\left(\wc{\ul{\mc{k}}}\right)$.
\end{proof}
As a direct consequence we have:

\begin{corollary}\label{LambdaPrimeIso}
The map $\slashed{\partial}^{(e)}$ is a bounded  isomorphism of $\ul{g}_{II}$ onto $\ul{e}_{*, II}$.
\end{corollary}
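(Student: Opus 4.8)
The plan is to deduce Corollary \ref{LambdaPrimeIso} essentially for free from the structural facts already assembled, so the ``proof'' is mostly a matter of assembling three ingredients in the right order. First I would record that $\slashed{\partial}^{(e)}$ is self-adjoint on the space $\ul{e}_*$ of even $C^{1,\alpha}$ functions (the Proposition just before this Corollary) and that, as an operator between the appropriate H\"older spaces, it is bounded: this is immediate from ellipticity of $\slashed{\partial}$ together with the fact that it is first order, so it maps $\ul{g}\subset C^{2,\alpha}(\partial\mc{C})$ boundedly into $\ul{e}_*\subset C^{1,\alpha}(\partial\mc{C})$. Being a self-adjoint elliptic pseudodifferential operator of order one (principal symbol $-|\xi|$), it is Fredholm of index zero on these spaces, so its cokernel is isomorphic to its kernel and surjectivity onto the orthogonal complement of the kernel is equivalent to injectivity there.

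Next I would invoke Proposition \ref{LPrimeKernel}, which identifies $\ker\slashed{\partial}^{(e)}$ exactly with $\ul{e}_{*,I}=\mc{tr}(\wc{\ul{\mc k}})$. By the definition given just above the Corollary, $\ul{g}_{II}$ is the $L^2$-orthogonal complement of $\ul{e}_{*,I}$ inside $\ul{g}$, and $\ul{e}_{*,II}$ the orthogonal complement of $\ul{e}_{*,I}$ inside $\ul{e}_*$. So the claim to prove is precisely that $\slashed{\partial}^{(e)}$ restricts to a bounded isomorphism $\ul{g}_{II}\to\ul{e}_{*,II}$. Self-adjointness gives that $\slashed{\partial}^{(e)}$ preserves orthogonal complements of its kernel: if $g\perp\ker$, then for any $k\in\ker$ we have $\langle \slashed{\partial}^{(e)}g,k\rangle=\langle g,\slashed{\partial}^{(e)}k\rangle=0$, so $\slashed{\partial}^{(e)}g\in\ul{e}_{*,II}$; hence the restriction to $\ul{g}_{II}$ is well-defined with target $\ul{e}_{*,II}$. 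Injectivity of this restriction is clear since the kernel has been quotiented out, and by index zero the dimension count forces surjectivity onto $\ul{e}_{*,II}$; boundedness of the inverse then follows from the open mapping theorem (or directly from elliptic estimates on the complement of the kernel).

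The one point that needs a word of care — and which I expect to be the only real obstacle — is matching function spaces: $\slashed{\partial}^{(e)}$ a priori loses one derivative, mapping $C^{2,\alpha}$ to $C^{1,\alpha}$, whereas $\ul{g}_{II}$ and $\ul{e}_{*,II}$ live in different H\"older classes, so ``isomorphism'' must be understood as a bounded bijection between these two Banach spaces with bounded inverse, and the Fredholm/index-zero statement must be set up in the correct functional-analytic category (one standard fix is to compose with the elliptic smoothing implicit in solving the Jacobi equation, or to phrase everything in terms of the associated quadratic form on $H^{1/2}$ and then bootstrap regularity). Once the spaces are pinned down, the proof is just: self-adjoint $\Rightarrow$ index zero and preserves $\ker^\perp$; $\ker=\ul{e}_{*,I}$ by Proposition \ref{LPrimeKernel}; therefore the induced map on the complements is an injective Fredholm map of index zero, hence an isomorphism, with boundedness of both directions from elliptic estimates. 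I would write this up as a three-sentence deduction citing the preceding Proposition and Proposition \ref{LPrimeKernel}, flagging the H\"older-space bookkeeping in a parenthetical remark rather than belaboring it.
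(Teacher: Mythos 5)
Your argument is exactly the one the paper intends: the paper states the corollary as a ``direct consequence'' of the self-adjointness of $\slashed{\partial}^{(e)}$ (hence Fredholm of index zero, as remarked just before Proposition \ref{LPrimeKernel}) together with the kernel identification in Proposition \ref{LPrimeKernel}, and gives no further proof. Your explicit Fredholm/orthogonal-complement deduction, including the parenthetical caution about the $C^{2,\alpha}\to C^{1,\alpha}$ H\"older bookkeeping, is the standard way to fill in what the paper leaves implicit.
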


\subsection{Resolving solutions}
We  now show that the type $II$ parameters generate the space $\ul{e}_{*, II}$. We will need the following Lemma:
\begin{lemma}
 The mapping $\rho \rightarrow \rho^\perp$, taking a piecewise translation of $\partial \mc{S}$ to its normal component $\partial^\perp$ along $\mc{S}$  is an isomorphism  of $\ul{\rho}$ onto the space lower modes on $\partial\mc{S}$.
\end{lemma}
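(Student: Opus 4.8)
The plan is to prove this by a dimension count combined with injectivity. First I would fix a single component $\mc{S}_\alpha$ of $\mc{S}$ together with its boundary components $\partial_1, \ldots, \partial_k \subset \partial \mc{S}_\alpha$. A piecewise translation $\rho$ of $\partial \mc{S}$ assigns to each boundary circle $\partial_i$ a translation vector $v_i \in \mb{R}^3$, so $\rho|_{\partial_i}$ acts by $p \mapsto p + v_i$ and the associated variation field restricted to $\partial_i$ is the constant vector field $v_i$. Its normal part $\rho^\perp|_{\partial_i} = v_i \cdot N_{\mc{S}_\alpha}$ is then the restriction to $\partial_i$ of a linear coordinate function on $\mb{R}^3$, hence — since each component of $\mc{S}$ lies in a sphere of radius $1/2$ and $\partial_i$ is a circle on that sphere — it is a lower mode on $\partial_i$. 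So the map $\rho \mapsto \rho^\perp$ lands in the space of lower modes on $\partial \mc{S}$, and it is clearly linear.

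Next I would count dimensions. Each boundary circle $\partial_i$ contributes a $3$-dimensional space of lower modes (spanned by $1, \cos\theta, \sin\theta$), and the space of translations assigned to $\partial_i$ is also $3$-dimensional ($v_i \in \mb{R}^3$). So if there are $m$ boundary circles in $\partial \mc{S}$ total, both $\ul{\rho}$ and the space of lower modes on $\partial \mc{S}$ have dimension $3m$. It therefore suffices to show the map $\rho \mapsto \rho^\perp$ is injective, or equivalently surjective, and by the dimension match either one implies the other.

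For injectivity, suppose $\rho^\perp = 0$, i.e. $v_i \cdot N_{\mc{S}_\alpha} \equiv 0$ along $\partial_i$ for every $i$. The key point is that a circle $\partial_i$ on a round sphere of radius $1/2$ is not contained in any plane through the origin parallel to a fixed direction unless that direction is tangent to... more precisely, the unit normal $N_{\mc{S}_\alpha}$ along a circle $\partial_i$ (which is a non-great circle in general, but in any case a genuine circle) spans a $2$-dimensional subspace of $\mb{R}^3$ as the point varies over $\partial_i$ — indeed $N_{\mc{S}_\alpha}(p) = 2(p - c_\alpha)$ where $c_\alpha$ is the sphere's center, and as $p$ ranges over the circle $\partial_i$ the vectors $p - c_\alpha$ sweep out a non-degenerate cone, hence span a plane. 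The condition $v_i \cdot N_{\mc{S}_\alpha} \equiv 0$ on $\partial_i$ forces $v_i$ to be orthogonal to that $2$-plane, so $v_i$ lies in a $1$-dimensional space; but we need $v_i = 0$. Here I would use that $\partial_i$ is an \emph{extrinsic} circle obtained by removing a small geodesic disk, so it is not a great circle — wait, it could be if the removed disk is a hemisphere, but $r_0$ is small, so it is a small circle, definitely not great — hence the plane it spans does not pass through $c_\alpha$, and the orthogonal complement direction $w$ satisfies $w \cdot (p - c_\alpha) = $ const $\neq 0$ along $\partial_i$; then $v_i \cdot N = 2 v_i \cdot (p - c_\alpha)$ is a nonzero constant unless $v_i = 0$. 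This pins down $v_i = 0$ for each $i$, so $\rho = 0$.

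The main obstacle, and the step needing the most care, is the injectivity argument: one must rule out the degenerate case and confirm that the constant term (the $1$ mode) of $v_i \cdot N_{\mc{S}_\alpha}$ restricted to a \emph{small} circle is genuinely nonzero when $v_i$ is the ``bad'' direction. This comes down to the elementary fact that a small circle on a sphere lies in an affine plane not through the center, whose normal direction, when dotted against the radial field, gives a nonvanishing constant. Once injectivity is in hand, the dimension count $\dim \ul{\rho} = 3m = \dim(\text{lower modes on } \partial\mc{S})$ upgrades it to an isomorphism, completing the proof. I would write this up in two short paragraphs: one establishing that $\rho^\perp$ is a lower mode and computing dimensions, and one giving the injectivity argument via the radial expression $N = 2(p-c)$.
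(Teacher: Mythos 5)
Your approach matches the paper's in substance: both compute the unit normal along a boundary circle of $\mc{S}$ and observe that its components realize all the lower modes $\cos\theta$, $\sin\theta$, $1$ with nonzero coefficients. The paper does this in coordinates, normalizing the circle to lie in $\{z = 0\}$ centered at the origin and reading off $N = \delta\sqrt{1 - \delta^2}\, e_r(\theta) + \left(\tfrac{1}{2} - \delta^2\right)e_z$; you do it via the radial formula $N = 2(p - c_\alpha)$ together with a dimension count. The two are essentially the same computation.

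One intermediate claim in your injectivity argument is false and needs repair. You assert that the normals $N(p) = 2(p - c_\alpha)$, as $p$ ranges over a small boundary circle $\partial_i$, span only a $2$-plane; in fact they span all of $\mb{R}^3$. Writing $\partial_i$ in the affine plane $\{(x - c_\alpha)\cdot u = h\}$ with $0 < h < 1/2$, one has $p - c_\alpha = h u + \sqrt{\tfrac{1}{4} - h^2}\left(\cos\theta\, e_1 + \sin\theta\, e_2\right)$, and the constant nonzero $u$-component together with the rotating $e_1, e_2$-components span $\mb{R}^3$ (a non-degenerate cone spans the full space, not a plane; it spans only a plane in the degenerate great-circle case $h = 0$, which you correctly exclude). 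The nonvanishing of the $u$-component is exactly the observation you invoke in your second step, so the error does not sink the conclusion: once corrected, $v_i \perp \mathrm{span}\{N(p) : p \in \partial_i\} = \mb{R}^3$ forces $v_i = 0$ directly, rendering your second step redundant. Equivalently, $v_i \cdot N = 2h(u\cdot v_i) + 2\sqrt{\tfrac{1}{4}-h^2}\bigl((e_1\cdot v_i)\cos\theta + (e_2\cdot v_i)\sin\theta\bigr)$ is visibly a linear isomorphism onto the lower modes of $\partial_i$, which is precisely the paper's coordinate computation. So all the right observations are present, but the stated logical chain passes through a false premise and should be rewritten accordingly.
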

\begin{proof}
  This is easily established by considering each boundary component separately: On each component $\partial$ of $\partial \mc{C}$, we have that $\rho$ is in the span of $e_x$, $e_y$   and $e_z$.  We can assume, after  a rigid motion of  the coordinate axes that $\partial$ is a circle in the plane $z = 0$ and centered at the origin. Parametrizing $\partial$ by the angular parameter $\theta$,  the unit normal at $\partial$ is of the form
\[
N = \left(\delta \sqrt{1 -\delta^2}\right)e_{r}(\theta)  + \left(\frac{1}{2} - \delta^2 \right) e_z.
\]
where $e_r(\theta) = \cos(\theta) e_x + \sin(\theta) e_z$. The $x$, $y$ and $z$ components of $N_{\mc{S}}$ are then in the span of $\cos(\theta)$, $\sin(\theta)$, and $1$, which are precisely the lower modes on $\partial$. 
\end{proof}

\begin{remark}
Throughout the remainder of the article, we will identify the space $\ul{\rho}$ of piecewise translations of $\partial \mc{S}$ with the subspace of piecewise translations $\rho$ such that $\rho^\perp$ is orthogonal to the space $\ul{e}_{*, I}$.
\end{remark}

We then have:
\begin{corollary}\label{TypeIIISo}
The mapping $\dot{\Lambda}: \ul{\xi}_{II} \rightarrow \ul{e}_{*, II}$ is an isomorphism.
\end{corollary}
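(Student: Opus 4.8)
The plan is to combine the identification of $\dot\Lambda$ on type II parameters with the isomorphism property of $\slashed\partial^{(e)}$ already established. First I would recall from the Lemma just proved that $\dot\Lambda(f,\rho) = \slashed\partial^{(e)}(f + \rho^\perp)$, where $\rho$ is identified with its even extension to $\partial\mc{C}$, and that by the Remark we are restricting $\ul{\rho}$ to those piecewise translations with $\rho^\perp$ orthogonal to $\ul{e}_{*,I}$. Thus the map $(f,\rho)\mapsto f + \rho^\perp$ sends $\ul{\xi}_{II} = \ul\rho\times\ul f$ into $\ul g_{II}$, since $\ul g_{II}$ is precisely the orthogonal complement of $\ul{e}_{*,I}$ in the space $\ul g$ of even $C^{2,\alpha}$ functions, and both $f$ (which is a higher mode by definition of $\ul f$) and $\rho^\perp$ (orthogonal to $\ul{e}_{*,I}$ by convention) land there. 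Composing with $\slashed\partial^{(e)}\colon \ul g_{II}\to\ul{e}_{*,II}$, which is a bounded isomorphism by Corollary \ref{LambdaPrimeIso}, gives that $\dot\Lambda$ factors as $\slashed\partial^{(e)}\circ \Phi$ where $\Phi(f,\rho) = f + \rho^\perp$.

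It then suffices to show that $\Phi\colon \ul{\xi}_{II}\to \ul g_{II}$ is an isomorphism. For surjectivity, given $g\in\ul g_{II}$, decompose $g = \mr g + \ol g$ into its higher and lower parts on $\partial\mc{C}$; the higher part $\mr g$ lies in $\mr C^{2,\alpha}(\partial\mc{C})$ and, being even, is identified with an element of $\ul f$, while $\ol g$ is a lower mode which, by the penultimate Lemma, equals $\rho^\perp$ for a unique piecewise translation $\rho$ of $\partial\mc{S}$; since $g$ is orthogonal to $\ul{e}_{*,I}$ and $\mr g$ automatically is (as $\ul{e}_{*,I}$ is spanned by traces of lifts of translational jacobi fields, hence lower modes), $\ol g$ is orthogonal to $\ul{e}_{*,I}$ as well, so $\rho$ lies in the restricted space $\ul\rho$. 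Hence $g = \Phi(\mr g, \rho)$. Injectivity follows from the same decomposition: the higher part of $\Phi(f,\rho)$ is $f$ and the lower part is $\rho^\perp$, and $\rho\mapsto\rho^\perp$ is injective by that Lemma, so $\Phi(f,\rho)=0$ forces $f=0$ and $\rho=0$. Boundedness in both directions is immediate since the higher/lower mode decomposition is a bounded projection and $\rho\mapsto\rho^\perp$ is a linear isomorphism between finite-dimensional spaces.

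Finally I would note that $\dot\Lambda = \slashed\partial^{(e)}\circ\Phi$ is a composition of bounded isomorphisms, hence a bounded isomorphism, which is the claim. The only point requiring a little care is the bookkeeping that ensures the image of $\Phi$ is exactly $\ul g_{II}$ and not a larger or smaller subspace — specifically, that the convention fixing $\ul\rho$ to translations with $\rho^\perp\perp\ul{e}_{*,I}$, together with the fact that $\ul f$ consists of higher modes, matches up precisely with the definition of $\ul g_{II}$ as the orthogonal complement of $\ul{e}_{*,I}$ in $\ul g$; this rests on the observation that $\ul{e}_{*,I} = \mc{tr}(\wc{\ul{\mc k}})$ consists entirely of lower modes, so that the orthogonal splitting of $\ul g$ respects the higher/lower mode splitting. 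I expect this compatibility check to be the main (though still routine) obstacle; everything else is a direct appeal to the preceding lemmas and corollaries.
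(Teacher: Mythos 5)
Your proposal is correct and follows essentially the same route as the paper: both factor $\dot\Lambda$ through $\slashed\partial^{(e)}$ via the higher/lower mode decomposition of an element of $\ul{g}_{II}$ and then invoke Corollary \ref{LambdaPrimeIso}. You are somewhat more careful than the paper's proof, which only exhibits a preimage for each $e_*\in\ul{e}_{*,II}$ and leaves injectivity and boundedness implicit, and you correctly identify and verify the small compatibility point the paper glosses over, namely that $\ul{e}_{*,I}$ consists of lower modes so that the restricted $\ul\rho$ exactly accounts for the lower part of any $g\in\ul g_{II}$.
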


\begin{proof}[Proof of Corollary \ref{TypeIIISo}]
Since $\slashed{\partial}^{(e)}$ is self adjoint, its  image is orthogonal to the kernel space $\ul{e}_{*, I}$  and thus lies in $e_{*, II}$. Now, pick $ e_* \in \ul{e}_{*, II}$ and consider  the  solution $g \in \ul{g}_{II}$ to the equation
\[
\slashed{\partial}^{(e)} (g) = e.
\]
Write $g = \ol{g} + \mr{g}$, where $\ol{g}$ and $\mr{g}$ are the lower and higher parts of $g$, respectively. We then set $f = \mr{g} \in \ul{f}$ and we let $\rho$ be determined by the condition that $\rho^\perp = \ol{g}$. We then have
\[
\dot{\Lambda}_{f, \rho} = \left(\slashed{\partial} (f + \rho^\perp)\right)^{(e)} = \slashed{\partial}^{(e)} (g) = e.
\]
This completes the proof.

\end{proof}

\section{Spheres on the integer  lattice}\label{SpheresLattice}
We now specialize to the case that the collection $\mb{S}$ comprises the spheres  $\mb{S}_{i, j}$ of radius $1/2$ centered at the points $(0, i, j)$. The singular set $\mc{s}$ in this case comprises the points $\mc{s}^{v}_{i, j} : = (0, i, j + 1/2) :  = \mb{S}_{i, j} \cap \mb{S}_{i, j + 1}$   as well as the points $\mc{s}^{h}_{i, j} : =  (0, i + 1/2, j) = \mb{S}_{i, j} \cap \mb{S}_{i + 1, j}$. We refer to singularities of the form $\mc{s}^{v}_{i, j}$ as \emph{vertical} and to those of the form $\mc{s}^{h}_{i, j}$ as \emph{horizontal}. Correspondingly,  the singular part  $\mc{N}$ of $\mb{S}$ is the union of the two subsets $\mc{N}^{v}$ and $\mc{N}^h$, comprising the necks $\mc{N}^{v}_{i, j} : =  B_{\delta}\left(\mc{s}^v_{i, j} \right) \cap \mb{S} $  and  $\mc{N}^{h}_{i, j} : =  B_{\delta}\left(\mc{s}^h_{i, j} \right) \cap \mb{S} $, respectively. We refer to necks belonging to $\mc{N}^v$ and $\mc{N}^h$ as vertical and horizontal, respectively.   The non-singular part $\mc{S}$ of $\mb{S}$ comprises the components $\mc{S}_{i, j} \subset \mb{S}_{i, j}$, each of which differs from another by a translation. In particular, the  component $\mc{S}_{0, 0}$ lies in the sphere $\mb{S}_{0, 0}$ centered at the origin, and is obtain from $\mb{S}_{0, 0}$ by removing disks of radius $\delta$ from the four points $\left(0, \pm \frac{1}{2}, \pm \frac{1}{2}\right)$. Each component $\mc{S}_{i, j}$ is incident--meaning that it shares a boundary component--with four necks in $\mc{N}$, namely the vertical necks $\mc{N}^v_{i, j}$,  $\mc{N}^v_{i, j + 1}$ and the horizontal necks $ \mc{N}^h_{i, j}$ and $\mc{N}^h_{i + 1, j}$. We refer to $\mc{N}^v_{i, j}$,  $\mc{N}^v_{i, j + 1}$ as the \emph{bottom} and \emph{top} necks incident to $\mc{S}_{i, j}$ and to $ \mc{N}^h_{i, j}$ and $\mc{N}^h_{i + 1, j}$ as the \emph{left} and \emph{right} necks incident to $\mc{S}_{i, j}$.

\subsection{Leaves and branches}
A \emph{leaf} in $\mc{C}$ is a component of $\mc{S}$ along with  the four incident half necks  in $\mc{N}$. Thus, the leaf $\mc{L}_{i, j}$ contains $\mc{S}_{i, j}$ along with the half necks $\mc{N}^{h, +}_{i, j}$, $\mc{N}^{h, -}_{i  + 1, j }$, $\mc{N}^{v, +}_{i, j}$ and $\mc{N}^{v, -}_{i, j + 1}$, to which we refer as the left, right, bottom and top half necks in $\mc{L}_{i, j}$, respectively. We can apply translations to the half necks in each leaf so that $\partial = \partial^*$ for each component $\partial$ of $\partial \mc{S}_{i, j}$.  Given $\xi \in\ul{\xi}$ with $\Lambda_{\xi} = 0$ the canonical  projection $\pi \mc{L}_{i, j}$ of $\mc{L}_{i, j}$ into $\mb{R}^3$  is then  a   smooth surface with boundary $w_{i,j}$  comprising the waists of the four incident half necks in $\mc{L}_{i, j}$. We label the components of $w_{i, j}$ lying in the left, right, bottom and top half necks in $\mc{L}_{i, j}$ as $w_{i, j, l}$, $w_{i, j, r}$, $w_{i, j, b}$ and $w_{i, j, t}$, respectively.

A \emph{branch} in $\mc{C}$ is a collection of leaves belonging to the same column. Thus the $i^{th}$ branch $\mc{B}_i$ in $\mc{C}$ is the collection of leaves $\mc{L}_{i, j}$ where $j$ is fixed. Since the top neck of $\mc{L}_{i, j}$ and the bottom neck of $\mc{L}_{i, j + 1}$ differ by a translation, we can construct a unique piecewise translation of the leaves that vanishes on $\mc{L}_{i, 0}$ and such that  $w_{i, j, t} = w_{i, j + 1, b}$, and we assume throughout that this motion has been applied.  When $\Lambda_{\xi} = 0$,   each branch  is a  smooth surface with boundary  comprising the waists of the horizontal half necks half necks in $\mc{B}$ and each branch depends differentiably on compact subsets of $\mb{R}^3$ on $\xi$.

\subsection{Horizontal and vertical differences. Weighted norms}
In the following, we let $A$ be a banach space, and we let $f = f_{i, j}$ be a family in $A$ indexed by pairs $(i, j)$ of integers.
\begin{definition}\label{Differences}
 The \emph{vertical differences} $f^{\Delta}  $ of $f$ is the family given by $f^{\Delta}_{i, j} = f_{i, j + 1} - f_{i, j }$ . The \emph{horizontal differences} $f^{\vtr}$ of $f$ is the family $f^{\vtr}_{i, j} = f_{i + 1, j} - f_{i , j}$. 
\end{definition}
  We will need the following weighted norms $\| -\|_{\mu}$.
\begin{definition}\label{WeightedNorms}
  Given $\mu \in \mb{R}$, we set $\left\| f\right\|_{\mu} : = \sup_{j} \cosh^{\mu}(j) \left\| f_{i, j}\right\|$.  
\end{definition}
Finally, we will need the \emph{ vertically graded norms}, which we  introduce below:
\begin{definition}\label{GradedNorm}
 We define the vertically graded norm $\left\| -\right\|^{\sim}_{\mu}$ as follows: Given $f \in A$ we set $\left\|f \right\|^{\sim}_{\mu} =\sup_{i}  \sup_j\left(\| f_{i, 0}\| + \cosh^{ \mu} (j)\left\| f^{\Delta}_{i, j}\right\| \right)$ 
\end{definition}
\begin{lemma}\label{Prop:WeightedNorm}
For $\mu > 0$ it holds  that $ \left\| f\right\| \leq^C  \left\| f\right\|^{\sim}_{-\mu}$.
\end{lemma}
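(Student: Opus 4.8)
The plan is to bound $\|f_{i,j}\|$ for each fixed $i$ and $j$ by telescoping the vertical differences $f^{\Delta}_{i,k}$ from $k=0$ up (or down) to the index $j$, then use the exponential weight $\cosh^{-\mu}(k)$ supplied by the graded norm $\|\cdot\|^{\sim}_{-\mu}$ to sum a convergent geometric-type series. Concretely, for $j>0$ one writes $f_{i,j} = f_{i,0} + \sum_{k=0}^{j-1} f^{\Delta}_{i,k}$, so by the triangle inequality $\|f_{i,j}\| \le \|f_{i,0}\| + \sum_{k=0}^{j-1} \|f^{\Delta}_{i,k}\|$. By Definition \ref{GradedNorm}, both $\|f_{i,0}\|$ and $\cosh^{-\mu}(k)\|f^{\Delta}_{i,k}\|$ are bounded by $\|f\|^{\sim}_{-\mu}$, hence $\|f_{i,j}\| \le \|f\|^{\sim}_{-\mu}\bigl(1 + \sum_{k=0}^{j-1} \cosh^{\mu}(k)\bigr)$.

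The key point is then that, despite the weight $\cosh^{\mu}(k)$ \emph{growing} for $\mu>0$, the definition of the graded norm with the \emph{negative} index $-\mu$ attaches the decaying weight $\cosh^{-\mu}(k)$ to the differences, so that when we reconstruct $f_{i,j}$ the reconstructed quantity is dominated by the partial sum of $\cosh^{\mu}(k)$ only through the uniform bound $\cosh^{-\mu}(k)\|f^{\Delta}_{i,k}\| \le \|f\|^{\sim}_{-\mu}$ — wait, that gives growth, not decay. The correct reading: the graded norm $\|\cdot\|^{\sim}_{-\mu}$ measures $f_{i,0}$ together with $\cosh^{-\mu}(j)\|f^{\Delta}_{i,j}\|$, i.e. it \emph{penalizes} differences at large $|j|$ by the large factor $\cosh^{-\mu}(j)$... no. Let me restate cleanly: in Definition \ref{GradedNorm} the weight applied is $\cosh^{\nu}(j)$ with $\nu$ the subscript; with subscript $-\mu$ and $\mu>0$ the weight is $\cosh^{-\mu}(j)$, which is small for large $|j|$, so a finite graded norm only forces $\|f^{\Delta}_{i,j}\| \le \cosh^{\mu}(j)\|f\|^{\sim}_{-\mu}$, allowing the differences to \emph{grow}. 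That is not enough to bound $\|f_{i,j}\|$ by a constant multiple of $\|f\|^{\sim}_{-\mu}$ unless one reads the subscript with the opposite sign convention, under which $\|f^{\Delta}_{i,j}\| \le \cosh^{-\mu}(j)\|f\|^{\sim}_{-\mu}$. Taking that (intended) reading, one gets $\|f_{i,j}\| \le \|f\|^{\sim}_{-\mu}\bigl(1 + \sum_{k\ge 0}\cosh^{-\mu}(k)\bigr)$, and since $\cosh(k)\ge \tfrac12 e^{|k|}$ the series $\sum_{k\ge 0}\cosh^{-\mu}(k)$ converges to a finite constant $C_\mu$ depending only on $\mu>0$. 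Hence $\|f\| = \sup_{i,j}\|f_{i,j}\| \le (1+C_\mu)\|f\|^{\sim}_{-\mu} =: C\|f\|^{\sim}_{-\mu}$, which is the claim.

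The steps in order: (i) fix $i$, telescope to express $f_{i,j}$ in terms of $f_{i,0}$ and the differences $f^{\Delta}_{i,0},\dots,f^{\Delta}_{i,j-1}$ (and symmetrically for $j<0$ using $f^{\Delta}_{i,-1},\dots,f^{\Delta}_{i,j}$); (ii) apply the triangle inequality in the Banach space $A$; (iii) bound each term by the graded norm via Definition \ref{GradedNorm}; (iv) sum the resulting geometric-type series $\sum_{k}\cosh^{-\mu}(k) < \infty$ using $\cosh(k)\ge \tfrac12 e^{|k|}$ and $\mu>0$; (v) take the supremum over $i,j$. I expect the only real subtlety to be pinning down the sign convention in Definition \ref{GradedNorm} — i.e. confirming that the decaying weight is attached to the differences — after which the estimate is a routine telescoping-plus-geometric-series argument; the hypothesis $\mu>0$ enters solely to guarantee convergence of $\sum_k \cosh^{-\mu}(k)$.
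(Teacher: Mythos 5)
Your argument is the same telescoping-plus-geometric-series proof as the paper's: expand $f_{i,j} = f_{i,0} + \sum_{k<j} f^{\Delta}_{i,k}$, bound each difference using the graded norm, and sum the convergent series $\sum_k \cosh^{-\mu}(k)$ using $\mu > 0$. You correctly flag (and resolve) the sign-convention ambiguity in Definition \ref{GradedNorm} — the estimate requires the decaying weight $\cosh^{-\mu}(k)$ on the differences, which is the reading the paper's own proof silently uses, even though a literal substitution of $-\mu$ into the displayed definition would attach the growing weight and make the lemma fail.
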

\begin{proof}
We have 
\begin{align*}
\|f_{i, j}\| & = \|f_{i, 0}\| + \sum_{k = 1}^{j  - 1} \left\|f^{ \Delta}_{i, j} \right\| \\
& \leq \| f_{i, 0}\| + \sum_{k = 1}^{j - 1} \cosh^{-\mu}(k) \| f \|_{-\mu}\\
& \leq^C \| f\|^{\sim}_{-\mu}
\end{align*}
\end{proof}

\begin{remark}\label{WhatIsMu}
We will  regard $\mu$ as a fixed positive constant throughtout the remainder of this article. We will then choose base flux small  $\tau_0$  small relative to $\mu$ (See, for example Proposition \ref{OperatorinWeightedNorm}).
\end{remark}

\subsection{Distinguished subspaces of parameters}
The type $I$ and type $II$ parameters, as well as their restrictions to the subspaces horizontal and vertical necks, will assume different roles in our construction,  and for this reason it is useful to introduce notation for  the following subspaces of the parameter space $\ul{\xi}$. 

\begin{definition}\label{ParameterSubspaces}
 We let $\ul{e}_I$ denote the subspace of parameters $\ul{\xi}_I$ that vanish on $\mc{N}^h$. We also set  $\ul{e}_{II} = \ul{\xi}_{II}$ and let $\ul{e}$ denote the direct sum of $\ul{e}_{I}$ and $\ul{e}_{II}$. Finally, we let $\ul{v} : = \ul{\xi}_I^h$ the subspace of type $I$ parameters that are supported on $\mc{N}^h$. Thus we have
 \[
 \ul{\xi} = \ul{\xi}_I \oplus \ul{\xi}_{II} = \ul{v} \oplus \ul{e}_I  \oplus \ul{e}_{II} = \ul{v} \oplus \ul{e}.
 \]
\end{definition}

\subsection{$\mc{G}$-invariant perturbations}
 The collection $\mb{S}$ is invariant under the group $\mc{G}$ generated by reflections through the lines $y= i,  i + 1/2$ and $z = j, j + 1/2$, where $i$ ranges over the integers.  The group $\mc{G}®$ also contains the translations by $e_y$ and $e_z$, which act transitively on the horizontal and vertical catenoids, as well as on the spherical domains in $\mc{C}$. This implies that the $\mc{G}$ invariant subspaces of $\ul{\tau}$ and $\ul{\phi}$ are two dimensional, since flux and phase assignments are determined by their values on a single  vertical and horizontal neck, which may be freely prescribed. Since $\mc{G}$ invariance directly implies orthogonality to the obstruction  space $\ul{e}_{*, I}$ we have:
 \begin{proposition}\label{SymmetryPerturbations}
There is $\epsilon > 0$ and $C > 0$ such that: Given $\mc{G}$-invariant $ \xi_I \in \ul{\xi}_{I}$  with $\| \xi_{I}\| < \epsilon$ there is a unique $\mc{G}$ invariant  $e_{II}  = \left(e_{II}\right)_{\xi_I}\in  \ul{e}_{II} $ with $\| e_{II}\| \leq C \| \xi_{I} \|$
and such that 
\[
\Lambda_{\xi_I + e_{II}} = 0.
\]
\end{proposition}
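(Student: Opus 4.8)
The plan is to apply the implicit function theorem to the restriction of the defect operator $\Lambda$ to the $\mc{G}$-invariant subspaces. First I would set up the $\mc{G}$-equivariant function spaces: let $\ul{\xi}_{I}^{\mc{G}}$, $\ul{e}_{II}^{\mc{G}}$ and $\ul{e}_{*, II}^{\mc{G}}$ denote the closed subspaces of $\mc{G}$-invariant elements of $\ul{\xi}_{I}$, $\ul{e}_{II}$ and $\ul{e}_{*, II}$, respectively. The key structural point is that the family $\mc{C}$ and the operator $\Lambda$ are $\mc{G}$-equivariant, since $\mb{S}$ is $\mc{G}$-invariant and the construction of the CMC necks and spherical domains in Section \ref{PreEmSpheres} is natural with respect to rigid motions; consequently $\Lambda$ maps $\mc{G}$-invariant parameters to $\mc{G}$-invariant errors, and its linearization $\dot{\Lambda}$ restricts to the $\mc{G}$-invariant subspaces.

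The crucial observation, stated in the paragraph preceding the proposition, is that $\mc{G}$-invariance forces orthogonality to the obstruction space $\ul{e}_{*, I}$: a $\mc{G}$-invariant function on $\partial\mc{C}$ cannot have a nonzero component in $\ul{e}_{*, I} = \mc{tr}(\wc{\ul{\mc{k}}})$, because the elements of $\ul{\mc{k}}$ are translational jacobi fields on the individual spheres, and no nonzero such field on $\mb{S}$ is invariant under the full group $\mc{G}$ (in particular under the reflections through $y = i + 1/2$ and $z = j + 1/2$). Hence $\ul{e}_{II}^{\mc{G}} \subset \ul{g}_{II}$ after trace, and by Corollary \ref{LambdaPrimeIso} together with Corollary \ref{TypeIIISo}, the map $\dot{\Lambda}: \ul{\xi}_{II} \to \ul{e}_{*, II}$ is an isomorphism; restricting to the $\mc{G}$-invariant subspaces (using equivariance and the fact that $\mc{G}$-averaging commutes with $\dot{\Lambda}$) shows that $\dot{\Lambda}: \ul{e}_{II}^{\mc{G}} \to \ul{e}_{*, II}^{\mc{G}}$ is also a bounded isomorphism. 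This is exactly the invertibility hypothesis needed.

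Now I would define the map $F(\xi_I, e_{II}) := \Lambda_{\xi_I + e_{II}}$, regarded as a $C^1$ map from a neighborhood of the origin in $\ul{\xi}_I^{\mc{G}} \times \ul{e}_{II}^{\mc{G}}$ into $\ul{e}_{*}^{\mc{G}}$; by Lemma \ref{ConormalMatching} it is differentiable near zero and $F(0,0) = 0$. Its partial differential in the second variable at the origin is $\dot{\Lambda}|_{\ul{e}_{II}^{\mc{G}}}$, which we have just shown is a bounded isomorphism onto $\ul{e}_{*}^{\mc{G}}$ — here I use that $\ul{e}_{*}^{\mc{G}} = \ul{e}_{*, II}^{\mc{G}}$, since the $\mc{G}$-invariant part of $\ul{e}_{*, I}$ is trivial by the argument above. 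The implicit function theorem then yields $\epsilon > 0$ and a $C^1$ map $\xi_I \mapsto (e_{II})_{\xi_I}$ on $\{\|\xi_I\| < \epsilon\}$ with $(e_{II})_0 = 0$ and $F(\xi_I, (e_{II})_{\xi_I}) = 0$, i.e. $\Lambda_{\xi_I + (e_{II})_{\xi_I}} = 0$; the $\mc{G}$-invariance of $(e_{II})_{\xi_I}$ follows from uniqueness in the implicit function theorem together with the equivariance of $F$. The bound $\|(e_{II})_{\xi_I}\| \le C\|\xi_I\|$ follows from the $C^1$ dependence (the differential of $\xi_I \mapsto (e_{II})_{\xi_I}$ at the origin is $-(\dot{\Lambda}|_{\ul{e}_{II}^{\mc{G}}})^{-1}\circ(\dot{\Lambda}|_{\ul{\xi}_I^{\mc{G}}})$, which is bounded), after shrinking $\epsilon$ if necessary.

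The main obstacle is the verification that the $\mc{G}$-invariant part of the obstruction space $\ul{e}_{*, I}$ is trivial, so that the type II variations alone suffice to solve the equation within the symmetric category; this rests on a careful bookkeeping of how the generators of $\mc{G}$ act on the translational jacobi fields of each sphere $\mb{S}_{i,j}$, and on confirming that the reflections through the half-integer planes act freely enough on $\ul{\mc{k}}$ to kill every invariant element. Everything else — the equivariance of the construction, the reduction of the linearization to the self-adjoint Dirichlet-to-Neumann operator, and the application of the implicit function theorem — is either already established in the excerpt or routine.
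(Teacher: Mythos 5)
Your proposal is correct and follows essentially the same route as the paper's own proof: restrict $\Lambda$ to the $\mc{G}$-invariant subspaces, observe that $\ul{e}_{*,I}$ has trivial $\mc{G}$-invariant part so that $\dot{\Lambda}|_{\ul{e}_{II}^{\mc{G}}}$ is an isomorphism onto $\ul{e}_*^{\mc{G}}$ by Corollary \ref{TypeIIISo}, and invoke the implicit function theorem; your version merely fleshes out the equivariance bookkeeping and the derivation of the linear bound $\|e_{II}\|\leq C\|\xi_I\|$. One small shared imprecision: the group $\mc{G}$ as written acts only in the $(y,z)$-plane and does not by itself kill the constant $\mc{k}^x$ field, so both you and the paper are implicitly relying on the additional reflection symmetry through $\{x=0\}$ (which the configuration $\mb{S}$ enjoys and which the paper invokes later when it notes orthogonality to $\mc{k}^x_{i,j}$).
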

\begin{proof}
$\Lambda$ restricts as a differentiable  map between the $\mc{G}$ invariant subspaces of $\ul{\xi}$ and $\ul{e}_{*}$. Since  the kernel $\ul{e}_{*, I}$ does not survive the symmetries  of $\ul{\mc{G}}$, we have in the $\mc{G}$-invariant setting that $\ul{e}_{II} = \ul{e}$ and thus by Corollary \ref{TypeIIISo}, the linearization $\dot{\Lambda}: \ul{e}_{II} \rightarrow  \ul{e}_{*, II}$ is an isomorphism. The claim then follows directly from the Implicit Function Theorem. 
\end{proof}

The space of $\mc{G}$-invariant type $I$ parameters is four dimensional, with the phase and flux of the vertical and horizonal necks independently prescribable. Let $\ul{\tau}_0$ denote the one dimensional subspace of $\mc{G}$-invariant assignments in $\ul{\xi}_I$ with vanishing phase and with vanishing horizontal flux. We will refer to an element of $\ul{\tau_0}$ as a \emph{base flux assignment}. Proposition \ref{SymmetryPerturbations} then gives a differentiable one parameter family $\tau_0 \in \mb{R} \mapsto e_{\mc{G}, \tau_0}$ of $\mc{G}$-invariant elements of $\ul{e}_{II}$ such that $\Lambda_{\tau_0 + e_{\mc{G}, \tau_0}} = 0$. 

\begin{remark}\label{TotalParamConv}
Throughout the remainder of this article, we will regard the base flux assignment  $\tau_0 \in \ul{\tau}$ as fixed and  small subject to various considerations. We  will  suppress from our notation the dependence of the surfaces $\mc{C}_{\xi}$ on $\tau_0$ as follows: Set $e_0 = e_{\mc{G}, \tau_0}$  and $\xi_{0} :  = \tau_0 + e_0$. We will then write $\Lambda_{\xi}$ to mean   $\Lambda_{\xi + \xi_0}$, unless additional clarity is required. For a given parameter value $\xi \in \ul{\xi}$, we will refer to $\xi + \xi_0$ as the total parameter value corresponding to $\xi$ and denote it by $\xi_{\text{total}}$ and we refer to $\xi_0$ as the background parameter of the construction.
\end{remark}

Since the horizontal flux assignments vanish when   $\xi = 0$, the branches of $\mc{C}$ are smooth surfaces without boundary, invariant under reflections through the lines $y = j$  and $y = j + 1/2$ where $j$ ranges over the integers. It then follows easily that each branch is a surface of rotation and thus a Delaunay surface.  Though this is a standard fact, we include a short proof below:

\subsection{Branches  at $\xi = 0$ are Delaunay surfaces}
Since the horizontal flux assignments vanish when   $\xi = 0$, the branches of $\mc{C}$ are smooth constant mean curvature surfaces without boundary and two ends, and are thus Delaunay surfaces.  Though this is a standard fact, we include a short proof below:

\begin{proposition}\label{RotSymBranches}
Each branch at $\xi = 0$ is a smooth CMC surface of rotation and thus a Delaunay surface.
\end{proposition}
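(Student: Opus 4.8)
The plan is to show that each branch at $\xi = 0$ — being a complete embedded CMC surface with two ends, finite topology, and invariant under the reflections through the lines $y = j$ and $y = j + 1/2$ — must in fact be a surface of rotation about the vertical axis $\{y = 0, z\text{-axis lifted to the }x\text{-axis}\}$, i.e. the $x$-axis through the appropriate column. Once rotational symmetry is established, the surface is automatically a CMC annulus of revolution, and the classification of such surfaces (Delaunay) finishes the argument.

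First I would observe that each branch $\mc{B}_i$ at $\xi = 0$ is genuinely a smooth CMC surface without boundary: this is asserted in the text, since the horizontal flux assignments vanish at $\xi = 0$, so the horizontal half-necks degenerate to flat disks that patch the spherical domains of the column into one smooth surface, and the vertical necks at base flux $\tau_0$ are smooth CMC annuli of revolution by construction (Section \ref{CMCNecks}, the $f = 0$ case). Thus $\mc{B}_i$ is a complete, properly embedded CMC surface with exactly two ends. Next I would invoke the reflection symmetries: $\mc{B}_i$ is fixed by reflection through each plane $\{y = j\}$ and $\{y = j + 1/2\}$ — here these are the planes orthogonal to the $e_y$ direction through the sphere centers and the neck waists. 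Compose two such reflections through adjacent parallel planes to recover that $\mc{B}_i$ is invariant under the full discrete group of translations by $e_y$ generated inside $\mc{G}$; this already pins the asymptotic axis of each Delaunay-type end to the line $\mr{\ell}_i$ parallel to $e_y$ through the column, by the Korevaar--Kusner--Solomon result that each end of a finite-topology CMC surface converges to a Delaunay end.

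The key step is then the following classical rigidity argument, which I would carry out via Alexandrov reflection (moving planes). Because $\mc{B}_i$ is embedded with two ends asymptotic to coaxial Delaunay ends along the common line $\mr{\ell}_i$, the moving-plane method applied to families of planes containing $\mr{\ell}_i$ and sweeping around it shows that $\mc{B}_i$ is symmetric under reflection through \emph{every} plane containing $\mr{\ell}_i$: one starts a plane far out, slides it in, and at the first contact the strong maximum principle for the CMC equation (applied to the surface and its reflected image, which are tangent) forces coincidence; the two-endedness and embeddedness guarantee the reflection process starts and terminates correctly and does not get obstructed by a second sheet. Invariance under reflection through all planes through $\mr{\ell}_i$ is exactly rotational symmetry about $\mr{\ell}_i$. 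Hence $\mc{B}_i$ is a CMC surface of revolution, and the Delaunay classification (the only complete CMC surfaces of revolution are spheres, cylinders, unduloids, and nodoids; with two embedded ends it is an unduloid, or degenerately a cylinder) identifies it as a Delaunay surface.

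The main obstacle I anticipate is making the Alexandrov reflection argument rigorous in a way that genuinely uses only the \emph{reflection} symmetries through the finitely-many planes $\{y = j\}, \{y = j+1/2\}$ rather than assuming more — in particular, verifying that the moving-plane process around the axis $\mr{\ell}_i$ can be started (one needs the surface to lie, near each end, in a definite half-space relative to each sweeping plane, which follows from exponential convergence to a coaxial Delaunay end by \cite{KKS}) and that the first touching point is interior or else handled by the boundary-point version of the maximum principle. An alternative, perhaps cleaner route avoiding moving planes: use that a Delaunay end is rotationally symmetric about $\mr{\ell}_i$, combine this with the discrete $e_y$-translation invariance and the two reflection symmetries, and appeal to a unique-continuation / analyticity argument — a CMC surface is real-analytic, and a real-analytic surface invariant under a rich enough discrete symmetry group and asymptotic to a rotationally symmetric end, with the symmetry group's closure containing the rotations, must be rotationally symmetric. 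Either way, the geometric heart is the same: two embedded Delaunay-asymptotic ends plus the prescribed reflections leave no room for anything but a surface of revolution, hence a Delaunay surface.
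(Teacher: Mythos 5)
Your proof is correct in substance but follows a genuinely different route from the paper's. You appeal to the classical moving-plane / Alexandrov-reflection machinery (essentially re-deriving the two-ends case of Korevaar--Kusner--Solomon, which the paper already cites in the introduction) to conclude rotational symmetry directly. The paper instead runs a degeneration argument entirely internal to its own toolkit: assume the branch is not rotationally symmetric, extract the nontrivial normal part $u$ of a rotational Killing field (a periodic Jacobi field, odd under the coordinate-plane reflections), normalize $\sup u = 1$, send the base flux $\tau_0 \to 0$ so the branch degenerates to a chain of tangent spheres, and run a blow-up at the point sequence realizing the sup. Each limiting regime (sphere away from the poles, plane, or catenoid after rescaling) contradicts either the known kernel of the sphere's stability operator, Liouville for bounded harmonic functions on the plane, or Lemma \ref{CatJacFields} on bounded Jacobi fields of the catenoid. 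The trade-offs are worth noting. Your route is cleaner and stronger: it works for any base flux, not just small $\tau_0$, and can even be shortened to a citation of \cite{KKS}. The paper's route, by contrast, is only asserted for $\tau_0$ sufficiently small, but it is deliberately chosen because the identical degeneration-and-blow-up argument is reused (nearly verbatim) in Lemma \ref{RotSymEnds} for the asymptotic limit ends, where a clean appeal to KKS is less convenient; the author flags this by saying ``Though this is a standard fact, we include a short proof.'' One small correction to your write-up: the branch $\mc{B}_i$ is a column of spheres with centers at $(0,i,j)$, so it extends in the $e_z$ direction; the relevant reflections are through the planes $\{z=j\}$ and $\{z=j+\tfrac{1}{2}\}$, and composing them yields the discrete translations by $e_z$, not $e_y$ as you wrote. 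The asymptotic axis is likewise parallel to $e_z$.
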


We will need in the proof of Proposition \ref{RotSymBranches} the following fact, which is well known and whose proof we therefore omit:
\begin{lemma}\label{CatJacFields}
The space of bounded jacobi fields on the catenoid is precisely the space of translational jacobi fields. 
\end{lemma}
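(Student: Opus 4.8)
The plan is to separate variables, reduce to a one‑parameter family of second order ODEs — one for each Fourier mode in the rotational angle — and classify the bounded solutions mode by mode. First I would fix the conformal parametrization $X(u,v)=(\cosh u\cos v,\cosh u\sin v,u)$ of the catenoid, for which the induced metric is $\cosh^2u\,(du^2+dv^2)$ and $|A|^2=2\,\mathrm{sech}^4 u$, so that (using $H=0$, hence $L=\Delta+|A|^2$) a function $\phi$ is a jacobi field exactly when
\[
\phi_{uu}+\phi_{vv}+2\,\mathrm{sech}^2(u)\,\phi=0 .
\]
If $\phi$ is bounded then its Fourier coefficients $f_n(u)=\tfrac{1}{2\pi}\int_0^{2\pi}\phi(u,v)e^{-inv}\,dv$ are smooth in $u$, satisfy $|f_n(u)|\le\|\phi\|_\infty$ for all $u$ and $n$, and each solves
\[
f_n''+\big(2\,\mathrm{sech}^2(u)-n^2\big)f_n=0 .
\]
So it suffices to show this ODE has no nonzero bounded solution for $|n|\ge 2$, and that for $n=0$ (resp.\ $n=\pm1$) its bounded solutions form the line spanned by $\tanh u$ (resp.\ by $\mathrm{sech}\,u$). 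Then $\phi$ is a combination of $\tanh u$, $\mathrm{sech}(u)\cos v$ and $\mathrm{sech}(u)\sin v$, which span the same space as the translational jacobi fields $N\cdot e_x$, $N\cdot e_y$, $N\cdot e_z$; conversely these three are visibly bounded jacobi fields, which closes the argument.

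For $|n|\ge 2$ I would write the equation as $f''=qf$ with $q(u):=n^2-2\,\mathrm{sech}^2(u)\ge n^2-2\ge 2>0$ and run the standard convexity argument. Suppose $f\not\equiv0$; after possibly replacing $f$ by $-f$, pick $u_0$ with $f(u_0)>0$. If $f'(u_0)\ge0$, then as long as $f>0$ one has $f''=qf>0$, so $f'$ is increasing and stays $\ge f'(u_0)\ge0$, so $f$ is nondecreasing and never returns to $0$; hence $f>0$ and $f''\ge 2f(u_0)$ on all of $[u_0,\infty)$, forcing at least quadratic growth, contradicting boundedness. If $f'(u_0)<0$, the same reasoning on $(-\infty,u_0]$ shows $f$ stays positive with $f'\le f'(u_0)<0$ there, so $f(u)\ge f(u_0)+|f'(u_0)|(u_0-u)\to\infty$ as $u\to-\infty$, again a contradiction. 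Hence $f\equiv0$.

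For $n=0$ one checks directly that $\tanh u$ solves $f''+2\,\mathrm{sech}^2(u)f=0$; reduction of order gives the independent solution $\tanh u\int\coth^2u\,du=u\tanh u-1$, which is unbounded, so the bounded solutions are exactly $\mathrm{span}(\tanh u)$. For $n=\pm1$ one checks that $\mathrm{sech}\,u$ solves $f''-f+2\,\mathrm{sech}^2(u)f=0$; since it decays like $e^{-|u|}$ at both ends it spans the one‑dimensional space of solutions decaying at $+\infty$, and its reduction‑of‑order partner $\mathrm{sech}(u)\int\cosh^2u\,du=\tfrac12\big(u\,\mathrm{sech}\,u+\sinh u\big)$ is unbounded, so the bounded solutions are $\mathrm{span}(\mathrm{sech}\,u)$. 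Assembling the three modes identifies the bounded jacobi fields with the span of $N\cdot e_x,N\cdot e_y,N\cdot e_z$.

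The bookkeeping above is routine once the separation of variables is set up; the only step needing genuine care is the nonexistence of bounded solutions for $|n|\ge 2$, and I expect the convexity argument to settle it cleanly without any spectral theory. (If one prefers, for $|n|\ge 2$ a bounded solution decays exponentially by standard asymptotics, hence lies in $H^1(\mb{R})$, and multiplying the ODE by $f_n$ and integrating gives $\int(f_n')^2+n^2\int f_n^2=\int 2\,\mathrm{sech}^2(u)f_n^2\le 2\int f_n^2$, which forces $f_n\equiv0$; the convexity route avoids even invoking the asymptotics.)
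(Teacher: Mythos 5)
The paper explicitly states this lemma as a well-known fact and omits the proof, so there is nothing internal to compare your argument against. Your proof — conformal coordinates, Fourier decomposition in $v$, and a mode-by-mode analysis of the resulting ODEs $f_n'' + (2\,\mathrm{sech}^2 u - n^2)f_n = 0$, with the convexity argument killing $|n|\ge 2$ and explicit solutions plus reduction of order handling $n=0,\pm 1$ — is the standard proof, and it is complete and correct; in particular you correctly identify the surviving span $\{\tanh u,\ \mathrm{sech}\,u\cos v,\ \mathrm{sech}\,u\sin v\}$ with $\{N\cdot e_z,\ N\cdot e_x,\ N\cdot e_y\}$.
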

In other words, the space of bounded jacobi fields on the catenoid is spanned by the components of the unit normal. 
 \begin{proof}[Proof of Proposition \ref{RotSymBranches}]
 Suppose that a branch $\mc{B}$ is not rotationally symmetric. Then the rotational  vector field $x e_y  -  ye_x$ has non-trivial normal part, which we denote by $u$.  Observe that $u$ is a  periodic jacobi field and moreover the symmetries of $\mc{B}$ imply that $u$ is odd with respect to coordinate plane reflections.  We normalize $u$ so that $\sup u = 1$.   Now, take a sequence $\tau_{0, j}$ of base flux assignments tending to  $0$ and consider the corresponding sequences  $\mc{B}_j$ of branches and  $u_{j}$ of periodic jacobi fields  on $\mc{B}_j$. Observe that as $\tau_{0, j} \rightarrow 0$ the branch $\mc{B}_j$ converges to a  limit $\mc{B}_{0}$, which is a collection of spheres of radius $1/2$ touching at the north and south poles--that is, the points where the unit normal coincides with $\pm e_z$.  Pick a sequence $p_j$ of points realizing the supremum of $u_j$, so $1 = u_j (p_j)$. There are then several possibilities. If $p_j$ remains in a fixed compact subset away from the poles, then $u_j$ converges to a non-trivial  bounded jacobi field on the sphere away from the north and south poles. Standard removable singularity theory then implies that it extends smoothy across the puncture to a non-trivial jacobi field on the sphere, and is thus a  translational jacobi field. However, this is a contradiction, since we assumed that $u$ is orthogonal to the translational jacobi fields over each fundamental domain. Thus, we must have that $p_j$ converges to a pole, which without loss of generality we can take to be the origin. After applying a small translation in the $e_z$ direction, we can assume that $p_j$ lies in the plane $\{z = 0 \}$. The rescaled Delaunay surface $\tilde{\mc{B}}_i = \mc{B}_i/ |p_i|$ then converges to a limit $\tilde{\mc{B}}$ that is either a flat plane or else the standard catenoid. In both cases, define the function $\tilde{u}_j$ on $\tilde{\mc{B}}_j$ by $\tilde{u}_j (p) = u(|p_j| p)$. Then $\tilde{u}_j$ is a jacobi field on  $\tilde{\mc{B}}_j$ that achieves its supremum, which is equal to $1$, on the unit circle. The sequence is then uniformly bounded in $C^k$ and converges to a nontrivial   bounded jacobi field  $u$ on $\wt{\mc{B}}$. In the case that $\wt{\mc{B}}$ is the punctured plane, $u$ extends smoothly across the origin  to a bounded harmonic function  on the plane, and is thus constant. However, since $u$ is odd with respect to coordinate plane reflections, this is a contradiction. On the other hand, if the limit $\wt{\mc{B}}$ is the standard catenoid, then we obtain a contradiction to Lemma \ref{CatJacFields} by again considering the symmetries of $u$. This completes the proof.

  \end{proof}

\section{The defect operator on Delaunay surfaces}\label{DefectOnDelaunay}
In this section, we record the effects of the type $I$ parameters on the kernel $\ul{e}_{*, I}$. We will need that the variations in the type $I$ vertical parameters generate the kernel in order to apply the implicit function theorem, and we need to understand the projections of type $I$ horizontal parameters in proving embeddedness of the solutions later.

\subsection{Type II projections}
We now record the fact that the linearized operator $\dot{\Lambda}$ preserves the  type II spaces. This  is the case for the operator $\dot{\Lambda}_0$--the linearization of $\Lambda$ at vanishing total parameter value $\xi_{\text{total}} = 0$ (Recall Remark \ref{TotalParamConv}), and it is a convenient fact that this carries of over to the linearization of $\Lambda$ at $\xi_0$. As a corollary, we can use a pertubation argument to show that it is an isomorphism from the space $\ul{e}_{II}$ to $\ul{e}_{*, II}$

\begin{lemma} \label{DotLambdaRestricts}
The operator $\dot{\Lambda}$ restricts as a  mapping from $\ul{e}_{II}$ into $\ul{e}_{*, II}$.
\end{lemma}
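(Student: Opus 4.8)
The statement asserts that $\dot\Lambda$ maps the type II parameter space $\ul e_{II}$ into $\ul e_{*,II}$, the orthogonal complement of the kernel $\ul e_{*,I}=\mc{tr}(\wc{\ul{\mc k}})$. The plan is to reduce to the analogous statement for the linearization $\dot\Lambda_0$ at vanishing total parameter value $\xi_{\text{total}}=0$, where everything is computed on the singular configuration $\mb S$ of exactly tangent spheres, and then show the property survives passage to the background parameter $\xi_0$. For the base case, recall from the Type II section that on the nondegenerate configuration $\mc C_0$ the linearization in the type II parameters equals $\slashed\partial^{(e)}$ acting on $f+\rho^\perp$, and that $\slashed\partial^{(e)}$ is $L^2$-self-adjoint on even functions with kernel exactly $\ul e_{*,I}$. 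Self-adjointness forces the image to be orthogonal to the kernel, i.e.\ contained in $\ul e_{*,II}$; this is already recorded as the first sentence of the proof of Corollary \ref{TypeIIISo}. So at $\xi_{\text{total}}=0$ the claim is immediate.

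First I would state precisely what "the operator $\dot\Lambda$" means in the present section, namely the linearization of $\Lambda$ at the background parameter $\xi_0=\tau_0+e_0$ (Remark \ref{TotalParamConv}), so that the surface in question is the $\mc G$-invariant Delaunay configuration $\mc C_{\xi_0}$ rather than the tangent-sphere configuration $\mc C_0$. The key point is that on $\mc C_{\xi_0}$ the linearization in the type II parameters is again of the form $(\dot{\mc h}_{\mc C_{\xi_0},\,\dot f+\dot\rho^\perp})_\eta$ up to taking even parts, by the same computation as in Lemma \ref{ConormalMatching} and Proposition \ref{FamilyLinearization}, since the type II variations still act by prescribing boundary Dirichlet data and $\mc C_{\xi_0}$ is still a nondegenerate collection of CMC pieces (spherical domains and CMC necks). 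Hence $\dot\Lambda$ on $\ul e_{II}$ equals the even Dirichlet-to-Neumann operator $\slashed\partial_{\xi_0}^{(e)}$ of $\mc C_{\xi_0}$, which by the identical integration-by-parts argument (Green's identity for the Jacobi operator $L_{\mc C_{\xi_0}}$) is self-adjoint on even $L^2$ functions on $\partial\mc C_{\xi_0}$. Its image is therefore orthogonal to its own kernel. What remains is to identify that kernel, or at least to show it contains $\ul e_{*,I}$ so that orthogonality to the kernel implies orthogonality to $\ul e_{*,I}$, i.e.\ membership in $\ul e_{*,II}$.

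The heart of the matter — and the step I expect to be the main obstacle — is the relation between the kernel of $\slashed\partial_{\xi_0}^{(e)}$ and the fixed subspace $\ul e_{*,I}$: unlike at $\xi_{\text{total}}=0$, the perturbed operator need not have kernel exactly $\ul e_{*,I}$ (indeed that is the content of the forthcoming corollary that $\dot\Lambda$ is an \emph{isomorphism} $\ul e_{II}\to\ul e_{*,II}$, which would fail if the kernel jumped). The clean route is to invoke the imposed orthogonality condition on the necks and the $\mc G$-symmetry of $\mc C_{\xi_0}$ mentioned at the start of this section: these guarantee that the geometric (translational) Jacobi fields of $\mb S$ still lift to Cauchy-matched Jacobi fields on $\mc C_{\xi_0}$, so $\ul e_{*,I}\subset\ker\slashed\partial_{\xi_0}^{(e)}$; combined with self-adjointness this gives $\mathrm{im}\,\slashed\partial_{\xi_0}^{(e)}\perp\ul e_{*,I}$, which is exactly $\dot\Lambda(\ul e_{II})\subset\ul e_{*,II}$. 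I would present the argument in the order: (1) record the Dirichlet-data form of $\dot\Lambda|_{\ul e_{II}}$ on $\mc C_{\xi_0}$ and identify it with $\slashed\partial_{\xi_0}^{(e)}$; (2) establish self-adjointness of $\slashed\partial_{\xi_0}^{(e)}$ on even functions by Green's identity, verbatim as in Section \ref{TypeIIVariations}; (3) show $\ul e_{*,I}$ still lies in its kernel using the symmetry/orthogonality of the necks; (4) conclude $\dot\Lambda(\ul e_{II})$ is $L^2$-orthogonal to $\ul e_{*,I}$, hence lands in $\ul e_{*,II}$. The only genuinely delicate point is (3), verifying that the background perturbation preserves the geometric Jacobi fields — this is where the "imposed orthogonality condition on the necks" is used, and where I would be careful to cite the precise property from Section \ref{CMCNecks} (normalization of $\mc h_{\tau,\delta,f}$ against lower modes on the waist) that makes the lift of a translational Jacobi field of $\mb S$ remain a genuine, normalized Jacobi field on each perturbed neck.
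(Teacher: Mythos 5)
There is a genuine gap, and it sits precisely where you predicted: step~(3). You need $\ul e_{*,I}\subset\ker\slashed\partial_{\xi_0}^{(e)}$, and you propose to deduce it from $\mc G$-symmetry and the normalization of the CMC necks. But the containment is false. The space $\ul e_{*,I}=\mc{tr}\left(\wc{\ul{\mc k}}\right)$ is built from the translational Jacobi fields $\mc k^{x,y,z}_{i,j}$ of the \emph{round spheres} $\mb S_{i,j}$. Once $\tau_0\neq 0$, the components of $\mc C_{\xi_0}$ are pieces of Delaunay surfaces, not spheres; the functions $\mc k^{x,y,z}_{i,j}$ are no longer Jacobi fields on those domains, so their lifts are not Cauchy matched and their traces are not in $\ker\slashed\partial_{\xi_0}^{(e)}$. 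What does lie in that kernel are the traces of the translational fields $N_x,N_y,N_z$ of $\mc C_{\xi_0}$ itself, and these do not coincide with the traces of $\mc k^x,\mc k^y,\mc k^z$: the paper only asserts proportionality on the boundary circles $\partial_t,\partial_b$ meeting the vertical necks, and there is no componentwise proportionality on the circles meeting the horizontal necks. So self-adjointness of $\slashed\partial_{\xi_0}^{(e)}$ gives orthogonality of the image to the \emph{wrong} space, and the normalization condition by itself does not bridge the gap.

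The paper's proof is not a soft kernel argument but a direct computation of the projections $\langle\dot\Lambda(\dot e),\mc k^y_{i,j}\rangle$ and $\langle\dot\Lambda(\dot e),\mc k^z_{i,j}\rangle$. Green's identity for the Jacobi operator is applied on the \emph{leaf} $\mc L_{i,j}$ (a spherical domain together with its four incident half necks), with test function a component $N_y$ or $N_z$ of the unit normal of $\mc C_{\xi_0}$; these are global Jacobi fields on the leaf because the branches at $\xi_0$ are surfaces of rotation (Proposition~\ref{RotSymBranches}). Since $\partial\mc L_{i,j}$ contains the waists $w$ of the incident vertical half necks, the even/odd bookkeeping under $*$ produces not a bare orthogonality but an identity of the form $\int_{\partial\mc S}\dot\Lambda(\dot e)\,N + \int_w\bigl(u_\eta N - u N_\eta\bigr) = 0$. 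The genuinely new content --- entirely absent from your sketch --- is the explicit evaluation of the waist integrals: for $\dot e=(0,\dot f)$ one uses the normalization of $\mc h_{\tau,\delta,f}$ (orthogonality of the Jacobi field and its conormal derivative to the lower modes along the waist), and for $\dot e=(\dot\rho,0)$ one notes that $u$ restricted to each half neck is a translational Jacobi field and evaluates the integral explicitly from the formulas $N_y=h'\sin(x)$, $N_z=-r'$ for a surface of rotation. This is where the normalization condition is actually used: not to preserve the kernel through the perturbation, but to kill the boundary contributions at the waists. Steps~(1) and~(2) of your outline are fine, but they reduce the lemma to a waist computation that your proposal never opens.
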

\begin{proof}
Pick $\dot{e} =  (\dot{\rho}, \dot{f}) \in \ul{e}_{II}$. We will show that $\dot{\Lambda}\left( \dot{e}\right) $ is perpendicular to $\mc{k}^{y}_{i, j}$ and $\mc{k}^z_{i, j}$ for arbitrary $i$ and $j$ where here $\mc{k}^{y}_{i, j}$ and $\mc{k}^z_{i, j}$ denote the restrictions of the translational jacobi field  $\mc{k}^{y}$ and $\mc{k}^z$ to $\mc{S}_{i, j}$. Consider the leaf $\mc{L} = \mc{L}_{i, j}$. We will assume throughout that coordinate axes are arranged so that $\mc{L}_{i, j}$ is invariant under reflections through the coordinate planes, and we will suppress the index $(i, j)$ from our notation since we are regarding it as fixed. Recall that the boundary of $\mc{L}$ comprises $\partial \mc{S} = \partial \mc{S}_{i, j}$ as well as $(\partial \mc{S})^*$, along with the waists $w$ of the half necks in $\mc{L}$. In the following, we will interpret the pairing $*$ as a pairing on $\partial \mc{L}$ that acts on $w$ as the identity.  First, we will need a few preparations that describe geometric data on rotationally symmetric surfaces.

Since each branch is a rotationally symmetric surface, it can be parametrized by $F(x, s) = r e_r (x) + h e_z$, where $e_r(x) = \cos(x) e_x + \sin(x) e_y$, and where $r$ and $h$ are functions of $s$ only. We will assume that  $s$ is an arc length parameter for the profile curve so that $h'^2 + r'^2 = 1$.  The components $N_x$, $N_y$ and $N_z$ of the unit normal are then given respectively by $h' \cos(x)$, $h' \sin(x)$ and $- r'$. In particular, since $r'$ doesn't vanish on the top and bottom  boundary components $\partial_{t}$ and $\partial_b$ of $\mc{S}$, and since $x= r\cos(\theta)$, $y = r\sin(\theta)$ and $z = 1$, we have that the components $N_x$,  $N_y$ and $N_z$ of the unit normal are uniformly comparable to $\mc{k}^x$, $\mc{k}^y$ and $\mc{k}^z$, respectively, and thus we can compute the $L^2$ projections of $\dot{\Lambda} (\dot{e})$  onto  $\ul{e}_{II}$  up to positive constants by computing the projections  onto the $x$, $y$, $z$ components $N_x$, $N_y$ and $N_z$ of the unit normal $N$ to $\mc{S}$. Observe that, along the waists $w_t$ and $w_b$, the symmetries of the branches imply that $r' = 0$ and thus $N_z = 0$. Differentiating the equality $r'^2 + h'^2 = 1$ gives
$r' r'' + h' h'' = 0$.  In particular, along the waist we have $h' = 1$ and thus $h'' = 0$. The expressions for the components $N_x$, $N_y$ and $N_z$ o the unit normal  then give $\left( N_y \right)_{\eta} = \pm N_y' = h'' \sin(x) = 0$ and $N_{z, \eta} = \pm N'_{z} = \pm r''$. Now, let $u$ denote the normal variation field in $\mc{L}$ generated by $\dot{e}$. We then have $ \dot{\Lambda}  (\dot{e}) = \left(u_{\eta}\right)^{(e)}$. Since $u$ is a jacobi field on $\mc{L}$, integrating by parts gives
\[
0 = \int_{\partial \mc{L}} u_\eta N - u N_{\eta}.
\]
Applying the pairing $*$ and using that $N$ is even and $N_{\eta}$ is odd, we get
\begin{align} \label{PairingApplied}
2\int_{\partial \mc{S} + \partial \mc{S}^*}\left(  \dot{\Lambda} \right)  (\dot{e}) N + 2\int_{w} u_{\eta} N - 2 \int_{w} u N_{\eta} = 0.
\end{align}
We consider first the case that $N = N_y$ . Then  the last term on the left hand side above vanishes and we are left with 
\begin{align}\label{NProjection1}
\int_{\partial \mc{S} + \partial \mc{S}^*}\left(  \dot{\Lambda} \right)  (\dot{e}) N_y  = - \int_{w} u_{\eta} N_y.
\end{align}
In the case that $\dot{e} = (0, \dot{f})$, so that $\dot{\rho} = 0$, the variation field $u$ is given by $u = \dot{\mc{h}}_f$,  which is normalized. Recall that this means that $\dot{\mc{h}}_{f}$ and $\left(\dot{\mc{h}}_{f}\right)_{\eta}$ are orthogonal to the space of lower modes on the waists $w_{t}$ and $w_{b}$ of the top and bottom half necks.  Since  $N_y$  restricts to $w$ as a lower mode, the second term on the right hand side above vanishes and we have $\int_{\partial \mc{S} + \partial \mc{S}^*}\left(  \dot{\Lambda}  (\dot{e})\right)  N_y = 0$, and thus $\int_{\partial \mc{S}}\left(  \dot{\Lambda}   (\dot{e}) \right)\mc{k}^{y} = 0$.  In the case that $\dot{e} = (\dot{\rho}, 0)$, then the the restriction of the  variation field $u$ to each half neck in $\mc{L}$ is a translational jacobi field and thus belongs to  the span of $N_y$ and $N_z$. Suppose first that the restriction of $u$ to $\mc{H}_{t} : = \mc{N}^{v, -}_{i, j}$ is equal to $N_y$. Then   $u_{, \eta} = N'_{y} = 0 $ on $w_{t}$ and thus the right hand side of (\ref{NProjection1}) vanishes over $w_{t}$. Suppose now that the restriction of $u$ to $\mc{H}_t$ is $N_z$. Then $u_{\eta}= \pm r'' $ and thus
\[
\int_{w_t} u_{\eta} N_{y} = \pm r'' h'\int_{0}^{2\pi}  \sin(x)  dx = 0.
\]
A similar analysis can be applied to $w_b$, which gives in all cases that $\int_{\partial \mc{S}}\left(  \dot{\Lambda}   (\dot{e}) \right) \mc{k}^y = 0$.

We now consider the case that  $N = N_z$. We have $N_z = 0$ along $w$ and thus the second term in (\ref{PairingApplied}) vanishes and we have
\begin{align}\label{NProjection2}
\int_{\partial \mc{S} + \partial \mc{S}^*}\left(  \dot{\Lambda}  (\dot{e}) \right) N_z  =  2 \int_{w} u N_{z, \eta} = 2 \pm r'' \int_{w} u.
\end{align}
In the case that $\dot{e} = (0, \dot{f})$, then as before $u$ is normalized since the restriction of $u$ to $\mc{N}$ is $\dot{\mc{h}}_{\mc{N}, f}$. Since $N_{z, \eta} = \pm r''$ is constant and thus a lower mode, the last term above vanishes. When $\dot{e} = \left(\dot{\rho}, 0 \right)$, then again the restriction of $u$ to each half neck in $\mc{L}$ is in the span of   $N_y$ and $N_z$. Suppose first that the restriction of $u$ to $\mc{H}_t$ is $N_z$. Then $u$ vanishes on $w_{t}$ and thus $\left(  \dot{\Lambda}  (\dot{e}) \right)$ and the integral on the right hand side of (\ref{NProjection2}) vanishes over $w_t$.  Assume now that  the restriction of $u$ to $\mc{H}_t$ is $N_y$. Then $u \cong \sin(x)$ on $w_t$ and again the integral on the right hand side of (\ref{NProjection2}) vanishes over $w_t$. Applying the same analysis to $w_b$, we conclude that  (\ref{NProjection2}) vanishes in all cases and thus $\dot{\Lambda} (\dot{e})$ is in $\ul{e}_{*, II}$ as claimed. 

\end{proof}

\begin{corollary}\label{NoKernelISo}
$\dot{\Lambda}: \ul{e}_{II} \rightarrow \ul{e}_{*, II}$ is an isomorphism in the norms $\| -\|_{\mu}$.
\end{corollary}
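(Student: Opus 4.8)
The plan is to deduce the statement from Corollary \ref{TypeIIISo} by perturbing in the base flux $\tau_0$, and then to transfer the resulting unweighted isomorphism to the weighted norms $\|-\|_{\mu}$ using the finite range and uniform geometry of $\dot{\Lambda}$.

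\emph{The unweighted isomorphism.} Recall from Lemma \ref{DotLambdaRestricts} that $\dot{\Lambda}$ maps $\ul{e}_{II}$ into $\ul{e}_{*, II}$, and from Lemma \ref{ConormalMatching} that $\Lambda$, hence its linearization, depends continuously (indeed $C^1$) on $\xi$ near the origin; since the conormal angle $\lambda$ is invariant under translations of the components of $\mc{C}$, this differentiability is genuine in $\xi$, not merely modulo translations, even at $\tau = 0$. When the base flux vanishes the background parameter $\xi_0 = \tau_0 + e_0$ vanishes as well (by uniqueness in Proposition \ref{SymmetryPerturbations}), and $\dot{\Lambda}$ then coincides with the linearization of $\Lambda$ at the vanishing total parameter, which by Corollary \ref{TypeIIISo} restricts to a bounded isomorphism $\ul{e}_{II} = \ul{\xi}_{II} \to \ul{e}_{*, II}$ in the unweighted norms. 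Since $\|\xi_0\| \leq^C \tau_0$ and the set of bounded isomorphisms is open, for $\tau_0$ sufficiently small $\dot{\Lambda} : \ul{e}_{II} \to \ul{e}_{*, II}$ is a bounded isomorphism in the unweighted norms.

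\emph{Passing to the weighted norms.} Two structural facts are needed. First, since $\dot{\mc{h}}_{\mc{C}, g}$ is solved componentwise on $\mc{C}$ and the pairing $*$ relates a boundary circle of a spherical domain only to a circle of an incident neck, $\dot{\Lambda}(\dot{e})$ restricted to $\mc{S}_{i, j}$ depends only on the restriction of $\dot{e}$ to $\mc{S}_{i, j}$ and to the four neighbours $\mc{S}_{i \pm 1, j}$, $\mc{S}_{i, j \pm 1}$; thus $\dot{\Lambda}$ is a nearest-neighbour operator in the indices $(i, j)$. Second, since $\xi_0$ is $\mc{G}$-invariant, $\mc{C}_{\xi_0}$ is invariant under the lattice translations generated by $\mc{G}$, so the blocks of $\dot{\Lambda}$ are uniformly bounded and independent of $(i, j)$; in particular $\dot{\Lambda}$ is bounded in $\|-\|_{\mu}$. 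Now let $W$ denote the operator of multiplication by $\cosh^{\mu}(j)$, so that $\|f\|_{\mu} = \|W f\|$ and $\dot{\Lambda}$ is an isomorphism in $\|-\|_{\mu}$ if and only if $W \dot{\Lambda} W^{-1}$ is one in the unweighted norm. By the nearest-neighbour structure, $W \dot{\Lambda} W^{-1} - \dot{\Lambda}$ is the sum of the vertical off-diagonal blocks of $\dot{\Lambda}$ each rescaled by $\cosh^{\mu}(j)/\cosh^{\mu}(j \pm 1) - 1$, which is bounded in absolute value by $e^{\mu} - 1$ uniformly in $j$; since the blocks are uniformly bounded, $\|W \dot{\Lambda} W^{-1} - \dot{\Lambda}\| \leq^C \mu$. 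Choosing $\mu$ small (as permitted by Remark \ref{WhatIsMu}) and using the unweighted isomorphism established above, $W \dot{\Lambda} W^{-1}$ is a bounded isomorphism in the unweighted norm, and therefore $\dot{\Lambda} : \ul{e}_{II} \to \ul{e}_{*, II}$ is an isomorphism in $\|-\|_{\mu}$.

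\emph{Main obstacle.} The perturbation step only produces the isomorphism in the unweighted norm; the substance is the passage to the exponentially weighted norm. This rests on $\dot{\Lambda}$ being finite range with uniformly bounded, translation-invariant blocks — which is exactly what the $\mc{G}$-invariance of the background parameter $\xi_0$, together with the componentwise nature of the jacobi extension, guarantees — so that conjugation by the weight changes the operator only by $O(\mu)$, and the smallness of the fixed constant $\mu$ makes this a genuine perturbation of an invertible operator.
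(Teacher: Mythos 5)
Your proof is correct but takes a genuinely different route from the paper's. Both arguments exploit the same structural facts — that $\dot{\Lambda}_0$ (the linearization at vanishing total parameter) is an isomorphism by Corollary \ref{TypeIIISo}, that $\dot{\Lambda}$ is a nearest-neighbour operator in $(i,j)$ with blocks uniform under the lattice translations, and that the weight ratio $\cosh^\mu(j)/\cosh^\mu(j\pm 1)$ is uniformly bounded. The paper perturbs in a single step: it writes $\dot{\Lambda} = \dot{\Lambda}_0 + (\dot{\Lambda} - \dot{\Lambda}_0)$, observes that $\dot{\Lambda}_0$ is block-diagonal (hence trivially an isomorphism in any weighted norm), and bounds the error term directly in the weighted norm by $\tau_0 \cosh^\mu(j)$ using locality and the uniform weight ratio, then iterates. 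You instead split the perturbation into two separate and conceptually cleaner steps — first establish the unweighted isomorphism by perturbing away from $\tau_0 = 0$, then transfer to the weighted norm by conjugating with the multiplication operator $W = \cosh^\mu(j)$ and treating $W\dot{\Lambda}W^{-1} - \dot{\Lambda}$ as a second small perturbation. This is a tidy way to isolate where the weight actually enters. One small thing to tighten: as written, your estimate $\|W\dot{\Lambda}W^{-1} - \dot{\Lambda}\| \leq^C \mu$ forces you to take $\mu$ small, whereas the paper's argument (choosing $\tau_0$ small after $\mu$ is fixed, per Remark \ref{WhatIsMu}) works for any fixed $\mu > 0$. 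You can recover the full strength for free by noting that the vertical off-diagonal blocks of $\dot{\Lambda}$ are themselves $O(\tau_0)$ — they vanish identically at $\tau_0 = 0$ since the necks decouple — so that $\|W\dot{\Lambda}W^{-1} - \dot{\Lambda}\| \leq^C (e^\mu - 1)\,\tau_0$, which is small for any fixed $\mu$ once $\tau_0$ is small enough, matching the paper's order of quantifiers.
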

\begin{proof}
This is a relatively standard perturbation argument. The point is that the operator $\dot{\Lambda}$ is mostly local, in the sense that $\dot{\Lambda}_{i, j}$ depends only on $\dot{e}_{i, j}$ and $\dot{e}_{i, j  - 1}$. The uniform control on the rate of exponential growth implies the nearby arguments are comparable and thus contribute error terms of roughly the same size. Pick $\dot{e}_* \in \ul{e}_{*, II}$. Then there is $\dot{e}_0 \in \ul{e}_{II}$ such that $\dot{\Lambda}_0 (e_0) = \dot{e}_*$. We then have $\dot{\Lambda} (\dot{e}) = \dot{e}_{*} + \dot{e}_{*, 1}$, where $e*_1$ satisfies the estimate
\begin{align*}
\left\|\left( \dot{e}_{*, 1} \right)_{i, j}\right\|  & \leq^C \left\|\dot{\Lambda}  - \dot{\Lambda}_0\right\|\left(\left\| \dot{e}_{i, j} \right\| + (\left\| \dot{e}_{i, j - 1} \right\|\right) \\
& \leq^C \tau_0 \left(\cosh^{\mu} (j) + \cosh^{\mu} (j - 1)\right)\\
& \leq^C \tau_0 \cosh^{\mu}(j).
\end{align*}
Thus, $e_{*, 1}$ is in $\ul{e}_{*, II}$ and satisfies the estimate $\| e_{*, 1}\| \leq 1/2$ for $\tau_0$ sufficiently small. The process can then be iterated to obtain an exact solution.  
\end{proof}

\subsection{Type I projections}
In this section, we record the projections of the variations generated by type $I$ parameters onto the kernel elements $\ul{e}_{*, I}$. Recall (Corollary \ref{TypeIIISo}) that the type II parameters generate the orthogonal complement $\ul{e}_{*, II}$ of $\ul{e}_{*, I}$. Here we show that the type I parameters generate the co-kernel of the type II variations. Recall that  $\ul{e}_{*, I}$ is defined to be the space $\mc{tr}\left(\wc{\ul{\mc{k}}} \right)$ of traces of elements of $\ul{\wc{\mc{k}}}$, where here $\ul{\mc{k}}$ denotes the space of jacobi fields on $\mb{S}$ and $\wc{\ul{\mc{k}}}$ the space of lifts of elements of $\ul{\mc{k}}$ to $\mc{C}$ under the canonical projection $\pi: \mc{C} \rightarrow \mb{S}$.  The space of jacobi fields on a sphere is generated by translations, and is spanned by the components of the unit normal. If $\mb{S}_{i,j}$ is the sphere in $\mb{S}$ centered at the point $(0, i, j)$, we let $\mc{k}^{x}_{i, j}$,  $\mc{k}^{y}_{i, j}$ and $\mc{k}^{z}_{i, j}$ denote the jacobi field on $\mb{S}_{i, j}$ generated by translations in the direction $e_x$, $e_y$ and $e_z$, respectively. Thus,  $\mc{k}^{x}_{i, j}$,  $\mc{k}^{y}_{i, j}$ and $\mc{k}^{z}_{i, j}$ are the $x$ $y$ and $z$ components of the unit normal of $\mb{S}_{i, j}$ and the space $\ul{\mc{k}}$ is spanned by these functions. In the following, we will  not distinguish a function $\mc{k} \in \ul{\mc{k}}$ with the trace of its lift $\mc{tr} \left( \wc{\mc{k}}\right)$. Thus, the space $\ul{e}_{*, I}$ is spanned by the functions $\mc{k}^{x}_{i, j}$,  $\mc{k}^{y}_{i, j}$ and $\mc{k}^{z}_{i, j}$.   The symmetries we have imposed on our construction imply that the error terms are necessarily orthogonal to $\mc{k}^{x}_{i, j}$, and thus we record here only the projections onto   $\mc{k}^{y}_{i, j}$ and $\mc{k}^{z}_{i, j}$.

\begin{lemma}\label{TauProjections}
It holds that 
\[
\left\langle \frac{\partial \Lambda}{\partial \tau}\left(\dot{\tau} \right), \mc{k}^{y}_{i, j} \right\rangle = - c_{\delta} \dot{\tau}^{h, \vtr}_{i, j}
\]
and 
\[
\left\langle \frac{\partial \Lambda}{\partial \tau}\left(\dot{\tau} \right), \mc{k}^z_{i, j} \right\rangle = - c_{\delta} \dot{\tau}^{v, \Delta}_{i, j}
\]
where $c_\delta =  \pi \delta \sqrt{1 - \delta^2}\left(\frac{1}{2} - \delta^2 \right) $.
\end{lemma}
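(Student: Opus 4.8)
The plan is to read off $\frac{\partial\Lambda}{\partial\tau}(\dot{\tau})$ from the formula $\dot{\Lambda}_{\dot{\xi}}=(u_\eta)^{(e)}$ of Lemma~\ref{ConormalMatching} (equation~(\ref{DotLambdaExpression})), using that a pure flux variation is supported, near $\mc{S}_{i,j}$, on the four incident necks, and then to evaluate the two $L^2$ pairings by reducing them to elementary integrals over the four boundary circles of $\mc{S}_{i,j}$. I would carry out the computation at the limiting tangent-sphere configuration (vanishing background flux), where every incident neck is rotationally symmetric about its axis; the identity at the background parameter $\xi_0=\tau_0+e_0$ then holds up to corrections controlled by $\tau_0$, which is all that is used later.

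First I would observe that, when only the flux is varied (phase, spherical translations and Dirichlet data fixed), the spherical domain $\mc{S}_{i,j}$ does not move, so the outward conormal of $\mc{C}$ along $\partial\mc{S}_{i,j}$ is unchanged and the conormal angle $\lambda$ vanishes there. Along the partnered circle $\partial_a^{*}$, lying in the $a$-th incident neck, the reparametrization of the flux in Section~\ref{CMCNecks} was arranged precisely so that $\dot{\lambda}=-1$ per unit flux, and since the neck is rotationally symmetric (the generating field being $\log(r)\,e_z$ at scale zero) this variation is constant in $\theta$; hence $\dot{\lambda}=-\dot{\tau}_a$ on $\partial_a^{*}$, where $\dot{\tau}_a$ is the flux of the $a$-th incident neck. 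Taking even parts, $\frac{\partial\Lambda}{\partial\tau}(\dot{\tau})$ equals the constant $-\tfrac{1}{2}\dot{\tau}_a$ on both $\partial_a\subset\partial\mc{S}_{i,j}$ and its partner $\partial_a^{*}$.

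Next, $\mc{k}^{y}_{i,j}\in\ul{e}_{*,I}$ is supported on the four components incident to $\mc{S}_{i,j}$ and equals there the pullback under $\pi$ of the $y$-component $N_y$ of the unit normal of $\mb{S}_{i,j}$; since paired circles differ by a translation (under which $N_y$ is invariant) and coincide in $\mb{R}^3$ at the limiting configuration,
\[
\left\langle \frac{\partial\Lambda}{\partial\tau}(\dot{\tau}),\,\mc{k}^{y}_{i,j}\right\rangle
 = \sum_{a}\Bigl(-\tfrac{1}{2}\dot{\tau}_a\Bigr)\Bigl(\int_{\partial_a}N_y+\int_{\partial_a^{*}}N_y\Bigr)
 = -\sum_{a}\dot{\tau}_a\int_{\partial_a}N_y ,
\]
the sum over the left, right, bottom and top incident necks. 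Each $\partial_a$ is a small circle of $\mb{S}_{i,j}$ about a pole; by the description of the unit normal along $\partial\mc{S}$ recorded earlier, on such a circle $N$ is a constant axial component of size $\tfrac{1}{2}-\delta^2$ along that neck's axis plus an oscillatory radial part $\delta\sqrt{1-\delta^2}\,e_r(\theta)$, and $\partial_a$ has length $\pi\delta\sqrt{1-\delta^2}$. Thus $\int_{\partial_a}N_y$ vanishes for the two vertical necks (axis $e_z$, so $N_y$ is the oscillatory part) and equals $\pm\pi\delta\sqrt{1-\delta^2}(\tfrac{1}{2}-\delta^2)=\pm c_\delta$ for the two horizontal necks (axis $e_y$), with sign $+$ at the $+e_y$-pole and $-$ at the $-e_y$-pole. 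Only the horizontal necks survive, so the projection of $\frac{\partial\Lambda}{\partial\tau}(\dot{\tau})$ onto $\mc{k}^{y}_{i,j}$ is $-c_\delta(\dot{\tau}^{h}_{i+1,j}-\dot{\tau}^{h}_{i,j})=-c_\delta\dot{\tau}^{h,\vtr}_{i,j}$. The computation of the projection onto $\mc{k}^{z}_{i,j}$ is identical with the two axes interchanged: now only the vertical necks contribute, giving $-c_\delta(\dot{\tau}^{v}_{i,j+1}-\dot{\tau}^{v}_{i,j})=-c_\delta\dot{\tau}^{v,\Delta}_{i,j}$.

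The one substantive ingredient is the identification of $\dot{\lambda}$ along the neck side with the constant $-\dot{\tau}_a$; this rests entirely on the normalizations of Section~\ref{CMCNecks} (the scaling that produces $\dot{\lambda}=-1$, together with the rotational symmetry of the necks at $f=0$, which gives the $\theta$-independence and kills any second-fundamental-form correction). The remaining points — that $\dot{\lambda}=0$ along $\partial\mc{S}$ under a pure flux variation, the reduction of each pairing to the two circle integrals, and the evaluation of those integrals from the known form of the normal of $\partial\mc{S}$ — are routine. The only care needed beyond this is the bookkeeping of which incident neck plays the role of left/right/top/bottom relative to the indexing of $\dot{\tau}^{h,\vtr}$ and $\dot{\tau}^{v,\Delta}$ and the accompanying signs, and the remark that the $O(\tau_0)$ discrepancy between the limiting configuration and the true background $\xi_0$ is harmless for the later arguments.
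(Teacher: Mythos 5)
Your proof is correct and follows essentially the same route as the paper's: use the normalization $\dot\lambda = -\dot\tau$ on the neck side, observe that the pure flux variation fixes $\mc{S}_{i,j}$ so the spherical side contributes nothing, take even parts to get $-\tfrac12\dot\tau_a$, and reduce each $L^2$ pairing to elementary integrals over the four boundary circles of $\mc{S}_{i,j}$, with the vertical (resp.\ horizontal) circles dropping out of the $\mc{k}^y$ (resp.\ $\mc{k}^z$) pairing. One cosmetic slip: you state the length of $\partial_a$ as $\pi\delta\sqrt{1-\delta^2}$ rather than the circumference $2\pi\delta\sqrt{1-\delta^2}$, but since you then sum over both $\partial_a$ and $\partial_a^*$ (whereas the paper's convention computes only over $\partial\mc{S}_{i,j}$), the two factor-of-$2$ discrepancies cancel and you land on the same $c_\delta$ as the paper; the structure and signs are all right.
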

\begin{proof}
Observe that symmetry considerations give that 
\[
\left\langle \frac{\partial \Lambda}{\partial \tau}\left(\dot{\tau}^{v} \right), \mc{k}^{y}_{i, j} \right\rangle  = 0,
\]
and thus
\[
\left\langle \frac{\partial \Lambda}{\partial \tau}\left(\dot{\tau} \right), \mc{k}^{y}_{i, j} \right\rangle  = \left\langle \frac{\partial \Lambda}{\partial \tau}\left(\dot{\tau}^{h} \right), \mc{k}^{y}_{i, j} \right\rangle.
\]
Let $\partial_{l}$ and $\partial_{r}$ denote the left and right boundaray components of  $\partial \mc{S}_{ij}$,  respectively. Since  at $\xi = 0$ we have 
\[
\left. \frac{\partial \lambda}{\partial \tau} (\dot{\tau}) \right|_{\partial \mc{N}^{h}_{i, j}} = -\dot{\tau}^{h}_{i, j},
\]
we have 
\[
\left. \dot{\Lambda}\left( \dot{\tau}\right) \right|_{\partial_l} = - \frac{ \dot{\tau}^{h}_{i, j}}{2}, \quad \left. \dot{\Lambda}\left( \dot{\tau}\right) \right|_{\partial_r} = - \frac{ \dot{\tau}^{h}_{i + 1, j}}{2}
\]
The boundary component $\partial_{l}$ is the circle of radius $r  = \delta \sqrt{1  - \delta^2}$ in the plane $x_{ij} = - 1/2 + \delta^2$ centered at the point $(-1/2 +  \delta^2, 0, 0)$. Thus on $\partial_{l}$ we have $\mc{k}^{y}_{i, j} = - 1/2 + \delta^{2}$ which gives
\begin{align*}
\int_{\partial_l}  \dot{\Lambda}\left(\dot{\tau} \right) \mc{k}^{y}_{i, j} & = \dot{\tau}^{h}_{i, j}\int_{\partial_{l}} \left( -\frac{1}{2} \right)\left( -1/2 + \delta^2\right) \\
& = \pi \delta \sqrt{1 - \delta^2}\left(\frac{1}{2} - \delta^2 \right) \dot{\tau}^h_{i,j}.
\end{align*}
Similarly,  $\partial_r$ is the circle of radius $r  = \delta \sqrt{1  - \delta^2}$ in the plane $x_{ij} = + 1/2 - \delta^2$ centered at the point $(1/2 - \delta^2, 0, 0)$, and thus we get
\begin{align*}
\int_{\partial_r} \frac{\partial \Lambda}{\partial \tau}\left(\dot{\tau} \right) \mc{k}^{y}_{i, j} & = \dot{\tau}^{h}_{i + 1, j}\int_{\partial_{r}} \left( -\frac{1}{2} \right)\left( \frac{1}{2} - \delta^2\right) \\
& =- \left( \pi \delta \sqrt{1 - \delta^2}\left(\frac{1}{2} - \delta^2 \right)\right) \dot{\tau}^h_{i,j}.
\end{align*}
The proof of the remaining estimate proceeds identically to the first, and we thus omit it. This completes the proof. 
\end{proof}

\begin{lemma}\label{Projections}
It holds that 
\[
\left\langle \frac{\partial \Lambda}{\partial \phi} (\dot{\phi}), \mc{k}^{y}_{i, j} \right\rangle =  c_0 \phi^{v, \Delta}_{i, j}.
\]
For a constant $c_0 > 0$ depending on $\tau_0$.
\end{lemma}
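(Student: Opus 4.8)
The plan is to compute the $L^2$ pairing $\langle \partial_\phi \Lambda(\dot\phi), \mc{k}^y_{i,j}\rangle$ directly, in the spirit of the proof of Lemma \ref{TauProjections}, by working on a single leaf $\mc{L}_{i,j}$ and integrating by parts. First I would fix $(i,j)$, arrange the coordinate axes so that $\mc{L}_{i,j}$ is symmetric through the coordinate planes, and note (as in the proof of Lemma \ref{DotLambdaRestricts}) that the phase variations $\dot\phi$ rotate the axes of the incident necks. Since a phase assignment on a \emph{horizontal} neck generates a rotation whose axis is horizontal, the resulting variation field is, by the symmetries of $\mc{L}_{i,j}$, odd with respect to the plane $\{y = i\}$ on the spherical boundary components $\partial_l, \partial_r$ and therefore pairs trivially with $\mc{k}^y_{i,j}$ there; hence only the \emph{vertical} phase parameters $\dot\phi^v$ contribute, which already explains the appearance of the vertical difference $\phi^{v,\Delta}_{i,j}$ on the right-hand side.

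Next I would identify the variation field $u$ on $\mc{L}_{i,j}$ generated by $\dot\phi^v$. By the conventions of Section \ref{CMCNecks}, rotating the axis of the vertical neck $\mc{N}^{v,\pm}_{i,j}$ produces, to first order, a jacobi field on the half neck equal to a rotational jacobi field of the limiting sphere, which along the waist restricts to a lower mode proportional to $\cos\theta$ or $\sin\theta$; the precise normalization is where the constant $c_0$ (depending on $\tau_0$ through the scale of the background necks) enters. Then, exactly as in Lemma \ref{DotLambdaRestricts}, I would apply Green's identity on $\mc{L}_{i,j}$ against $N_y$, use that $N$ is even and $N_\eta$ is odd under the pairing $*$ to fold the boundary integral over $\partial\mc{S} + \partial\mc{S}^*$ into the $\dot\Lambda$ term, and reduce to boundary integrals over the waists $w_t, w_b$ of the top and bottom half necks. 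Using the rotationally symmetric frame $F(x,s) = r\, e_r(x) + h\, e_z$ and the formulas $N_x = h'\cos x$, $N_y = h'\sin x$, $N_z = -r'$ together with $h'' = 0$, $N_{y,\eta} = 0$ on the waist, the surviving term is $\int_w u\, N_{y,\eta}$ or $\int_w u_\eta N_y$, which evaluates against the known lower-mode structure of $u$ on $w_t$ and $w_b$ to give $c_0(\phi^v_{i,j+1} - \phi^v_{i,j}) = c_0 \phi^{v,\Delta}_{i,j}$, the difference arising because $w_t$ sits in the neck $\mc{N}^v_{i,j+1}$ and $w_b$ in $\mc{N}^v_{i,j}$ with opposite orientations of the conormal.

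The main obstacle I anticipate is pinning down the constant $c_0$ and, more importantly, verifying that it is genuinely \emph{nonzero} (and positive after a suitable sign convention) uniformly as $\tau_0 \to 0$. This requires being careful about the normalization of the rotational jacobi field generated by a unit phase variation of a CMC half neck — the surfaces $\mc{N}_{\tau,f}$ are graphs over catenoidal necks and the rotation acts on the \emph{axis}, so one must track how the conormal angle $\lambda$ responds, using the differentiability-at-$\tau=0$-modulo-translations from Section \ref{CMCNecks} and the fact that the variation field extends to $\log(r)\,e_z$ plus the rotational contribution. A secondary, more routine point is bookkeeping the two waists: one must check that the contributions from $w_t$ and $w_b$ do not cancel but instead combine into a difference, which follows from the opposing conormals at the two ends of a neck (the $\pm$ in $N_{z,\eta} = \pm r''$, $u_\eta = \pm r''$ appearing in the proof of Lemma \ref{DotLambdaRestricts}) — but this is exactly analogous to the horizontal/vertical difference structure already established in Lemma \ref{TauProjections}, so I would model the presentation on that proof and omit the parallel computation for the $\mc{k}^z$-type projection if it is not needed.
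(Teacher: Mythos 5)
Your proposal matches the paper's proof in all essential respects: you isolate a leaf, use Green's identity against $N_y$, invoke Cauchy matching of $N_y$ to fold the $\partial\mc{S}$ and $\partial\mc{S}^*$ contributions into the $\dot\Lambda$ term, and reduce to waist integrals against the rotational field $u = y N_z - z N_y$, with the difference $\phi^{v,\Delta}_{i,j}$ emerging from the two waists. The constant you flag as needing care is computed explicitly in the paper as $c_0 = \tfrac{2 - r_0}{2}$ with $r_0$ the waist radius of the top half neck, which is manifestly positive for small $\tau_0$, confirming your anticipated resolution.
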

 \begin{proof}

The leaf $\mc{L}_{i, j}$ comprises $\mc{S}_{i, j}$ as well as the four incident half necks $\mc{N}^{h, +}_{i, j}$, $\mc{N}^{h, -}_{i +1 , j}$ $\mc{N}^{v, +}_{i, j}$ and $\mc{N}^{v, -}_{i, j + 1}$.  We will assume throughout that the coordinate axes have been chosen to coincide with the center $(0,  i, j)$ of $\mb{S}_{i, j}$. We will also let $N_x$, $N_y$ and $N_z$ denote components of the unit normal to $\mc{C} = \mc{C}_{\tau_0 + e_{\mc{G},  \tau_0}}$. Observe the symmetries of $\mc{L}_{i, j}$ give that  along $\partial_{t}$ and $\partial_b$, $\mc{k}^{y}_{i, j}$ and $N_y$ are proportional up to a positive constant, so $\mc{k}^{y}_{i, j} = \alpha N_y$  for some $\alpha > 0$ on $\partial_b$ and $\partial_t$. Thus, in order to estimate the $L_2$-projection $\langle \frac{\partial \Lambda}{\partial \phi} (\dot{\phi}), \mc{k}^{y}_{i, j} \rangle$ it suffices to estimate $ \langle \frac{\partial \Lambda}{\partial \phi} (\dot{\phi}), N_{y} \rangle_{\partial \mc{S}_{i, j}}$.

The parameter $\phi$ rotates $\partial_{t}$ about the origin through the positive angle $\phi^{v}_{i, j + 1}$ and thus on $\mc{N}^{v, -}_{i, j + 1}$ the parameter $\phi$ also acts as the same rotation. Let $u$ denote the normal variation field generated by $\phi^{v}_{i, j + 1}$ at $\phi  = 0$. Then 
\[
\left. \dot{\Lambda} \right|_{\partial \mc{S}_{i, j}} = \left(u_{\eta}\right)^{e}.
\]
Integrating over the leaf $\mc{L}_{i, j}$ we have
\[
0 = \int_{\mc{L}_{i, j}} N_yL u = \int_{\partial \mc{L}_{i, j}} u_{\eta} N_{y} - u N_{y, \eta},
\]
where above $L$ denotes the stability operator of $\mc{C}$.
Since $N_{y}$ is smooth on $\pi \mc{L}$ it is Cauchy matched and we have $\left. N_{y, \eta} \right|_{\partial_t} = - \left. N_{y, \eta} \right|_{\partial_t^*}$. This then gives
\[
\int_{\partial \mc{S}_{i, j}}\dot{\Lambda} N_y + \int_{w_{i,j}} u_{\eta} N_{y} - u N_{y, \eta} = 0. 
\]
Thus, it remains to compute the second integral in the left hand side above. 
On $\mc{N}^v_{i, j + 1}$,  $u$ is the rotational field given by
\[
u = y N_z - z N_y.
\]

Let $w_{t}$ denote the top boundary component of $\pi \mc{L}_{i, j}$. Thus, $w_t$  is the waist of the top  half neck in  $\mc{L}_{i, j}$. Then $w_{t}$ is a circle of radius $\delta$, invariant under rotations about the $z$-axis. Let $\theta$ denote the standard cylindrical coordinate on $w_t$. On $w_{t}$, we have $N_{z} = 0$ and $N_{y, \eta} = 0$, and $z_{\eta} = 1$. Moreover, we have $N_y = \sin(\theta)$ and thus $y = r_0 N_y  = r_0 \sin(\theta)$, where $r_0$ denotes the radius of  $w_t$. Finally, since the mean curvature of the Delaunay end is $1$ with respect to the outward unit normal, we have that $N_{z, \eta} = \left(1 - \frac{1}{r_0}\right)$. Using the expression for $u$ above we then have
\begin{align*}
u_{\eta} & = \left( y N_z - z N_y. \right)_{\eta} \\
&  =  y N_{z, \eta} - z_{\eta} N_{y} \\
 &= y \left(1 - \frac{1}{r_0}  \right) - N_y \\
 & = \sin(\theta) (r_0 - 1) - \sin(\theta) \\
 &=  (r_0- 2 ) \sin(\theta)
\end{align*}
Thus we have
\begin{align*}
 \int_{w_t} u_{\eta} N_{y} - u N_{y, \eta} & = \int_{w_t} u_{\eta} N_y ds \\
 & =   (r_0 - 2)\int_{0}^{2\pi} \sin^2(\theta) d\theta \\
 & = \frac{r_0 - 2}{2}
\end{align*}
Thus, we have
\[
\left\langle \frac{\partial \Lambda}{\partial \phi} (\dot{\phi}), \mc{k}^{y}_{i, j} \right\rangle =  \frac{2 - r_0}{2} \phi^{v, \Delta}_{i, j}
\]
as claimed. This completes the proof. 
\end{proof}

\section{Symmetry breaking perturbations} \label{SymBreaPert}

In this section we record the mapping properties of the operator $\Lambda$ as a  map from the space $\ul{\xi}$ into $\ul{e}_{*}$ in the vertically graded norms with exponentially decaying weights. The main results of the section are Proposition \ref{OperatorinWeightedNorm}, which says that $\Lambda$ is defined and bounded in $C^1$ on a ball about the origin and Proposition \ref{VerticalVariations}, which says the linearization is an isomorphism from $\ul{e}$ onto $\ul{e}_*$. As a direct consequence, in Corollary \ref{SymmetryBroken} we construct a differentiable map $v \mapsto e_v$ parametrizing the zeroes of $\Lambda$ near $\xi = 0$, so $\Lambda_{v + e_v} = 0$.

\begin{proposition}\label{OperatorinWeightedNorm}
There is $\epsilon > 0$ such that $\Lambda: \ul{\xi} \rightarrow \ul{e}_{*}$ is defined and differentiable on  the ball $B_{\epsilon} \left(0 ; \ul{\xi}\right)$ in the norm vertically graded norm $\| -\|^{\sim}_{-\mu}$.
\end{proposition}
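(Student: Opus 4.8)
The plan is to bootstrap from the already-established $C^1$-differentiability of $\Lambda$ on a fixed $\epsilon$-ball in the unweighted norm (Lemma \ref{ConormalMatching}, together with the component-wise structure of $\mc{C}_\xi$) to the vertically graded norm $\|-\|^\sim_{-\mu}$. The key structural input is that $\Lambda$ is \emph{essentially local along columns}: the value $\Lambda_\xi$ restricted to the boundary of a leaf $\mc{L}_{i,j}$ depends only on the parameters $\xi_{i,j}$ attached to that leaf and to its vertically adjacent leaves $\mc{L}_{i,j\pm1}$ (through the shared vertical half-necks), plus the horizontal half-neck parameters at $(i,j)$ and $(i+1,j)$, which for the purposes of the graded estimate are controlled leaf-by-leaf. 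Because of this, one can first check defined-ness and then differentiability.

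First I would verify that $\Lambda_\xi$ is defined whenever $\|\xi\|^\sim_{-\mu} < \epsilon$. By Lemma \ref{Prop:WeightedNorm} we have $\|\xi\| \leq^C \|\xi\|^\sim_{-\mu}$, so a small graded ball sits inside the unweighted $\epsilon$-ball on which each component surface $\mc{N}_{\tau_i,f_i}$, $\mc{S}_{\rho,X}$ is defined (invoking Theorem \ref{Prop:MainTheoremSimple} and the spherical-parameter lemmas of Section \ref{PreEmSpheres}); hence $\mc{C}_\xi$ and its conormal field, and therefore $\lambda_\xi$ and $\Lambda_\xi = \lambda_\xi^{(e)}$, are defined on each boundary component. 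The point of the graded norm is then to control the \emph{output} in the same grading: I would show $\|\Lambda_\xi\|^\sim_{-\mu} \leq^C \|\xi\|^\sim_{-\mu}$ by writing, at the leaf $\mc{L}_{i,j}$, the difference $(\Lambda_\xi)_{i,j+1} - (\Lambda_\xi)_{i,j}$ and expanding it as a Lipschitz-controlled function of the parameter differences $\xi^\Delta_{i,j-1}, \xi^\Delta_{i,j}, \xi^\Delta_{i,j+1}$ together with the anchored value $\xi_{i,0}$. The Lipschitz constants here are uniform in $(i,j)$: this is exactly the content of "uniform $C^1$ estimates for the neck family modulo translations" from Section \ref{CMCNecks} (the family $\tilde{\mc{N}}^\pm_{\tau,\delta,f}$ is $C^1$ in $\tau$ at $\tau=0$ and smooth in $\delta,f$, with bounds independent of the ambient integer labels, since translating a neck along its axis does not change its intrinsic geometry). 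Multiplying through by the weight $\cosh^{-\mu}(j)$ and using that $\cosh^{-\mu}(j\pm1) \leq^C \cosh^{-\mu}(j)$ (here $\mu>0$ is fixed, and — per Remark \ref{WhatIsMu} — $\tau_0$ is chosen small relative to $\mu$, which is what keeps the finitely many neighboring contributions from accumulating) closes the estimate for the graded norm of the output.

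Differentiability then follows the same template applied to difference quotients. Fix $\dot\xi$ with $\|\dot\xi\|^\sim_{-\mu} < \infty$; the candidate derivative is $\dot\Lambda_{\dot\xi} = (u_\eta)^{(e)}$ from \eqref{DotLambdaExpression} with $u$ the normal variation field, which is bounded $\ul{\xi}\to\ul{e}_*$ in the graded norm by the same column-locality estimate just established (now applied to the linearized data). The remainder $\Lambda_{\xi+\dot\xi} - \Lambda_\xi - \dot\Lambda_{\dot\xi}$ is, component-by-component, the Taylor remainder of the smooth (modulo translation) dependence of the conormal field on the leaf-and-neighbors parameters; its unweighted size at $\mc{L}_{i,j}$ is $O(\|\xi_{i,j}\|\,\|\dot\xi_{i,j}\| + \text{neighbor terms})$, and crossing with the weight and summing as before gives $o(\|\dot\xi\|^\sim_{-\mu})$ uniformly. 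The only genuinely delicate point — and the step I expect to be the main obstacle — is confirming that the per-leaf $C^1$ and remainder bounds are truly \emph{uniform} in the integer labels $(i,j)$ in the \emph{graded} sense: one must be careful that the re-anchoring of branches (the piecewise translation matching $w_{i,j,t}=w_{i,j+1,b}$ that is assumed throughout Section \ref{SpheresLattice}) does not introduce errors that grow down a branch, so that an estimate phrased in terms of parameter \emph{differences} $\xi^\Delta$ genuinely controls the conormal \emph{differences}. This is where the "differentiable at $\tau=0$ modulo translations" property of the half-neck family is used decisively: it is precisely the statement that, after the anchoring translations, the geometry varies smoothly and boundedly with the local parameters, with no hidden $j$-dependence in the constants.
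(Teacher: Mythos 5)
Your proposal is correct and follows essentially the same route as the paper: reduce to the unweighted estimate via Lemma~\ref{Prop:WeightedNorm}, then control vertical differences of $\Lambda$ by vertical differences of $\xi$ using column-locality and translation equivariance. The paper's own proof compresses this into the terse identity $\Lambda^{\Delta}(\xi)=\Lambda(\xi^{\Delta})$ (which, read literally for a nonlinear operator, is not quite an identity but rather shorthand for the Lipschitz/mean-value estimate on differences that your write-up makes explicit), and you have correctly identified the decisive ingredient as the uniformity, in $(i,j)$ and modulo the anchoring translations, of the $C^1$ bounds for the half-neck family from Section~\ref{CMCNecks}.
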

\begin{proof}
By Lemma \ref{Prop:WeightedNorm} we have $\| \xi\| \leq^C \| \xi\|^{\sim}_{-\mu}$. Moreover, since $\Lambda^{\Delta} (\xi)  = \Lambda (\xi^{\Delta})$, we have
\[
\left\|\Lambda (\xi)\right\|^{\sim}_{-\mu} \leq^C \left\|\xi\right\|^{\sim}_{-\mu},
\]
so that $\Lambda$ is bounded in $C^0$ on the ball of radius $\epsilon$ about zero in $\ul{\xi}$ in the form $\| -\|^{\sim}_{-  \mu}$.
The derivative estimate follows similarly. 
\end{proof}
The following proposition records an a-priori estimate for the operator $\Lambda$ in the norms $\| -\|^{\sim}_{- \mu}$
\begin{proposition}\label{SimAprioriEstimate}
There is $\epsilon > 0$ such that: For each $\tau < \epsilon$, there is a constant  $C > 0$ such that:  Given  $e \in \ul{e}$ it holds that 
\[
\left\|e\right\|^{\sim}_{-\mu} \leq^C \left\| \dot{\Lambda}(e) \right\|^{\sim}_{-\mu}.
\]
\end{proposition}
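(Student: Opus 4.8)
The plan is to reduce the graded a-priori estimate to the already-established isomorphism statement of Corollary \ref{NoKernelISo} together with the vertical difference structure of $\Lambda$. The key structural fact is that $\Lambda$ commutes with the vertical difference operator in the sense that $\Lambda^{\Delta}(\xi) = \Lambda(\xi^{\Delta})$ — or more precisely, that the $(i,j)$-component $\dot{\Lambda}_{i,j}(e)$ depends only on $e_{i,j}$ and $e_{i,j-1}$, with the dependence being translation-equivariant in the column index $j$. This was exactly the observation exploited in the proof of Corollary \ref{NoKernelISo}. Consequently, writing $\dot{\Lambda}(e) = e_*$, the vertical differences satisfy $(\dot{\Lambda}(e))^{\Delta}_{i,j} = \dot{\Lambda}(e^{\Delta})_{i,j}$ up to the local coupling terms, so that an a-priori estimate at the level of differences follows from the a-priori estimate (equivalently, boundedness of the inverse) already available in the ungraded weighted norms $\|-\|_{-\mu}$.

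The steps, in order, are as follows. First, I would record the decomposition $e = e_I + e_{II}$ according to Definition \ref{ParameterSubspaces} and use Lemma \ref{DotLambdaRestricts} together with Lemma \ref{TauProjections} and Lemma \ref{Projections} to see that the type I (vertical) part of $e$ is controlled by the projection of $\dot{\Lambda}(e)$ onto $\ul{e}_{*,I}$, while the type II part is controlled by the projection onto $\ul{e}_{*,II}$ via Corollary \ref{NoKernelISo}. Concretely, the projection of $\dot{\Lambda}(e)$ onto $\mc{k}^z_{i,j}$ is $-c_\delta \dot{\tau}^{v,\Delta}_{i,j}$ plus lower-order contributions from $\ul{e}_{II}$ and the phase parameters, so one recovers the \emph{vertical differences} $\tau^{v,\Delta}_{i,j}$ of the type I parameters — precisely the quantities measured by the graded norm $\|-\|^{\sim}_{-\mu}$. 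Second, I would combine this with Corollary \ref{NoKernelISo}, which gives $\|\dot e_{II}\|_{-\mu} \leq^C \|\dot\Lambda(\dot e_{II})\|_{-\mu}$, after splitting off the type I contribution; the cross terms $\|\dot\Lambda - \dot\Lambda_0\|$ are $O(\tau_0)$ by the same perturbation estimate used in that corollary, so for $\tau_0$ small they are absorbed. Third, to pass from control of differences $f^\Delta_{i,j}$ and of the base term $f_{i,0}$ back to the graded norm itself, I would invoke the definition of $\|-\|^{\sim}_{-\mu}$ directly (Definition \ref{GradedNorm}), noting that the base term $e_{i,0}$ is controlled because the zeroth column contributes to $\dot\Lambda(e)$ through the undifferenced operator exactly as in the ungraded setting; here one uses Lemma \ref{Prop:WeightedNorm} to compare $\|e\|$ with $\|e\|^{\sim}_{-\mu}$ in the other direction where needed.

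The main obstacle I expect is handling the base-term (zeroth column) contribution cleanly: the graded norm measures the surface by differences across fundamental domains plus the value on a single column, and while $\dot\Lambda$ is ``mostly local'' in the vertical direction, it is not literally a difference operator — the diagonal coupling $\dot\Lambda_{i,j}$ sees both $e_{i,j}$ and $e_{i,j-1}$, and the horizontal coupling through the type I horizontal parameters $\ul v$ mixes adjacent columns in the $i$-direction. The delicate point is to show that the finite-dimensional ``constant'' part of the error (the part not seen by vertical differencing) is still invertibly controlled, which amounts to checking that the kernel of $\dot\Lambda$ restricted to constant-in-$j$ data is trivial on $\ul e$ — this is where the choice of $\tau_0$ small relative to $\mu$ (Remark \ref{WhatIsMu}) enters, since the exponential weights must dominate the off-diagonal coupling uniformly. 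Once that is in place, iterating the resulting near-identity estimate exactly as in Corollary \ref{NoKernelISo} promotes the perturbative bound to the genuine a-priori inequality.
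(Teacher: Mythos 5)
Your proposal takes a genuinely different route from the paper's, and I do not think it closes. The paper proves this estimate by a compactness/blow-up argument in two passes: first it shows $\|e^{\Delta}\|_{-\mu} \leq^C \|\dot{\Lambda}^{\Delta}(e)\|_{-\mu}$ by contradiction, extracting from a putative failing sequence a renormalized limit $e$ with $e_0 = 0$ and $(\dot{\Lambda}(e))^{\Delta} = 0$, then uses the projection Lemmas \ref{TauProjections}, \ref{Projections}, \ref{DotLambdaRestricts} to kill $e_I$, and finally sends $\tau_0 \to 0$ so the surviving jacobi field lives on a tangent collection of spheres and must be translational, hence vanish by the normalization $e_0 = 0$; it then runs an analogous contradiction argument a second time to upgrade the difference estimate to the full graded estimate.

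There are two concrete gaps in what you wrote. First, a norm mismatch: you invoke Corollary \ref{NoKernelISo} as giving $\|\dot{e}_{II}\|_{-\mu} \leq^C \|\dot{\Lambda}(\dot{e}_{II})\|_{-\mu}$, but that corollary is stated (and used in Lemma \ref{GrowingIso}) in the norms $\|{-}\|_{\mu}$ with the \emph{growing} weight, and the paper's entire reason for having a separate Proposition \ref{SimAprioriEstimate} is precisely that you cannot directly transfer invertibility in $\|{-}\|_{\mu}$ to control in the decaying-weight graded norm $\|{-}\|^{\sim}_{-\mu}$. In particular $\|\dot{\Lambda}(e)\|^{\sim}_{-\mu}$ only controls the sup of $\dot{\Lambda}(e)$ (via Lemma \ref{Prop:WeightedNorm}), not its $\|{-}\|_{-\mu}$ norm, so the chain of inequalities you propose does not typecheck. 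Second, and more fundamentally, you correctly identify the base-term/zeroth-column contribution as the main obstacle but propose to dispose of it by ``iterating the near-identity estimate exactly as in Corollary \ref{NoKernelISo}.'' That perturbative iteration cannot resolve it: what is needed is triviality of the kernel of $\dot{\Lambda}$ on constant-in-$j$ data, and this is a genuine rigidity statement, not a small-$\tau_0$ perturbation off $\dot{\Lambda}_0$. The paper establishes it by the $\tau_0 \to 0$ compactness argument reducing to the fact that the sphere admits only translational jacobi fields (Lemma \ref{Prop:SKernel}), combined with the pinning condition at $j=0$ coming from the renormalization and the imposed reflection symmetry. Without some version of that rigidity input your scheme cannot control $e_{i,0}$, and the graded estimate does not follow.
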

\begin{proof}
We will first show that 
\begin{align} \label{DifferenceEngine}
\left\|e^{\Delta}\right\|_{-\mu} \leq^C \left\| \dot{\Lambda}^{\Delta}(e) \right\|_{-\mu}.
\end{align}
 If this doesn't hold, then  for arbitrarily small base flux $\tau_0$, there  is a sequence   $e_k$  with $\left\|\dot{\Lambda}^{\Delta} (e_k)\right\|_{-\mu} \rightarrow 0 $ and $\left\|e_k^{ \Delta}\right\|_{- \mu} = 1$. Pick integers $i_k$ and $j_k$ such that 
 \[
 1 = \left\| e^{\Delta}_k\right\|_{-\mu} = \cosh^{\mu} (j_k) \left\| \left(e^{\Delta}_k\right)_{(i_k, j_k)}\right\|.
 \]
 Let $\ol{e}_k$ denote the element of $\ul{e}$ such that $\left(\ol{e}_{k}\right)_{i, j} = \left(e_{k} \right)_{i_k, j_k} $
 for arbitrary $i$ and $j$ and thus $\ol{e}_{k}^{ \Delta}  = 0$.  Without loss of generality we can assume that $i_k = 0$ and that $j_{k} \geq 0$.   We  will suppress $i$ from the notation throughout the rest of the proof.  We then define the renormalized sequence $\tilde{e}_{k}$  as follows:
\[
\left(\tilde{e}_{k} \right)_{j} : = \cosh^{\mu} (j_k) \left(\left(e_{k} \right)_{ j_k + j} - \left(\ol{e}_{k} \right)_{ j + j_k}\right).
\]
  After passing to a  subsequence we can assume that the sequence $j_k$  is convergent with limit $j_\infty$  in the extended integers $\mb{Z} \cup \{ \infty \}$.   Observe that by construction we have $\left(\tilde{e}_{k} \right)_{0} = 0$ and that 
\begin{align*}
\left\|\left(\tilde{e}^{ \Delta}_{k} \right)_{j} \right\| &= \cosh^{ \mu} (j_k) \left\| \left(e^{\Delta}_{k}\right)_{j + j_k}\right\| \\
& \leq \cosh^{- \mu}_{j_k} (j), 
\end{align*}
where above we have used that $\left\| e_k^{\Delta}\right\|_{-\mu} = 1$, and by definition we have $\left\|\left(\tilde{e}^{ \Delta}_{k} \right)_{0} \right\|= 1$.  After passing to a subsequence, we have that $\tilde{e}_{k}$ converges to a non-trivial limit $e \in \ul{e}$. The limit $e$ then satisfies $\left(e \right)_{0} = 0$ and the estimate $\left\| \left(e^{ \Delta}\right)\right\| \leq \cosh_{j_\infty} (j)$. We have:
\begin{align*}
\left\{\dot{\Lambda} (\tilde{e}_{k}) \right\}_{j}
& = \cosh^{\mu} (j_k) \left\{\dot{\Lambda} \left(e_{k} \right)\right\}_{j + j_k} - \cosh^{\mu} (j_k) \left\{\dot{\Lambda}  \left(\ol{e}_{k} \right)\right\}_{j + j_k}.\\
\end{align*}
Since $\ol{e}^{\Delta}_k = 0$ we have  $\left( \dot{\Lambda}  \left(\ol{e}_{k} \right) \right)^{\Delta}= 0$ and thus
\begin{align*}
\left\|\left\{\left(\dot{\Lambda} (\tilde{e}_{k}) \right)^{\Delta} \right\}_{j} \right\| & \leq \cosh^{\mu}(j_k) \left\|\dot{\Lambda}  (e_{k}) \right\|_{-\mu} \cosh^{-\mu} (j + j_k)\\
& \leq \cosh^{-\mu}_{j_k} (j) \left\| \dot{\Lambda}(e_{k}) \right\|_{-\mu}.
\end{align*}
Thus, $\left\{\left(\dot{\Lambda} (\tilde{e}_{k})\right)^{\Delta} \right\}_{j} \rightarrow 0$ as $k \rightarrow \infty$. Taking the limit as $k \rightarrow \infty$ gives
\[
\left(\dot{\Lambda} (e)\right)^{\Delta} = 0. 
\]
Let $e_I$ and $e_{II}$ denote the projections of $e$ into the spaces $\ul{e}_{I}$ and $\ul{e}_{II}$, respectively.
By Lemmas \ref{TauProjections} and \ref{Projections} and \ref{DotLambdaRestricts} we have that 
\[
\left\| \left\{\left(\dot{\Lambda} (\dot{e})\right)_{j }\right\}_{I} \right\| \leq^C \left\| \left\{e^{\Delta}\right\}_{j}\right\| \leq \cosh_{j_\infty}(j).
\]
Since $\left\{\dot{\Lambda} (e) \right\}_{j}$ is independent of $j$, the decay estimate above forces $ \left(\dot{\Lambda} (e)\right)_{I} $ to vanish. Applying  Lemmas \ref{TauProjections} and \ref{Projections} and \ref{DotLambdaRestricts}  again and the fact that $\left\{e_I \right\}_{0} = 0$ gives that $e_I \equiv 0$ and thus $e$ is in $\ul{e}_{II}$. 

Now, take a sequence $\tau_{0, n}$ of base flux assignments tending to $0$, and let $e_n$ denote the corresponding solutions constructed above. The solutions then converge to a non-trivial limit $e$ satisfying $\dot{\Lambda}_0^{\Delta} (e) = 0$, where here $\dot{\Lambda}_0$ denotes the linearization of $\Lambda$ at $\xi_{total} = 0$.  Let $u$ denote the normal variation field generated by $e$. Then $u$ is a jacobi field  on the branch $\mc{B}_0$  containing $\mc{S}_{0, 0}$. Since the background flux $\tau_0$  is now zero, $\mc{B}_0$ is a collection of spheres, and thus $u$ is a collection of translational jacobi fields on the spheres $\mb{S}_j$ comprising the leaves of $\mc{B}_0$. Since $e_0 =0$, we have that $u$ vanishes on $\mb{S}_0$, from which it follows that $u \equiv 0$ and thus $e = 0$.   This establishes the estimate (\ref{DifferenceEngine}). 

Now, suppose that for small $\tau_0$, there is a sequence $e_k$  with $\left\|e_k \right\|^{\sim}_{-\mu} = 1$ and $\left\| \dot{\Lambda} (e_k)\right\|^{\sim}_{-\mu} \rightarrow 0$. By the estimate (\ref{DifferenceEngine}), we have $\left\| e^{\Delta}_{k}\right\|_{- \mu} \rightarrow 0$ and thus  the sequence $e_k$ converges to a non-trivial limit $e \in \ul{e}$ with $e^{\Delta} = 0$ and satisfying $\dot{\Lambda} (e) = 0$. Since $e^{\Delta} = 0$, the symmetries imply that  $\left(\dot{e}_I\right)_{0} = 0$ and thus $e_I = 0$. Taking the base flux $\tau_0 \rightarrow 0 $ as above we obtain a contradiction as above. This completes the proof. 
\end{proof}

In the following lemma we show that the linear operator $\dot{\Lambda}$ is an isomorphism in the graded norms with exponentially growing weights. This  is needed for a perturbation argument in the proof of Lemma \ref{VerticalVariations}, which records that the operator is an isomorphism in the vertically graded norms with exponentially decaying weight.

In order to simplify our presentation, we will assume throughout that the collection $\mc{C}$ derived from $\mb{S}$, and the corresponding parameter spaces, are invariant under reflections through the plane $\{y = 0 \}$. Thus, given $\xi \in \ul{\xi}$, the entries of $\xi_{i, j}$ of $\xi$ are determined by their values for $j \geq 0$.  Observe additionally that the symmetries imply that if $e_* \in \ul{e}_*$ then $e_{*, i, 0}$ is orthogonal to translational jacobi field $\mc{k}^{z}$. Also, given $e_{I} \in \ul{e}_{I}$, we have $e_{I, i, j} = - e_{I, i, - j + 1}$. Since $e_{*, i, 0}$ is a-priori orthogonal to $\mc{k}^z$, we will assume throughout that $\tau^{v}_{i, 1} = \tau^{v}_{i, 0} = 0$.

\begin{lemma}\label{GrowingIso}
The mapping $\dot{\Lambda}: \ul{e} \rightarrow \ul{e}_{*}$ is a bounded  isomorphism in the weighted norms $\| -\|_{\mu}$ for $\mu > 0$.
\end{lemma}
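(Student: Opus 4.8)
\textbf{Proof strategy for Lemma \ref{GrowingIso}.}

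The plan is to prove injectivity and surjectivity of $\dot{\Lambda}:\ul{e}\to\ul{e}_*$ in the exponentially \emph{growing} weighted norm $\|-\|_\mu$ by combining the local structure of $\dot{\Lambda}$ --- namely that $\dot{\Lambda}_{i,j}$ depends only on the data $e_{i,j}$ and $e_{i,j-1}$ --- with the two ingredients already in hand: the isomorphism $\dot{\Lambda}:\ul{e}_{II}\to\ul{e}_{*,II}$ in the unweighted norm (Corollary \ref{NoKernelISo}), and the explicit projection formulas of Lemmas \ref{TauProjections} and \ref{Projections} showing that the type $I$ parameters generate the cokernel $\ul{e}_{*,I}$ via first differences $\tau^{h,\vtr}$, $\tau^{v,\Delta}$ and $\phi^{v,\Delta}$. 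First I would establish the a priori estimate $\|e\|_\mu\leq^C\|\dot{\Lambda}(e)\|_\mu$ by the same contradiction/blow-up scheme used in Proposition \ref{SimAprioriEstimate}: if it fails, one extracts a sequence $e_k$ with $\|e_k\|_\mu=1$ concentrating at indices $j_k$ with $\cosh^\mu(j_k)\|(e_k)_{i_k,j_k}\|=1$, renormalizes and translates to move the concentration index to $0$, passes to a limit $e$ with $\dot{\Lambda}(e)=0$ and a uniform exponential \emph{growth} bound $\|e_j\|\leq\cosh^\mu_{j_\infty}(j)$, and then takes the auxiliary base-flux limit $\tau_0\to0$ to reduce to a genuine (collection of) Delaunay/sphere jacobi field argument; here the growth weight $\mu>0$ is precisely what kills the unbounded-away-from-zero possibility, so the limit jacobi field is forced to be translational and then trivial by the imposed orthogonality over fundamental domains --- exactly as in the proof of Proposition \ref{RotSymBranches}.

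Given the a priori estimate, injectivity is immediate and it remains to prove surjectivity. The cleanest route is a \emph{telescoping/transport} argument exploiting the one-step locality: given a target $e_*\in\ul{e}_*$ with bound $\|e_*\|_\mu<\infty$, one solves the recursion in $j$ directly. Because of the structure recorded in Lemmas \ref{DotLambdaRestricts}, \ref{TauProjections} and \ref{Projections}, at each level $j$ the component of $\dot{\Lambda}(e)$ along the kernel $\ul{e}_{*,I}$ is controlled by the differences $e^{\Delta}_{i,j}$ of the type $I$ part, while the complementary component along $\ul{e}_{*,II}$ is handled by the invertibility of $\dot{\Lambda}|_{\ul{e}_{II}}$ from Corollary \ref{NoKernelISo}. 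So one first determines $e_I$ by integrating the first-difference equations $c_\delta\,\tau^{v,\Delta}_{i,j}=-\langle e_*,\mc{k}^z_{i,j}\rangle$, etc., starting from the symmetry-determined initial data $\tau^v_{i,0}=\tau^v_{i,1}=0$ (and similarly for $\tau^h$, $\phi^v$), which by summation and the weight $\cosh^\mu$ yields $\|e_I\|_\mu\leq^C\|e_*\|_\mu$ since the series $\sum_{k\le j}\cosh^\mu(k)$ is comparable to $\cosh^\mu(j)$; then, with $e_I$ fixed, one inverts on $\ul{e}_{II}$ to kill the residual $\ul{e}_{*,II}$-component, picking up the error from the off-diagonal ($j\mapsto j-1$) coupling, which is again absorbed because the growth rates at adjacent levels are comparable. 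The a priori estimate guarantees this constructed $e$ is unique and has the claimed norm bound, so $\dot{\Lambda}$ is a bounded isomorphism.

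The main obstacle I anticipate is the blow-up argument for the a priori estimate in the \emph{growing} weight: one must check that the renormalized limit $e$ not only satisfies $\dot{\Lambda}(e)=0$ but also inherits a usable growth bound, and that after the second limit $\tau_0\to0$ the limiting object is a jacobi field on the singular configuration (a chain of tangent spheres) rather than on a smooth Delaunay surface, so that the removable-singularity and symmetry-orthogonality arguments of Proposition \ref{RotSymBranches} apply verbatim to force triviality. A secondary technical point is bookkeeping the initial-data normalization $\tau^v_{i,0}=\tau^v_{i,1}=0$ together with the reflection symmetry $e_{I,i,j}=-e_{I,i,-j+1}$, which must be shown consistent with the recursion so that the integrated type $I$ solution genuinely lies in $\ul{e}$; once these are in place the surjectivity construction is routine summation.
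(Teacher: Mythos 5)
Your surjectivity construction --- integrating the first-difference relations from Lemmas \ref{TauProjections} and \ref{Projections} to determine $e_I$, then inverting on $\ul{e}_{II}$ via Corollary \ref{NoKernelISo} --- is exactly the paper's argument, and it constitutes the \emph{entire} proof of Lemma \ref{GrowingIso} in the paper. The place you diverge is in first building a separate a priori estimate $\|e\|_\mu\leq^C\|\dot{\Lambda}(e)\|_\mu$ by a blow-up/contradiction scheme. The paper does not do this for Lemma \ref{GrowingIso}, and it is not needed here: in the constructive argument the type $I$ part $e_I$ is already determined \emph{uniquely} by the first-difference equations together with the symmetry normalization $\tau^v_{i,0}=\tau^v_{i,1}=0$, and $\dot{\Lambda}|_{\ul{e}_{II}}$ is injective by Corollary \ref{NoKernelISo}; so injectivity falls out of the same two ingredients you use for surjectivity, without passing to any limit. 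The blow-up-and-renormalize machinery appears in the paper only in Proposition \ref{SimAprioriEstimate}, and it is tailored to the graded norm $\|-\|^\sim_{-\mu}$ whose defining data are the vertical differences $e^\Delta$; porting it to the non-graded norm $\|-\|_\mu$, as you propose, is precisely where you will run into the one-sided exponential growth of the rescaled limit that you flag as your ``main obstacle.'' In short, your proposal would likely go through but substitutes a longer limiting argument for what the paper handles with a short direct computation; note also that after summing the differences of $e_I$ you obtain a bound of the form $\|(e_I)_{i,j}\|\leq^C\cosh^{\mu}(j)$, so the estimate you want is the one the paper records, and the phrase ``yields $\|e_I\|_\mu\leq^C\|e_*\|_\mu$ since $\sum_{k\le j}\cosh^\mu(k)\cong\cosh^\mu(j)$'' should be read with that growth allowance in mind rather than as a decay estimate.
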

\begin{proof}
Clearly, we have that $\dot{\Lambda}$ is bounded in the norms $\| -\|_{\mu}$ for arbitrary $\mu$. To see that it has a bounded  inverse in the norms $\| -\|_{\mu}$ for $\mu > 0$, pick $e_* \in \ul{e}_*$ with $\| e_*\|_{\mu} = 1$. By Lemma's \ref{TauProjections} and \ref{Projections} there is a unique $e_I \in \ul{e}_I$  such that $\dot{\Lambda} (e_I) + e_{*} \in \ul{e}_{II}$. Moreover, we have the estimate  $\left\|\left(e^{\Delta}_{I} \right)_{ i, j} \right\| \cong \left\|e_{* i, j} \right\|$ and thus
\[
\|\left( \dot{e}_{I} \right)_{i, j}\| \leq \sum_{j' = 0}^{j}\left\| \left(\dot{e}_I^{\Delta}\right)_{i, j'} \right\| \leq^C \sum_{j' = 0}^j \cosh^{\mu} (j') \leq^C \cosh(j).
\]
This then gives the estimate $\left\| \dot{\Lambda} (\dot{e}_I) + e_{*,I} \right\| \leq^C \cosh^{\mu} (j)$.  We have thus reduced to the case of showing that $\dot{\Lambda}$ is a bounded isomorphism  in the norm $\| -\|_{\mu}$ from $\ul{e}_{II}$ into $\ul{e}_{*,II}$, which is Corollary \ref{NoKernelISo}. 
\end{proof}
As a direct corollary to the apriori estimate recorded in Proposition \ref{SimAprioriEstimate} and the existence result in Lemma  \ref{GrowingIso}, we have:

\begin{proposition}\label{VerticalVariations}
The mapping $\dot{\Lambda}: \ul{e} \rightarrow \ul{e}_{*}$ is a bounded  isomorphism in the weighted norms $\| -\|^{\sim}_{-\mu}$.
\end{proposition}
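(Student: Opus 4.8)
The plan is to combine the a-priori estimate of Proposition \ref{SimAprioriEstimate} with the existence result of Lemma \ref{GrowingIso} by a duality or adjoint argument, exploiting the fact that the operator $\dot\Lambda$ is self-adjoint (up to sign) with respect to the $L^2$ pairing on $\partial\mc{C}$ and that the exponential weights $\cosh^{\mu}(j)$ and $\cosh^{-\mu}(j)$ are dual to one another under this pairing. First I would observe that Proposition \ref{SimAprioriEstimate} already gives injectivity of $\dot\Lambda:\ul{e}\to\ul{e}_*$ in the norm $\|-\|^{\sim}_{-\mu}$, together with the lower bound $\|e\|^{\sim}_{-\mu}\leq^C\|\dot\Lambda(e)\|^{\sim}_{-\mu}$; this is precisely the statement that $\dot\Lambda$ has closed range and trivial kernel in these norms. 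So the only thing left to prove is \emph{surjectivity} onto $\ul{e}_*$ in the vertically graded norm with exponentially decaying weight.

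For surjectivity, the cleanest route is to transpose the isomorphism of Lemma \ref{GrowingIso}. Since $\dot\Lambda$ is (up to the sign bookkeeping coming from the parity of conormals, as in the proof of Lemma \ref{ConormalMatching}) formally self-adjoint with respect to the $L^2$ inner product on $\partial\mc{C}$, and since $\ul{e}$ and $\ul{e}_*$ are in $L^2$-duality with the weight $\cosh^{\mu}(j)$ pairing against $\cosh^{-\mu}(j)$, the bounded invertibility of $\dot\Lambda$ in $\|-\|_{\mu}$ asserted by Lemma \ref{GrowingIso} dualizes to bounded invertibility of $\dot\Lambda$ in the predual norm, which is exactly the graded norm $\|-\|^{\sim}_{-\mu}$ up to the equivalence recorded in Lemma \ref{Prop:WeightedNorm}. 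Concretely: given $e_*\in\ul{e}_*$ with $\|e_*\|^{\sim}_{-\mu}<\infty$, I would first use Lemmas \ref{TauProjections} and \ref{Projections} to peel off a type~I correction $e_I$ killing the projection of $e_*$ onto $\ul{e}_{*,I}$ (the $e_I$ is obtained by summing the differences $e_{*,i,j}$ as in the proof of Lemma \ref{GrowingIso}, and since these differences now decay like $\cosh^{-\mu}(j)$ the partial sums \emph{converge} rather than grow, giving $\|e_I\|^{\sim}_{-\mu}\leq^C\|e_*\|^{\sim}_{-\mu}$), reducing to solving $\dot\Lambda(e_{II})=e_{*,II}$ with $e_{II}\in\ul{e}_{II}$, $e_{*,II}\in\ul{e}_{*,II}$; and that is handled by Corollary \ref{NoKernelISo} together with Lemma \ref{Prop:WeightedNorm}, since on $\ul{e}_{II}$ the operator is essentially block-diagonal in $i$ and nearly diagonal in $j$, so the decaying-weight estimate is the same perturbation-of-the-identity argument as in Corollary \ref{NoKernelISo} run with the weight $\cosh^{-\mu}(j)$ in place of $\cosh^{\mu}(j)$.

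The main obstacle, and the step deserving the most care, is making the duality between the growing and decaying weighted spaces precise in the \emph{graded} (difference) norms rather than the plain weighted norms: $\|-\|^{\sim}_{-\mu}$ measures the base value $\|e_{i,0}\|$ plus the weighted differences $\cosh^{-\mu}(j)\|e^{\Delta}_{i,j}\|$, and one must check that the type~I summation step does not lose the decay and that the base-point data $e_{*,i,0}$ (which by the imposed $\{y=0\}$-symmetry is automatically orthogonal to $\mc{k}^z$, and for which we have set $\tau^v_{i,1}=\tau^v_{i,0}=0$) is correctly matched. Once the reduction to $\ul{e}_{II}$ is in place this is routine, but the interplay of the parity reduction, the normalization conventions on the necks, and the graded norm is where an error would most likely hide; I would therefore write that reduction out explicitly and invoke Lemma \ref{Prop:WeightedNorm} and Corollary \ref{NoKernelISo} only after the type~I part has been removed.
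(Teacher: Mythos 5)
Your proposal identifies the same two ingredients the paper uses---Lemma \ref{GrowingIso} and Proposition \ref{SimAprioriEstimate}---but assembles them differently and somewhat more laboriously than the paper does. The paper's proof is essentially three lines: by Lemma \ref{Prop:WeightedNorm} a datum $e_*$ with $\|e_*\|^{\sim}_{-\mu}\leq 1$ is bounded in the plain sup norm and hence in the weighted norm used in Lemma \ref{GrowingIso}; Lemma \ref{GrowingIso} then supplies \emph{directly} a solution $e\in\ul{e}$ of $\dot\Lambda(e)=e_*$; and Proposition \ref{SimAprioriEstimate}, applied to this $e$, upgrades its norm to $\|e\|^{\sim}_{-\mu}\leq^C\|e_*\|^{\sim}_{-\mu}$. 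There is no need to re-run the type-I/type-II reduction of Lemma \ref{GrowingIso} with a different weight, and no duality step. You instead treat the a-priori estimate as giving only injectivity and closed range, and then go looking for surjectivity separately; the surjectivity can be obtained more cheaply by observing that the graded data already lies in the space where Lemma \ref{GrowingIso} provides a preimage.

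Two points in your argument deserve correction. First, the transposition step is not sound as stated: the graded norm $\|-\|^{\sim}_{-\mu}$, which records the base value $\|e_{i,0}\|$ together with weighted \emph{differences}, is not the $L^2$-dual (nor the predual) of the weighted $\ell^\infty$-type norm $\|-\|_{\mu}$, and the formal self-adjointness of $\slashed\partial^{(e)}$ does not transport the Lemma \ref{GrowingIso} isomorphism into the graded setting by abstract functional analysis; the paper deliberately avoids any such duality claim. Second, in your concrete fallback you assert that ``since these differences now decay like $\cosh^{-\mu}(j)$ the partial sums converge rather than grow.'' This is not what $\|e_*\|^{\sim}_{-\mu}<\infty$ gives you: the graded norm bounds the \emph{differences} $e^{\Delta}_{*,i,j}$ by $\cosh^{-\mu}(j)$, so $e_{*,i,j}$ itself is merely bounded, not decaying. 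Consequently $e_I^{\Delta}\cong e_*$ is bounded, not decaying, and the partial sums $\sum_{j'\leq j}e_I^{\Delta}$ grow linearly; what remains controlled is the \emph{graded} norm $\|e_I\|^{\sim}_{-\mu}$, precisely because it only measures differences, and your final bound $\|e_I\|^{\sim}_{-\mu}\leq^C\|e_*\|^{\sim}_{-\mu}$ does hold. So the conclusion of that step is right, but for a different reason than the one you give, and the confusion between ``$e_*$ decays'' and ``$e_*^{\Delta}$ decays'' is exactly the kind of slip the graded norm is designed to expose.
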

\begin{proof}
Fix $e_* \in \ul{e}_{*}$ with $\| e\|^\sim_{-\mu} = 1$. By Lemma \ref{Prop:WeightedNorm}, we have $\| e_*\| \leq^C \| e \|^{\sim}_{-\mu} \leq C$ and thus in particular $\| e_*\|_{\mu} \leq C$. By Lemma \ref{GrowingIso}, there is $e \in \ul{e}$ with $\| e\|_{\mu} \leq {C'}$ and such that $\dot{\Lambda} (e) = e_*$. By Proposition \ref{SimAprioriEstimate}, we have the estimate
\[
\left\| e\right\|^{\sim}_{-\mu} \leq^C \|e_*\|^{\sim}_{-\mu}.
\]
This completes the proof. 
 \end{proof}
 
 Thus, we have shown that the  operator is bounded in $C^1$ on a ball about the origin in $\ul{\xi}$ as a map into $\ul{e}_*$ and that the linearization at the origin is an isomorphism of $\ul{e}$ onto $\ul{e}_{*}$. As a direct application of the implicit function theorem we then have that the zeroes of $\Lambda$ near the origin are differentiably parametrized by the free variables $v \in \ul{v}$.

\begin{corollary} \label{SymmetryBroken}
There is $\epsilon > 0$ and $C > 0$ such that: Given $v \in \ul{v}$ with $\| v \|^\sim_{\mu} \leq \epsilon$, there is a unique $e \in \ul{e}$ with $\| e\|^{\sim}_{-\mu} \leq^C \epsilon $ and  such that 
\[
\Lambda_{v + e} = 0.
\]
Moreover, the mapping  $ v \mapsto  e$ is differentiable from $\ul{v}$ into $\ul{e}$. 
\end{corollary}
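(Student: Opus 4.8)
The plan is to obtain Corollary \ref{SymmetryBroken} as a direct application of the implicit function theorem to the map $\Lambda$, viewing it as a map of Banach spaces with the domain split as $\ul{\xi} = \ul{v} \oplus \ul{e}$ (Definition \ref{ParameterSubspaces}) and the target $\ul{e}_*$. The three ingredients required by the implicit function theorem are already established in the preceding results: first, that $\Lambda$ is defined and $C^1$ on a ball $B_\epsilon(0;\ul{\xi})$ with the domain carrying the vertically graded norm $\|-\|^\sim_{-\mu}$ and the target carrying the norm $\|-\|^\sim_{-\mu}$ — this is Proposition \ref{OperatorinWeightedNorm}; second, that $\Lambda_0 = 0$, i.e. the origin is a solution, which holds because at $\xi = 0$ (equivalently $\xi_{\text{total}} = \xi_0$, recalling Remark \ref{TotalParamConv}) the paired boundary components of $\mc{C}$ differ by translations with opposing conormals, so that $\lambda_0 = 0$ and hence $\Lambda_0 = \lambda_0^{(e)} = 0$; and third, that the partial derivative $\partial_e \Lambda|_{\xi = 0} = \dot{\Lambda}|_{\ul{e}} : \ul{e} \to \ul{e}_*$ is a bounded isomorphism in the norms $\|-\|^\sim_{-\mu}$ — this is precisely Proposition \ref{VerticalVariations}.

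With these in hand, I would first record that $D\Lambda|_{\xi=0}$ restricted to the complemented subspace $\ul{e}$ is an isomorphism onto $\ul{e}_*$, so that the hypotheses of the implicit function theorem in Banach spaces are satisfied on the product $\ul{v} \times \ul{e}$. Applying the theorem then yields $\epsilon > 0$ and a $C^1$ map $\Phi : \{v \in \ul{v} : \|v\|^\sim_{\mu} \leq \epsilon\} \to \ul{e}$ with $\Phi(0) = 0$ and $\Lambda_{v + \Phi(v)} = 0$, and moreover $\Phi$ is the unique such map in a neighbourhood of the origin in $\ul{e}$; setting $e = \Phi(v)$ gives the asserted solution and uniqueness clause. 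The norm bound $\|e\|^\sim_{-\mu} \leq^C \epsilon$ follows from the Lipschitz bound on $\Phi$ near the origin, which the implicit function theorem supplies: $\|\Phi(v)\|^\sim_{-\mu} \leq^C \|v\|^\sim_{\mu}$ since $\Phi(0) = 0$ and $D\Phi$ is bounded near $0$ (here I am using that $\Lambda$ is bounded in $C^1$ from Proposition \ref{OperatorinWeightedNorm} and that the inverse of $D\Lambda|_{\ul{e}}$ is bounded from Proposition \ref{VerticalVariations}).

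One technical point that warrants a sentence of care: the norm on $\ul{v}$ appearing in the statement is $\|-\|^\sim_{\mu}$ (exponentially \emph{growing} weight) while the norm on $\ul{e}$ and on $\ul{e}_*$ is $\|-\|^\sim_{-\mu}$ (exponentially \emph{decaying} weight). So the domain of $\Lambda$ is not literally a single normed space but a product of two differently normed spaces; I would note that this causes no difficulty, since the implicit function theorem applies to $C^1$ maps on open subsets of products of Banach spaces, and the relevant mixed continuity and differentiability of $\Lambda$ with respect to this product structure is exactly what Proposition \ref{OperatorinWeightedNorm} asserts (the proof there uses $\|\xi\| \leq^C \|\xi\|^\sim_{-\mu}$ from Lemma \ref{Prop:WeightedNorm}, and the same reasoning applied componentwise handles the $\ul{v}$ factor). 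The choice of the growing weight on $\ul{v}$ is what makes the a priori estimate in Proposition \ref{SimAprioriEstimate} — and hence the isomorphism in Proposition \ref{VerticalVariations} — go through, so this asymmetry is essential rather than incidental.

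I do not expect a genuine obstacle here: the corollary is explicitly flagged in the text as a ``direct application of the implicit function theorem,'' and all the analytic work — the $C^1$ bounds, the solvability of the linearized equation, and the a priori estimate forcing injectivity of the linearization on $\ul{e}$ — has been carried out in Sections \ref{TypeIIVariations} through \ref{SymBreaPert}. The only thing to be careful about is bookkeeping: making sure the splitting $\ul{\xi} = \ul{v} \oplus \ul{e}$ is the one used, that the linearization is taken at $\xi = 0$ (equivalently at total parameter $\xi_0$), and that the quantified constants $\epsilon$ and $C$ are extracted in the right order (first $\epsilon$ from the domain of definition in Proposition \ref{OperatorinWeightedNorm}, possibly shrunk by the implicit function theorem, then $C$ from the operator norm of the inverse linearization). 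The differentiability of $v \mapsto e$ is immediate from the implicit function theorem since $\Lambda$ is $C^1$.
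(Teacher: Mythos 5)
Your proposal is correct and takes essentially the same approach as the paper: the paper's proof is the one-line statement that the corollary follows from Propositions \ref{OperatorinWeightedNorm}, \ref{VerticalVariations}, and the implicit function theorem, and your writeup simply fills in the standard bookkeeping (checking $\Lambda_0 = 0$, identifying the partial derivative in $\ul{e}$ with $\dot{\Lambda}|_{\ul{e}}$, and noting the asymmetric weights on $\ul{v}$ versus $\ul{e}$) that the paper leaves implicit.
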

\begin{proof}
This is a direct consequence of Propositions \ref{OperatorinWeightedNorm},  \ref{VerticalVariations} and the implicit function theorem.
\end{proof}

\begin{remark} 
For the remainder of the article, we will regard $e \in \ul{e}$ as a differentiable function of $v \in \ul{v}$  using Corollary \ref{SymmetryBroken}, so that $\Lambda_{v} = \Lambda(v + e) = 0$ for each small value of $v$. Thus, each branch in $\mc{C}_{v}$ is a smooth surface with boundary comprising the waists of the incident half necks. 

\end{remark}

\section{Cancelling the translational error}\label{CanTranErr}

 In this section we resolve  the translational differences between the half necks. The result is a smooth immersed $CMC$ surface without boundary.

 \subsection{Regular parameter assigments}\label{RegParAss} It will be convenient to paramatrize the horizontal necks in $\mc{C}$ by their separation  $\sigma^{h}_{i, j}$ parameters, rather than the flux $\tau^{h}_{i, j}$, since they are regular near zero. Thus, a \emph{regular parameter assignment} in $\mc{N}^{h}$ is a tuple $r = (r_{i, j})$ where $r_{i, j} = \left(\sigma^{h}_{i, j}, \phi^{h}_{i, j} \right)$ and where $\sigma^h_{i, j}$ denotes the separation of the horizontal neck $\mc{N}^h_{i, j}$ and $\phi^{h}_{i, j}$ the phase. We denote by $\ul{r}$ the space of regular parameter assignments in $\mc{N}^h$. Observe that since the separation and flux parameters are related by (\ref{NewSigmaFromTau}), we have that $\ul{r}$ and the space $\ul{v}$  are isomorphic under the mapping 
 \begin{align}\label{TheMappingL}
 l: \left(\tau^{h}_{i, j}, \phi^{h}_{i, j} \right) \in \ul{v} \mapsto \left( l_{\tau^{h}_{i, j}}, \phi^{h}_{i, j}\right) \in \ul{r}
 \end{align}

\subsection{The translational error operator $\mc{E}$}
We let $\ul{r}_*$ denote the space of tuples in $\mb{R}^2$ indexed by pairs of integers $(i, j) \in \mb{Z} \times \mb{Z}$. We define an operator $\mc{E}: \ul{r} \rightarrow \ul{r}_*$ as follows: Given $(i, j) \in \mb{Z} \times \mb{Z}$, we let $\mc{N}^{h, +}_{i, j}$ and $\mc{N}^{h, -}_{i, j}$ denote the half necks in $\mc{N}^{h}_{i, j}$ that are incident to $\mc{S}_{i, j + 1}$ and $\mc{S}_{i, j}$, respectively, and we let $w^{h,  +}_{i, j}$ and $w^{h, -}_{i, j}$ denote the waists of each. Then $w^{h,  +}_{i, j}$ and $w^{h, -}_{i, j}$ differ by a translation, which we denote by $\mc{E}_{i, j} \in \mb{R}^2$. Thus we have 
\[
w^{h, +}_{i, j} = w^{h, -}_{i, j} + \mc{E}_{i, j}.
\]
This defines a  mapping $\mc{E}: \ul{r} \rightarrow \ul{r}_{*}$, which is locally differentiable in the sense that for each $(i, j)$  the mapping $\mc{E}_{i, j} $ is differentiable from $\ul{r}$ into $\mb{R}^2$.

\subsection{The restriction of $\mc{E}$}
 We restrict the domain and codomains $\ul{r}$ and $\ul{r}_*$ as follows:  Fix a finite subset $\mb{F}$ of $\mb{Z} \times \mb{Z}$.  We  will assume throughout that $r \in \ul{r}$  satisfies $r_{i,j } = 0$ unless $(i, j) \in \mb{F}$, and simillarly for $r_* \in \ul{r}_*$.  Let  $\mb{F}_i \subset \mb{Z}$ be defined by the condition that $j \in \mb{F}_i$ if and only if $(i, j) \in \mb{F}$. We will assume throughout that $\mb{F}_i$ is empty for $i \leq 0$ and $i >n $ and is otherwise nonempty.  
 Since the parameter $r_{i, j}$ vanishes for $i$ outside of the range $\{1, \ldots n \}$, so too does $v_{i, j} = l^{-1}_{r_{i, j}}$  and thus the branches $\mc{B}_{i}$, $i \in \mb{Z} \setminus \{ 0, \ldots,  n\}$ are smooth Delaunay surfaces.
 \begin{remark} \label{CRestriction}
  For the remainder of the article, we will restrict $\mc{C} = \mc{C}_{v}$ to the sub-collection comprising the branches $\mc{B}_{0}$ through $\mc{B}_{n}$.
\end{remark}
\subsection{Regular variables generate the error space $\ul{r}_{*}$}
 When $r = 0$, the branches of $\mc{C}_{r}$ are rotationally symmetric Delaunay ends, differing from each other by a translation. We can assume that a fixed translation has been applied separately to the branches so that $\mc{E}$ vanishes at $r = 0$. Thus, the collection $\mc{C}_r$ at $r = 0$ is a collection of Delaunay ends touching tangentially.

We wish to will apply the implicit function theorem, with $r$ serving as the regular variable, and with families of translations of the branches serving as the free parameters. We show now that the linearization of $\mc{E}$ at $r = 0$ in $r$ is an isomorphism onto the error space $\ul{r}_*$. In order to prove embeddedness of some the surfaces we construct, we will need to precisely understand the effect of the parameters $\phi^h$ and $\sigma^h$. In the following, we let $\dot{\mc{E}}$ denote the linearization of $\mc{E}$ at $r = 0$.

\begin{proposition}\label{EIsAnIso}
The mapping $ \dot{\mc{E}}: \ul{r} \rightarrow \ul{r}_{*}$ is an isomorphism.  Precisely, it holds that 
\[
 \left\{\dot{\mc{E}} \left(\dot{\sigma}^h\right) \right\}_{i, j}= - \dot{\sigma}^h_{i, j} e_y, \quad \left\{\dot{\mc{E}}(\dot{\phi}^{h})\right\}_{i, j} = \dot{\phi}_{i, j}^{h} e_z + O(\tau_0).
\]
\end{proposition}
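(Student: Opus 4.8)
The plan is to compute the linearization $\dot{\mc{E}}$ by tracking, for a fixed horizontal neck $\mc{N}^h_{i,j}$, how the two waists $w^{h,+}_{i,j}$ and $w^{h,-}_{i,j}$ move as the regular parameters $\sigma^h_{i,j}$ and $\phi^h_{i,j}$ vary, remembering that after solving $\Lambda_{v+e}=0$ (Corollary \ref{SymmetryBroken}) the branches $\mc{B}_i$ are translated independently so that paired boundary components of the leaves are matched exactly, and that a fixed translation has been applied to each branch so that $\mc{E}$ vanishes at $r=0$. Concretely, I would first reduce to a single neck: since $\mc{E}_{i,j}$ depends only on the relative position of the two half-neck waists inside $\mc{N}^h_{i,j}$, and since the branches $\mc{B}_i$ and $\mc{B}_{i+1}$ to which these half necks belong are translated rigidly, the effect of a variation $(\dot\sigma^h_{i,j},\dot\phi^h_{i,j})$ localized at $(i,j)$ is governed by (a) the intrinsic change in the half-neck geometry from the CMC-neck family of Section \ref{CMCNecks}, and (b) the compensating rigid motions of the two branches forced by the matching condition $\Lambda=0$.

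The key computational inputs are the properties of the CMC half necks recorded in Section \ref{CMCNecks}. Recall that $\sigma = l_\tau$ is precisely half the separation between the top and bottom boundary components of the neck, and the waist sits at the slice $s=0$; thus increasing $\sigma^h_{i,j}$ by $\dot\sigma$ while holding the boundary of the incident leaf $\mc{S}_{i,j}$ fixed moves the waist $w^{h,-}_{i,j}$ toward $\mc{S}_{i,j}$ by $\dot\sigma$ along the neck axis, and the waist $w^{h,+}_{i,j}$ toward $\mc{S}_{i,j+1}$ by $\dot\sigma$, so their relative displacement is $-\dot\sigma^h_{i,j}$ along the (horizontal) axis $e_y$; this is the first formula. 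For the phase parameter, $\phi^h_{i,j}$ is a rotation of the horizontal neck about its own axis; to leading order this rotates the waist circle within its own plane, which is a translation along $e_z$ (the remaining symmetry-allowed direction, since the $e_x$-component is killed by the imposed $\{x=0\}$-reflection symmetry) of size $\dot\phi^h_{i,j}$, while the $O(\tau_0)$ error accounts for the fact that at positive base flux the neck is a genuine (unduloidal) CMC neck rather than a catenoidal one, and for the induced rigid motions of the branches which, by the estimates of Sections \ref{DefectOnDelaunay} and \ref{SymBreaPert} (in particular the projection formulas in Lemmas \ref{TauProjections}, \ref{Projections} and the smallness of the $\tau_0$-dependent constants), contribute only $O(\tau_0)$.

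From these two formulas the isomorphism statement is immediate: the leading-order matrix of $\dot{\mc{E}}$ with respect to the basis $(\dot\sigma^h_{i,j},\dot\phi^h_{i,j})\mapsto(e_y,e_z)$-components is, entrywise in $(i,j)$, the matrix $\begin{pmatrix} -1 & 0 \\ 0 & 1\end{pmatrix}$ plus an $O(\tau_0)$ perturbation; since $\mb{F}$ is finite, $\dot{\mc{E}}$ acts on a finite-dimensional space and a diagonal leading term with $O(\tau_0)$ error is invertible for $\tau_0$ small, so $\dot{\mc{E}}$ is an isomorphism onto $\ul{r}_*$. The main obstacle is step (b): carefully justifying that the rigid motions of the branches $\mc{B}_i,\mc{B}_{i+1}$ that are forced by re-solving $\Lambda_{v+e}=0$ after perturbing $r$ — together with the correction $e\in\ul{e}$ supplied by Corollary \ref{SymmetryBroken} — do not contaminate the leading-order terms and indeed only enter at order $O(\tau_0)$. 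This requires combining the differentiability of $v\mapsto e_v$ with the structure of the translational matching in the branches (the fact that within a branch the leaves are translated so that $w_{i,j,t}=w_{i,j+1,b}$, so a branch moves as a single rigid body) and the smallness of the relevant $L^2$-projections; once the bookkeeping of which displacements are "absorbed" by a rigid branch motion versus which register in $\mc{E}_{i,j}$ is set up correctly, the estimate reduces to the already-established bounds on $\dot\Lambda^{-1}$ and on the type I/II projections.
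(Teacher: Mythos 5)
Your plan has the right skeleton: localize to a single horizontal neck, compute the partial derivatives of $\mc{E}$ in $\sigma^h$ and $\phi^h$ from explicit geometry, and then control the additional variation that arises because $e=e_v$ must be re-solved when $v$ changes. Where the proposal stops short is exactly the step you flag yourself — showing the $e$-correction only contributes $O(\tau_0)$ — and the paper has two specific mechanisms there that are not optional bookkeeping but the whole content of the proof.

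For the $\phi^h$-derivative, the paper writes the total derivative as $\mc{E}_{,\phi^h}=\frac{\partial\mc{E}}{\partial\phi^h}+\frac{\partial\mc{E}}{\partial e}\frac{\partial e}{\partial\phi^h}$ and observes that $\left.\frac{\partial\Lambda}{\partial\phi}\right|_{\xi_{\text{total}}=0}=0$; differentiability of $\Lambda$ then gives $\left\|\dot\Lambda(\dot\phi^h)\right\|\leq^C\tau_0$, and the a priori estimate of Proposition \ref{SimAprioriEstimate} upgrades this to $\left\|\dot e\right\|^\sim_{-\mu}\leq^C\tau_0\left\|\dot\phi^h\right\|$. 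This is the precise reason the correction is $O(\tau_0)$; "the differentiability of $v\mapsto e_v$ and the projection lemmas" alone would not give a factor of $\tau_0$ without the vanishing of $\partial\Lambda/\partial\phi$ at scale zero.

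For the $\sigma^h$-derivative the situation is different, and your proposal misses this: the stated formula $\left\{\dot{\mc{E}}(\dot\sigma^h)\right\}_{i,j}=-\dot\sigma^h_{i,j}e_y$ carries no $O(\tau_0)$ error, so it cannot be obtained by the same "leading term plus small perturbation" reasoning. The mechanism is that $e=e_v$ depends on $v$, which depends on $\sigma$ through the flux parameter $\tau=\tau(\sigma)$, and by (\ref{NewSigmaFromTau}) one has $d\tau/d\sigma=0$ at $\tau=\sigma=0$. So in $\mc{E}_{,\sigma^h}=\frac{\partial\mc{E}}{\partial\sigma^h}+\frac{\partial\mc{E}}{\partial e}\frac{\partial e}{\partial\sigma^h}$ the chain through $e$ is killed identically at $v=0$, not merely estimated. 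Your proposal treats the $\sigma$- and $\phi$-rows symmetrically ("diagonal matrix plus $O(\tau_0)$"), which both overclaims an error for $\sigma$ that isn't there and underexplains why the $\phi$-error is controlled; the paper's exactness in $\sigma$ is what makes the whole apparatus of re-parametrizing by $\sigma$ rather than $\tau$ (Section \ref{RegParAss}) worth doing.
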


\begin{proof}
In the following, we will fix a pair $(i, j)$ in $\mb{F}$ and suppress it from the notation.  
We assume that the coordinate axes are positioned so that the waists $w^- : = w^{h, -}$ and $w^+ :   =w^{h, +}$, which are points when $ v = 0$,  coincide with the origin. Recall that  throughout we have fixed a base flux assignment $\tau_0 \in \ul{\tau}_0$ and suppressed $\xi_0 : = \tau_0 + e_{\mc{G}, \tau_0}$ from our notation. Throughout, we have then written $\mc{C}_{\xi}$ to mean $\mc{C}_{\xi + \xi_0 }$. Thus, making the dependence on all parameters explicit again, we have $\mc{C}_{r} = \mc{C}\left(v_r + e_{v_r} + \xi_0\right)$, where we have put $v_{r} = l^{-1}_{r}$ and where $l$ is the mapping defined in Section \ref{RegParAss}. Thus, when  $\tau_0 = r= 0$, we have that the total parameter  $\xi_{\text{total}}$ vanishes and the branches of $\mc{C}$ coincide with  the collection $\mb{S}$ of spheres of radius $1/2$ on the integer lattice $(0, i, j)$. The mapping $v \mapsto e_v$ is not  defined here--it is only defined for positive base flux $\tau_0$--but we can compute the partial derivative of $\mc{E}$ in $\phi^h$ by fixing the remaining parameter values at $0$. The necks $w^{-}$ and $w^+$ are then given by
\[
w^- = \frac{1}{2}\left(\cos(\phi^h) e_y + \sin(\phi^h) e_z\right) + \left(0, -\frac{1}{2}, 0\right)
\]
and 
\[
w^+ = - \frac{1}{2}\left(\cos(\phi^h) e_y + \sin(\phi^h) e_z\right) + \left(0, \frac{1}{2}, 0\right)
\]
We then have 
\[
\mc{E}_{i, j} =  w^+ - w^- = \left(\cos(\phi^h) e_y + \sin(\phi^h) e_z\right) + \left(0, 1, 0\right),
\]
From which the partial derivative $\frac{\partial \mc{E}}{\partial \phi^h}$ can be computed. 

Now, we compute the total derivative $\mc{E}_{, \phi^h}$  for non-vanishing $\tau_0$ and vanishing $v$, where we are again regarding $e = e_v$ as a function of $v$. Since $\mc{E} = \mc{E}_{\xi}$ is a differentiable function of $\xi = v + e$ and $e = e_v$ is a function  of $v$, the total derivative of $\mc{E}$ in  $\phi^h$ is given by
\[
\mc{E}_{, \phi^h}  = \frac{\partial\mc{E}}{\partial \phi^h} + \frac{\partial \mc{E}}{\partial e } \frac{\partial  e}{\partial \phi^h}.
\]
Picking a variation $\dot{\phi}^h$ of $\phi^h$ at $\phi^h = 0$ and letting $\dot{e} = \left. D e \right|_{v = 0} (\dot{\phi}^h)$ denote the induced variation of $e$, we have. 
\[
\dot{\Lambda}\left(\dot{\phi}^h + \dot{e}\right)  = 0.
\]
Observe that $\left. \frac{\partial \Lambda}{\partial \phi} \right|_{\xi_{total} = 0} = 0$.  By the  differentiability properties of the mapping $\Lambda$ we then have  the estimate  $\left\|\dot{\Lambda}\left(\dot{\phi}^h\right)\right\| \leq^C \tau_0$.  Proposition \ref{SimAprioriEstimate} then gives  $\left\|\dot{e}\right\|^{\sim}_{-\mu} \leq^C \left\| \dot{\Lambda}\left(\dot{\phi}^h\right)\right\|^{\sim}_{-\mu} \leq^C \tau_0 \left\|\dot{\phi}^h\right\|$. Thus, we have
\[
\left\|\left. \mc{E}_{, \phi^h} \right|_{\tau_0} - \left. \mc{E}_{, \phi^h} \right|_{\tau_0 = 0} \right\| \leq^C \tau_0.
\]
For the partial derivative $\mc{E}_{, \sigma^h}$, we can similarly compute the total derivative of $\mc{E}$ in $\sigma^h$ as
\begin{align} \label{TotalDD}
\mc{E}_{, \sigma^h} = \frac{\partial \mc{E}}{\partial \sigma^h} + \frac{\partial \mc{E}}{\partial e} \frac{\partial e}{\partial \sigma^h}
\end{align}
Directly from the definition of the separation parameter $\sigma^h$, we have $\frac{\partial \mc{E}}{\partial \sigma^h} = -\dot{\sigma}^h e_y$. Recall that the flux $\tau$ and separation $\sigma$ parameters   are related to each other by (\ref{NewSigmaFromTau}). Thus, at $\tau= \sigma = 0$ we have $\frac{d \tau}{d \sigma} = 0$. Applying this to  (\ref{TotalDD}) we have that that at $v = 0$ we have $\mc{E}_{, \sigma^h} = \frac{\partial \mc{E}}{\partial \sigma^h} =- \dot{\sigma}^h e_y$. This completes the proof. 
\end{proof}

\subsection{Immersed surfaces parametrized by Branchwise translations}
As a direct consequence, we can construct families of smoothly immersed CMC surfaces $\mc{C}_{\mc{M}}$ parametrized by branchwise motions of  $\mc{C}$. Here, a \emph{branchwise} motion is a mapping $\mc{M}$ on $\mc{C}$ such that the restriction to each branch is a rigid motion. In the following, if $\mc{M}$ is a branchwise translation of $\mc{C}$,  we let $\mc{C}^{\mc{M}}$ denote the image of $\mc{C}$ under $\mc{M}$. We also let $\mc{E}^{\mc{M}}_{v}$ denote the translational error between the waist pairs in $\mc{C}^{\mc{M}}$.  It will be convenient to parametrize the family of branchwise translations by their \emph{relative} translations, since applying a single translation to the collection $\mc{C}$ does not effect $\mc{E}$. We do this as follows: Given constants $d = (d_i)$ with $d_i > 0 $, $i > 1$, we let $\mc{M} = \mc{M}_d$ denote the unique branchwise translation in $\mc{C}$ such that $\left. \mc{M} \right|_{\mc{B}_i} =d^i e_y $ and $\left. \mc{M} \right|_{\mc{B}_0}$ is the identity. Here we have set $d^i : =  \left(\sum_{j \leq i} d_j \right) $. We will refer to $d$ as the \emph{relative translation} parameter, since the motion $\mc{M}^{d}$ determined by $d$ specifies the relative translations between branches, rather than the absolute translation. We then regard $d$ as the free parameter of  $\mc{C}$ by setting $\mc{C}_{d} = \mc{C}^{\mc{M}^{d}}$ and we put  $\mc{E}^d_{r} := \mc{E}^{\mc{M}^d}_{r}$

\begin{corollary}\label{Branches!}
There is a differentiable mapping $d \mapsto r_{d} \in \ul{r}$ such that $\mc{E}^{d}_{r_{d}} = 0$.
\end{corollary}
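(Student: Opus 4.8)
The plan is to reduce Corollary \ref{Branches!} to a single application of the implicit function theorem, with $r \in \ul{r}$ as the ``regular'' variable and the relative translations $d = (d_i)$ as the free parameters, using Proposition \ref{EIsAnIso} to invert the linearization. First I would set up the combined error map $\Phi \colon (r, d) \mapsto \mc{E}^{d}_{r} \in \ul{r}_*$, where $\mc{C}_d = \mc{C}^{\mc{M}^d}$ is obtained by applying the branchwise translation $\mc{M}^d$ (with $\left.\mc{M}^d\right|_{\mc{B}_i} = d^i e_y$, $d^i = \sum_{j \le i} d_j$) to the collection $\mc{C}_v$ already made into zeroes of $\Lambda$ via Corollary \ref{SymmetryBroken}. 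At $(r, d) = (0, 0)$ we have $\Phi(0, 0) = 0$: when $r = 0$ the branches are rotationally symmetric Delaunay ends differing by translations, and we have arranged (as in the discussion preceding Proposition \ref{EIsAnIso}) a fixed branchwise translation so that $\mc{E}$ vanishes there. Since both domain and codomain are restricted to the finite set $\mb{F}$, the map $\Phi$ is between finite-dimensional spaces, so there are no functional-analytic subtleties once differentiability is in hand.

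Next I would verify that $\Phi$ is differentiable near $(0,0)$ and compute $\partial_r \Phi|_{(0,0)} = \dot{\mc{E}}$. Differentiability in $r$ is exactly the local differentiability of $\mc{E}$ recorded in Section \ref{CanTranErr} (each $\mc{E}_{i,j}$ is differentiable from $\ul{r}$ into $\mb{R}^2$), composed with the differentiable map $v \mapsto e_v$ of Corollary \ref{SymmetryBroken} and the regular reparametrization $l$ of \eqref{TheMappingL}; differentiability in $d$ is clear since translating a branch rigidly moves its waists affinely. By Proposition \ref{EIsAnIso}, $\dot{\mc{E}} \colon \ul{r} \to \ul{r}_*$ is an isomorphism, with $\{\dot{\mc{E}}(\dot\sigma^h)\}_{i,j} = -\dot\sigma^h_{i,j} e_y$ and $\{\dot{\mc{E}}(\dot\phi^h)\}_{i,j} = \dot\phi^h_{i,j} e_z + O(\tau_0)$, so for $\tau_0$ small it is a bounded isomorphism uniformly. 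Hence $\partial_r \Phi|_{(0,0)}$ is invertible, and the implicit function theorem produces a differentiable map $d \mapsto r_d$, defined for $d$ in a neighborhood of $0$, with $\Phi(r_d, d) = \mc{E}^d_{r_d} = 0$, which is the assertion.

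The one point requiring a little care — and the main obstacle — is making sure the two layers of ``solved'' parameters are genuinely compatible, i.e.\ that applying the branchwise translation $\mc{M}^d$ does not disturb the vanishing of $\Lambda$ and does not spoil the differentiability of the composite. This is where the structure of the construction matters: $\Lambda$ measures the angle between conormals across paired boundary components (Definition \ref{NeumannOperator}), which is invariant under translating whole components, and a branchwise translation translates each branch as a rigid body, hence translates each leaf and each incident half-neck; so $\Lambda_{v} = 0$ is preserved under $\mc{M}^d$. What $\mc{M}^d$ does change is the matching of the \emph{waists} of half-necks belonging to the same horizontal neck, and that mismatch is precisely what $\mc{E}$ records — so the decomposition into ``fix conormals with $\Lambda$, then fix waist translations with $\mc{E}$'' is consistent. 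I would also note that because $d_i > 0$ is required only for $i > 1$ and the parameters are supported on the finite set $\mb{F}$ with $\mb{F}_i$ empty outside $\{1, \dots, n\}$, only finitely many $d_i$ are live, so the IFT is applied in a fixed finite dimension and there is no issue of uniformity in infinitely many variables. With those observations the argument is routine.

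\begin{proof}[Proof of Corollary \ref{Branches!}]
Define $\Phi \colon \ul{r} \times \mb{R}^{n} \to \ul{r}_*$ by $\Phi(r, d) = \mc{E}^{d}_{r}$, where $\mc{C}_{d} = \mc{C}^{\mc{M}^{d}}$ and $\mc{M}^{d}$ is the branchwise translation with $\left.\mc{M}^{d}\right|_{\mc{B}_i} = d^i e_y$, $d^i = \sum_{j \le i} d_j$, $\left.\mc{M}^{d}\right|_{\mc{B}_0} = \mb{I}$. By the normalization preceding Proposition \ref{EIsAnIso} we have $\Phi(0, 0) = 0$. A branchwise translation moves each branch, hence each leaf and each incident half-neck, rigidly; since $\Lambda$ measures angles between conormals across paired boundary components and is invariant under translating components, the identity $\Lambda_{v + e_v} = 0$ from Corollary \ref{SymmetryBroken} is unaffected, so $\mc{C}_d$ is a well-defined collection of branches whose conormals are matched up to translation, and $\mc{E}^{d}_r$ records only the residual waist mismatch. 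The map $v \mapsto e_v$ is differentiable by Corollary \ref{SymmetryBroken}, $l$ of \eqref{TheMappingL} is a diffeomorphism near $0$, and each $\mc{E}_{i,j}$ is differentiable from $\ul{r}$ into $\mb{R}^2$; translating a branch moves its waists affinely in $d$. Hence $\Phi$ is differentiable near $(0,0)$, and $\partial_r \Phi|_{(0,0)} = \dot{\mc{E}}$. By Proposition \ref{EIsAnIso}, $\dot{\mc{E}} \colon \ul{r} \to \ul{r}_*$ is an isomorphism; restricted to the finite set $\mb{F}$ it is a linear isomorphism of finite-dimensional spaces. The implicit function theorem now yields $\epsilon > 0$ and a differentiable map $d \mapsto r_d \in \ul{r}$, defined for $|d| < \epsilon$, with $\Phi(r_d, d) = \mc{E}^{d}_{r_d} = 0$. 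This completes the proof.
\end{proof}
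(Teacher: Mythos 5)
Your proof is correct and takes essentially the same route as the paper, which simply invokes Proposition \ref{EIsAnIso} together with the implicit function theorem; you have fleshed out the same one-line argument by explicitly checking $\Phi(0,0)=0$, differentiability of the composite, and invertibility of $\partial_r \Phi$ at the origin. The additional remark that branchwise translations preserve the vanishing of $\Lambda$ (since $\Lambda$ is translation-invariant componentwise) is a useful sanity check that the paper leaves implicit, but it does not change the structure of the argument.
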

\begin{proof}
This is a direct consequence of Proposition \ref{EIsAnIso} and the implicit function theorem.
\end{proof}

 \section{Embeddedness of solutions}\label{EmbeddSol}
  The surfaces   $\mc{C}_{d}$ are  smoothly immersed CMC surfaces without boundary depending differentiably on $d$. In this section we record conditions on the relative translation parameter $d$ such that the surface $\mc{C}_d$ is embedded. Embeddedness on compact subsets of $\mb{R}^3$ is fairly easy to establish and follows from basic continuity arguments.  The asymptotic embeddedness of the surfaces is established by a careful analysis of the linearization of the parameters $v$ and $e$ in $d$.

 \subsection{Notational conventions in this section}
 Throughout this section, we will regard $d = (d_i)$ as a fixed choice of relative translations, which we  assume small relative to various considerations. We  will finally  exhibit an explicit choice for $d$ in the proof of Proposition \ref{EmbeddedChoice} recorded at the end of the section. We  will set $\tilde{\sigma}_{i, j} = d_i$ and we let $\tilde{\tau}_{i, j}$ be defined by the relation  (\ref{NewSigmaFromTau}).  We put $\tilde{r} = \left( \tilde{\sigma}^h_{i, j}, 0 \right)$ and $\tilde{v} = \left(\tilde{\tau}^h_{i, j}, 0\right)$. Since $\tilde{\sigma}_{i, j}$, and consequently $\tilde{v}_{i,j}$ depend only on $i$, we will in places write $\tilde{\sigma}^h_{i}$ instead of $\tilde{\sigma}^h_{i, j}$ and similarly for $\tilde{\tau}^h_{i, j}$.   We let $ r = r_{d} = \left(\sigma^h_{i, j}, \phi^h_{i, j} \right)$  be determined by  $d$ using Corollary \ref{Branches!}  above, and we  set $v = l^{-1}_{r} = (\tau^{h}_{i,j}, \phi^{h}_{i, j})$. 

\subsection{Existence of embedded surfaces}
The main theorem of this section is:
\begin{proposition} \label{EmbeddedChoice}
There exist relative translations $d_i$ such that the  surface $\mc{C}_{d}$ is embedded. 

\end{proposition}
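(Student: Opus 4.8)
The plan is to realize $\mc{C}_d$ as a perturbation of a configuration of tangentially touching Delaunay ends, and to choose the relative translations $d = (d_i)$ so that these ends, after the perturbation, separate from each other except along the prescribed catenoidal bridges in $\mb{F}$. Recall that at $r = 0$ the collection $\mc{C}_r$ consists of the branches $\mc{B}_0, \ldots, \mc{B}_n$, each a rotationally symmetric Delaunay end of small neck size $\tau_0$, touching tangentially at the horizontal singular points; the branchwise translation $\mc{M}^d$ moves $\mc{B}_i$ by $d^i e_y = \left(\sum_{j \le i} d_j\right) e_y$. First I would fix a large compact set $K \subset \mb{R}^3$ and show, by a straightforward continuity argument, that for $d$ sufficiently small but bounded below (say $\tau_0 \ll d_i \ll 1$), the surface $\mc{C}_d \cap K$ is embedded: away from the finitely many bridges indexed by $\mb{F}$, consecutive branches $\mc{B}_i$ and $\mc{B}_{i+1}$ are pulled apart a definite distance $d_{i+1}$ in the $e_y$ direction, which dominates the $O(\tau_0)$ and $O(r)$ corrections coming from $v = l^{-1}_{r_d}$ and $e = e_{v}$; and within each branch, embeddedness is inherited from the fact that each $\mc{B}_i$ is a small perturbation of an embedded unduloidal Delaunay end (using that $\delta > 0$ was chosen so that the model necks $\mc{N}_{\tau,\delta,0}$ are embedded unduloids).

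The substantive part is asymptotic embeddedness, i.e.\ controlling $\mc{C}_d$ outside $K$. For $|i| > n$ the branches are honest rotationally symmetric Delaunay ends differing by translations, so the issue is whether the ends of $\mc{B}_0, \ldots, \mc{B}_n$ that escape to $z = \pm\infty$ stay pairwise disjoint and disjoint from the unperturbed branches. Here I would use Proposition \ref{EIsAnIso} and Corollary \ref{Branches!}, together with the exponential-decay estimates packaged in the graded norms $\|-\|^{\sim}_{-\mu}$ from Proposition \ref{SimAprioriEstimate} and Corollary \ref{SymmetryBroken}: the parameters $r_d$, hence $v_d$ and $e_{v_d}$, are supported (up to exponentially small tails) near the finite set $\mb{F}$, so the branches are asymptotically \emph{exactly} the rotationally symmetric model Delaunay ends translated by $d^i e_y$. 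Since distinct translates of a fixed vertical Delaunay end by $d^i e_y$ with strictly increasing $d^i$ are disjoint for $|z|$ large, and the correction from $e_{v_d}$ decays like $\cosh^{-\mu}(j)$ as one moves away from $\mb{F}$, one gets a definite asymptotic gap. The careful linearization alluded to in the section preamble is exactly the verification that $\partial v/\partial d$ and $\partial e/\partial d$ are controlled in the exponentially weighted norms, which follows by differentiating the implicit-function-theorem identities $\mc{E}^{d}_{r_d} = 0$ and $\Lambda_{v + e} = 0$ and invoking Proposition \ref{VerticalVariations}.

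Finally I would make the explicit choice: take $d_i = d_i(\tau_0)$ with, say, $d_i$ comparable to a fixed small constant $c$ independent of $\tau_0$ (or a geometric sequence $d_i = c\,2^{-i}$ if one wants the separations summable), small enough that the compact-set argument applies and large enough relative to $\tau_0$ that the asymptotic gap estimate closes; then $\mc{C}_d$ is embedded, and by the discussion at the end of Section \ref{CanTranErr} its genus equals the first Betti number of the graph whose vertices are the leaves and whose edges are the bridges in $\mb{F}$, while its number of ends is twice the number of branches that carry an infinite Delaunay end — which is how the prescribed topology is attained. The main obstacle I expect is the asymptotic disjointness: one must show the exponentially small tails of $e_{v_d}$ genuinely cannot overcome the linear-in-$d$ separation of the branch ends, which is precisely why the graded norms with exponentially decaying weights were introduced, and where the continuity-only arguments of earlier constructions would have been insufficient.
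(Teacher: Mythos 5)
Your treatment of embeddedness on compact sets is fine and matches the paper's (brief) treatment, but your analysis of asymptotic embeddedness has a genuine gap rooted in a misreading of what the graded norms encode.

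You claim that ``the parameters $r_d$, hence $v_d$ and $e_{v_d}$, are supported (up to exponentially small tails) near the finite set $\mb{F}$, so the branches are asymptotically \emph{exactly} the rotationally symmetric model Delaunay ends translated by $d^i e_y$.'' This is false, and it is precisely the point the graded norm $\| - \|^{\sim}_{-\mu}$ was designed around: the norm bounds the \emph{differences} $e^{\Delta}_{i,j}$ by a decaying exponential, not the values $e_{i,j}$ themselves. Consequently $e_{i,j}$ is a Cauchy sequence in $j$ with a generically \emph{nonzero} limit $\ol{e}_i = (\ol{\tau}_i, \ol{\phi}_i, \ol{\rho}_i, \ol{f}_i)$, and the asymptotic model for $\mc{B}_i$ is not the vertical Delaunay end translated by $d^i e_y$ but a Delaunay end whose axis has been rotated to the direction $-\sin(\ol{\phi}_i)e_y + \cos(\ol{\phi}_i)e_z$. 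Adjacent ends thus do not stay a bounded $y$-distance apart as $|z| \to \infty$: their separation grows or shrinks linearly in $|z|$ according to the sign of $\ol{\phi}_{i+1} - \ol{\phi}_i$. The embeddedness criterion is therefore $\ol{\phi}^{\vtr} < 0$ (Lemma \ref{EmbeddedCriteria}), not disjointness of parallel translates; without this, nothing prevents two tilted ends from crossing at large $|z|$.

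Correspondingly, your proposed choice $d_i \sim c$ or $d_i \sim c\, 2^{-i}$ is not engineered to control the asymptotic phases and cannot be justified. The paper's choice is dictated by the type I projection lemmas (\ref{TauProjections} and \ref{Projections}), which show that the horizontal flux $\tau^h$ feeds into the vertical phase difference $\phi^{v,\Delta}$, and hence into $\ol{\phi}_i$, through a sum over the bridges in $\mb{F}_i$. This forces the relation $\frac{i(i-1)}{2}c = |\mb{F}_i|\tilde{\tau}_i$ (equation (\ref{ChoiceOfRelativeTranslations})): the relative translations $d_i$ must depend both on $i$ and on the number of bridges $|\mb{F}_i|$ in each branch, so that the accumulated phase $\mc{b}_i = \sum_j \tau^{h,\vtr}_{i,j}$ is forced to be monotone with a uniform gap. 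None of this is visible from the point of view that the ends are parallel vertical translates, and the rotational-symmetry result Lemma \ref{RotSymEnds} (needed to identify the asymptotic surface and hence read off the axis direction from $\ol{\phi}_i$) does not appear in your outline at all. In short: the mechanism is phase divergence, not translation separation, and the choice of $d$ is a delicate balancing act against $|\mb{F}_i|$, not a free small parameter.
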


We establish embeddedness in two steps. The first step, which is relatively simple, establishes embeddedness on compact subsets of $\mb{R}^3$.  The only conditions required in this step on the relative separations $d_i$ are that they are positive, and embeddedness follows from the fact that $\tilde{\tau}_{i, j}$ is much smaller than $d$. Since the solution $e = e_{v}$ depends differentiably on $v$, which is much smaller than $d$, the resulting correction is small relative to $d$ and embeddedness is thus preserved locally.   For asymptotic embeddedness, additional conditions are required. To understand these, we need to understand carefully the asymptotics of the ends of $\mc{C}_{d}$ for a fixed choice of $d$. 

To do this, we extract asymptotic values of the solution $e = e_{v}$.  By Corollary \ref{SymmetryBroken}, we have that $\| e\|^{\sim}_{- \mu} \leq^C \left\| v\right\|$. Thus, for fixed $i$,  we have $e^{\Delta}_{i, j} \cong \left\|v \right\|\cosh^{- \mu} (j)$, and thus $e_{i, j}$ is a cauchy sequence in $j$. We let $\ol{e}_{i} = \lim_{j \rightarrow \infty} e_{i, j}$ denote its limit, which we regard as periodic in $j$ by setting $\ol{e}_{i,j} = \ol{e}_{i}$. By continuity, we have $\Lambda (\ol{e}) = 0$, and thus the corresponding limit $\ol{\mc{B}}_{i}$ of the branch $\mc{B}_{i}$ is a smooth  periodic CMC surface in $\mb{R}^3$. Such a surface is necessarily a Delaunay end, which we prove with tools internal to this paper. Once it is established that $\ol{\mc{B}}_i$ is symmetric about an axis, the asymptotics  of $\ol{\mc{B}}_i$ can be determined precisely from the limit data $\ol{e} = \left(\ol{\tau}, \ol{\phi}, \ol{\rho}, \ol{f} \right)$. Precisely, it follows that the axis of $\ol{\mc{B}}_{i}$ is parallel to $ -  \sin (\ol{\phi}_i)e_y + \cos (\ol{\phi}_i) e_z$.  It then follows that the surface $\mc{C}_d$ is asymptotically embedded if $\ol{\phi}^{\vtr} <0$. We first record, in Lemma \ref{GoodApproximation} below, bounds for the exact solutions $r$ in terms of the approximate solution $\tilde{r}$.

\begin{lemma} \label{GoodApproximation}
It holds that $\| r- \tilde{r}\| \leq^C \| \tilde{\tau}\|$.

 \end{lemma}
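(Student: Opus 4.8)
The plan is to compare the exact relative-translation solution $r = r_d$ produced by Corollary \ref{Branches!} with the approximate solution $\tilde r$ obtained by simply declaring the horizontal separations equal to the branch offsets $d_i$ and the horizontal phases zero. First I would observe that $\mc{E}$ vanishes at $r=0$ with $d=0$, and that $\tilde r$ is designed so that the \emph{leading-order} translational error it produces is exactly cancelled: from Proposition \ref{EIsAnIso}, $\dot{\mc{E}}(\dot\sigma^h) = -\dot\sigma^h e_y$, which is precisely the $e_y$-translation introduced by offsetting adjacent branches by $d_i e_y$. Hence $\mc{E}^d_{\tilde r}$ is not zero in general, but it is a higher-order quantity: it is the difference between the true nonlinear response of $\mc{E}$ to the parameters and its linearization, plus the contribution of the induced correction $e = e_v$ (with $v = l^{-1}_{\tilde r}$) which by Corollary \ref{SymmetryBroken} satisfies $\|e\|^\sim_{-\mu} \leq^C \|v\| \cong \|\tilde\tau\|$. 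The key point is that $\tilde\tau_{i,j}$ is related to $\tilde\sigma_{i,j} = d_i$ through (\ref{NewSigmaFromTau}), so that $\tilde\tau << \tilde\sigma$ as the scale tends to zero; thus all these error contributions are controlled by $\|\tilde\tau\|$, which is much smaller than $\|\tilde\sigma\| = \|d\|$.

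The key steps, in order, are: (1) Record that $\mc{E}^d$ is differentiable in $r$ near $r=0$ (the local differentiability already noted after the definition of $\mc{E}$, together with the differentiable dependence $v \mapsto e_v$ from Corollary \ref{SymmetryBroken}), and that its linearization $\dot{\mc{E}}$ at $r=0$ is the bounded isomorphism of Proposition \ref{EIsAnIso} with a bounded inverse. (2) Estimate $\mc{E}^d_{\tilde r}$: by construction the $e_y$-part of the leading term cancels the branch offsets, and what remains is $O(\|\tilde r\|^2)$ from the second-order Taylor remainder of $\mc{E}$ together with the $O(\tau_0)$-type correction $\frac{\partial \mc{E}}{\partial e}\frac{\partial e}{\partial r}$, all of which — using $\|e_v\| \leq^C \|v\| \cong \|\tilde\tau\|$ and $\|\tilde\tau\| << \|\tilde\sigma\| = \|d\|$ — are bounded by $\|\tilde\tau\|$. (3) Apply the implicit function theorem / contraction estimate underlying Corollary \ref{Branches!}: since $r_d$ is the exact zero of $\mc{E}^d$, and $\tilde r$ is an approximate zero with residual $\|\mc{E}^d_{\tilde r}\| \leq^C \|\tilde\tau\|$, invertibility of $\dot{\mc{E}}$ gives $\|r_d - \tilde r\| \leq^C \|\mc{E}^d_{\tilde r}\| \leq^C \|\tilde\tau\|$, which is the claim.

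The main obstacle I expect is step (2): making precise that the residual $\mc{E}^d_{\tilde r}$ is genuinely of order $\|\tilde\tau\|$ rather than order $\|d\|$. This requires unwinding the definitions carefully — in particular checking that, after applying the branchwise translation $\mc{M}^d$ with the waists matched at $r=0$, the only first-order effect of turning on $\tilde\sigma^h_{i,j} = d_i$ on the waist differences $\mc{E}_{i,j}$ is the $-d_i e_y$ translation that $\mc{M}^d$ was designed to cancel, so the residual truly starts at second order in $\tilde\sigma$ plus the $e$-induced correction. One must also confirm, using $\frac{d\tau}{d\sigma} = 0$ at $\sigma=0$ from (\ref{NewSigmaFromTau}), that passing from the separation parametrization to the flux parametrization does not reintroduce a first-order term, and that the phase components $\phi^h$ of $r_d$ — which are not prescribed by $\tilde r$ — are themselves forced to be $O(\|\tilde\tau\|)$ by the same inversion argument, since they only enter $\mc{E}$ through terms that the exact solution must balance against the higher-order residual. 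Once these bookkeeping points are settled, the estimate follows mechanically from the boundedness of $\dot{\mc{E}}^{-1}$.
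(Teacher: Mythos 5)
Your high-level structure (approximate zero of $\mc{E}^d$ at $\tilde r$, invertibility of $\dot{\mc{E}}$, conclude by inversion/contraction) is the same as the paper's, but you miss the single most important observation that makes the paper's proof work cleanly: the paper shows that $\mc{E}^d(\tilde r) = 0$ \emph{exactly}, not merely to first order, when the $e$-correction is turned off. The argument is a symmetry argument, not a Taylor one: when $e$ and $\phi^h$ vanish, the variation induced by $\tau^h$ alone is supported on the horizontal catenoids and preserves the reflection symmetry of each $\mc{N}^h_{i,j}$ through the vertical line through its center, so the two waists of each horizontal neck are translates of each other by exactly $\pm\sigma^h_{i,j}e_y$; since $\tilde\sigma^h_{i,j} = d_i$ this is exactly undone by $\mc{M}^d$. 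Hence the \emph{only} source of nonzero residual $\mc{E}^d_{\tilde r}$ is turning the $e$-correction back on, which contributes $O(\|e_{\tilde v}\|) = O(\|\tilde v\|) = O(\|\tilde\tau\|)$ by differentiability of $\mc{E}$ in $e$ and Corollary \ref{SymmetryBroken}. The conclusion then follows from the fundamental theorem of calculus applied to $\mc{E}^d_r - \mc{E}^d_{\tilde r}$ and the invertibility of $\int_0^1 \partial_r\mc{E}|_{r(t)}\,dt$, which is close to $\dot{\mc{E}}$ for $d$ small.

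Your proposal instead frames the cancellation as a first-order phenomenon with an $O(\|\tilde r\|^2)$ Taylor remainder. This can in principle be pushed through, since one can verify from (\ref{NewSigmaFromTau}) that $\|\tilde\sigma\|^2 = \|d\|^2 = o(\|\tilde\tau\|)$ as $d\to 0$ (roughly $\tilde\tau \sim \tilde\sigma/\log(1/\tilde\sigma)$ so $\tilde\sigma^2/\tilde\tau \sim \tilde\sigma\log(1/\tilde\sigma) \to 0$). But this route has two genuine gaps as stated. First, a second-order Taylor remainder bound for $\mc{E}$ in $r$ requires more regularity than the paper establishes — $\mc{E}$ is only recorded to be (locally) differentiable, not $C^{1,1}$ or $C^2$ — so the claimed $O(\|\tilde r\|^2)$ bound is not actually available without further work. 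Second, you treat the needed inequality $\|\tilde\sigma\|^2 \leq^C \|\tilde\tau\|$ as obvious, but it relies on the specific logarithmic degeneracy of the separation-to-flux map at zero and must be checked. The paper's symmetry argument sidesteps both issues: it needs only first-order regularity and never invokes the estimate $\|d\|^2 \leq^C \|\tilde\tau\|$. You correctly identify the role of the $e$-correction and of $d\tau/d\sigma = 0$ at the origin, but the exact vanishing $\mc{E}^d(\tilde r)=0$ is the missing idea.
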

 \begin{proof}

In the following, we will make the dependence of  $\mc{E}$ on $e$ explicit again, so that  $\mc{E}^{d}_{r} = \mc{E}(r + e_r)$. Observe that  $\mc{E}^{d}(\tilde{r}) = 0$.  Too see this, observe that, when $r$ and $e$ vanish, the branches $\mc{B}_i$ and $\mc{B}_{i + 1}$ of $\mc{C}$ differ from each other by the translation by $e_y$  and are symmetric through their axes, respectively. Thus, the horizontal necks $\mc{N}^{h}_{i, j}$ are symmetric through vertical lines passing through their centers. When the parameter values $e$ and $\phi^h$ are zero, the variations induced by $\tau^{h}$ are supported on the horizontal catenoids and preserve the symmetries listed above. Thus, the boundary components  $\partial^{\pm} \mc{N}^h_{i, j}$ of  $ \mc{N}^h_{i, j}$ differ by translations  $\mc{t}^{\pm}$  given by $ \mc{t}^{\pm} = \pm\sigma^{h}_{i, j} e_y$. Since $\tilde{\sigma}^{h}_{i,j} = d_i$ we can apply the  branchwise translation $\mc{M}_{d}$ determined by $d$ to $\mc{C}_{\tilde{r}}$ so that  $\mc{E}^{d}_{\tilde{r}} = 0$ as claimed.

Now, since $e = e_v$ is a differentiable function of $v$ we have:
\[
\mc{E}_{\tilde{r}} =  \mc{E}^{d}(\tilde{r} + e_{\tilde{v}}) = O(e_{\tilde{v}}) = O \left(\left\|\tilde{v}\right\|\right) = O \left( \|\tilde{\tau}\|\right).
\]
Since $\mc{E}^{d}_{r} = 0$ we have
  \begin{align*}
0&  = \mc{E}^{d}_{r} - \mc{E}^{d}_{\tilde{r}} + O \left(\|\tilde{\tau}\|\right) \\
& = \left(\int_{0}^1\left. \frac{\partial \mc{E}}{\partial r} \right|_{r(t)} dt \right)\left( r - \tilde{r}\right)  + O (\|\tilde{\tau}\|),
\end{align*}
where above we have set $r(t) = (1 - t) \tilde{r} + t r$.  Taking $d$ small, we can arrange for $r$ and $\tilde{r}$ to arbitrarily close to $0$, so that the operator  $\int_{0}^1\left. \frac{\partial \mc{E}}{\partial r} \right|_{r(t)} dt $ is arbitrarily close to $ \left. \frac{\partial\mc{E}}{\partial r} \right|_{0}$. Applying $\left(  \int_{0}^1\left. \frac{\partial\mc{E}}{\partial r} \right|_{v(t)} dt\right)^{-1}$ to the right hand side above then gives that $r - \tilde{r} = O\left(\left\|\tilde{\tau}\right\| \right)$. This completes the proof.
\end{proof}
As a direct consequence of Lemma \ref{GoodApproximation}, we have the estimate 
\begin{align} \label{VPrimeBound}
\left\| v \right\| \leq^C \left\|\tilde{\tau}\right\|.
\end{align}

In order to determine precisely the asymptotics of the ends in terms of their limits, we need to know that the ends are rotationally invariant about an axis, and not just that they are singly periodic.  This is recorded in Lemma \ref{RotSymEnds} below:

\begin{lemma}\label{RotSymEnds}
Each asymptotic end $\ol{\mc{B}}_i$ is invariant under rotations about an axis.
\end{lemma}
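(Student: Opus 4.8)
The plan is to follow the scheme of the proof of Proposition \ref{RotSymBranches}, exploiting that $\ol{\mc{B}}_i$ is again a complete, singly periodic CMC surface of mean curvature one. First I would record its structure: for small base flux $\tau_0$ and small free parameters, $\ol{\mc{B}}_i$ is, away from the necks, $C^2_{loc}$-close to a periodic chain of spheres of radius $\tfrac12$ with nearly collinear centers, the points of tangency resolved by the vertical necks of the limit data $\ol e$; topologically it is an annulus with two ends, and it is embedded. Let $\ell$ be the line through the centers of the near-spheres (well defined for $\tau_0$ small), let $R$ be the Killing field of $\mb{R}^3$ generating rotation about $\ell$, and set $u := R\cdot N$, a bounded periodic Jacobi field on $\ol{\mc{B}}_i$. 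It suffices to show $u\equiv 0$, since this says exactly that $R$ is everywhere tangent to $\ol{\mc{B}}_i$, i.e.\ that $\ol{\mc{B}}_i$ is a surface of rotation about $\ell$. I would next observe that $u$ inherits the residual symmetries of the construction: since the lattice $\mb{S}$ and every parameter defining $\mc{C}$ are preserved by the reflection through $\{x=0\}$, so is $\ol{\mc{B}}_i$, and as $R$ is odd under that reflection $u$ is too; likewise $u$ is odd under the reflection of $\ol{\mc{B}}_i$ through the plane perpendicular to $\ell$ at a neck, which $\ol{\mc{B}}_i$ inherits --- via the decay $e^{\Delta}_{i,j}\to 0$ of Corollary \ref{SymmetryBroken}, so that the asymptotic periodic data is centered about a neck --- from the $j\mapsto -j+1$ reflection symmetry of the parameter spaces. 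Supposing $u\not\equiv 0$, normalize so that $\sup|u| = 1$.

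Then I would run the blow-up of Proposition \ref{RotSymBranches}. Take $\tau_{0,k}\to 0$, with the corresponding surfaces $\ol{\mc{B}}_i^{(k)}$, axes $\ell_k$, and normalized Jacobi fields $u_k$, and pick $p_k$ with $|u_k(p_k)| = 1$. On the body of any near-sphere, $R\cdot N$ equals, up to an error vanishing with $\tau_{0,k}$, the translational Jacobi field $N_w$ for the constant $w = \omega\times(c-a)$, where $c$ is the center and $a\in\ell_k$ its orthogonal projection onto $\ell_k$; since $c$ lies nearly on $\ell_k$ this field is nearly zero, so $p_k$ cannot stay in a compact subset disjoint from the necks and, after translating, converges to a pole. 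Rescaling $\ol{\mc{B}}_i^{(k)}$ by $|p_k|$ about that pole, the surfaces converge in $C^k_{loc}$ to a flat plane or to the standard catenoid, and $\tilde u_k(p) := u_k(|p_k|p)$ converge to a nontrivial bounded Jacobi field $\tilde u$ on the limit, still odd under the two reflections above (which persist in the limit). In the planar case $\tilde u$ extends across the puncture to a bounded harmonic function on $\mb{R}^2$, hence a constant, which must vanish by its oddness; in the catenoidal case Lemma \ref{CatJacFields} identifies $\tilde u$ with a translational Jacobi field, i.e.\ a component of the unit normal, and its oddness under both reflections again forces $\tilde u\equiv 0$. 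Either way we contradict $\sup|\tilde u| = 1$, so $u\equiv 0$ and $\ol{\mc{B}}_i$ is rotationally symmetric.

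I expect the main obstacle to be the symmetry bookkeeping: unlike the closed branches at $\xi = 0$, which carry the whole group $\mc{G}$, the asymptotic end $\ol{\mc{B}}_i$ retains only the $\{x=0\}$-reflection together with one neck reflection, and the catenoidal case genuinely requires oddness under two such reflections to conclude, so I must verify that both survive the passage to the periodic limit --- which is precisely where $e^{\Delta}_{i,j}\to 0$ is used, to see that the asymptotic periodic data is centered about a neck. A secondary point is the choice of the candidate axis $\ell$: taking it through the near-sphere centers is what renders $R\cdot N$ negligible on the bodies and thereby forces the blow-up onto a neck, the only location where the relevant Jacobi fields are not manifestly controlled. (Alternatively, since $\ol{\mc{B}}_i$ is a complete embedded CMC surface with exactly two ends, rotational symmetry is immediate from \cite{KKS}; the blow-up argument has the advantage of keeping the proof internal to the present construction.)
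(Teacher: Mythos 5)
Your proposal follows the same overall blow-up scheme as the paper's proof (mirroring Proposition \ref{RotSymBranches}): normalize the rotational Jacobi field, send $\tau_0\to 0$, rescale at a pole, and contradict the resulting bounded Jacobi field on the plane or on the catenoid. The two proofs differ, however, in how the limiting field $\tilde u$ is killed. The paper subtracts the translational component of $R\cdot N$ over fundamental domains, so the orthogonalized $u$ has vanishing lower azimuthal modes on the intermediate Delaunay limit (obtained by first letting $d\to 0$); this vanishing passes to $\tilde u$ and simultaneously rules out the constant harmonic function on $\mb{R}^2$ and the translational fields on the catenoid. You instead leave $u = R\cdot N$ unprojected, take the axis $\ell$ through the near-sphere centers so that $u$ is small on the bodies, and propose to kill $\tilde u$ by oddness under two reflections.

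The gap is the second reflection. Oddness under the $\{x=0\}$-reflection is solid (it is a symmetry of the whole construction and of each asymptotic end), but on the blow-up catenoid it only forces $\tilde u\in\text{span}(N_x)$; you need the reflection through a plane perpendicular to $\ell$ at a neck to kill that remaining component. That symmetry is not delivered by the decay $e^{\Delta}_{i,j}\to 0$. The parameter-space reflection $j\mapsto -j+1$ relates the $j\to +\infty$ end of a branch to its $j\to -\infty$ end, which are two different limits, so it does not constrain a single asymptotic end; moreover that reflection flips the sign of the tilt parameter, and the asymptotic phase $\ol{\phi}^{v}_{i}$ is precisely \emph{nonzero} in your construction (it is what separates the ends), so the constant-in-$j$ data is not a fixed point of the reflection action. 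In fact, reflection symmetry of $\ol{\mc{B}}_i$ through planes perpendicular to its tilted period direction is essentially equivalent to the rotational symmetry being proved, so invoking it is circular. To close the gap you either need the paper's projection/lower-mode argument (the Delaunay intermediate limit as $d\to 0$ is where that projection is exploited), or fall back on \cite{KKS} as you note parenthetically.
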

\begin{proof}
We fix $i$ and suppress it from our notation throughout this proof, and thus we will write $\ol{\mc{B}}$ for $\ol{\mc{B}}_i$.  Observe that, since the limit $\ol{\mc{B}}$ is singly periodic in the direction $ \ol{\mc{t}} : = - \sin(\ol{\phi}) e_y + \cos (\ol{\phi}) e_z$, the axis is determined uniquely up to translations. In fact, we can uniquely define an axis in various ways, such as, for example, declaring it to be the unique line parallel to the direction $v$ minimizing the $L^2$ distance over a fundamental domain. For the purposes of the proof, this is not necessary and we take, for the time being, the axis to be the line  parallel to $\ol{\mc{t}}$ and passing through the origin. 

 Now, suppose the claim is false. Then the normal variation field generated by rotations about the axis has a non-zero part  $u$ orthogonal to the jacobi fields generated by translations over fundamental domains of $\ol{\mc{B}}$. The function $u$ is then a periodic jacobi field on $\ol{\mc{B}}$. We can assume that $u$ normalized  so that the supremum is $1$. Taking $d \rightarrow 0$, the asymptotic solution $\ol{e}$ tends to zero and thus the asymptotic end $\ol{\mc{B}}$ converges on compact subsets of $\mb{R}^3$ to a  smooth CMC surface of rotation with axis parallel to $e_z$ (Thus, it is a Delaunay surface). After possibly applying a translation, we can assume that the axis of rotation coincides the $z$-axis. The corresponding solutions   converge to a non-trivial periodic jacobi field  $u$ on a Delaunay surface with small parameter value $\tau_0$, which is orthogonal to the translational jacobi fields over fundamental domains. The proof is finished by arguing as in the proof of Proposition \ref{RotSymBranches}.  Though the overall strategy is the same, there are enough differences that we present the argument completely. We normalize $u$ so that $\sup u = 1$.   We can decompose $u$ into its higher and lower parts $\mr{u}$ and $\ol{u}$. The lower part lies in the six dimensional space of geometric jacobi fields generated by  rotations, translations, and the variation of the Delaunay parameter. Since the Delaunay variation and rotations generate linearly growing jacobi fields, it follows that $\ol{u}$ is generated by a translation. However, since $u$ is  orthogonal to the translational jacobi fields over fundamental domains of  $\ol{\mc{B}}$, we have that  $\ol{u}$ must vanish and thus $u = \mr{u}$. Now, take a sequence $\tau_{0, j}$ of base flux assignments tending to  $0$ and consider the corresponding sequences  $\ol{\mc{B}}_j$ of branches and  $u_{j}$ of periodic jacobi fields  on $\ul{\mc{B}}_j$. Observe that as $\tau_{0, j} \rightarrow 0$ the branch $\ol{\mc{B}}_j$ converges to a  limit $\ol{\mc{B}}_{0}$, which is a collection of spheres of radius $1/2$ touching at the north and south poles--that is, the points where the unit normal coincides with $\pm e_z$.  Pick a sequence $p_j$ of points realizing the supremum of $u_j$, so $1 = u_j (p_j)$. There are then several possibilities. If $p_j$ remains in a fixed compact subset away from the poles, then $u_j$ converges to a non-trivial  bounded jacobi field on the sphere away from the north and south poles. Standard removable singularity theory then implies that it extends smoothy across the puncture to a non-trivial jacobi field on the sphere, and is thus a  translational jacobi fields. However, this is a contradiction, since we assumed that $u$ is orthogonal to the translational jacobi fields over each fundamental domain. Thus, we must have that $p_j$ converges to a pole, which without loss of generality we can take to be the origin. After applying a small translation in the $e_z$ direction, we can assume that $p_j$ lies in the plane $\{z = 0 \}$. The rescaled Delaunay surface $\tilde{\mc{B}}_i = \ol{\mc{B}}_i/ |p_i|$ then converges to a limit $\tilde{\mc{B}}$ that is either a flat plane or else the standard catenoid. In both cases, define the function $\tilde{u}_j$ on $\tilde{\mc{B}}_j$ by $\tilde{u}_j (p) = u(|p_j| p)$. Then $\tilde{u}_j$ is a jacobi field on  $\tilde{\mc{B}}_j$ that achieves its supremum, which is equal to $1$, on the unit circle. The sequence is then uniformly bounded in $C^k$ and converges to a nontrivial   bounded jacobi field  $u$ on $\wt{\mc{B}}$. In the case that $\wt{\mc{B}}$ is the punctured plane, $u$ extends smoothly across the origin  to a bounded harmonic function  on the plane, and is thus constant. However, since $u$ is the limit of functions with vanishing lower part, the lower part of $u$ vanishes as well, which gives a contradiction. On the other hand, if the limit $\wt{\mc{B}}$ is the standard catenoid, then we obtain a contradiction to Lemma \ref{CatJacFields}. This completes the proof.

 \end{proof}  

As a consequence of Lemma \ref{RotSymEnds}, the asymptotics of the ends can be determined explicitly from the asymptotic data $\ol{e}$. In particular, if $\ol{\phi^v}$ is the asymptotic phase of the vertical catenoids, then the axis of rotation of $\ol{\mc{B}}_i$ is parallel to $\ol{\mc{t}}_i$ (defined in the proof of Lemma \ref{RotSymEnds} above). As a consequence, the problem of proving asymptotic embeddedness of the surfaces $\mc{C}_d$ is reduced to the problem  of prescribing the asymptotic vertical phase. 

\begin{lemma} \label{EmbeddedCriteria}
If $\ol{\phi^{v, \vtr}}  < 0$ and $|d|$ is sufficiently small, then $\mc{C}_{d}$ is embedded. 
\end{lemma}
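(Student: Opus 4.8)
\textbf{Proof plan for Lemma \ref{EmbeddedCriteria}.}

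The plan is to split the verification of embeddedness into the compact and asymptotic regimes, using the translational error cancellation of Corollary \ref{Branches!} together with the asymptotic analysis sketched above. First I would establish embeddedness on a fixed large ball $B_R(0)$ for $|d|$ small. The surface $\mc{C}_d$ is, component-wise, a union of spherical domains and CMC half-necks that are $C^{2,\alpha}$-close to the corresponding pieces of the touching-spheres configuration $\mb{S}$ (translated branchwise by $\mc{M}^d$). Since the branches $\mc{B}_i$ are translated in the $e_y$ direction by the strictly increasing amounts $d^i = \sum_{j\le i} d_j$, for $|d|$ small the branches meet only along the horizontal necks, which by Corollary \ref{Branches!} close up exactly. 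The catenoidal/unduloidal necks involved are embedded (by the embeddedness criterion for CMC necks with $\delta>0$) and have scale $\tilde{\tau}_{i,j}$, which by (\ref{VPrimeBound}) is exponentially small compared to $d$; hence within $B_R$ the only possible failure of embeddedness would be a collision of two branches, which a compactness argument rules out since at $d=0$ the branches touch transversally at the lattice points only, and the corrections $e=e_v$ and $r-\tilde r$ are of size $O(\|\tilde\tau\|)\ll|d|$ by Lemma \ref{GoodApproximation}. So on any fixed compact set, positivity of the $d_i$ alone suffices.

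Next I would treat the ends. By the discussion preceding the lemma, each end of $\mc{C}_d$ is, after passing to the limit $j\to\pm\infty$ within a branch, asymptotic to the rotationally symmetric Delaunay end $\ol{\mc{B}}_i$ (rotational symmetry is exactly Lemma \ref{RotSymEnds}), whose axis is the line in the direction $\ol{\mc{t}}_i := -\sin(\ol{\phi^v}_i)e_y + \cos(\ol{\phi^v}_i)e_z$, translated appropriately. Two consecutive ends $\ol{\mc{B}}_i$ and $\ol{\mc{B}}_{i+1}$ are separated (in the $x=0$ plane to which everything is confined) by the fixed unit lattice spacing in $y$ plus the branchwise translation difference $d^{i+1}-d^i = d_{i+1}>0$; their axes make angles $\ol{\phi^v}_i$ and $\ol{\phi^v}_{i+1}$ with $e_z$. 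The condition $\ol{\phi^{v,\vtr}} = \ol{\phi^v}_{i+1}-\ol{\phi^v}_i < 0$ says precisely that, as one moves to larger $i$ (larger $y$), the axes tilt progressively in the direction that pushes consecutive ends apart rather than together, so that for all sufficiently large $|z|$ the ends $\ol{\mc{B}}_i$ are pairwise disjoint; combined with the uniform exponential decay of the ends to their Delaunay models (from $\|e\|^\sim_{-\mu}\le^C\|v\|$ and the exponential convergence of each branch to $\ol{\mc{B}}_i$), for $|d|$ small the actual ends of $\mc{C}_d$ inherit this disjointness outside $B_R$.

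Finally I would glue the two regimes: choose $R$ large enough that outside $B_R$ the asymptotic argument applies uniformly in $i$, then choose $|d|$ small enough that inside $B_R$ the compactness argument applies; since there are only finitely many branches $\mc{B}_0,\dots,\mc{B}_n$ carrying nontrivial parameters and the remaining branches are exact translated Delaunay surfaces, a single choice of $R$ and threshold on $|d|$ works for all of $\mc{C}_d$ simultaneously, and embeddedness follows. The main obstacle I expect is the asymptotic step: one must show not merely that each $\ol{\mc{B}}_i$ is a Delaunay end in the right direction, but that the genuine ends of $\mc{C}_d$ (which are not exactly periodic — they only converge to $\ol{\mc{B}}_i$) stay disjoint, which requires quantitative control of the rate at which $e_{i,j}\to\ol{e}_i$ and hence of how fast $\mc{B}_i$ approaches $\ol{\mc{B}}_i$, uniformly in $i$; this is where the exponentially weighted graded norm $\|-\|^\sim_{-\mu}$ and the estimate of Corollary \ref{SymmetryBroken} do the essential work, converting the sign condition $\ol{\phi^{v,\vtr}}<0$ into genuine geometric separation.
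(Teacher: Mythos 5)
Your proposal is correct and follows essentially the same route as the paper: compact embeddedness by continuity together with $|\tilde\tau|\ll|d|$, and asymptotic embeddedness by passing to the rotationally symmetric Delaunay limits $\ol{\mc{B}}_i$ (Lemma \ref{RotSymEnds}), reading off the axis tilt from $\ol{\phi^v}_i$, using the graded-norm decay to transfer disjointness from the limits to the actual ends, and invoking $\ol{\phi^{v,\vtr}}<0$ to separate consecutive ends. The paper is merely more explicit at the last step: it sums the cumulative branchwise translation to get $\mc{M}_{i,j}=j\,\ol{v}_i+O(|\tilde\tau|)$ and then shows each branch lies in an explicit wedge-shaped region $R_i$ in the $\{x=0\}$ plane, with $R_i\cap R_{i+1}=\emptyset$ for all $|z|$ (not only large $|z|$, as you suggest) because $(\tan\ol{\phi}_{i+1}-\tan\ol{\phi}_i)|z|<d_{i+1}+O(\tilde\tau)$ holds everywhere when $\ol{\phi}_{i+1}<\ol{\phi}_i$ and $d_{i+1}>0$; this is the formalization of your ``axes tilt apart'' picture.
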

\begin{proof}
Since the embeddedness of $\mc{C}_{d}$ on compact subsets  follows easily from continuity arguments, we discuss here only the asymptotic embeddedness of the surfaces $\mc{C}_{d}$.
Let $\mc{M}_{i, j}$ denote the piecewise translation of $\mc{C} = \mc{C}_{d}$ that resolves the translational errors along the branches. We wish to compare $\mc{M}_{i, j}$ with the asymptotic translations $\ol{\mc{M}}_{i, j}$ resolving the translational errors the asymptotic branches  $\ol{\mc{B}}_i$. We will use the notation  $\mc{M}^{\Delta}_{i, j}$ to denote the difference between the translations on leaves $\mc{L}_{i, j}$ and $\mc{L}_{i, j - 1}$.  It follows from the periodicity of the asymptotic end $\ol{\mc{B}}_i$ that   $\ol{\mc{M}}^{\Delta}_{i, j} = \ol{v}_i$ is a fixed vector independent of $j$ that depends continuously on  the axis and the Delaunay parameter of  $\ol{\mc{B}}$. When the Delaunay parameter vanishes, a straightforward computation gives that $\ol{\mc{M}}^{\Delta}_{i, j} = \ol{v}_i = \left(0, - \sin(\ol{\phi}_i), \cos(\ol{\phi}_i) - 1 \right)$.   Moreover, since we have the estimate $\| e_{i,j} - \ol{e}_{i, j}\| \leq^C \left|\tilde{\tau}\right| \cosh^{- \mu} (j)$,  it follows that $\left\| \mc{M}^{\Delta}_{i, j} - \ol{\mc{M}}^{\Delta}_{i, j}\right\| \leq^C |\tilde{\tau}| \cosh^{- \mu} (j)$. Thus we have 
\begin{align*}
\mc{M}_{i, j} & =  \mc{M}_{i, j} - \ol{\mc{M}}_{i, j} \\
&  = \sum_{j' = 0}^{j} \mc{M}^{\Delta}_{i, j'} - \sum_{j'  = 0}^{j}\ol{\mc{M}}^{\Delta}_{i, j'}\\
& = j \ol{v}_i + O (|\tilde{\tau}|).
\end{align*}
It follows from the above estimates for the motions $\mc{M}_{i, j}$ that away from a compact set about the origin, the branch $\mc{B}_i$ is the contained in the region  $R_i$ defined by
\[
- \tan(\ol{\phi}_{i})|z| + d^i +  i - \frac{1}{2}- O (\tilde{\tau}) \leq y \leq - \tan(\ol{\phi}_{i})|z| + d^i +  i  + \frac{1}{2}- O (\tilde{\tau}).
\]
For $d$ sufficiently small we have that $\left|\tilde{\tau}\right| < < \left|d\right|$. Since $\ol{\phi}_{i + 1} < \ol{\phi}_i$, and for $d$ sufficiently small, the regions $R_i$ and $R_{i + 1}$ do not intersect: Given $(x, y, z)$ in $R_i$, the inequality above must hold. However, we have
\begin{align*}
- \tan(\ol{\phi}_{i})|z| + \frac{1}{2} + i + d^i + O (\tilde{\tau}) &  <- \tan(\ol{\phi}_{i + 1}) |z|  - \frac{1}{2} + (i + 1) + d^{i + 1} + O (\tilde{\tau})
\end{align*}
and thus $(x, y, z)$ cannot lie in both regions.  This completes the proof. 
\end{proof}
We are now ready to prove Proposition \ref{EmbeddedChoice}.

\begin{proof}[Proof of Proposition \ref{EmbeddedChoice}]
We are regarding $e \in \ul{e}$ as a function of $v$ so that $\Lambda_{v + e} = 0$. Since $\Lambda$ is differentiable, we have $\dot{\Lambda}_{v+ e} = o \| v + e\| = o (\| v\|)$ and thus
\begin{align*}
e & = - \dot{\Lambda}^{-1, e} \left( \dot{\Lambda} (v)\right) + o (\| v\|) \\
& = : \dot{e} + o (\| v\|) \\
& = \dot{e} + o(\left\|\tilde{\tau}\right\|)
\end{align*}
where $\dot{e} = (\dot{\tau}^v, \dot{\phi}^v)$ is defined implicitly above and where the term $o(\| \tilde{\tau} \|)$ is measured in the norm $\| -\|^\sim_{\mu}$.
Since $\mb{F}$ is a finite set, there is and $j_0$ such that $(i, j) \in \mb{F}$ implies  $j < j_0$. From Lemmas \ref{TauProjections} and \ref{Projections} we have $\dot{\phi}^{v,\Delta}_{i, j} \cong \dot{\tau}^{h, \vtr}_{i, j }$, so that for $j > j_0$ we have
\[
\dot{\phi}^v_{i, j}  \cong   \mc{b}_{i},
\]
where above we have set $\mc{b}_i = \sum_{j = 0}^{j_0}\tau^{h, \vtr}_{i, j }$. In particular, if 
\begin{align} \label{BDifference}
\mc{b}_{i + 1} - \mc{b}_{i} \leq - c \| \tilde{\tau} \|,
\end{align}
for a positive constant $c$, we have $ \dot{\phi}^{h, \vtr}_{i , j } \leq - c \| \tilde{\tau}\| + o (\| \tilde{\tau}\|)$ for $j > j_0$. The weighted estimates then imply that $ \ol{\phi}_i^{h, \vtr} < 0$ and thus  $\mc{C}$ is asymptotically embedded for $d$ sufficiently small.

From Lemma \ref{GoodApproximation} we have.
\[
\sigma^h_{i, j}   = d_i + O\left( \left\|\tilde{\tau}\right\| \right), 
\]
which then directly gives:
\[
\tau^h_{i, j} = \tilde{\tau}^h_{i, j} + \int_{0}^{1} \tau'_{d (t)}\left(O (\left\|\tilde{\tau}\right\|) \right) dt, \quad d(t) = d_i + t O (\tilde{\tau}).
\]
Dividing by $\tilde{\tau}^h_i$, taking $d \rightarrow 0$ and using that the derivative $\tau'_{d}=$  of $\tau$ as a function of $d$ tends to zero as $d \rightarrow 0$ gives
\[
 \rho : = \left|\frac{\tau^h_{i, j}}{\tilde{\tau}_{i}} - 1 \right| \rightarrow 0.
\]
For a constant $c  >0$ sufficiently small, define numbers $d_i$ by the condition
\begin{align}\label{ChoiceOfRelativeTranslations}
 \frac{i (i - 1)}{2} c : = \left|\mb{F}_i\right| \tilde{\tau}_{i},
\end{align}
 Observe that $\| \tilde{\tau}\| \cong c$. We have
\begin{align*}
\mc{b}_i & = \sum_{j = 0}^{j_0} \tau^{h, \vtr}_{i, j} \\
& =  \sum_{j = 0}^{j_0} \tau^{h}_{i + 1, j } - \sum_{j = 0}^{j_0} \tau^{h}_{i, j} \\
& = \left(\sum_{j = 0}^{j_0} \frac{\tau^{h}_{i + 1, j }}{\tilde{\tau}_{i + 1}} \right) \tilde{\tau}_{i + 1}-  \left(\sum_{j = 0}^{j_0} \frac{\tau^{h}_{i + 1, j }}{\tilde{\tau}_{i}} \right) \tilde{\tau}_{i} \\
& =  c \left(\frac{i (i + 1)}{2}  - \frac{i (i - 1)}{2}\right)+  E \\
&  =  ci + E,
\end{align*}
where the error term $E$ above is explicitly given by
\[
E : = \sum_{j = 0}^{j_0} \left( \frac{\tau^{h}_{i + 1, j }}{\tilde{\tau}_{i + 1}}  - 1\right) \tilde{\tau}_{i + 1} -  \sum_{j = 0}^{j_0} \left( \frac{\tau^{h}_{i + 1, j }}{\tilde{\tau}_{i}}  - 1\right) \tilde{\tau}_{i}.
\]
In particular $E$ satisfies the estimate
\[
|E| \leq \rho c \left( \frac{i (i - 1)}{2} + \frac{i (i + 1)}{2}\right)(2 j_0 + 1).
\]
Observe that $\rho \rightarrow 0$ as $c \rightarrow 0$. Thus for $c$ sufficiently small we have   $| E| \leq \frac{c}{4}$ for all $i < i_0$.  This then gives
\begin{align*}
\mc{b}_{i + 1} - \mc{b}_i  &  \leq \frac{-c}{2} \\
& \leq \tilde{c} \| \tilde{\tau}\|
\end{align*}
where the last equality follows from the fact that $\| \tilde{\tau}\| \cong c$. This gives (\ref{BDifference}) and  completes the proof.
\end{proof} 
\subsection{Adding one end to the construction} \label{AddEnd}

The surfaces $\mc{C}_d$ constructed in Corollary \ref{Branches!}  have   $2(n + 1)$ ends, where $n$ is an arbitrary postitive integer. In this section we explain a basic modification of the construction  that adds one end, yielding surfaces with $2n + 3$ ends. \

We let $\mb{S}'$ denote the collection of spheres $\mb{S}'_j  : = \mb{S}_{-j, 0}$ for $j < 0$, and we define a surface $\mc{C}'$ using $\mb{S}'$  and the strategy of Section \ref{PreEmSpheres}. Precisely, we take the singular points $\mc{s}'$ of the configuration to be the points of intersection $\mc{s}'_j : = \mb{S}'_j \cap \mb{S}'_{j + 1} = (0, - j -1/2, 0)$ along with the point $\mc{s}'_0 : = (0, - 1/2, 0)$, and we let $\mc{N}'$ and $\mc{S}'$ denote the singular and spherical parts of $\mb{S}'$, respectively. Observe that $\mc{N}'$ comprises the components $\mc{N}'_j$  containing the singular point $\mc{s}'_j$ for  $j < 0$ as well as the component $\mc{D}'_0$ containing $\mc{s}'_0$, and that $\mc{S}$ comprises the components $\mc{S}_j$ contained in $\mb{S}_j$. We let $\mc{C}'$ denote the  disjoint union of the components of $\mc{S}'$ and $\mc{N}'$.  Since we are assuming symmetry through the coordinate plane $\{z = 0 \}$,  we fix the phase assignments of the necks in $\mc{N}'_j$ at zero and the free parameters $\ul{\xi}'$ of  $\mc{C}'$ comprise the space $\ul{\tau}'$ of flux assignments of the necks in $\mc{C}'$ as well as the space $\ul{\rho}'$ of piecewise motions of $\partial \mc{S}'$ and the space $\ul{f}'$ of even functions in $\mr{C}^{2, \alpha} (\partial\mc{C}')$. For clarity, we remark that we are regarding $\mc{D}'_0$ as a disk, not half of a CMC neck, and thus we fix the flux assigment on $\mc{D}'_0$ at zero.  We let $*'$ denote the pairing on the boundary $\partial\mc{C}'$ of $\mc{C}'$ induced by the canonical projection into $\mb{R}^3$. The defect operator $\Lambda'$ associated to the collection $\left( \mc{C}', *'\right)$ is then a differentiable map  defined on a ball about  $0$  in $\ul{\xi}'$ into $\ul{e}_{*}'$, where the error space $\ul{e}'_*$ is the space of even $C^{1, \alpha}$ functions on $\partial \mc{C}'$, and where both spaces above are equipped with the norms $\| -\|^\sim_{- \mu}$.  Using the arguments of Section \ref{DefectOnDelaunay} and the symmetry assumptions, it follows that  
 
\begin{lemma}\label{InScaleZeroSymm}
$\slashed{\partial}^e$ is an isomorphism from $\ul{e}'$ onto $\ul{e}'_{*}$ in the norms $\| -\|^\sim_{- \mu}$
\end{lemma}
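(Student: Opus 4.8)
The plan is to mirror the argument developed in Section \ref{TypeIIVariations} and Section \ref{DefectOnDelaunay} for the lattice collection $\mc{C}$, now applied to the single-column collection $\mc{C}'$. First I would observe that, exactly as in Definition \ref{DtoNOpoerator} and the discussion following it, the operator $\slashed{\partial}^{e}$ on $\partial\mc{C}'$ is an elliptic, first order, self-adjoint pseudo-differential operator, hence index zero, so the only obstruction to invertibility is its kernel. By Proposition \ref{LPrimeKernel} the kernel of $\slashed{\partial}^{e}$ in the space of even functions is the space $\mc{tr}(\wc{\ul{\mc{k}}'})$ of traces of lifts of bounded jacobi fields on $\mb{S}'$, and each bounded jacobi field on a sphere is a translational jacobi field (Lemma \ref{Prop:SKernel}). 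The point of the symmetry assumption is that reflection through $\{z=0\}$ kills the $e_z$ translational jacobi fields on each sphere, while the $e_x$ direction is killed by the ambient reflection symmetry in $\{y=0\}$ that we have imposed; what remains is the $\mc{k}^{y}$ translational field on each sphere $\mb{S}'_j$, and these are precisely resolved by the type $I$ flux variations of the horizontal necks in $\mc{N}'$, by the computation in Lemma \ref{TauProjections} (the pairing $\langle \partial\Lambda/\partial\tau(\dot\tau),\mc{k}^{y}_{j}\rangle = -c_\delta\,\dot\tau^{\vtr}_{j}$). Thus $\slashed{\partial}^{e}$ restricted to the $\mc{G}$-adapted parameter space has trivial cokernel once the flux parameters are included, i.e. it is an isomorphism onto $\ul{e}'_{*}$ at the level of lower modes; on higher modes, Corollary \ref{LambdaPrimeIso} gives the isomorphism directly.

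The step-by-step plan: (1) record that $\slashed{\partial}^{e}$ on $\partial\mc{C}'$ is self-adjoint and index zero, so surjectivity is equivalent to injectivity; (2) identify the kernel as $\mc{tr}(\wc{\ul{\mc{k}}'})$ via Proposition \ref{LPrimeKernel} and note that under the imposed reflection symmetries through $\{y=0\}$ and $\{z=0\}$ the only surviving translational jacobi field on $\mb{S}'_j$ is $\mc{k}^{y}_j$; (3) split the parameter space $\ul{e}'=\ul{e}'_I\oplus\ul{e}'_{II}$ as in Definition \ref{ParameterSubspaces}, with $\ul{e}'_I$ the horizontal flux parameters in $\mc{N}'$; (4) invoke Corollary \ref{LambdaPrimeIso} to get that $\slashed{\partial}^{e}$ is an isomorphism from $\ul{g}'_{II}$ onto $\ul{e}'_{*,II}$, hence from $\ul{e}'_{II}$ onto $\ul{e}'_{*,II}$ after identifying $\ul{\rho}'$-parameters with lower modes; (5) use the projection computation of Lemma \ref{TauProjections}, applied to the one-column configuration, to show that the horizontal flux parameters $\dot\tau'$ generate the one-dimensional-per-sphere cokernel $\ul{e}'_{*,I}$; (6) assemble these into a bounded isomorphism $\slashed{\partial}^{e}:\ul{e}'\to\ul{e}'_{*}$ in the graded norm $\|-\|^{\sim}_{-\mu}$, using the a-priori estimate of Proposition \ref{SimAprioriEstimate} together with the growing-weight isomorphism of Lemma \ref{GrowingIso}, exactly as in the proof of Proposition \ref{VerticalVariations}.

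The main obstacle I expect is step (5) together with the boundary behavior at the special component $\mc{D}'_0$: unlike the interior necks $\mc{N}'_j$, the component $\mc{D}'_0$ is a genuine disk (a CMC graph tangent at the pole, with its flux fixed at zero), so the column of spheres $\mb{S}'$ terminates there, and one must verify that the translational jacobi field $\mc{k}^{y}_0$ on the bottom-most sphere is still resolvable — that the flux differences $\dot\tau^{\vtr}_j$ for $j<0$, together with the fixed $\dot\tau'|_{\mc{D}'_0}=0$ constraint, span the appropriate obstruction space. This is the analogue of the boundary normalization $\tau^{v}_{i,1}=\tau^{v}_{i,0}=0$ used in the lattice case, and the argument is: since $\mc{k}^{y}$ restricted to $\mb{S}'_0$ is a lower mode proportional to the $N_y$ component of the unit normal along $\partial\mc{D}'_0$, and since the disk $\mc{D}'_0$ contributes one jacobi field via the Dirichlet-to-Neumann operator, the terminal obstruction is again a single copy of $\mc{k}^y$ and is killed by the flux of the first interior neck $\mc{N}'_{-1}$. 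Once this bookkeeping at the terminal disk is settled, the rest of the proof is a direct transcription of Sections \ref{TypeIIVariations}--\ref{SymBreaPert} with the lattice index $(i,j)$ replaced by the single index $j$, and I would simply cite those results rather than rewrite the estimates. So the statement follows, and I would end the proof with a sentence to that effect.

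Concretely, the proof reads: Since $\slashed{\partial}^{e}$ on $\partial\mc{C}'$ is self-adjoint and index zero, it suffices to show that the horizontal flux and type $II$ parameters together generate $\ul{e}'_{*}$. By Proposition \ref{LPrimeKernel} the kernel of $\slashed{\partial}^{e}$ is $\mc{tr}(\wc{\ul{\mc{k}}'})$, which under reflection through $\{y=0\}$ and $\{z=0\}$ is spanned by the fields $\mc{k}^{y}_{j}$ on the spheres $\mb{S}'_j$. By Corollary \ref{LambdaPrimeIso}, $\slashed{\partial}^{e}$ is an isomorphism from $\ul{e}'_{II}$ onto $\ul{e}'_{*,II}$, so it remains to show the horizontal flux parameters in $\ul{e}'_I$ generate $\ul{e}'_{*,I}$. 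This follows from the computation in Lemma \ref{TauProjections}, which gives $\langle \partial\Lambda'/\partial\tau(\dot\tau'),\mc{k}^{y}_{j}\rangle = -c_\delta\,\dot\tau'^{\vtr}_{j}$ for interior $j$, together with the normalization fixing the flux at the terminal disk $\mc{D}'_0$ to zero, so the map $\dot\tau'\mapsto (\dot\tau'^{\vtr}_j)$ is an isomorphism onto the obstruction space. Finally, the boundedness and isomorphism property in the graded norm $\|-\|^{\sim}_{-\mu}$ follow by the perturbation argument of Proposition \ref{SimAprioriEstimate}, Lemma \ref{GrowingIso} and Proposition \ref{VerticalVariations}, applied verbatim with the single index $j$ in place of $(i,j)$. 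This completes the proof.
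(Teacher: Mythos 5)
Your proposal follows essentially the same route as the paper: identify the surviving obstruction as $\mc{k}^y$ per sphere (with $\mc{k}^z$ killed a-priori by the imposed reflection symmetry and $\mc{k}^x$ by the ambient symmetry of the plane containing the centers), resolve $\mc{k}^y$ via the flux computation of Lemma~\ref{TauProjections} since the phase computation of Lemma~\ref{Projections} degenerates at zero background flux, and then invoke the graded-norm machinery of Propositions~\ref{SimAprioriEstimate}, \ref{GrowingIso} and \ref{VerticalVariations}. One small slip worth correcting: the symmetry that kills $\mc{k}^x$ is reflection through $\{x=0\}$, not through $\{y=0\}$ as you write; a reflection through $\{y=0\}$ would kill precisely the obstruction $\mc{k}^y$ you are trying to resolve, contradicting the rest of your own argument. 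This does not change the structure of the proof, and the confusion was likely induced by a typo in Section~\ref{SymBreaPert} of the paper, but it should be fixed.
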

\begin{proof}
We repeat the proof of  Lemma \ref{GrowingIso} explaining the modifications needed as we proceed, which  are needed since the background parameter on $\mc{C}'$ is zero and thus Lemma \ref{Projections} does not apply. This is compensated by the fact that the symmetry assumptions a-priori yield orthogonality of the source terms to the translational jacobi field $\mc{k}^z$. The proof of Lemma \ref{InScaleZeroSymm} the proceeds identically to the proof of Proposition \ref{VerticalVariations}.
We first show that $\dot{\Lambda}_{\mc{C}'}$ is an isomorphism  in the norms $\| -\|_{\mu}$ for arbitrary $\mu > 0$. To see this, pick $e_* \in \ul{e}'_*$ with $\| e_*\|_{\mu} = 1$. By symmetry considerations, $e_{*, j}$ is orthogonal to the translational killing field $\mc{k}^{z '}_{j}: = \mc{k}^{z}_{-j, 0}$ for all $j$. Thus, by  Lemma  \ref{TauProjections} there is a unique $ \dot{\tau}' \in \ul{\tau}'$ such that  $\dot{\Lambda} ( \dot{\tau}') + e_{*} \in \ul{e}_{II}$. Moreover, we have the estimate  $\left\|\left(\dot{\tau}'^{\Delta} \right)_{ i, j} \right\| \cong \left\|e_{* i, j} \right\|$ and thus
\[
\|\left( \dot{\tau}' \right)_{i, j}\| \leq \sum_{j' = 0}^{j}\left\| \left(\dot{\tau}'^{\Delta}\right)_{i, j'} \right\| \leq^C \sum_{j' = 0}^j \cosh^{\mu} (j') \leq^C \cosh(j).
\]
This then gives the estimate $\left\| \dot{\Lambda} (\dot{\tau}' + e_{*}) \right\| \leq^C \cosh^{\mu} (j)$.  We have thus reduced to the case of showing that $\dot{\Lambda}$ is a bounded isomorphism  in the norm $\| -\|_{\mu}$ from $\ul{e}_{II}$ into $\ul{e}_{*,II}$, which is Corollary \ref{NoKernelISo}. The proof is then finished by appealing to the  Proposition \ref{VerticalVariations}.
\end{proof}

In the following, we will fix a choice $d_0$ of relative translations of $\mc{C}$ constructed in the Proof of Proposition \ref{EmbeddedChoice}.    In particular, we will assume that $d_0$ is given by (\ref{ChoiceOfRelativeTranslations}). Recall that $r_0 = r_{d_0}$ is determined by $d_0$ by Corollary \ref{Branches!} and $e_0 = e_{v_0}$ is  determined by $v_0$ by Corollary \ref{SymmetryBroken}. Thus,  $e_0$ is determined by  $d_0$ using   the composition 
\[
e_{d} = e_{v_d}, \quad v_d : = l^{-1}_{r_d}
\]
where above the mapping $l: \ul{v} \rightarrow \ul{r}$ is given by (\ref{TheMappingL}). We will in the following re-introduce $\xi \in \ul{\xi}$  as a  free variable of the construction  by setting
\[
\mc{C}_{\xi} : = \mc{C}_{\xi + e_0 + v_0}.
\]
Observe that we are still suppressing the background parameter $\xi_0$ from the notation, and thus the total parameter value is given by $\xi_{total} = \xi_0 + e_0 + v_0 + \xi$. With this notation, we have that $\left. \mc{C}_{\xi}\right|_{\xi = 0}$ is a complete embedded CMC surface and that the ends are asymptotically separated.

Now, consider the defect operator $\Lambda_{\mc{C} \coprod \mc{C}'}$ associated to the disjoint union of the collections $\mc{C}$ and $\mc{C}'$. We can regard $\Lambda_{\mc{C} \coprod \mc{C}'}$  as isomorphic to the direct sum   $\Lambda_{\mc{C}'} \oplus \Lambda_{\mc{C}}: \ul{\xi'} \oplus \ul{\xi} \rightarrow \ul{e'}_{*} \oplus \ul{e}_{*}$. By Proposition \ref{VerticalVariations}  and  Lemma \ref{InScaleZeroSymm}, and taking $d_0$ smaller if necessary so that $e_0$ and $v_0$ are small, the linearization $\dot{\Lambda}_{\mc{C} \coprod \mc{C}'}$ at $\xi' = \xi = 0$ restricts to  a bounded isomorphism of  $\ul{e}' \oplus \ul{e}$ onto  $\ul{e}'_* \oplus \ul{e}_*$ in the norms $\| -\|_{-\mu}^\sim$. 
Now,  let $\mc{N}_0$ denote the family of CMC necks and  set $\mc{C} * \mc{C}' : = \left\{\mc{C} \coprod \mc{C}'  \setminus (\mc{D}'_0 \coprod \mc{N}^-_{0, 0})  \right\}\coprod \mc{N}$. That is, $\mc{C} * \mc{C}'$ is obtained from $\mc{C} \coprod \mc{C}'$ be replacing the half necks $\mc{D}'_0 $  and $\mc{N}^-_{0, 0}$
 with the family of CMC necks $\mc{N}_0$. The boundary $\partial \mc{N}_0$ of $\mc{N}_0$ is  isomorphic to $\partial \left(\mc{D}'_0 \coprod \mc{N}^-_{0, 0} \right)$ and thus we can regard the pairing $*$ as a pairing on $\partial \left(\mc{C} *  \mc{C}'\right)$. Let $\tau_0$ denote the flux of the neck $\mc{N}_0$. Then it holds that 
 \[
 \left. \Lambda_{\mc{C} * \mc{C}'} \right|_{\tau_0 = 0} = \Lambda_{\mc{C} \coprod \mc{C}'}.
 \]
Thus, the linearization $\dot{\Lambda}_{\mc{C} * \mc{C}'}$ of $\Lambda_{\mc{C} * \mc{C}'}$ at $\xi + \xi' = 0$ is a bounded isomorphism  of $\ul{e} \oplus \ul{e}'$ onto  $\ul{e}_* \oplus \ul{e}'_*$ in the norms $\| -\|_{-\mu}^\sim$. By the implicit function theorem there is differentiable  map $(\tau_0, r) \mapsto (e', e)$ defined near zerp from $\ul{\tau}_0 \oplus \ul{r}$ into $\ul{e} \oplus \ul{e}'$ such that 
\begin{align}\label{Relation}
\Lambda_{\mc{C} * \mc{C}'}(\tau_0  + v + e' + e) = 0
\end{align}
 and such that $e' = e = 0$ when $r = \tau_0 = 0$. For the remainder of the proof we will consider $e$ and $e'$ to be related to $r$ and $\tau_0$ by (\ref{Relation}). By Corollary \ref{Branches!} and again taking $d_0$ smaller if necessary, there is a differentiable map $\tau_0 \rightarrow r$ such that $r = 0$ when $\tau = 0$ and such that $\mc{E}^{d}_{r + \tau_0} = 0$. We will now regard $\tau_0$ as the remaining free variable of the construction. The surface $\left(\mc{C} * \mc{C}' \right)_{\tau_0}$ is then a smoothly immersed $CMC$ surface  depending continuously on $\tau_0$ and when $\tau_0 = 0$ we have 
 \[
\left.  \left(\mc{C} * \mc{C}' \right)_{\tau_0} \right|_{\tau_0 = 0} = \mc{C} \cup \mc{C}'.
 \]
 By continuous dependence,  for small values of $\tau_0$, the ends of the surface $\mc{C} * \mc{C}'$ are asymptotically separated  and the surface $\mc{C} * \mc{C}'$ is thus asymptotically embedded. By continuous dependence   $\mc{C} * \mc{C}'$ is embedded on compact subsets of $\mb{R}^3$ for small values of $\tau_0$. Thus for $\tau_0$ sufficiently small $\mc{C} * \mc{C}'$  is globally embedded. 
\begin{proof}[Proof of Proposition \ref{BigTheorem}]
By Corollary \ref{EmbeddedChoice}  and the discussion in Section \ref{EmbeddedChoice} we can  construct completed embedded constant mean curvature surfaces with any number of ends  greater than or equal to four for any choice of finite set $\mb{F}$. Thus, in order to prove Proposition \ref{BigTheorem}, we simply need to verify that the topologies of the surfaces can be prescribed by choosing $\mb{F}$ appropriately. This is straight forward. Since our construction is invariant through the plane $\{z = 0 \}$, the set $\mb{F}$ is constrained by the fact that it includes $(i, j)$ if and only if it includes $(i, -j)$. We set $\mb{F}_{i} = \left\{ 0 \right\}$ for $i = 2, \ldots n$. For  an integer $k \geq 0$ we can either  take $\mb{F}_1 = \left\{-k, \ldots k  \right\}$  or $\mb{F}_1 = \left\{-k, \ldots k  \right\} \setminus \{ 0\}$.  In the first case the branches $\mc{B}_{0}$ and $\mc{B}_1$ are joined by $2k + 1$ necks and thus together have genus $2 k$. For $i >1$ the branch $\mc{B}_{i}$ is joined to $\mc{B}_{i - 1}$ by one neck, which contributes no genus and $\mc{B}'$ is joined to $\mc{B}_0$ by one neck, which again contribute no genus, and thus the surfaces have genus $2 k$. In the second case, the branches $\mc{B}_0$ and $\mc{B}_1$ are joined by $2k$ necks, while the topology of the rest of the construction is unchanged. By the same arguments, the surfaces have genus $2k -1$. This prescribes the genus freely and completes the proof. 
\end{proof}

\bibliographystyle{amsalpha}

\begin{thebibliography}{99}
\bibitem{Alexandrov}
 A. D. Alexandrov, \emph{Uniqueness theorems for surfaces in the large}. V, Amer. Math. Soc. Transl. (2) 21 (1962), 412–416.
\bibitem{andrews-hopper}
Ben Andrews and Christopher Hopper, 
\newblock \emph{The Ricci flow in Riemannian Geometry} (Springer, Berlin/Heidelberg, 2011)

\bibitem{breiner-kapouleas1}
Christine Breiner and Nikolaos Kapouleas,
\emph{Embedded constant mean curvature surfaces in Euclidean three-space}, Math. Ann. 360 (2014), no. 3-4, 1041-1108.

\bibitem{breiner-kapouleas2}
C. Breiner and N. Kapouleas,
\emph{Complete constant mean curvature hypersurfaces in euclidean space of dimension four or higher}, Amer. J. Math., 143(4):1161–1259, 2021.
\bibitem{delaunay}
C. Delaunay,
\newblock  \emph{Sur la surface de revolution dont la courbure myonne est constant}
\newblock{\em Journal de Mathematiques Pures et Appliquees}, 6 (1841), 309-320. 

\bibitem{Gilbarg-Trudinger}
Gilbarg and Trudinger
 Elliptic partial differential equations of second order. Classics in Mathematics. Springer-
Verlag, Berlin, 2001. Reprint of the 1998 edition.

\bibitem{Hopf}
H. Hopf, \emph{Differential geometry in the large}, Lecture Notes in Mathematics, vol. 1000, Springer-Verlag, Berlin, 1983, Notes taken by Peter Lax and John Gray, With a preface by S. S. Chern.




\bibitem{Kapouleas4}
 N. Kapouleas
 \emph{ Minimal surfaces in the round three-sphere by doubling the equatorial two-sphere}
  I. J. Differential Geom., 106(3):393–449, 2017.

\bibitem{Kapouleas}
N. Kapouleas,
\emph{Complete constant mean curvature surfaces in Euclidean three space}, Ann. Math. (2) 131 (1990), 239-330

\bibitem{Kapouleas2}
N. Kapouleas
 \emph{Compact constant mean curvature surfaces in Euclidean three-space}, J. Differential Geom. 33 (1991), no. 3, 683–715.

\bibitem{Kapouleas3}
N. Kapouleas
 \emph{Constant mean curvature surfaces constructed by fusing Wente tori}, Invent. Math. 119 (1995), no. 3, 443–518.

\bibitem{Kapouleas-Mcgrath}
N. Kapouleas, P. Mcgrath
\emph{Generalizing the Linearized Doubling approach, I: General theory and new minimal surfaces and
self-shrinkers}
\emph{Cambridge Journal of Mathematics}, Volume 11, Number 2, 299–439, 2023


\bibitem{Kapouleas-Mcgrath2}
N. Kapouleas, P. Mcgrath
\emph{Free boundary minimal annuli immersed in the unit ball}, arXiv:2208.14998
\bibitem{Kleene}
Stephen J. Kleene
\newblock \emph{Estimates for the constant mean curvature problem on catenoids}, arxiv.
\bibitem{Kleene2}
Stephen J. Kleene,
\newblock \emph{Complete embedded CMC surfaces with  three ends and any genus}, in preparation.

\bibitem{KKS}
N. Korevaar, R. Kusner, and B. Solomon, \emph{The structure of complete embedded surfaces with constant mean curvature}, J. Differential Geom. 30 (1989), no. 2, 465–503.

\bibitem{mazzeo-pacard}
Rafe Mazzeo and Frank Pacard,
\newblock \emph{Constant mean curvature surfaces with Delaunay ends}
\newblock{\em Comm. Anal. Geom.}, 9 (1):  169--237, 2001. 

\bibitem{mazzeo-pacard-pollack}
R Mazzeo, F. Pacard, and D. Pollack
\newblock \emph{Connected sums of constant mean curvature surfaces in Euclidean 3 space}
 \newblock vol. 2001, no. 536, 2001, pp. 115-165.

\bibitem{mazzeo-pacard2}
R. Mazzeo and F. Pacard, 
\newblock \emph{Constant scalar curvature metrics with isolated singularities} 
\newblock \emph{Duke Math. J.} 99(3): 353-418 (15 September 1999). DOI: 10.1215/S0012-7094-99-09913-1.
\bibitem{Pacard-Xu}
F. Pacard and X. Xu, \emph{Constant mean curvature spheres in Riemannian manifolds}, Manuscripta Math.
128 (2009), no. 3, 275–295.

\bibitem{Meeks}
W. H. Meeks, III, \emph{The topology and geometry of embedded surfaces of constant mean curvature}, J. Differential Geom. 27 (1988), no. 3, 539–552.

\bibitem{Schoen}
R. Schoen, \emph{The existence of weak solutions with prescribed singular behavior for a conformally invariant
scalar equation}, Comm. Pure Appl. Math. 41 (1988), no. 3, 317–392.

\bibitem{Wente}
 H. C. Wente, \emph{Counterexample to a conjecture of H. Hopf}, Pacific J. Math. 121 (1986), no. 1, 193–243.







\end{thebibliography}

 \end{document}